\newtheorem{thm}{Theorem}[section]
\newtheorem{prop}[thm]{Proposition}
\newtheorem{lemma}[thm]{Lemma}
\theoremstyle{definition}
\newtheorem{defn}[thm]{Definition}
\theoremstyle{remark}
\newtheorem{rmk}[thm]{Remark}
\newcommand{\R}{\mathbb{R}}
\newcommand{\C}{\mathbb{C}}
\newcommand{\N}{\mathbb{N}}
\newcommand{\Sph}{\mathbb{S}}
\newcommand{\D}{\mathcal{D}'}
\newcommand{\brac}[1]{\langle #1 \rangle}
\newcommand{\pair}[2]{\bigl\langle #1,\, #2 \bigr\rangle}
\newcommand{\WF}{\operatorname{WF}'}
\newcommand{\rkerr}{\mathfrak{r}}
\newcommand{\bdf}{\rho}
\newcommand{\scf}{\mathrm{scf}}
\newcommand{\tf}{\mathrm{tf}}
\newcommand{\zf}{\mathrm{zf}}
\newcommand{\scb}{\text{sc-b}}
\DeclareMathOperator{\supp}{supp}
\DeclareMathOperator{\Ell}{Ell}
\DeclareMathOperator{\Char}{Char}
\title{Quasinormal modes for the Kerr black hole}
\author{Thomas Stucker}
\address{Department of Mathematics, ETH Zurich, R\"amistrasse 101, 8092, Z\"urich, Switzerland}
\email{thomas.stucker@math.ethz.ch}
\begin{document}

\begin{abstract}
    We provide a rigorous definition of quasinormal modes for the Kerr black hole. They are obtained as the discrete set of poles of the meromorphically continued cutoff resolvent. The construction combines the method of complex scaling near asymptotically flat infinity with microlocal methods near the black hole horizon. We study the distribution of quasinormal modes in both the high and low energy regimes. We establish the existence of a high energy spectral gap and exclude the accumulation of quasinormal modes at zero energy.
\end{abstract}

\maketitle

\tableofcontents

\section{Introduction}
\label{section_introduction}

\subsection{Quasinormal modes on black hole spacetimes}

The study of quasinormal modes (QNMs) on black hole spacetimes has a long history in the physics literature, see the review articles \cite{kokkotas_physicsQNM,berti_cardoso} and references therein. QNMs are exponentially damped oscillating solutions to linear wave equations on a stationary black hole background. They can be described by a discrete set of complex frequencies (the negative imaginary part corresponding to the rate of decay), which are characteristic of the specific black hole spacetime. This should be compared to the purely oscillatory normal modes describing waves propagating on a compact domain. In contrast, our system is dissipative -- energy can escape to infinity or into the black hole -- and QNMs fail to form a complete set. Nonetheless, it is expected that a large portion of the gravitational radiation emitted by a perturbed black hole, after an initial response depending strongly on the nature of the perturbation, takes the form of a superposition of QNMs.
In the words of Chandrasekhar:
\begin{displayquote}[\cite{chandrasekhar_book}]
    \dots we may expect on general grounds that any initial perturbation will, during its last stages, decay in a manner characteristic of the black hole and independently of the original cause. In other words, we may expect that during the very last stages, the black hole will emit gravitational waves with frequencies and rates of damping, characteristic of itself, in the manner of a bell sounding its last dying pure notes. 
\end{displayquote}

The first mathematically rigorous investigation of QNMs, using tools from scattering theory, was conducted by Bachelot–Motet-Bachelot \cite{bachelot} and Sá Barreto-Zworski \cite{saBarreto_zworski} for spherically symmetric black holes. QNMs were defined as scattering resonances, that is, poles of the meromorphic continuation of a resolvent operator. This approach also proved effective for the study of black hole QNMs in the context of de Sitter space (positive cosmological constant), see \cite{dyatlov_QNM_KdS,dyatlov_QNM_quantization,vasy_KdS} for the Kerr-de Sitter case. In the de Sitter setting, ringdown can be described by means of a resonance expansion. That is, the late-time behavior of solutions to the wave equation is characterized by a finite superposition of QNMs, up to an error of exponentially faster decay. 

In contrast, solutions to wave equations on asymptotically flat spacetimes tend to exhibit polynomial tails \cite{tataru_price,hintz_pricelaw,gajic_priceLawKerr} and the meaning of QNMs in this context is not entirely clear. In particular, there is, as of yet, no rigorous definition of QNMs for the Kerr black hole. In this paper, we intend to remedy this situation by providing a mathematically robust construction of quasinormal modes on subextremal Kerr spacetimes and studying their distribution in both the high and low energy limits.

The concrete subject of our investigation is the scalar wave equation on the background of a Kerr black hole. Kerr spacetime \cite{kerr} is a family of stationary, asymptotically flat 4-dimensional Lorentzian manifolds $(M,g_{m,a})$ parameterized by $m>0$ and $a \in [-m,m]$, which are solutions to the vacuum Einstein equation with vanishing cosmological constant, i.e. $\mathrm{Ric}(g_{m,a})=0$. They describe a rotating black hole with mass $m$ and specific angular momentum $a$. In the region exterior to the black hole $\R_t\times (r_+,\infty)_r\times\Sph^2_{\theta,\varphi}$, the metric is traditionally written in Boyer-Lindquist coordinates $(t,r,\theta,\varphi)$, see Section \ref{section_kerr_metric} for the explicit form. Here, we only consider the subextremal range of angular momentum ${a \in (-m,m)}$. Our methods, specifically the radial point estimates over the horizon, break down in the extremal case.

Scalar waves on Kerr spacetime are described by the linear wave equation
$$\Box_g v = 0,$$
where $\Box_g$ is the Laplace-Beltrami operator for the Kerr metric. This forms a well-posed Cauchy problem for initial data specified on the spatial slice $\{t=0\}$, e.g.
$$v|_{t=0} = f_0, \quad \partial_tv|_{t=0} = f_1, \quad\mathrm{with}\quad f_0, f_1 \in C_c^\infty\bigl((r_+,\infty)\times \Sph^2\bigr),$$
where we take smooth data with compact support in the exterior region for simplicity. The late-time behavior of solutions to this Cauchy problem is now well understood. They are governed by inverse polynomial decay rates, a result known as Price's law. Specifically, for fixed $x$ in the exterior region, scalar waves on Kerr satisfy the pointwise asymptotics
$$v(t,x) \sim ct^{-3} \quad\text{as}\quad t\to \infty$$
where the constant $c$ can be explicitly calculated from the initial data. This was first conjectured by Price \cite{price1,price2} and rigorously proved in \cite{tataru_price,hintz_pricelaw,gajic_priceLawKerr}. Such polynomial tails seem to be a generic feature of wave equations on asymptotically flat spacetimes, see for instance \cite{gajic_priceLaw,ma_zhang_priceLaw,millet_priceLaw,luk_oh_priceLaw}.

While polynomial tails have been observed in numerics, see e.g. \cite{polynomial_tails_numerics}, at earlier time-scales numerical solutions of wave equations on Kerr spacetime are seen to be dominated by quasinormal modes, i.e. exponentially decaying, oscillating behavior \cite{berti_cardoso,dorband_numerics,buonanno_numerics}. QNMs were first numerically observed by Vishveshwara \cite{vishveshwara} (for Gaussian initial data) and have since become a topic of intense study in the numerical relativity community, see for example the references above, with ever more accurate calculations of their frequencies in the complex plane. Moreover, the presence of QNMs in the decay of gravitational perturbations can be said to be experimentally confirmed by gravitational wave astronomy \cite{ligo2016}. As shown in \cite{ligo2021,testing_noHair}, the gravitational wave signal from the ringdown phase of a black hole merger can be accurately modeled by a superposition of QNMs.

This suggests that the behavior of waves on asymptotically flat black hole spacetimes should be divided into three regimes: a prompt response depending strongly on the initial data, an intermediate phase where quasinormal modes dominate, and a very late time regime where the inverse polynomial decay kicks in. However, it remains an open problem to reconcile the rigorously established polynomial tails with the observed exponentially damped resonant behavior. In particular, one may pose the following questions:
\begin{itemize}[leftmargin=6mm]
    \item At what time-scale are quasinormal modes overtaken by inverse polynomial decay?
    \item How does the presence of these contrasting regimes depend on initial conditions?
\end{itemize}
A preliminary result in this direction was obtained by Dyatlov in \cite{dyatlov_ringdown}. He showed that for initial data localized at very high frequencies $\sim \lambda$, that is, in the ${\lambda \to \infty}$ limit, exponential decay persists up to time-scales of order $\log(\lambda)$. However, as the aforementioned numerical simulations are generally run with initial data that cannot be characterized as high frequency (e.g. Gaussian data), this answer remains unsatisfactory.
In order to address these questions, a better understanding of QNMs in the asymptotically flat setting is necessary.

In the physics literature, quasinormal modes are defined as solutions of the form
$$\Box_g\bigl(e^{-i\sigma t}u(r,\theta,\varphi)\bigr) = 0, \quad\text{for}\quad \sigma \in \C,$$
where the function $u$ on the spatial slice satisfies outgoing boundary conditions at infinity and ingoing boundary conditions at the horizon. This is usually stated in terms of the tortoise coordinate $r_*$ with $r_* \sim \frac{\alpha}{2}\log(r-r_+)$ at the horizon for a constant $\alpha$ depending on the black hole parameters. Then $u$ is supposed to satisfy the asymptotics
$$u \sim e^{i\sigma r_*} \quad\text{as}\quad r_*\to\infty, \qquad u \sim e^{-i\sigma r_*} \quad\text{as}\quad r_*\to-\infty\,\,\, (r\searrow r_+).$$
This perspective on QNMs goes back to Chandrasekhar and Detweiler \cite{chandrasekhar_detweiler}. Note that the boundary conditions make physical sense, since one would expect a compact perturbation to create waves travelling out towards infinity and in towards the horizon. However, the precise meaning of the outgoing condition at spatial infinity is unclear. For $\Im(\sigma) < 0$, which is exactly the frequency range where QNMs occur, one is trying to enforce exponentially growing outgoing behavior while excluding the exponentially decaying ingoing modes. In fact, since $e^{-i\sigma r_*} = e^{i\sigma r_*}e^{-2i\sigma r_*}$
with $e^{-2i\sigma r_*}$ a smooth function of $r_*^{-1}$ near $0$ for $\Im(\sigma) < 0$, there is a certain ambiguity in the boundary condition.

At the black hole horizon, this issue can be resolved by characterizing the ingoing behavior in terms of regularity. To this end, one should work in coordinates $(t_*,r,\theta,\varphi_*)$ that extend beyond the (future) event horizon, see Section \ref{section_kerr_metric}. Note that $t_* \sim t + r_*$ at the event horizon and the hypersurfaces of constant $t_*$ intersect the future horizon transversally. In these coordinates ingoing modes are smooth across the event horizon, whereas outgoing modes behave as $u \sim (r-r_+)^{i\alpha\sigma}$ near $r=r_+$. Since this fails to lie in Sobolev spaces $H^s_\mathrm{loc}$ for $s > \frac{1}{2} - \alpha\Im(\sigma)$, one can enforce the boundary condition at the horizon by working on Sobolev spaces of high enough regularity. The fact that these asymptotics actually describe solutions to the Kerr wave equation can be heuristically justified by separating into spherical harmonics and studying the resulting ODE, which has a regular singular point at $r=r_+$ but an irregular singular point at $r=\infty$.

These considerations are made rigorous by the Fredholm framework of Vasy \cite{vasy_KdS}. QNMs are constructed as the poles of the meromorphically continued resolvent operator. Thus, one considers the spectral family associated to the stationary metric $g$, i.e. the Fourier transformed wave operator
$$P(\sigma) = e^{i\sigma t_*}\Box_g e^{-i\sigma t_*}.$$
The crucial step is to find function spaces $\mathcal{X},\mathcal{Y}$ such that $P(\sigma): \mathcal{X} \to \mathcal{Y}$ defines a Fredholm operator for all $\sigma$ in some domain $D \subset \C$. 
This is achieved via Fredholm estimates for $P(\sigma)$ and its formal adjoint, i.e. by estimating
$$\|u\|_\mathcal{X} \leq C\bigl(\|P(\sigma)u\|_\mathcal{Y} + \|u\|_\mathcal{Z}\bigr),$$
where the inclusion $\mathcal{X} \hookrightarrow \mathcal{Z}$ is compact, and similarly for $P(\sigma)^*$. If one can additionally show the invertibility of $P(\sigma)$ when $\Im(\sigma)$ is large and positive, a fact that follows for instance from simple energy estimates, then the analytic Fredholm theorem implies that the resolvent $P(\sigma)^{-1}: \mathcal{Y} \to \mathcal{X}$ extends to all of $\sigma \in D$ as a meromorphic family of operators. The quasinormal modes in $D$ are then identified with the finite rank poles of $P(\sigma)^{-1}$. The guiding principle in finding the right function spaces is that $\mathcal{X}$ should admit the desired ingoing/outgoing behavior, while excluding the objectionable asymptotics.

The discussion above then suggests that, near the horizon, Sobolev spaces of sufficient regularity can be used to characterize QNMs as resonances. In the de Sitter setting, where asymptotically flat infinity is replaced by a cosmological horizon, such a characterization of quasinormal modes has been successfully implemented. In particular, QNMs have been rigorously constructed for the Kerr-de Sitter black hole. Moreover, resonance expansions have been established for the wave equation on Kerr-de Sitter spacetime. These take the form
$$v(t_*,x) = \sum_j e^{-i\sigma_j t_*}v_j(x) + \mathcal{O}(e^{-\gamma t_*}), \quad\text{as}\quad t_*\to\infty$$
for some $\gamma > 0$, where the sum is over the set of QNMs with $\Im(\sigma) > -\gamma$. In \cite{saBarreto_zworski} Sá Barreto and Zworski constructed QNMs for the Schwarzschild-de Sitter black hole and showed that these approach a lattice in the high energy, $|\Re(\sigma)| \to \infty$, limit. A resonance expansion for Schwarzschild-de Sitter was then established by Bony and Häfner \cite{bony_haefner_res_exp}. In a series of papers \cite{dyatlov_QNM_KdS,dyatlov_QNM_KdS_2,dyatlov_QNM_quantization}, Dyatlov constructed QNMs for Kerr-de Sitter black holes, gave a description of high energy resonances and proved a resonance expansion in the slowly rotating case. This was extended by Vasy \cite{vasy_KdS} to a large range of angular momenta, $|a| < \frac{\sqrt{3}}{2}m$. Vasy also provided a general framework for proving Fredholm estimates near the event horizon using microlocal methods. Finally, in \cite{petersen_vasy_full_subextremal} resonance expansions were established for the full subextremal range of Kerr-de Sitter black holes.

Because of the presence of asymptotically flat infinity, these methods are not immediately applicable to the Kerr black hole. In the asymptotically flat setting, the aforementioned guiding principle for proving Fredholm estimates suggests that for $\Im(\sigma)<0$ one should work with function spaces that somehow exclude the exponentially decaying ingoing behavior while including the exponentially growing outgoing behavior at infinity. We will achieve this through a trick known as complex scaling, see the discussion below. We note that Gajic and Warnick have developed an alternative characterization of quasinormal modes for asymptotically flat spacetimes as the set of eigenvalues of an appropriate evolution semigroup. This is accomplished by working with spaces based on Gevrey regularity. These methods were applied in \cite{gajic_warnick_QNM_Reissner_Nordstrom} to construct QNMs on extremal Reissner-Nordström spacetime and have now been extended to the case of Kerr spacetimes \cite{gajic_warnick_QNM_Kerr}, see Remark \ref{rmk_gajic_warnick} for a brief comparison of results.

\subsection{Meromorphic continuation of the cutoff resolvent}

We will study the Kerr spectral family $P(\sigma)$ on the spatial slice $X = (r_0,\infty)\times\Sph^2$, where the boundary at $r=r_0$ is located inside the horizon. In Section \ref{section_fredholm}, we prove the meromorphic continuation of the cutoff resolvent for the Kerr spectral family to a subset of the logarithmic cover. Note that the cutoff is necessary to avoid the expected exponentially growing behavior produced by the resolvent. QNMs can then be characterized as the discrete set of finite-rank poles of this meromorphic family of operators.
\begin{thm}
\label{thm_analytic_continuation}
    The cutoff resolvent for the Kerr spectral family has a meromorphic continuation to the first sheets of the logarithmic cover $\Lambda \to \C\setminus\{0\}$ of the complex plane. More precisely, for any $s\in\R$ and any $\chi \in \Bar{C}^\infty(X)$ with compact support in $\Bar{X}$ and satisfying $\chi=1$ on some ball $B_R$, the operator
    $$\chi P(\sigma)^{-1} \chi: \Bar{H}^{s-1}(X) \to \Bar{H}^{s}(X)$$
    extends from $\Im(\sigma) \gg 0$ to
    $$\Bigl\{\sigma\in\Lambda \,\,|\,\, \arg(\sigma) \in (-\pi,2\pi), \, \Im(\sigma) > \tfrac{1}{\alpha}\bigl(\tfrac{1}{2} - s\Bigr) \bigr\}$$
    as a meromorphic family of bounded operators with poles of finite rank. Here, by the imaginary part of $\sigma$ we mean $\Im(\sigma) = \sin(\arg(\sigma))|\sigma|$ and $\alpha$ is a constant depending on the black hole parameters, specifically $\alpha = 2(m + \frac{m^2}{\sqrt{m^2-a^2}}).$

    The poles of the cutoff resolvent, for various $s$, form a discrete subset of 
    $$\bigl\{\sigma\in\Lambda \,\,|\,\, \arg(\sigma) \in (-\pi,2\pi)\bigr\},$$
    which is independent of the cutoff function $\chi$, for $R$ large enough, and we define the quasinormal modes of the Kerr black hole as this discrete set of poles.
\end{thm}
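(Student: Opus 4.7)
The plan is to combine complex scaling at asymptotically flat infinity with the microlocal Fredholm machinery of Vasy at the event horizon. The strategy is to build a deformation $P_\theta(\sigma)$ of $P(\sigma)$, depending on a scaling angle $\theta$, that agrees with $P(\sigma)$ on a large compact set but is globally Fredholm on suitable Sobolev spaces $\Bar{H}^s(X)$, invertible for $\Im(\sigma)\gg 0$, and holomorphic in $\sigma$. By analytic Fredholm theory this produces a meromorphic inverse $P_\theta(\sigma)^{-1}$ on the stated region of the logarithmic cover, after which the cutoff resolvent is identified with $\chi P_\theta(\sigma)^{-1}\chi$ on the support of $\chi$.

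Concretely, I would first replace the exterior region by a slightly larger slice $X=(r_0,\infty)\times\Sph^2$ with $r_0<r_+$, so the event horizon sits in the interior of $X$; the artificial inner boundary is achronal for the $t_*$-foliation and requires no boundary condition, since all relevant null bicharacteristics cross it transversally. Second, I introduce a complex deformation of the radial coordinate in a region $r>R_1$, yielding $P_\theta(\sigma)$ whose semiclassical principal symbol becomes elliptic at infinity with a definite imaginary sign for $\arg(\sigma)\in(-\pi,2\pi)$; in the scaled region the outgoing normalisation $e^{i\sigma r_*}$ decays exponentially while the ingoing one $e^{-i\sigma r_*}$ grows, so the outgoing behaviour is selected and the ingoing behaviour excluded by working on unweighted Sobolev spaces. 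Third, I prove the Fredholm estimate
\[
\|u\|_{\Bar{H}^s(X)} \le C\bigl(\|P_\theta(\sigma)u\|_{\Bar{H}^{s-1}(X)} + \|u\|_{\Bar{H}^{-N}(X)}\bigr),
\]
together with its dual analogue for $P_\theta(\sigma)^*$, by assembling standard elliptic regularity away from the characteristic set, real-principal-type propagation of singularities along the null bicharacteristic flow, a radial point estimate at the generalised horizon which yields precisely the threshold $s>\tfrac{1}{2}-\alpha\Im(\sigma)$, and the ellipticity of $P_\theta(\sigma)$ at infinity. Fourth, for $\Im(\sigma)\gg 0$ the usual energy/semiclassical argument gives invertibility, and the analytic Fredholm theorem then promotes $P_\theta(\sigma)^{-1}$ to a meromorphic family with finite-rank poles on the whole target region.

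To conclude, taking $\chi$ compactly supported inside $\{r<R_1\}$, the complex scaling is trivial on $\supp\chi$, so $P(\sigma)\chi=P_\theta(\sigma)\chi$ and $\chi P(\sigma)=\chi P_\theta(\sigma)$; sandwiching resolvents for $\Im(\sigma)\gg 0$ gives $\chi P(\sigma)^{-1}\chi=\chi P_\theta(\sigma)^{-1}\chi$, and the right-hand side extends meromorphically to the target region. Independence of the pole set from $\chi$, from $s$, and from the scaling parameter then follows by unique meromorphic continuation: two admissible scalings agree on compactly supported functions in the common unscaled region, and changing $s$ preserves the inverse on the intersection of domains by the same elliptic/propagation arguments. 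The hard part will be the third step, namely designing the complex deformation so that $P_\theta(\sigma)$ is globally elliptic at infinity with the correct imaginary sign throughout the full two-sheeted range $\arg(\sigma)\in(-\pi,2\pi)$, and so that the scaling estimate glues cleanly to the propagation and radial point estimates across the transition region $r\sim R_1$. The appearance of the logarithmic cover is intrinsic: the outgoing normalisation $e^{i\sigma r_*}$ requires a choice of $\log\sigma$, and a single rotation of the scaling angle covers exactly two branches before ellipticity at infinity is lost.
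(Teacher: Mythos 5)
Your overall architecture matches the paper's, but there are two genuine gaps that prevent the argument from closing, both stemming from the absence of the Sj\"ostrand--Zworski deformation lemma.

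First, you posit a single deformation $P_\theta(\sigma)$ that is ``globally elliptic at infinity with the correct imaginary sign throughout the full two-sheeted range $\arg(\sigma)\in(-\pi,2\pi)$.'' This is impossible. A complex scaling by a fixed angle $\beta$ replaces the leading operator at infinity by $e^{-2i\beta}\Delta-\sigma^{2}$, which fails to be scattering-elliptic precisely when $e^{i\beta}\sigma$ is real, i.e.\ when $\arg(\sigma)\in\{-\beta,\pi-\beta\}$. Thus each scaling angle yields ellipticity, hence Fredholm estimates, only on the tilted open half-plane $\Lambda_\beta=\{\arg(\sigma)\in(-\beta,\pi-\beta)\}$, and one must let $\beta$ range over $(-\pi,\pi)$ to cover $(-\pi,2\pi)$. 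But then one obtains a family of \emph{different} Fredholm operators $P_\beta(\sigma)$ on different deformed spaces $X_\beta$, whose meromorphic inverses must be glued. The gluing is exactly where the real work is: one must prove that $\chi P_{\beta_1}(\sigma)^{-1}\chi=\chi P_{\beta_2}(\sigma)^{-1}\chi$ on $\Lambda_{\beta_1}\cap\Lambda_{\beta_2}$ and that $\dim\ker P_{\beta_1}(\sigma)=\dim\ker P_{\beta_2}(\sigma)$, neither of which is automatic.

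Second, the ``sandwiching'' step $\chi P(\sigma)^{-1}\chi=\chi P_\theta(\sigma)^{-1}\chi$ does not follow from $P(\sigma)\chi=P_\theta(\sigma)\chi$ and $\chi P(\sigma)=\chi P_\theta(\sigma)$. Writing $P_\theta^{-1}-P^{-1}=P^{-1}(P-P_\theta)P_\theta^{-1}$ and noting that $P-P_\theta$ is supported in $\{r>R_1\}$, one has $\chi P_\theta^{-1}\chi-\chi P^{-1}\chi=\chi P^{-1}\Tilde\chi(P-P_\theta)\Tilde\chi P_\theta^{-1}\chi$ with $\Tilde\chi$ supported far out, and the factor $\chi P^{-1}\Tilde\chi$ (the resolvent kernel between the compact region and infinity) is nonzero in general. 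What actually makes the identity true is the Sj\"ostrand--Zworski deformation lemma: for compactly supported $f$, the unscaled solution $u=P(\sigma)^{-1}f$ is real-analytic in $\{r>R_0\}$, solves an elliptic PDE with analytic coefficients there, and therefore continues analytically to the deformed contour; one must then verify, via an explicitly $R$-uniform dyadic elliptic estimate, that the continuation lies in $\Bar H^s(X_\theta)$. The same continuation argument is what shows the kernel dimension of $P_\beta(\sigma)$ is $\beta$-independent, which in turn is how invertibility of the original Kerr spectral family in the upper half-plane (mode stability) is transferred to each $P_\beta(\sigma)$. Your appeal to ``unique meromorphic continuation'' presupposes the very identification that this lemma provides. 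Aside from these two points -- the need to vary $\beta$ and patch, and the deformation/continuation argument replacing the algebraic sandwiching -- your outline of the Fredholm estimates (elliptic estimates in the scaled region, propagation and radial point estimates at the horizon with the threshold $s>\tfrac12-\alpha\Im\sigma$, hyperbolic estimates at the artificial inner boundary) is exactly the paper's structure.
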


\begin{rmk}
    The Kerr resolvent exhibits a logarithmic singularity at $\sigma = 0$, see \cite{hintz_pricelaw}. Thus, the Riemann surface of the logarithm, i.e. the universal cover $\Lambda \to \C\setminus\{0\}$, is the natural domain for the meromorphic continuation in Theorem \ref{thm_analytic_continuation}. This can be compared to the case of potential scattering in even dimensions, see \cite{dyatlov_zworski_scattering_book}. We expect that a meromorphic continuation to the full logarithmic cover is possible, but this would require more sophisticated methods. In any case, the meaning of resonances on more distant sheets of the logarithmic cover is not entirely clear. Note that Theorem \ref{thm_analytic_continuation} in particular establishes the extension of $\chi P(\sigma)^{-1} \chi$ to $\arg(\sigma) \in (-\frac{\pi}{2},\frac{3\pi}{2})$ which can be identified with $\sigma \in \C \setminus i\R_{\leq 0}$. However, the range of $\sigma$ in Theorem \ref{thm_analytic_continuation} goes slightly beyond this. Moving clockwise around the singularity at $\sigma=0$, we reach the negative imaginary axis at $\arg(\sigma)=-\frac{\pi}{2}$ and can extend the cutoff resolvent further all the way to $\arg(\sigma)>-\pi$. In the counterclockwise direction, we reach the negative imaginary axis at $\sigma=\frac{3\pi}{2}$ and can extend further to $\arg(\sigma)<2\pi$. Thus, under the projection $\Lambda \to \C\setminus\{0\}$ the lower half-plane is covered twice by our domain, see Figure \ref{fig_log_cover}. For the purpose of a potential resonance expansion (with an additional term encoding the polynomial tail), the range $\arg(\sigma) \in [-\frac{\pi}{2},\frac{3\pi}{2}]$ seems most relevant.
\end{rmk}

\begin{rmk}
\label{rmk_gajic_warnick}
    The approach of Gajic and Warnick to the construction of QNMs \cite{gajic_warnick_QNM_Reissner_Nordstrom,gajic_warnick_QNM_Kerr} differs significantly from the one used here. An advantage of our approach is the ability to access QNMs in the relatively large domain of Theorem \ref{thm_analytic_continuation}, whereas Gajic-Warnick can only access QNMs in the sector $-\frac{1}{6}\pi < \arg(\sigma) < \frac{7}{6}\pi$. A disadvantage is the necessity to use a cutoff function, which leads to a loss of information on the behavior of resonant states, i.e. mode solutions, near infinity. Thus, in a potential modified (i.e. with polynomial tail) resonance expansion for the Kerr wave equation, we would be restricted to initial data supported away from infinity. In contrast, the methods of Gajic-Warnick allow for a description of quasinormal mode solutions all the way to infinity. Note however that we only cutoff near infinity and not near the horizon, so the behavior of resonant states at the horizon is captured by our cutoff resolvent.
\end{rmk}

In practice, we will not obtain QNMs directly from the cutoff resolvent, but rather via a family of deformed operators $P_\beta(\sigma)$ for $\beta \in (-\pi,\pi)$. For $\sigma$ in a $\beta$-dependent tilted half plane $\Lambda_\beta$, the poles of the cutoff resolvent coincide with the values of $\sigma$ where $P_\beta(\sigma)$ has non-trivial kernel. Increasing the scaling angle $\beta$ then allows one to uncover resonances ever farther into the negative half-plane. This leads to an alternative definition of QNMs, see Definition \ref{def_qnm}. The deformed operator $P_\beta(\sigma)$ is obtained from $P(\sigma)$ by complex scaling, see Section \ref{section_complex_scaling}.

The method of complex scaling goes back to Aguilar-Combes \cite{aguilar_combes}, Balslev-Combes \cite{balslev_combes} and was further developed by Sj\"ostrand-Zworski \cite{sjostrand_zworski_complex_scaling}. The idea of complex scaling is to deform the original spatial slice $X \subset \R^3$ to a real submanifold $X_\beta \subset \C^3$ and replace the operator $P(\sigma)$ by a complex scaled operator $P_\beta(\sigma)$ on $X_\beta$ with better properties in the $|x|\to\infty$ limit. The deformation takes place far from the horizon in the region where the Kerr spectral family is elliptic, and hence does not alter the behavior of $P(\sigma)$ near the horizon. In fact, for some large $R_1 < R_2$, the deformed space $X_\beta$ agrees with $X$ in $\{|x| < R_1\}$, but near infinity it is scaled by the angle $\beta$ into the complex domain, that is, $X_\beta$ agrees with $e^{i\beta}\R^3$ in $\{|x| > R_2\}$. Thus, on $X_\beta$ outgoing behavior is described by the asymptotics $\sim e^{i\sigma e^{i\beta} r}$, which is exponentially decaying for $-\beta < \arg(\sigma) < \pi-\beta$.

The method relies on the analyticity of the Kerr spectral family. In Section \ref{section_scaled_operator} we show that $P(\sigma)$ extends outside some ball $B_{R_0}$ to a differential operator with analytic coefficients on a complex domain $U \subset \C^3$. The complex scaled operator is then obtained by analytic continuation of $P(\sigma)$ to the deformed space $X_\beta$. Crucial for the success of this method is the deformation result of Sjöstrand-Zworski \cite{sjostrand_zworski_complex_scaling}, see Lemma \ref{deformation_lemma}, which allows solutions $u \in C^\infty(X)$ of $P(\sigma)u = 0$ to be analytically continued to solutions $u_\beta \in C^\infty(X_\beta)$ of $P_\beta(\sigma)u_\beta = 0$. Thus, for $\arg(\sigma) \in (-\beta,\pi-\beta)$ solutions with outgoing asymptotics can be characterized by square-integrability on the deformed space $X_\beta$.

We will work with Sobolev spaces $\Bar{H}^s(X_\beta)$ on $X_\beta$ (whose elements extend beyond the boundary of $X_\beta$ at $r=r_0$ chosen arbitrarily inside the horizon). The main ingredient in the proof of Theorem \ref{thm_analytic_continuation} is Proposition \ref{fredholm_prop}, where we show that the complex scaled operators define analytic families of Fredholm operators
$$P_\beta(\sigma): \mathcal{X}^s_\beta = \{u \in \Bar{H}^s(X_\beta) \,\,|\,\, P_\beta(0)u \in \Bar{H}^{s-1}(X_\beta)\} \to \Bar{H}^{s-1}(X_\beta)$$
for all $\sigma$ in the half-plane
$$\Lambda_\beta = \bigl\{\sigma \in \C\setminus\{0\} \,\,|\,\, \arg(\sigma) \in (-\beta,\pi-\beta)\bigr\}$$
satisfying $\Im(\sigma) > \frac{1}{\alpha}(\frac{1}{2}-s)$. This is achieved via the Fredholm estimates of Proposition \ref{prop_fredholm_estimates}. We first show in Section \ref{section_scattering_elliptic} that $P_\beta(\sigma)$ and its formal adjoint are elliptic in the region where the complex deformation was applied, and moreover scattering elliptic near infinity. Away from the complex scaling region, we use the microlocal methods of \cite{vasy_KdS}. Thus, in Section \ref{section_hamiltonian_flow} we study the Hamiltonian flow of the principal symbol on the characteristic set. The dynamics are very similar to the Kerr-de Sitter case studied in \cite{vasy_KdS}. In particular, over the black hole horizon there is a radial source and a radial sink for the flow. We can thus apply microlocal radial estimates to our operator $P_\beta(\sigma)$ and propagate these estimates throughout the characteristic set. Note that the radial estimate over the horizon is where the condition on the Sobolev regularity $s$ enters. Inside the horizon we close our Fredholm estimates by using the strict hyperbolicity of $P_\beta(\sigma)$ with respect to $r$.

Analytic Fredholm theory then gives the meromorphic continuation of the complex scaled resolvent $P_\beta(\sigma)^{-1}$ to the half-plane $\Lambda_\beta$. Poles occur when $P_\beta(\sigma)$ has non-trivial kernel on the Sobolev spaces $\Bar{H}^s(X_\beta)$. In order to relate this to the original Kerr spectral family, one needs to establish that the behavior is in some sense unaffected by the complex scaling procedure, which is addressed in Section \ref{section_def_QNM}. Thus, in Proposition \ref{prop_qnm_scaling_independent} we show that the dimension of the kernel of $P_\beta(\sigma)$ is actually independent of the scaling angle $\beta$. This allows for a characterization of QNMs as the poles of the complex scaled resolvent. Furthermore, in Proposition \ref{prop_cutoffs_agree} we show that the action of the complex scaled resolvents away from the region where the complex deformation was applied does not depend on $\beta$. This allows the $P_\beta(\sigma)^{-1}$ to be patched together to a meromorphic continuation of the cutoff resolvent as in Theorem \ref{thm_analytic_continuation}.

\subsection{Quasinormal modes in the high and low energy limits}

With a definition of Kerr quasinormal modes at hand, a natural question regards the distribution of QNMs in the complex domain. Note that the mode stability results of \cite{whiting,shlapentokh-rothman_mode_stability} imply the absence of resonances in the upper half-plane. The characterization of quasinormal modes in terms of the kernel of the complex scaled operator allows us to infer properties of the QNMs by studying the operator $P_\beta(\sigma)$. In particular, we will study the behavior of $P_\beta(\sigma)$ in the high energy ($|\Re(\sigma)| \to \infty$) and low energy ($|\sigma| \to 0$) regimes and establish certain uniform estimates in these limits. The high energy estimates imply the presence of a high energy spectral gap.

\begin{thm}
\label{thm_high_energy}
There exist $\gamma>0$ and $C>0$ such that there are no quasinormal modes in 
$$\bigl\{\sigma \in \C\setminus i\R_{\leq 0} \,\,\,|\,\,\, \Im(\sigma) > -\gamma,\,\, |\Re(\sigma)| > C\bigr\}.$$
\end{thm}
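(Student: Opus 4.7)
\textbf{Proof plan for Theorem \ref{thm_high_energy}.}
The plan is to reduce the absence of QNMs to a high-energy invertibility statement for the complex-scaled operator $P_\beta(\sigma)$ and then close Fredholm estimates uniformly in the semiclassical limit $|\Re(\sigma)|\to\infty$ by combining the microlocal estimates already used for Theorem \ref{thm_analytic_continuation} with a normally hyperbolic trapping estimate at the photon shell. Concretely, fix a small $\beta_0 > 0$ and consider both scaling angles $\pm\beta_0$; for $|\Re(\sigma)|$ large enough, the strip $\{\Im(\sigma) > -\gamma,\, |\Re(\sigma)| > C\}$ is contained in the union $\Lambda_{\beta_0} \cup \Lambda_{-\beta_0}$. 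Since by Proposition \ref{prop_qnm_scaling_independent} the QNMs in this half-plane are precisely the $\sigma$ for which $P_\beta(\sigma)$ has nontrivial kernel on $\mathcal{X}^s_\beta$, it suffices to prove injectivity of $P_{\pm\beta_0}(\sigma)$ in the respective regions, with quantitative resolvent bounds.

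Next, I would pass to the semiclassical regime by setting $h = |\Re(\sigma)|^{-1}$ and $z = h\sigma$, so that $z$ stays in a compact subset of $\{\Re(z)=\pm 1,\, \Im(z) \in (-\gamma h,0]\}$. Multiplying by $h^2$ puts $P_\beta(\sigma)$ in the form of a semiclassical operator $P_{h,\beta}(z)$ whose semiclassical principal symbol $p_{h,\beta}$ equals $h^2$ times the dual Kerr metric symbol shifted by $z\,d t_*$, modified analytically in the scaling region. I would then re-derive the ingredients of Proposition \ref{prop_fredholm_estimates} in the semiclassical calculus: (i) semiclassical ellipticity of $P_{h,\beta}(z)$ in the complex-scaling region, so the deformed tail contributes no trapping; (ii) semiclassical radial source/sink estimates at the horizon, which cost only the same threshold $s > \tfrac12 + \tfrac{\Im\sigma}{\alpha}$ as before, and hence remain admissible in a strip of size $O(h)$ below the real axis; (iii) semiclassical propagation of singularities throughout the characteristic set away from the trapped set; and (iv) strict hyperbolicity inside the horizon.

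The core of the argument, and the main obstacle, is step (v): a semiclassical resolvent estimate across the trapped set. In the exterior of Kerr, the Hamiltonian flow of the (real, $\beta=0$) principal symbol has a smooth codimension-two trapped set $K$ -- the photon shell -- which is normally hyperbolic, as analyzed by Wunsch-Zworski and Dyatlov. I would invoke the Dyatlov/Nonnenmacher-Zworski normally hyperbolic trapping estimate, which yields an $O(h^{-N})$ bound of the form
\begin{equation*}
\|u\|_{H^s_h(X_\beta)} \leq C h^{-N}\|P_{h,\beta}(z) u\|_{H^{s-1}_h(X_\beta)}
\end{equation*}
uniformly for $\Im(z) > -\nu_{\min}/2 \cdot h$, where $\nu_{\min}$ is the minimal expansion rate of the flow transversal to $K$. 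The content of step (v) is to verify that this estimate, originally established in closed/compactly perturbed settings and in Kerr-de Sitter, still applies microlocally near $K$ in the present asymptotically flat setup with complex scaling: since $K$ sits in the region where $X_\beta = X$ (the scaling is at infinity, well past the photon shell), the local dynamical and geometric hypotheses are identical, and the global closure comes from steps (i)--(iv).

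Combining the estimates of (i)--(v) in the standard way gives a semiclassical Fredholm bound for $P_{h,\beta}(z)$ with no error term, forcing injectivity whenever $h$ is small and $\Im(z) > -\gamma h$ for a suitable $\gamma \in (0, \nu_{\min}/2)$. Undoing the rescaling, this rules out QNMs with $|\Re(\sigma)| > C$ and $\Im(\sigma) > -\gamma$, proving the theorem. The expected principal difficulty is the verification that the trapping estimate survives the global framework, in particular that one can microlocalize the normally hyperbolic argument in an $h$-independent neighborhood of $K$ disjoint from both the horizon radial sets and the complex scaling region, and patch the result with the propagation/radial estimates without incurring uncontrolled losses in the strip of size $O(h)$.
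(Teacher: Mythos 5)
Your plan matches the paper's approach at the architectural level -- semiclassical rescaling, normally hyperbolic trapping at the photon shell, closure via radial, propagation, and hyperbolic estimates, then undoing the rescaling -- but it contains a genuine gap in step (i), and that gap propagates into step (iii).

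You treat the complex-scaling tail as simply ``semiclassically elliptic, so it contributes no trapping.'' This is not quite the situation. The scaled operator is semiclassically elliptic only where the scaling phase $\phi_\beta(r)>0$, but on the transition shell where $\phi_\beta$ vanishes to first order the characteristic set is still nonempty, and there the semiclassical principal symbol is complex-valued with a \emph{small} imaginary part. One cannot localize away from this transition, nor can one propagate through it in an arbitrary direction. The crucial fact, established in the paper as Lemma~\ref{semiclassical_complex_scaling}, is that $\Im(\sigma_\hbar(P_\hbar))$ has a \emph{definite sign} on the real characteristic set: nonpositive for $\beta>0$ (the branch used for $\Re\sigma>0$), nonnegative for $\beta<0$. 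By \cite[Theorem~E.47]{dyatlov_zworski_scattering_book} this permits propagation \emph{only in one direction} along the Hamiltonian flow of the real part of the symbol -- forward for $z=1$, backward for $z=-1$. Your step (iii) -- ``semiclassical propagation of singularities throughout the characteristic set away from the trapped set'' -- silently assumes bidirectional propagation, which is unavailable near the scaling transition.

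Once the directional constraint is imposed, the Hamiltonian flow in the exterior has to be analyzed case by case (the regimes $\Tilde\kappa \lessgtr V_\nu(r_{\mathrm{min}})$, etc.), and one case is genuinely dangerous: the component $\Sigma_\mathrm{hor}$ with $z((r_+^2+a^2)z - a\nu) < 0$, on which the flow runs from the radial sink across the horizon -- the ``wrong'' direction for $z=1$. If such an integral curve entered the scaling region, the argument would fail. The paper resolves this by showing (via an elementary inequality on the characteristic set) that these curves are confined to the ergoregion $\{\mu \le a^2\sin^2\theta\}$, hence never reach $r>R_0$. Your plan should incorporate both this sign analysis and this dynamical confinement; without them the propagation step cannot be completed. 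The rest of your proposal -- the rescaling, the threshold regularity above which high-regularity radial estimates hold in the $O(h)$ strip, the invocation of the normally hyperbolic trapping estimate (the paper uses \cite[Theorem~4.7]{hintz_vasy_quasilinear}, but Dyatlov's formulation also applies), and the observation that the photon shell lies strictly inside the unscaled region -- is consistent with what the paper does.
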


\begin{rmk}
    Notice that Theorem \ref{thm_high_energy} is stated for $\sigma \in \C\setminus i\R_{\leq 0}$, which is identified with the subset of the logarithmic cover determined by $\arg(\sigma) \in (-\frac{\pi}{2},\frac{3\pi}{2})$. The theorem establishes a resonance free region at high energy near the positive ($\arg(\sigma)=0$) or negative ($\arg(\sigma)=\pi$) real axis, see Figure \ref{fig_log_cover}. Under the projection $\Lambda \to \C\setminus\{0\}$, this region is covered a second time by the domain of Theorem \ref{thm_analytic_continuation}, i.e. for $\arg(\sigma)$ near $2\pi$ or $-\pi$. Theorem \ref{thm_high_energy} says nothing about resonances in this second ``high energy'' region further along the logarithmic cover.
\end{rmk}

We will prove Theorem \ref{thm_high_energy} in Section \ref{section_high_energy}. It is convenient to transform the high energy regime into a semiclassical problem with small parameter $h = |\sigma|^{-1}$. We take as our semiclassical operator $P_\hbar = h^2P_\beta(h^{-1}z)$ with $|z|=1$. The limit $|\Re(\sigma)| \to \infty$ with $\sigma$ confined to a strip of the form $|\Im(\sigma)|<\gamma$ then corresponds to the semiclassical limit $h\to 0$ with $\Im(z) < \gamma h$. The main ingredient in the proof of Theorem \ref{thm_high_energy} is then the semiclassical resolvent estimate of Proposition \ref{prop_semiclassical_estimate}.

The proof of this estimate is quite similar to the proof of a corresponding estimate in the Kerr-de Sitter case, see for instance \cite{vasy_KdS}. They follow by applying various semiclassical estimates of propagation-type on the semiclassical characteristic set of $P_\hbar$, and thus require a careful study of the Hamiltonian flow of the semiclassical principal symbol, which is related to the null-geodesic flow of the Kerr metric. The salient feature is the presence of trapped null-geodesics, which leads to a trapped set for the Hamiltonian flow. The trapped set on Kerr has been shown to be normally hyperbolic, first by Wunsch-Zworski \cite{wunsch_zworski_trapping} for small angular momentum and then by Dyatlov \cite{dyatlov_ringdown} in the full subextremal range. This allows the trapped set to be dealt with using the normally hyperbolic trapping estimates of \cite{hintz_vasy_quasilinear}, see also \cite{wunsch_zworski_trapping,dyatlov_trapping}. Note that the maximal possible value of $\gamma$ in Theorem \ref{thm_high_energy} is related to the minimal expansion rate in the normal directions at trapping.

Our use of complex scaling leads to another subtlety. The semiclassical principal symbol is now complex-valued in the region where the complex deformation takes place. In order to apply propagation of regularity, we must show that the imaginary part of the principal symbol has a definite sign on the characteristic set, as is done in Section \ref{section_definite_sign}. Estimates can then only be propagated to and from the complex scaling region in a definite direction along the Hamiltonian flow.

Finally, in Section \ref{section_low_energy} we study the low energy ($|\sigma|\to 0$) regime. Notice that the discreteness of the set of quasinormal modes on the logarithmic cover $\Lambda \to \C\setminus\{0\}$ does not a priori exclude the possibility that QNMs could accumulate at $\sigma = 0$. This possibility was already discussed by Sá Barreto and Zworski \cite{saBarreto_zworski} for the Schwarzschild black hole. Even in the Schwarzschild case the accumulation of QNMs at the origin has not previously been ruled out. We address this question by establishing a resonance free region around $\sigma=0$ in the logarithmic cover.

\begin{thm}
\label{thm_low_energy}
For each $\delta>0$, there exists $c>0$ such that no quasinormal modes are contained in the set
$$\bigl\{\sigma \in \Lambda \,\,|\,\, \arg(\sigma) \in [-\pi+\delta,2\pi-\delta],\,\, |\sigma| \leq c \bigr\}.$$
\end{thm}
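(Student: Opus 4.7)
The plan is to argue by contradiction via a uniform low-energy Fredholm estimate for the family $P_\beta(\sigma)$. Suppose there exists a sequence of QNMs $\sigma_n \in \Lambda$ with $|\sigma_n| \to 0$ and $\arg(\sigma_n) \in [-\pi + \delta, 2\pi - \delta]$. For each $n$, I would pick a scaling angle $\beta_n \in [-\pi + \delta/2, \pi - \delta/2]$ with $\sigma_n \in \Lambda_{\beta_n}$ and a nonzero $u_n \in \mathcal{X}^s_{\beta_n}$ in $\ker P_{\beta_n}(\sigma_n)$, normalized suitably, and pass to a subsequence along which $\beta_n \to \beta_\infty$.

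The core technical input is a uniform estimate
$$\|u\|_{\mathcal{X}} \leq C\bigl(\|P_\beta(\sigma) u\|_{\mathcal{Y}} + \|u\|_{\mathcal{Z}}\bigr)$$
valid for $\sigma$ in a small punctured neighborhood of zero in the sector $[-\pi + \delta, 2\pi - \delta]$, with $\mathcal{X} \hookrightarrow \mathcal{Z}$ compact and $C$ independent of $\sigma$. The natural framework is a low-energy \scb-transition calculus on the space obtained by blowing up the corner $\sigma = 0$, $r = \infty$ into the faces $\zf$, $\tf$, and $\scf$. In this calculus $P_\beta(\sigma)$ carries normal operators on each of the three faces, and a uniform estimate follows from their individual invertibility combined with standard propagation. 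On $\scf$ the complex scaling supplies scattering ellipticity (its principal symbol is non-characteristic precisely when $\arg(\sigma) \in (-\beta, \pi - \beta)$); at $\tf$ the normal operator is a Bessel-type rescaling; and the microlocal propagation and radial point estimates developed in Section \ref{section_hamiltonian_flow} carry over uniformly in $\sigma$ since they depend only on the real principal symbol.

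The decisive and most delicate step is invertibility of the normal operator at $\zf$, which is essentially $P(0)$ acting on a weighted b-Sobolev space on the Kerr spatial slice. This requires a zero-mode absence statement: the only solution of $P(0) u = 0$ which is smooth across the horizon in the appropriate ingoing sense and satisfies the b-decay condition at infinity is $u = 0$. I would reduce to azimuthal modes $e^{im\varphi_*}$ via the Killing vector $\partial_{\varphi_*}$ and treat each $m$ separately by a combination of pairing arguments and unique continuation. The $m = 0$ case in particular can draw on the zero-energy analysis of the Kerr operator carried out in \cite{hintz_pricelaw} in the proof of Price's law.

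With the $\zf$ normal operator invertible, the uniform estimate together with analytic Fredholm theory gives bounded invertibility of $P_\beta(\sigma)$ on $\mathcal{X}^s_\beta$ for $|\sigma|$ small and $\arg(\sigma)$ in the sector, contradicting the existence of nonzero $u_n$. I expect the main obstacles to be twofold: assembling the low-energy calculus compatibly with complex scaling near infinity while retaining the b-structure at the horizon, and proving triviality of the $\zf$ kernel.
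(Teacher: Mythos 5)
Your proposal uses essentially the same framework as the paper — the \scb-transition calculus with faces $\zf$, $\tf$, $\scf$, uniform estimates from normal operator invertibility, scattering ellipticity at $\scf$ supplied by complex scaling, and radial-point/propagation estimates inherited from Section~\ref{section_hamiltonian_flow} — and you correctly identify the invertibility of the $\zf$-normal operator as the crux. However, there is a gap in how you handle that step. The $\zf$-normal operator (Lemma~\ref{lemma_sc-b_interpretation}) is the \emph{complex-scaled} zero-energy operator $P_\beta(0)$, not the original Kerr operator $P_0(0)$: it has complex-valued coefficients in $\{r>R_1\}$, so the pairing/integration-by-parts and unique-continuation arguments you sketch for fully separated modes do not transfer verbatim. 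The paper addresses this via Lemma~\ref{lemma_zero_operator_kernel}, a separate deformation argument in the spirit of Proposition~\ref{prop_qnm_scaling_independent}, which analytically continues elements of $\ker(P_{\beta_1}(0)) \subset \Bar{H}_\mathrm{b}^{s,l}$ to elements of $\ker(P_{\beta_2}(0))$ while controlling the weighted b-Sobolev norm dyadically, thereby reducing the triviality of $\ker(P_\beta(0))$ to the triviality of $\ker(P_0(0))$ (the latter being \cite[Lemma 3.19]{hintz_mode_stability}). Without this reduction or an alternative direct argument for the complex-scaled operator, your $\zf$ step does not close.

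A secondary remark: you reach for a compactness/contradiction argument with a sequence $\sigma_n \to 0$, $\beta_n \to \beta_\infty$ and weak limits, and then invoke analytic Fredholm theory. The paper instead proves a quantitative estimate (Proposition~\ref{prop_low_energy_estimate}) in which the normal operator estimates at $\zf$ and $\tf$ each improve the error term by a positive order of decay, yielding an error of size $|\sigma|^\epsilon$ relative to the main term; for $|\sigma|$ small this is absorbed directly, giving injectivity of $P_\beta(\sigma)$ on $\Bar{H}_\scb^{s,q,l,w}$ with no compactness or limiting argument. This is cleaner, since it sidesteps the subtlety that the spaces $\mathcal{X}^s_{\beta_n}$ and the operators $P_{\beta_n}(\sigma_n)$ vary with $n$, and it automatically produces the uniformity in $\arg(\sigma)$ over the closed sector $[-\beta+\delta,\pi-\beta-\delta]$ that the contradiction route would require one to track by hand.
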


\begin{rmk}
Note that Theorem \ref{thm_low_energy} in particular rules out accumulation of QNMs in the physically relevant region $\arg(\sigma) \in [-\frac{\pi}{2},\frac{3\pi}{2}]$, which under the projection $\Lambda \to \C\setminus\{0\}$ covers the entire punctured complex plane with the negative imaginary axis covered twice, see Figure \ref{fig_log_cover}. In fact, the non-accumulation result goes safely beyond the negative imaginary axis in both directions. However, as one moves further along the logarithmic cover, where the physical meaning of resonances is less clear, there could still be accumulation at the origin. In particular, Theorem \ref{thm_low_energy} does not exclude a sequence of resonances within the domain of Theorem \ref{thm_analytic_continuation} converging to $\sigma=0$ while $\arg(\sigma)$ decreases towards $-\pi$ or increases towards $2\pi$.
\end{rmk}

\begin{figure}[b]
    \centering
    \includegraphics{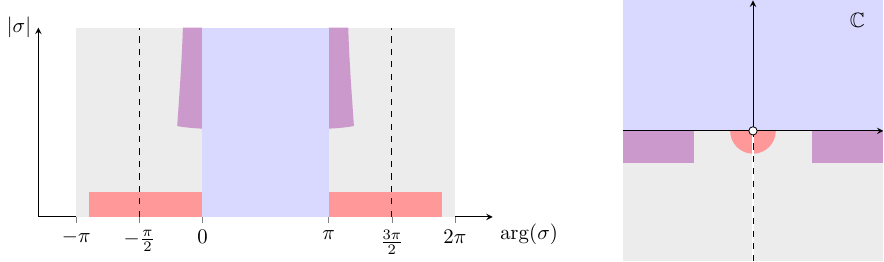}
    \caption{
    \textit{On the left:} the domain $(0,\infty)_{|\sigma|}\times(-\pi,2\pi)_{\arg(\sigma)}$ to which the cutoff resolvent continues meromorphically by Theorem \ref{thm_analytic_continuation}, as a subset of the logarithmic cover $\Lambda \simeq (0,\infty)_{|\sigma|}\times(-\infty,\infty)_{\arg(\sigma)}$. \textit{On the right:} the complex plane with a branch cut along the negative imaginary axis. Note that the cut complex plane is already covered by the subset $(0,\infty)_{|\sigma|}\times(-\frac{\pi}{2},\frac{3\pi}{2})_{\arg(\sigma)}$, indicated by the dashed lines on the left. Various resonance free regions are represented schematically in color. \textit{In blue:} the upper half-plane, where the resolvent itself is well-defined, and in fact analytic by the mode stability results of \cite{whiting,shlapentokh-rothman_mode_stability}. \textit{In violet:} the high energy region where resonances are excluded by Theorem \ref{thm_high_energy}. \textit{In red:} the low energy region where resonances are excluded by Theorem \ref{thm_low_energy}.
    }
    \label{fig_log_cover}
\end{figure}

Theorem \ref{thm_low_energy} will follow from the uniform low energy estimates of Proposition \ref{prop_low_energy_estimate}. The estimates are derived by viewing $P_\beta(\sigma)$ as an element of the scattering-b-transition calculus of \cite{guillarmou_hassell}, see also \cite{hintz_mode_stability}. This calculus captures the different behavior at infinity in the $|\sigma|>0$ versus $|\sigma|=0$ case. In fact, $P_\beta(\sigma)$ is well-behaved as a scattering operator for $|\sigma|>0$ with principal symbol at spatial infinity $e^{-2i\beta}|\xi|^2-|\sigma|^2e^{2i\arg(\sigma)}$ elliptic uniformly in $\arg(\sigma) \in [-\beta+\delta,\pi-\beta-\delta]$. The scattering calculus is however not the right venue for the zero energy operator and $P_\beta(0)$ should rather be viewed as a b-differential operator. Indeed, in Lemma \ref{lemma_zero_energy_estimate} we show that $P_\beta(0)$ has trivial kernel on certain weighted b-Sobolev spaces, where regularity is measured with respect to $r\partial_r$ and $\partial_\omega$ (with $\omega$ denoting coordinates on $\Sph^2$), and prove estimates for the zero energy operator on these spaces. This follows from a similar analysis of the Kerr zero energy operator without complex scaling performed in \cite{hintz_mode_stability}. The scattering-b-transition calculus in some sense patches together the b-calculus at zero frequency with the scattering calculus at non-zero frequencies. This is achieved by resolving the point $|\sigma|=r^{-1}=0$ through a blow-up. The behavior of $P_\beta(\sigma)$ in the limit $|\sigma|\to 0$, $r\to\infty$ is sensitive to the product $|\sigma|r$ and the blow up procedure can be thought of as using the rescaled coordinate $\tau = |\sigma|r$ all the way down to $\sigma=0$, $r=\infty$. The transition between zero and non-zero frequencies is governed by a model operator in $\tau$, which we study in Lemma \ref{lemma_tf_operator}. The uniform low energy estimates then hold on corresponding scattering-b-transition Sobolev spaces, where regularity is measured with respect to the frequency-dependent vector fields $\frac{r}{1+|\sigma|r}\partial_r, \frac{1}{1+|\sigma|r}\partial_\omega$.

\subsection{Outline of the paper}

\begin{itemize}
    \item In Section \ref{section_preliminaries} we discuss the method of complex scaling and review some microlocal estimates that are needed for the proof of Theorem \ref{thm_analytic_continuation}.
    \item In Section \ref{section_fredholm} we apply complex scaling to the Kerr spectral family and show that the resulting operator is Fredholm on appropriate spaces. The main ingredient are the Fredholm estimates of Proposition \ref{prop_fredholm_estimates}. In Section \ref{section_def_QNM} we then relate the complex scaled operator to the cutoff resolvent and prove Theorem \ref{thm_analytic_continuation}. 
    \item In Section \ref{section_high_energy} we derive uniform estimates in the $|\Re(\sigma)|\to\infty$ limit and prove Theorem \ref{thm_high_energy}. The main ingredient is Proposition \ref{prop_semiclassical_estimate}.
    \item In Section \ref{section_low_energy} we derive uniform estimates in the $|\sigma|\to 0$ limit and prove Theorem \ref{thm_low_energy}. The key result is Proposition \ref{prop_low_energy_estimate}.
\end{itemize}

\subsection*{Acknowledgements}
I am very grateful to my Ph.D. advisor Peter Hintz for suggesting the problem, for many invaluable discussions and for carefully reading parts of the manuscript. I would also like to thank Dejan Gajic for helpful discussions. I gratefully acknowledge the hospitality of the Erwin Schrödinger Institute in Vienna in July 2023 during the writing of this paper.

\section{Preliminaries}
\label{section_preliminaries}

In this section, we present some analytic preliminaries that are needed for the construction of quasinormal modes in Section \ref{section_fredholm}. We first treat the method of complex scaling and then discuss various microlocal estimates. We also introduce the Sobolev spaces of extendable distributions. A reader familiar with these tools may wish to skip this section. The high energy estimates of Section \ref{section_high_energy} will in addition require methods from semiclassical analysis, which are reviewed in Section \ref{section_semiclassical}. The low energy estimates of Section \ref{section_low_energy} will require the notion of scattering-b-transition pseudodifferential operators. These are introduced in Section \ref{section_scattering-b-transition}.

\subsection{Complex scaling}
\label{section_complex_scaling}
The method of complex scaling was introduced by Aguilar-Combes \cite{aguilar_combes} and Balslev-Combes \cite{balslev_combes}. It has found widespread use in numerical analysis, where it is sometimes called the method of perfectly matched layers. We follow the approach of Sj\"ostrand-Zworski \cite{sjostrand_zworski_complex_scaling}, see also \cite[Section 4.5]{dyatlov_zworski_scattering_book} and \cite[Section 7]{sjostrand_lectures_resonances}.

In section \ref{section_scaled_operator} we will show that the Kerr spectral family extends to a differential operator with analytic coefficients on an open set $U\subset\C^3$. The complex scaled operator is obtained by restricting this operator to a real submanifold of $U$. Here, we describe the method in more generality. Thus, let
\begin{equation}
\label{analytic_pde}
    P(z,\partial_z) = \sum_{|\alpha|\leq k} a_\alpha(z)\partial_z^\alpha
\end{equation}
be a differential operator with analytic coefficients in an open set $U \subset \C^n$, where $\partial_z$ denotes the complex differential. We wish to obtain a well-behaved restriction $P|_\Gamma$ of $P$ to a smooth real submanifold $\Gamma \subset U$, that is, the restriction should satisfy
$$P|_\Gamma u|_\Gamma = (Pu)|_\Gamma$$
for all $u$ analytic in a neighborhood of $\Gamma$. This requires certain properties of the submanifold $\Gamma$.

\begin{defn}
\label{def_totally_real}
    A smooth real submanifold $\Gamma\subset\C^n$ is called \textit{maximally totally real} if $\Gamma$ has (real) dimension $n$ and
    $$T_x\Gamma \cap iT_x\Gamma = \{0\}, \quad \forall x\in\Gamma,$$
    where we identify $T_x\Gamma$ with a real subspace of $\C^n$.
\end{defn}

The simplest example of a totally real submanifold is $\R^n\subset\C^n$. On the other hand, the definition excludes such examples as 
$$\Gamma = \{(x+iy,0) \in \C^2, (x,y)\in\R^2\}.$$
Note that in this example, it is impossible to restrict the operator $\partial_{z_2}$ to $\Gamma$ in such a way that $\partial_{z_2}|_\Gamma u|_\Gamma = (\partial_{z_2}u)|_\Gamma$ for all $u$ analytic in a neighborhood of $\Gamma$, take for instance $u = z_2$. On the other hand, we have the following lemma, see \cite[Lemma 4.30]{dyatlov_zworski_scattering_book}.

\begin{lemma}
    Let $P(z,\partial_z)$ be a differential operator with analytic coefficients on an open set $U\subset\C^n$ as in \eqref{analytic_pde}. Let $\Gamma\subset U$ be a maximally totally real submanifold. Then there is a unique differential operator on $\Gamma$
    $$P|_\Gamma: C^\infty(\Gamma) \to C^\infty(\Gamma),$$
    such that for all $u$ analytic in a neighborhood of $\Gamma$,
    $$P|_\Gamma(u|_\Gamma) = P(u)|_\Gamma.$$
\end{lemma}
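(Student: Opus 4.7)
The plan is to work locally using a smooth parametrization $\gamma: V \to \Gamma \cap W$, where $V$ is open in $\R^n$ and $W$ a neighborhood in $\C^n$ of a chosen point of $\Gamma$. Writing $\gamma=(\gamma_1,\dots,\gamma_n)$, I form the complex Jacobian matrix $A(x)=\bigl(\partial_{x_j}\gamma_i(x)\bigr)_{ij}\in M_n(\C)$. The first step is to show $A(x)$ is invertible as a complex matrix. If $A(x)c=0$ for some $c=a+ib\in\C^n$, then $\sum_j a_j\partial_{x_j}\gamma=-i\sum_j b_j\partial_{x_j}\gamma$; the left-hand side lies in $T_{\gamma(x)}\Gamma$ and the right-hand side in $iT_{\gamma(x)}\Gamma$, so Definition \ref{def_totally_real} forces both to vanish, and the $\R$-linear independence of $\{\partial_{x_j}\gamma\}_j$ (since $\gamma$ is an immersion) yields $a=b=0$.

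Next, for $u$ holomorphic near $\gamma(V)$, the ordinary chain rule gives $\partial_{x_j}(u\circ\gamma)(x)=\sum_i (\partial_{z_i}u)(\gamma(x))\,A_{ij}(x)$. Inverting $A$, I can solve for each complex partial,
$$(\partial_{z_i}u)\circ\gamma = \sum_j B_{ij}(x)\,\partial_{x_j}(u\circ\gamma), \qquad B=(A^T)^{-1},$$
with smooth coefficients $B_{ij}\in C^\infty(V)$. Iterating $|\alpha|$ times produces a differential operator $Q_\alpha$ in $\partial_x$ of order $|\alpha|$ with smooth coefficients such that $(\partial_z^\alpha u)\circ\gamma = Q_\alpha(u\circ\gamma)$ for every holomorphic $u$. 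Substituting into \eqref{analytic_pde} yields a differential operator $P^V$ of order $\leq k$ on $V$ such that $P^V(u\circ\gamma)=(Pu)\circ\gamma$ for all $u$ holomorphic near $\gamma(V)$. Transferring $P^V$ back to $\Gamma\cap W$ via $\gamma$ defines a candidate $P|_{\Gamma\cap W}$.

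For coordinate-independence and gluing, suppose $\tilde\gamma:\tilde V\to\Gamma\cap\tilde W$ is a second parametrization producing $\tilde P$. On the overlap, both operators satisfy the defining identity for all restrictions of holomorphic functions, so it suffices to prove the following uniqueness statement: if two differential operators of order $\leq k$ on $\Gamma$ agree on all restrictions of holomorphic functions, they are equal. Since such operators are determined by their action on $k$-jets at each point, I only need that the $k$-jets at $p\in\Gamma$ of restrictions $u\circ\gamma$, as $u$ ranges over complex polynomials of degree $\leq k$, span all $k$-jets of elements of $C^\infty(V)$ at $\gamma^{-1}(p)$. This I would prove by an induction on $|\alpha|$: degree-zero jets are realized by constant polynomials, and for the inductive step I use the invertibility of $A$ — choosing $u$ to be a degree $|\alpha|$ polynomial in $(z_1,\dots,z_n)$ suitably adjusted via $B$ so that $\partial_x^\alpha(u\circ\gamma)(\gamma^{-1}(p))$ hits any prescribed value, while the lower-order jet contributions are absorbed by polynomials of lower degree handled at earlier stages. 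With uniqueness in hand, the local operators glue to a globally defined $P|_\Gamma:C^\infty(\Gamma)\to C^\infty(\Gamma)$.

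The genuinely non-routine step is the jet surjectivity in the uniqueness argument: one must verify that the restriction map from complex polynomials of degree $\leq k$ on $\C^n$ to the $k$-jet space of $C^\infty(V)$ at a point is surjective, using only that $A$ is complex-invertible. Everything else (constructing $P^V$, checking smoothness of coefficients, patching) reduces to the chain rule and a standard partition-of-unity argument once the local formula is in place.
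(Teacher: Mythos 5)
Your argument is correct and follows the same route as the standard proof cited in the paper from \cite[Lemma 4.30]{dyatlov_zworski_scattering_book}: the maximally totally real condition makes the complex Jacobian $A$ of a parametrization invertible, the holomorphic chain rule plus inversion express $\partial_{z_i}$ along $\Gamma$ through the vector fields $\sum_j B_{ij}\partial_{x_j}$ with $B={}^\intercal A^{-1}$, and iterating produces exactly the local formula the paper records immediately after the lemma. One small remark: the jet surjectivity you single out as the non-routine step already falls out of your own construction -- since $Q_\alpha(u\circ\gamma)(x_0)=(\partial_z^\alpha u)(p)$, the restriction map from complex polynomials of degree $\leq k$ to $k$-jets at $x_0$ is injective and hence bijective by dimension count, so the separate induction on $|\alpha|$ (and the care about lower-order Faà di Bruno contributions) can be avoided.
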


The method of complex scaling consists in deforming $\R^n$ outside of a compact set to a family of maximally totally real submanifolds $\Gamma_\beta \subset \C^n$. The deformed submanifold $\Gamma_\beta$ will be given as the image of a smooth injective map $F_\beta: \Omega\subset\R^n \to \C^n$. In this case, the property of being maximally totally real can be characterized in terms of the differential of $F_\beta$.

Indeed, let $\Gamma = F(\Omega)$ for a smooth injective map $F:\Omega\to\C^n$. Identifying $T_x\Omega$ with $\R^n$ and $T_{F(x)}\Gamma$ with a real subspace of $\C^n$, we have $T_{F(x)}\Gamma = dF(x)(\R^n)$, where we view the differential at $x$ as a real linear map $dF(x): \R^n \to \C^n$. Extending $dF(x)$ by linearity to a complex linear map $dF(x): \C^n \to \C^n$, the condition in Definition \ref{def_totally_real} becomes
$$dF(x)(\R^n) \cap dF(x)(i\R^n) = \{0\}.$$
This is equivalent to the injectivity, and thus invertibility, of $dF(x)$ as a complex linear map. In other words, $F(\Gamma)\subset\C^n$ is maximally totally real if and only if $\det(dF(x))\neq 0$ for all $x\in\Omega$.

In the case $\Gamma=F(\Omega)$, we can also give a more explicit expression for the operator $P|_\Gamma$. Let $P(z,\partial_z)$ be given as in \eqref{analytic_pde}, with analytic coefficients in an open set $U\subset\C^n$, and let $\Gamma = F(\Omega) \subset U$. Then for $u\in C^\infty(\Gamma)$, we have
\begin{equation}
    P|_\Gamma u(F(x)) = \sum_{|\alpha|\leq k} a_\alpha(F(x))( ^\intercal dF(x)^{-1}\cdot\partial_x)^\alpha (u\circ F)(x).
\end{equation}
This provides the local coordinate expression for $P|_\Gamma$ in the coordinate chart given by $F^{-1}:\Gamma\to\Omega\subset\R^n$.

We now construct the family of maps $F_\beta$ used for complex scaling. 

\begin{defn}
\label{def_phase_function}
    Given $R_1 > 0$ and $\varepsilon > 0$, we choose $\psi \in C^\infty(\R)$, satisfying
    \begin{itemize}
        \item $\psi(t) \in [0,1], \quad \forall t\in\R$,
        \item $\psi'(t) \leq \varepsilon \quad \forall t\in\R$,
        \item $\psi = 0$ on $(-\infty,\log(R_1))$,
        \item $\psi = 1$ on $(\log(R_2),\infty)$,
    \end{itemize}
    for some $R_2 = R_2(\varepsilon, R_1)$. For $\beta \in (-\pi,\pi)$ we define the map $f_\beta: (0,\infty) \to \C$, by
    \begin{equation}
        f_\beta(r) = e^{i\phi_\beta(r)}r, \quad\text{where}\quad \phi_\beta(r) = \beta\psi(\log(r)).
    \end{equation}
    Finally, we define the complex scaling map $F_\beta$ by
    $$F_\beta: \R^n \to \C^n, \quad F_\beta(0) = 0, \quad F_\beta(x) = f_\beta(|x|)\frac{x}{|x|},\quad\text{for}\,\, x\neq0.$$
\end{defn}
Note that $f_\beta$ defined in this way is a smooth injective map into $\C$, depending smoothly on $\beta \in (-\pi,\pi)$. Furthermore, $f_\beta$ satisfies
\begin{itemize}
    \item $f_\beta(r) = r$ \,for\, $r < R_1$,
    \item $f_\beta(r) = e^{i\beta}r$ \,for\, $r > R_2$,
    \item $\arg(f_\beta(r)) \in [0,\beta]$, $\forall r$\, (respectively $[\beta,0]$ when $\beta<0$),
    \item $\partial_r f_\beta(r) \neq 0$, and $|\arg(\partial_r f_\beta(r)) - \arg(f_\beta(r))| < \pi\varepsilon$, $\forall r$.
\end{itemize}
The last property follows from
$$\partial_r f_\beta(r) = e^{i\phi_\beta(r)}\bigl(1 + ir\phi_\beta'(r)\bigr) = e^{i\phi_\beta(r)}\bigl(1 + i\beta \psi'(\log(r))\bigr).$$
This shows that $\arg(\partial_r f_\beta(r)) = \arg(f_\beta(r)) + \tan^{-1}(\beta \psi'(\log(r)))$ and since $\psi'\leq \varepsilon$, we have $|\tan^{-1}(\beta \psi'(\log(r)))| \leq |\beta|\varepsilon$.

The image of the map $F_\beta$ is a maximally totally real submanifold of $\C^n$, which we denote by $\Gamma_\beta$. Indeed, denoting $r=|x|$, the differential satisfies
$$dF_\beta(x)_{ij} = e^{i\phi_\beta(r)}\bigl(\delta_{ij} + i\frac{x_ix_j}{r}\phi_\beta'(r)\bigr).$$
Thus, for any nonzero $v\in\C^n$, we have
$$\Bar{v}\cdot dF_\beta(x) v = e^{i\phi_\beta(r)}\bigl(|v|^2 + i \frac{|x\cdot v|^2}{r}\phi_\beta'(r)\bigr) \neq 0.$$
This shows that $dF_\beta(x)$ is injective. Note that the space $\Gamma_\beta$ agrees with $\R^n$ in the ball $B_{R_1}$ and has all coordinates scaled by an angle $\beta$ into the complex plane outside of the ball $B_{R_2}$.

The fundamental lemma we need is a deformation result, which will allow us to analytically continue a solution $u_1 \in \Gamma_{\beta_1}$ of $P|_{\Gamma_{\beta_1}}u_1 = 0$ to a solution $u_2 \in \Gamma_{\beta_2}$ of $P|_{\Gamma_{\beta_2}}u_2 = 0$ for an elliptic operator with analytic coefficients $P$. See \cite[Lemma 3.1]{sjostrand_zworski_complex_scaling} for a proof.

\begin{lemma}
\label{deformation_lemma}
    Let $\Omega \subset \R^n$ be open and $F: [0,1]\times\Omega \to \C^n$ be a smooth proper map, such that $F(s,\,\cdot\,)$ is injective $\forall s\in[0,1]$ and $\det(d_x F(s,x)) \neq 0$, $\forall x\in\Omega,\, s\in[0,1]$.
    In addition, assume there exists a compact set $K\subset\Omega$, such that $F(s,x)=F(0,x)$ $\forall x\in\Omega\setminus K, s\in[0,1]$. Denote $\Gamma_s = F(\{s\}\times\Omega)$ and let $P(z,D_z)$ be a differential operator with analytic coefficients in some neighborhood $U$ of $\bigcup_{s\in[0,1]} \Gamma_s$, such that $P|_{\Gamma_s}$ is elliptic $\forall s\in[0,1]$. If $u_0 \in C^\infty(\Gamma_0)$ and $P|_{\Gamma_0}u_0$ extends to an analytic function on $U$, then $u_0$ extends to a, possibly multivalued, analytic function on a neighborhood of $\bigcup_{s\in[0,1]} \Gamma_s$. More precisely, for each $s\in[0,1]$ there is an analytic function $\Tilde{u}_s$ defined in a neighborhood $\Tilde{U}_s$ of $\Gamma_s$, such that $\Tilde{u}_0|_{\Gamma_0}=u_0$ and for some $\varepsilon>0$, independent of $s$, and all $|s_1-s_2|<\varepsilon$, $\Tilde{u}_{s_1}=\Tilde{u}_{s_2}$ on $\Tilde{U}_{s_1}\cap\Tilde{U}_{s_2}$.
\end{lemma}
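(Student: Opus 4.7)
The plan is to combine analytic hypoellipticity for elliptic operators with real-analytic coefficients with an iterative analytic continuation along the parameter $s \in [0,1]$. The first step is to verify that $P|_{\Gamma_0}$ is elliptic with real-analytic coefficients in the chart $F(0,\,\cdot\,)^{-1}$; this follows directly from the explicit local coordinate expression for $P|_\Gamma$ together with the analyticity of the $a_\alpha$ and of $F(0,\,\cdot\,)$. Since $P|_{\Gamma_0}u_0$ equals the restriction to $\Gamma_0$ of an analytic function on $U$, the Morrey-Nirenberg analytic hypoellipticity theorem implies that $u_0$ is real-analytic on $\Gamma_0$, and hence extends to a holomorphic function $\tilde u_0$ on some complex neighborhood $\tilde U_0 \subset U$ of $\Gamma_0$.

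The key technical step, and the main obstacle, is upgrading this to a uniform version: I would establish the existence of $\delta > 0$, independent of $s \in [0,1]$, such that any $v \in C^\infty(\Gamma_s)$ for which $P|_{\Gamma_s}v$ is the restriction of an analytic function on $U$ extends holomorphically to the $\delta$-neighborhood of $\Gamma_s$ in $\C^n$. Uniformity follows from compactness of $[0,1]$ (combined with the assumption that $\Gamma_s$ is independent of $s$ outside the compact set $K$), from the joint analyticity of the $a_\alpha$ on the fixed open set $U$ containing $\bigcup_s \Gamma_s$, and from uniform ellipticity bounds on $P|_{\Gamma_s}$. Quantitatively, one tracks the constants in the majorant-series proof of analytic hypoellipticity across this compact family of elliptic operators with uniformly controlled analyticity data.

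With such a uniform $\delta$ in hand, I would pick $\varepsilon > 0$ small enough that $|s_1 - s_2| < \varepsilon$ forces $\Gamma_{s_2}$ to lie in the $\delta$-neighborhood of $\Gamma_{s_1}$, which is possible by smooth dependence of $F$ on $s$ together with the support assumption on the deformation. Starting from $\tilde u_0$, the inductive step reads: given $\tilde u_{s_1}$ holomorphic near $\Gamma_{s_1}$, set $u_{s_2} := \tilde u_{s_1}|_{\Gamma_{s_2}}$. Since $\tilde u_{s_1}$ is holomorphic, so is $P\tilde u_{s_1}$ on $\tilde U_{s_1}$, and by uniqueness of analytic continuation it must coincide on $\tilde U_{s_1}$ with the given analytic extension of $P|_{\Gamma_0}u_0$ to $U$. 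Hence $P|_{\Gamma_{s_2}}u_{s_2}$ is again the restriction of an analytic function on $U$, and the uniform hypoellipticity step produces the desired $\tilde u_{s_2}$ holomorphic on a $\delta$-neighborhood of $\Gamma_{s_2}$; finitely many such steps cover $[0,1]$. On consecutive overlaps $\tilde U_{s_1}\cap \tilde U_{s_2}$ with $|s_1-s_2|<\varepsilon$ both extensions restrict to the same smooth function on $\Gamma_{s_1}$ and therefore agree by holomorphic unique continuation; globally the extension may still be multivalued, as the statement allows, because $\bigcup_s \tilde U_s$ need not be simply connected.
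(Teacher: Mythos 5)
The paper does not prove this lemma itself; it cites Lemma~3.1 of Sjöstrand--Zworski \cite{sjostrand_zworski_complex_scaling}, and your proposal reproduces that argument: Morrey--Nirenberg analytic hypoellipticity to produce a holomorphic extension off each maximally totally real contour, a quantitative lower bound on the extension radius that is uniform in $s$ (thanks to compactness of $[0,1]$ and of the deformation region, and the uniform analyticity/ellipticity data on $U$), and a finite-step continuation in $s$ with agreement on overlaps via unique continuation from the totally real $\Gamma_{s_2}$. One small slip: in the overlap step the two extensions agree by construction on $\Gamma_{s_2}$ (not $\Gamma_{s_1}$), and it is agreement there, propagated by uniqueness of analytic continuation from the maximally totally real set, that gives agreement on the connected overlap.
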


\begin{rmk}
\label{remark_multivalued}
Note that the possibility of a multivalued analytic function is due to the fact that the contours $\Gamma_s$ could, for example, wrap around the origin, in such a way that $\Gamma_1$ has a non-trivial intersection with $\Gamma_0$. However, if we exclude such non-trivial intersection, that is, if the deformations $\Gamma_s$ satisfy the additional requirement
$$z \in \Gamma_{s_1}\cap\Gamma_{s_2} \quad\text{for}\quad s_1<s_2 \quad\implies\quad z \in \Gamma_{s_1}\cap\Gamma_{s} \quad\forall\,s_1<s<s_2,$$
then the function $u_0$ of Lemma \ref{deformation_lemma} extends to a well-defined analytic function on a neighborhood of $\bigcup_{s\in[0,1]}\Gamma_s$.
\end{rmk}

\subsection{Microlocal analysis}
\label{section_microlocal_estimates}

In this section, we provide a brief introduction to pseudodifferential operators and state some microlocal estimates, which will be used in the proof of the Fredholm property of the complex scaled Kerr spectral family in Section \ref{section_fredholm_estimates}. For a detailed overview of the theory of pseudodifferential operators, see for instance \cite[Chapter 18]{hörmander3} or \cite{hintz_microlocal_notes}.

The space of (uniform) symbols on $\R^n$ of order $m \in \R$, denoted by $S^m(\R^n,\R^n)$, consists of all smooth functions $a(x,\xi)\in C^\infty(\R^n\times\R^n)$, satisfying the following estimate for all $\alpha\in\N_0^n$, $\beta\in\N_0^n$ and some constants $C_{\alpha\beta}$:
\begin{equation}
\label{pseudo_estimate}
    |\partial_x^\alpha \partial_\xi^\beta a(x,\xi)| \leq C_{\alpha,\beta} \brac{\xi}^{m-|\beta|}, \quad \forall x\in\R^n,\, \xi\in\R^n,
\end{equation}
where $\brac{\xi} = (1+|\xi|^2)^\frac{1}{2}$ and $|\beta| = \sum_{k=1}^n\beta_k$. Note that the family of norms
\begin{equation}
\label{pseudo_seminorms}
    \|a\|_{m,k} = \sup_{(x,\xi)\in \R^{2n}} \max_{|\alpha|+|\beta| \leq k} \brac{\xi}^{|\beta|-m}|\partial_x^\alpha \partial_\xi^\beta a(x,\xi)|,
\end{equation}
defines the topology of a Fréchet space on $S^m(\R^n,\R^n)$.

Given a symbol $a\in S^m(\R^n,\R^n)$, we define its quantization
$$\mathrm{Op}(a): C_c^\infty(\R^n) \to C^\infty(\R^n),$$
by the formula
\begin{equation}
\label{quantization}
    \mathrm{Op}(a)u(x) = \frac{1}{(2\pi)^n}\int_{\R^n} e^{ix\cdot \xi} a(x,\xi) \hat{u}(\xi)\,d\xi,
\end{equation}
where $\hat{u}$ denotes the Fourier transform of $u$. The space of $m$-th order (uniform) pseudodifferential operators on $\R^n$, denoted $\Psi^m(\R^n)$, consists of all operators ${A: C_c^\infty(\R^n) \to C^\infty(\R^n)}$, which are obtained as the quantization of a symbol in $S^m(\R^n,\R^n)$, as in \eqref{quantization}. We define the space of residual operators as 
$$\Psi^{-\infty}(\R^n) = \bigcap_{m\in\R}\Psi^m(\R^n).$$

Elements of $\Psi^m(\R^n)$ extend to define bounded operators between Sobolev spaces. More precisely, for all $s\in\R$, $m\in\R$ and $A\in\Psi^m(\R^n)$
$$A: H^s(\R^n) \to H^{s-m}(\R^n)$$
is bounded. Furthermore, pseudodifferential operators form an algebra under composition:
$$\Psi^m(\R^n) \circ \Psi^{m'}(\R^n) \subset \Psi^{m+m'}(\R^n).$$

The principal symbol of a pseudodifferntial operator $A=\mathrm{Op}(a)$, where ${a\in S^m(\R^n,\R^n)}$, is the equivalence class $ \sigma_m(A) = [a] \in S^m(\R^n,\R^n) / S^{m-1}(\R^n,\R^n)$. The principal symbol defines an algebra homomorphism
$$\sigma_m: \Psi^m(\R^n) \to S^m(\R^n,\R^n) / S^{m-1}(\R^n,\R^n),$$ where the product on the latter space is just multiplication of (equivalence classes of) symbols. Futhermore, this map fits into a short exact sequence
\begin{equation*}
\label{short_exact_sequence}
    0 \rightarrow \Psi^{m-1}(\R^n) \rightarrow \Psi^m(\R^n) \xrightarrow[]{\sigma_m} S^m(\R^n,\R^n) / S^{m-1}(\R^n,\R^n) \rightarrow 0.
\end{equation*}
This implies in particular that the commutator of two operators $A \in \Psi^m(\R^n)$, $B \in \Psi^{m'}(\R^n)$, satisfies $[A,B] \in \Psi^{m+m'-1}(\R^n)$.

Another important notion is the wavefront set, denoted $\WF(A)$, of a pseudodifferential operator $A = \mathrm{Op}(a)\in \Psi^m(\R^n)$. This is the complement of the set where $A$ is microlocally a residual operator. More precisely, a point $(x_0,\xi_0) \in \R^n\times(\R^n\setminus\{0\})$ does not lie in $\WF(A)$ if and only if there exists a conic neighborhood $V$ of $(x_0,\xi_0)$ (i.e. $(x,\xi)\in V$ implies $(x,\lambda\xi) \in V$, $\forall \lambda>0$) such that for all $\alpha,\beta \in \N_0^n$ and all $N\in\N$, we have
$$|\partial_x^\alpha \partial_\xi^\beta a(x,\xi)| \leq C_{\alpha,\beta,N} \brac{\xi}^{-N}, \quad \forall (x,\xi)\in V.$$
Note that $\WF(A)$ is a closed conic subset of $\R^n\times(\R^n\setminus\{0\})$, away from which $A$ is microlocally smoothing.

The theory of pseudodifferential operators provides a natural setting in which to phrase elliptic regularity. Here, we formulate a version for uniformly elliptic operators on $\R^n$. Below, we will state a more microlocal version of elliptic regularity on manifolds.

We say that $P \in \Psi^m(\R^n)$ is uniformly elliptic on an open subset $U \subset \R^n$ if there exist constants $C, c > 0$, such that the principal symbol of $P$ satisfies
$$|\sigma_m(P)(x,\xi)| \geq C |\xi|^m, \quad \forall (x,\xi) \in U\times\R^n,\, |\xi|\geq c.$$
Note that this property is independent of the choice of representative in the equivalence class $\sigma_m(P)$.

\begin{prop}[Uniform Elliptic Estimate]
\label{uniform_elliptic_estimate}
    Let $P \in \Psi^m(\R^n)$ and let $U\in\R^n$ be an open set, such that $P$ is uniformly elliptic on $U$. Let further $\chi, \Tilde{\chi} \in C^\infty(\R^n)$ satisfy $\supp(\chi) \subset U$ and $\Tilde{\chi} = 1$ on $\supp(\chi)$. Then for any $s,N\in\R$, there exists $C>0$, such that $\forall u \in H^{-N}(\R^n)$ with $\Tilde{\chi}Pu \in H^{s-m}(\R^n)$, we have $\chi u \in H^s(\R^n)$, and the following estimate holds:
    $$\|\chi u\|_{H^s(\R^n)} \leq C\bigl(\|\Tilde{\chi} Pu\|_{H^{s-m}(\R^n)} + \|u\|_{H^{-N}(\R^n)}\bigr).$$
\end{prop}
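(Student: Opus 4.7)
The plan is the standard parametrix argument for elliptic regularity, adapted to the uniform setting on $\R^n$. I would construct a left parametrix $Q \in \Psi^{-m}(\R^n)$ for $P$ with symbol $x$-localized to an intermediate set between $\supp(\chi)$ and the region where $\Tilde{\chi}=1$, apply $Q$ to $Pu$, and read off the estimate after splitting $Pu$ with respect to $\Tilde{\chi}$.

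First, I would introduce an intermediate cutoff $\phi \in C^\infty(\R^n)$ with $\phi \equiv 1$ on a neighborhood of $\supp(\chi)$, $\supp(\phi) \subset U$, and $\Tilde{\chi} \equiv 1$ on a neighborhood of $\supp(\phi)$, arranged so that $\mathrm{dist}\bigl(\supp(\phi),\supp(1-\Tilde{\chi})\bigr) > 0$. By uniform ellipticity of $P$ on $U$, the function $\phi(x)\sigma_m(P)(x,\xi)^{-1}\bigl(1-\psi(\xi)\bigr)$, with $\psi$ a compactly supported cutoff at the origin in $\xi$, defines a symbol $q_0 \in S^{-m}(\R^n,\R^n)$ with $x$-support in $\supp(\phi)$. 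Iteratively constructing symbolic corrections $q_j \in S^{-m-j}(\R^n,\R^n)$ with $x$-supports in $\supp(\phi)$ (where each correction cancels the next order of the remainder using the symbol calculus and the reciprocal of $\sigma_m(P)$), I would asymptotically sum to $q \sim \sum_j q_j$, obtaining $Q = \mathrm{Op}(q) \in \Psi^{-m}(\R^n)$ with
$$QP = \phi I + R, \qquad R \in \Psi^{-\infty}(\R^n).$$

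Since $\phi \equiv 1$ on $\supp(\chi)$, multiplying by $\chi$ and splitting $Pu = \Tilde{\chi} Pu + (1-\Tilde{\chi})Pu$ yields the identity
$$\chi u = \chi\phi u = \chi Q(\Tilde{\chi} Pu) + \chi Q\bigl((1-\Tilde{\chi})Pu\bigr) - \chi R u.$$
The first term is bounded by $C\|\Tilde{\chi} Pu\|_{H^{s-m}(\R^n)}$ using boundedness of $\chi Q \in \Psi^{-m}(\R^n)$ from $H^{s-m}$ to $H^s$. The third term is bounded by $C\|u\|_{H^{-N}(\R^n)}$ since $\chi R \in \Psi^{-\infty}(\R^n)$. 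For the middle term one observes that the Schwartz kernel of $\chi Q(1-\Tilde{\chi})$ is supported at $|x-y| \geq \mathrm{dist}(\supp(\phi),\supp(1-\Tilde{\chi})) > 0$; repeated integration by parts in $\xi$ in the defining oscillatory integral then produces a smooth kernel with arbitrary-order polynomial decay in $|x-y|$, uniformly in $x,y$. Consequently $\chi Q(1-\Tilde{\chi}): H^{-N-m}(\R^n) \to H^s(\R^n)$ is bounded, and composing with $P: H^{-N} \to H^{-N-m}$ gives the desired bound by $C\|u\|_{H^{-N}(\R^n)}$.

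The main technical point is keeping all these operations uniform in $x \in \R^n$, since the uniform calculus requires control with respect to the seminorms \eqref{pseudo_seminorms} globally rather than merely locally. This is guaranteed by the uniform ellipticity lower bound $|\sigma_m(P)(x,\xi)| \geq C|\xi|^m$ on $U$, which gives uniform control of the reciprocal and its derivatives (hence of all symbols $q_j$) in $S^{-m-j}$, and by choosing $\phi$ so that $\Tilde{\chi}\equiv 1$ on an open neighborhood of $\supp(\phi)$, which provides the uniform off-diagonal separation needed for the smoothing estimate of $\chi Q(1-\Tilde{\chi})$.
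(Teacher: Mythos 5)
Your proof is correct and is the standard parametrix argument for uniform elliptic estimates; the paper states Proposition~\ref{uniform_elliptic_estimate} without proof (it is a textbook result), so there is no alternative paper argument to compare against. The three-way decomposition $\chi u = \chi Q(\Tilde\chi Pu) + \chi Q((1-\Tilde\chi)Pu) - \chi Ru$, with the residual terms handled by the symbol calculus and by off-diagonal rapid decay, is exactly the right mechanism, and your attention to keeping the symbol seminorms uniform in $x$ is precisely where the uniform ellipticity hypothesis is consumed.

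One small point worth flagging: your construction requires an intermediate cutoff $\phi$ with $\Tilde\chi \equiv 1$ on a \emph{neighborhood} of $\supp(\phi)$, and $\phi \equiv 1$ on $\supp(\chi)$. This forces $\Tilde\chi \equiv 1$ on an open neighborhood of $\supp(\chi)$, which is marginally stronger than the stated hypothesis ``$\Tilde\chi = 1$ on $\supp(\chi)$'' (a smooth $\Tilde\chi$ could equal $1$ on a closed set with empty interior complement issues). In practice this is a standard and harmless abuse of language -- in every application of the proposition the outer cutoff is indeed identically $1$ on a neighborhood of the inner one, and the paper itself uses the same loose phrasing -- but if you want your write-up to match the literal hypothesis you should either note the implicit neighborhood requirement or observe that one can always replace the given $\Tilde\chi$ by one satisfying it. Everything else, including the integration-by-parts argument for the off-diagonal smoothing of $\chi Q(1-\Tilde\chi)$ and the composition with $P : H^{-N} \to H^{-N-m}$, is sound.
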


The notion of pseudodifferential operators carries over to smooth manifolds. An operator $A:C^\infty_c(M) \to C^\infty(M)$ belongs to $\Psi^m(M)$, the space of $m$-th order pseudodifferential operators on a manifold $M$, if and only if its Schwartz kernel is smooth away from the diagonal in $M\times M$, and for every chart $\varphi: U \to V$, with $U\subset M$ and $V \subset \R^n$, and every cutoff function $\chi \in C^\infty_c(U)$, we have $(\varphi^{-1})^*\chi A\chi\varphi^* \in \Psi^m(\R^n)$.
With this definition, the space of residual operators $\Psi^{-\infty}(M) = \bigcap_{m\in\R}\Psi^m(M)$ consists precisely of those operators whose Schwartz kernel is smooth on $M\times M$.

\begin{rmk}
    Note that for pseudodifferential operators on $\R^n$, we required the estimate \eqref{pseudo_estimate} to be uniform with respect to $x$. On a non-compact manifold, without additional structure, no such coordinate-invariant notion of uniformity is available. Thus, in the above definition, we only require the local coordinate version of \eqref{pseudo_estimate} to hold on compact subsets of $M$. Viewing $\R^n$ as a manifold, the above definition specifies a larger class of operators than the initial uniform definition.
\end{rmk}

We will say that a an operator $A\in\Psi^m(M)$ is compactly supported if its Schwartz kernel $\mathcal{K}_A$ has compact support in $M\times M$. We will say that $A$ is properly supported if the sets
$$\supp(\mathcal{K}_A)\cap\pi_1^{-1}(K), \quad \supp(\mathcal{K}_A)\cap\pi_2^{-1}(K)$$
are compact for every compact $K\subset M$, where $\pi_1,\pi_2: M\times M \to M$ are the projection maps on the first and second factor. Note that, in particular, differential operators are properly supported. If $A\in\Psi^m(M)$ is properly supported, then
$$A: C_c^\infty(M) \to C_c^\infty(M), \quad A: C^\infty(M) \to C^\infty(M),$$
and properly supported pseudodifferential operators form an algebra under composition.

On a general non-compact manifold, no invariant definition of Sobolev spaces is available. However, we can define the local Sobolev spaces, $H^s_{\mathrm{loc}}(M)$, to consist of all $u\in\D(M)$, such that $(\varphi^{-1})^*\chi u \in H^s(\R^n)$ for any chart $\varphi: U\to V$ and any $\chi \in C^\infty_c(U)$. We further define $H^s_c(M)$ to consist of all compactly supported elements of $H^s_{\mathrm{loc}}(M)$. We then have the following mapping property for all properly supported $A \in \Psi^m(M)$:
$$A: H^s_c(M) \to H^s_c(M), \quad A: H^s_{\mathrm{loc}}(M) \to H^s_{\mathrm{loc}}(M).$$

Note that these are not normed spaces. However, for any compact set $K\subset M$, we can define a norm on the space $H^s_K(M) = \{u \in H^s_c(M) \,|\, \supp(u) \subset K\}$ by choosing an arbitrary finite cover of $K$ by coordinate charts $\varphi_j: U_j \to V_j$ and a partition of unity $\chi_j \in C^\infty_c(U_j)$ subordinate to this cover and setting
$$\|u\|_{H^s} = \sum_j \|(\varphi^{-1})^*\chi_j u\|_{H^s(\R^n)}.$$
The norms introduced in this way for different choices of charts and cutoff functions are all equivalent. In the estimates below, we will use the notation $\|\cdot\|_{H^s}$ to denote such a choice of norm, when all functions involved are supported in the same compact set. The constants in the estimates will of course depend on the choice of norm, but the form of the estimates will not. In this sense, for any properly supported $A \in \Psi^m(M)$ we have
$$\|Au\|_{H^{s-m}} \leq C \|u\|_{H^s}, \quad \forall u \in H^s_K(M).$$

On a manifold, the principal symbol is invariantly defined as an equivalence class of functions on the cotangent bundle. Indeed, denote by $S^m(T^*M)$ the space of all $a \in C^\infty(T^*M)$ such that the estimates \eqref{pseudo_estimate} hold on any compact set $K\subset U$ and any coordinate chart $\varphi: U \to V$ with $\xi$ denoting the coordinates induced by $\varphi$ on the fibers of the cotangent bundle. Then the principal symbol map defines an algebra morphism
$$\sigma_m: \Psi^m(M) \to S^m(T^*M) / S^{m-1}(T^*M)$$
and the corresponding version of the short exact sequence \eqref{short_exact_sequence} holds. Locally, a representative of $\sigma_m(A)$ is obtained by taking the principal symbol of ${(\varphi^{-1})^*\chi A\chi\varphi^* \in \Psi^m(\R^n)}$.

The wavefront set can also be patched together from local coordinates to define a closed conic set $\WF(A) \subset T^*M \setminus \{0\}$, where $\{0\}$ denotes the graph of the zero section in $T^*M$. Note that $\WF(A)=\emptyset$ implies $A\in\Psi^{-\infty}(M)$ and if, in addition, $A$ is properly supported, we have for all $s,N\in\R$, $K\subset M$ compact, and $u\in H_K^{-N}(M)$:
$$\|Au\|_{H^s} \leq C_{s,N,K}\|u\|_{H^{-N}}.$$
Furthermore, for properly supported pseudodifferential operators $A,B$, the wavefront set satisfies $\WF(AB)\subset \WF(A)\cap\WF(B)$.

An operator $A\in\Psi^m(M)$ is elliptic at a point $(x_0,\xi_0) \in T^*M\setminus\{0\}$, if there exists a conic neighborhood $V \subset T^*M$ of $(x_0,\xi_0)$ and constants $C,c >0$, such that in local coordinates
$$|\sigma_m(A)(x,\xi)| \geq C|\xi|^m, \quad \forall (x,\xi)\in V,\, |\xi|\geq c.$$
We denote the set of points at which $A$ is elliptic by $\Ell(A)$. Note that this defines an open conic subset of $T^*M\setminus\{0\}$. The complement of the elliptic set is the characteristic set, denoted $\Char(A)= (T^*M\setminus\{0\}) \setminus \Ell(A)$.

\begin{rmk}
\label{sphere_bundle}
    A different perspective is to regard the conic sets $\Ell(A)$ and $\WF(A)$ as subsets of the sphere bundle 
    $$S^*M = (T^*M\setminus\{0\}) / \R_+,$$
    where the action of $\R_+$ by dilations on the fibers of the cotangent bundle has been quotiented out. This can, for instance, be useful for compactness arguments. Furthermore, if the principal symbol of $A$ has a homogeneous representative $a$, then one could also view the principal symbol as an element $\Tilde{a}$ of $C^\infty(S^*M)$ by setting $\Tilde{a}(x,[\xi]) = a(x,\frac{\xi}{|\xi|})$.
\end{rmk}

A useful result is the existence of microlocal partitions of unity, see \cite[Lemma 6.10]{hintz_microlocal_notes}, stated in terms of the sphere bundle as in the preceding remark.
\begin{prop}[Microlocal Parition of Unity]
\label{microlocal_partition_of_unity}
    Let $V\subset S^*M$ be compact. Let $U_1,\dots,U_N$ be an open cover of $V$. Then there exist compactly supported operators $A_1,\dots,A_N \in \Psi^0(M)$ such that $\WF(A_j) \subset U_j$ for all $j$ and $\WF(Id - \sum_j A_j) \cap V = \emptyset$.
\end{prop}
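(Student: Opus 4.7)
The plan is to quantize a smooth partition of unity on the cosphere bundle $S^*M$, and then correct the resulting operators order by order so that $\mathrm{Id} - \sum_j A_j$ becomes microlocally smoothing near $V$, not merely small at principal order.

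First, since $V \subset S^*M$ is compact and $\{U_j\}_{j=1}^N$ is an open cover, ordinary manifold theory yields $\chi_1, \dots, \chi_N \in C_c^\infty(S^*M)$ with $\supp(\chi_j) \Subset U_j$ and $\sum_j \chi_j \equiv 1$ on some open neighborhood $W \supset V$. I fix a reference Riemannian metric on $M$ (inducing $|\xi|$ on $T^*M$), a radial cutoff $\rho \in C^\infty(\R)$ with $\rho \equiv 0$ near $0$ and $\rho(t) = 1$ for $t \geq 2$, and $\psi \in C_c^\infty(M)$ equal to $1$ on the compact set $\pi(W)$. Writing $\widetilde{\chi}_j$ for the degree-$0$ homogeneous extension of $\chi_j$ to $T^*M \setminus 0$, the functions
\begin{equation*}
    a_j^{(0)}(x,\xi) = \psi(x)\,\rho(|\xi|)\,\widetilde{\chi}_j(x,\xi)
\end{equation*}
are symbols in $S^0$ with essential support contained in $U_j$. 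Using a locally finite atlas of $M$ with a subordinate partition of unity, I would quantize each $a_j^{(0)}$ chart by chart via \eqref{quantization} to obtain $A_j^{(0)} \in \Psi^0(M)$ with principal symbol $a_j^{(0)}$; since the Schwartz kernels are smooth off the diagonal and have compact first projection, a smooth cutoff supported near the diagonal renders each $A_j^{(0)}$ compactly supported modulo a smoothing error that will be absorbed below. By construction $\WF(A_j^{(0)}) \subset \supp(\chi_j) \subset U_j$.

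The operator $B_0 := \mathrm{Id} - \sum_j A_j^{(0)}$ has principal symbol $1 - \psi\rho\sum_j \widetilde{\chi}_j$, which vanishes on a conic neighborhood of $V$ for $|\xi| \geq 1$, so $B_0 \in \Psi^{-1}$ microlocally near $V$. To upgrade to vanishing of the full symbol, I iterate: picking a representative $b_1 \in S^{-1}$ of $\sigma_{-1}(B_0)$ and multiplying by the $\chi_j$ gives a decomposition $b_1 = \sum_j c_j^{(1)} + r_1$ with $\supp(c_j^{(1)}) \subset U_j$ and $r_1$ vanishing on a neighborhood of $V$. Quantizing the $c_j^{(1)}$ produces $A_j^{(1)} \in \Psi^{-1}(M)$ such that $B_1 := B_0 - \sum_j A_j^{(1)} \in \Psi^{-2}$ microlocally near $V$. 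Iterating and taking an asymptotic sum of symbols (Borel's theorem) yields $A_j \sim \sum_k A_j^{(k)}$ modulo $\Psi^{-\infty}$. Each $A_j$ is then compactly supported with $\WF(A_j) \subset \supp(\chi_j) \subset U_j$, and the full symbol of $\mathrm{Id} - \sum_j A_j$ vanishes on a neighborhood of $V$, which is equivalent to $\WF(\mathrm{Id} - \sum_j A_j) \cap V = \emptyset$.

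The main obstacle is essentially bookkeeping: on a general manifold the chart-wise quantizations determine the ps.d.o.\ only up to lower order at each step, so I must verify that the symbol decomposition $b_k = \sum_j c_j^{(k)}$ with $\supp(c_j^{(k)}) \subset U_j$ can be carried out coherently through the iteration, and that the asymptotic sum preserves both the compact support of each $A_j$ and the wavefront containment $\WF(A_j) \subset U_j$. Both properties are standard features of the microlocal calculus, but they need to be assembled carefully.
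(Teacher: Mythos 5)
Your proof is correct in structure and lands in the right place; since the paper cites \cite[Lemma 6.10]{hintz_microlocal_notes} for this result without reproducing the argument, I cannot compare line by line, but the strategy you use (quantize a partition of unity on $S^*M$, then iteratively remove lower-order errors by Borel summation) is a standard way to prove the microlocal partition of unity on a manifold.

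Two points are worth flagging. First, the expression $\sigma_{-1}(B_0)$ is not quite well-formed as written, because $B_0 = \mathrm{Id} - \sum_j A_j^{(0)}$ lies in $\Psi^0(M)$, not $\Psi^{-1}(M)$; only its principal symbol $\sigma_0(B_0)$ vanishes near $V$, and that vanishing is a local statement rather than a global order reduction. To make the iteration precise, you should first choose a representative $b_0 \in S^0$ of $\sigma_0(B_0)$ that vanishes identically in a conic neighborhood of $V$ (possible since $\sigma_0(B_0)=0$ there), set $R_0 := B_0 - \mathrm{Op}(b_0) \in \Psi^{-1}(M)$, and then take $b_1 := \sigma_{-1}(R_0)$. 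The term $\mathrm{Op}(b_0)$ has $\WF$ disjoint from $V$, and the rest of your scheme then goes through unchanged at each stage. Second, a remark on economy: on $\R^n$ with the left quantization \eqref{quantization}, the iteration is genuinely unnecessary, because $\mathrm{Op}$ is exactly additive and $\mathrm{Op}(1) = \mathrm{Id}$, so $\mathrm{Id} - \sum_j \mathrm{Op}(a_j^{(0)}) = \mathrm{Op}\bigl(1 - \sum_j a_j^{(0)}\bigr)$ and the full symbol $1 - \sum_j a_j^{(0)}$ already vanishes in a conic neighborhood of $V$ for $|\xi|$ large. It is only the chart-by-chart patching on a general manifold, where different $A_j^{(0)}$ may be quantized in different charts and the transitions produce lower-order contributions, that forces the Borel correction you carry out. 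Your proof is therefore appropriate for the manifold case; it is just helpful to recognize which source the errors are coming from, since that also shows the essential supports are preserved at every stage of the iteration, which is what makes your bookkeeping about $\WF(A_j)\subset U_j$ and compact support work out.
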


We can now state a microlocal version of elliptic regularity valid on manifolds, see \cite[Proposition 6.31]{hintz_microlocal_notes}.
\begin{prop}[Microlocal Elliptic Estimate]
\label{elliptic_estimate}
    Let $P\in\Psi^m(M)$ be properly supported and $B, G \in \Psi^0(M)$ compactly supported, such that $\WF(B)\subset\Ell(P)\cap\Ell(G)$. Then for any $s,N\in\R$, there exists $C>0$ and $\chi\in C^\infty_c(M)$, such that $\forall u \in H^{-N}_{\mathrm{loc}}(M)$ with $GPu \in H^{s-m}_c(M)$, we have $Bu \in H^s_c(M)$, and the following estimate holds:
    $$\|Bu\|_{H^s} \leq C\bigl(\|GPu\|_{H^{s-m}} + \|\chi u\|_{H^{-N}}\bigr).$$
\end{prop}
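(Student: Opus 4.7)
The plan is to prove the estimate by constructing a microlocal parametrix for $GP$ on $\WF(B)$ and reading off the Sobolev bound from the resulting identity. Since $\WF(B)$ is compact in $S^*M$ and contained in the open set $\Ell(G)\cap\Ell(P)$, I would first fix an open conic neighborhood $V$ of $\WF(B)$ whose closure still lies in $\Ell(G)\cap\Ell(P)$; cutoffs $\phi\in S^0(T^*M)$ supported in $V$ and equal to $1$ near $\WF(B)$ will be used throughout.

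The central step is to produce a compactly supported $Q\in\Psi^{-m}(M)$ with
$$B = QGP + R, \qquad R\in\Psi^{-\infty}(M) \text{ compactly supported}.$$
The composition $GP\in\Psi^m(M)$ has principal symbol $\sigma_0(G)\sigma_m(P)$, elliptic on $V$ by the algebra homomorphism property of $\sigma_m$. Set the leading symbol $q_0 := \phi\,\sigma_0(B)/(\sigma_0(G)\sigma_m(P))\in S^{-m}(T^*M)$ and quantize to $Q_0\in\Psi^{-m}(M)$; then $B - Q_0 GP \in \Psi^{-1}(M)$ with wavefront set contained in $V$. Iterating stage by stage, the error at stage $k$ lies in $\Psi^{-k}(M)$ with wavefront set in $V$, so its principal symbol can again be divided by $\sigma_0(G)\sigma_m(P)$ to produce $Q_k\in\Psi^{-m-k}(M)$, and asymptotic (Borel-type) summation yields $Q\sim\sum_k Q_k$. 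Because $B$ and $G$ are compactly supported and $P$ is properly supported, composing with appropriate cutoffs arranges both $Q$ and $R = B - QGP$ to be compactly supported.

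Applying $B = QGP + R$ to $u\in H^{-N}_{\mathrm{loc}}(M)$ gives $Bu = Q(GPu) + Ru$. Sobolev boundedness of $Q\in\Psi^{-m}(M)$ yields $\|Q(GPu)\|_{H^s}\leq C\|GPu\|_{H^{s-m}}$. For the residual, the compactness of the Schwartz kernel $\mathcal{K}_R$ lets me choose $\chi\in C_c^\infty(M)$ equal to $1$ on the projections of $\mathrm{supp}(\mathcal{K}_R)$ and $\mathrm{supp}(\mathcal{K}_{GP})$ to the second factor, so that $Ru = R(\chi u)$ and $GPu = GP(\chi u)$, giving $\|Ru\|_{H^s}\leq C\|\chi u\|_{H^{-N}}$. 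Summing these produces the stated estimate. The qualitative statement $Bu\in H^s_c(M)$ follows by the standard device of applying the inequality to the regularized $\Lambda_\varepsilon u$, with $\Lambda_\varepsilon = \mathrm{Op}((1+\varepsilon|\xi|^2)^{-k}) \in \Psi^{-2k}(M)$ converging to the identity as $\varepsilon\to 0$, and passing to the weak limit using uniform commutator bounds $[GP,\Lambda_\varepsilon]\in\Psi^{m-1}(M)$.

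The main technical obstacle is the parametrix construction, in particular the bookkeeping of support properties and wavefront sets during the asymptotic summation so that $Q$ and $R$ inherit compact support while respecting the microlocal region $V$. Once the identity $B = QGP + R$ is established, the Sobolev estimate and the regularization argument are essentially routine.
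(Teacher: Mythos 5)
Your parametrix construction $B = QGP + R$ with $Q \in \Psi^{-m}(M)$ and $R$ residual is exactly the standard argument for this microlocal elliptic estimate, matching the approach in the reference cited by the paper, and the symbol iteration, the support bookkeeping, and the resulting Sobolev estimate are all handled correctly. One small remark: the closing regularization via $\Lambda_\varepsilon$ is unnecessary and, as stated, slightly off on a general manifold (the Euclidean symbol $(1+\varepsilon|\xi|^2)^{-k}$ would have to be patched via a partition of unity); instead, once $B = QGP + R$ holds as an operator identity on $H^{-N}_{\mathrm{loc}}(M)$ (which follows by density of $C^\infty_c(M)$ and continuity of all four operators between the relevant local Sobolev spaces, using that $B,Q,R$ are compactly supported and $P$ properly supported), the membership $Bu = Q(GPu) + Ru \in H^s_c(M)$ is immediate from the hypothesis $GPu \in H^{s-m}_c(M)$ together with $Ru \in C^\infty_c(M)$.
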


On the characteristic set of an operator $P \in \Psi^m(M)$, where elliptic regularity is not available, it is sometimes possible to control regularity by studying the Hamiltonian flow of the principal symbol of $P$. To this end, let $\sigma_m(P)$ have a real-valued, homogeneous representative $p \in C^\infty(T^*M\setminus\{0\})$. 

The cotangent bundle is naturally equipped with a symplectic form, given in local coordinates $(x,\xi)$ by $\omega = \sum_j d\xi^j\wedge dx^j$. The Hamiltonian vector field 
$$H_p \in C^\infty(T^*M,T(T^*M))$$
associated to $p$ is defined to satisfy $\omega(H_p,X) = dp(X)$ for all vector fields $X$ on $T^*M$. In local coordinates it takes the form
$$H_p = \sum_j \Bigl(\frac{\partial p}{\partial\xi^j}\frac{\partial}{\partial x^j} - \frac{\partial p}{\partial x^j}\frac{\partial}{\partial\xi^j}\Bigr).$$
The Hamiltonian vector field appears in the principal symbol of commutators. Indeed, for $A \in \Psi^{m'}(M)$ we have
$$\sigma_{m+m'-1}\Bigl(\frac{1}{i}[P,A]\Bigr) = H_pa.$$

We denote by $\exp(tH_p)$ the Hamiltonian flow, that is, the flow generated by the vector field $H_p$. Since $H_pp = 0$, the level sets of $p$, in particular the characteristic set, are left invariant by the flow of $H_p$. The next result makes precise the notion that the regularity of solutions can be propagated inside the characteristic set along the Hamiltonian flow of $p$. See \cite[Theorem 8.7]{hintz_microlocal_notes} for a proof.

\begin{prop}[Propagation of Singularities]
\label{propagation_estimate}
    Let $P\in\Psi^m(M)$ be properly supported and have a real-valued homogeneous principal symbol. Let $B,E,G \in \Psi^0(M)$ be compactly supported such that $\WF(B),\WF(E) \subset \Ell(G)$. Assume moreover that for all ${(x,\xi) \in \WF(B)}$, there exists $T\geq 0$ such that
    \begin{equation}
    \label{propagation_condition}
        e^{-TH_p}(x,\xi)\in\Ell(E), \quad e^{tH_p}(x,\xi)\in\Ell(G), \,\,\forall t\in[-T,0].
    \end{equation}
    Then for any $s,N\in\R$, there exists $C>0$ and $\chi\in C^\infty_c(M)$, such that $\forall u \in H^{-N}_{\mathrm{loc}}(M)$ with $GPu \in H^{s-m+1}_c(M)$ and $Eu \in H^s_c(M)$, we have $Bu \in H^s_c(M)$, and the following estimate holds:
    $$\|Bu\|_{H^s} \leq C\bigl(\|GPu\|_{H^{s-m+1}} + \|Eu\|_{H^s} + \|\chi u\|_{H^{-N}}\bigr).$$
\end{prop}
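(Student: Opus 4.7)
The plan is to prove the propagation estimate via a Hörmander-type positive commutator argument, using $E$ as the source of a priori regularity, $G$ as the localizer along the flow, and $B$ as the target on which we gain regularity. The rough reduction is: it suffices to prove the estimate when the support of $B$ is a small piece of a flow segment of length $T$; the general case then follows from the compactness of $\WF(B)$ together with a microlocal partition of unity (Proposition \ref{microlocal_partition_of_unity}) and a finite iteration along the flow.

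The heart of the proof is the construction of an \emph{escape function} $a \in S^{s-(m-1)/2}(T^*M)$, compactly supported in the sphere-bundle sense inside $\Ell(G)$, with the following properties: $a$ is elliptic on $\WF(B)$, its support near the backward endpoint of the flow segment is contained in $\Ell(E)$, and most importantly
\begin{equation*}
H_p(a^2) \;=\; -b^2 + e^2 + r,
\end{equation*}
where $b \in S^s$ is elliptic on $\WF(B)$, $e \in S^s$ is supported in $\Ell(E)$, and $r$ lies one order lower. Such an $a$ is built by choosing a smooth cutoff in a transverse hypersurface to the flow and transporting it forward along $\exp(tH_p)$, multiplying by a cutoff that decays before one exits $\Ell(G)$ and turns on only after one enters $\Ell(E)$ in the backward direction. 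Homogeneity in $\xi$ is arranged by working on $S^*M$ as in Remark \ref{sphere_bundle} and then extending homogeneously.

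Quantizing $a, b, e$ to $A, B_1, E_1 \in \Psi^\bullet(M)$ (with the same wavefront/ellipticity properties as $B$ and $E$), the symbolic identity above and the commutator-symbol formula $\sigma(\tfrac{1}{i}[P,A^*A]) = H_p(a^2)$ give, modulo operators of strictly lower order,
\begin{equation*}
\tfrac{1}{i}[P, A^*A] \;=\; -B_1^*B_1 + E_1^*E_1 + R, \qquad R \in \Psi^{2s-1}(M),
\end{equation*}
with $\WF(R) \subset \Ell(G)$. Testing against $u$ and pairing the left-hand side as $2\,\Im\langle APu, Au\rangle$, one obtains
\begin{equation*}
\|B_1 u\|_{H^0}^2 \;\leq\; \|E_1 u\|_{H^0}^2 + 2\,|\langle APu, Au\rangle| + |\langle Ru, u\rangle|.
\end{equation*}
The first term is bounded by $\|Eu\|_{H^s}^2$ after an elliptic estimate (Proposition \ref{elliptic_estimate}) microlocalized in $\Ell(E)$; the $APu$ term is Cauchy-Schwarzed and controlled by $\|GPu\|_{H^{s-m+1}}\|Au\|_{H^0}$, with $\|Au\|_{H^0}$ absorbed into the left-hand side after a standard argument using $B_1$ elliptic on $\WF(A)$; the $R$ term, being of order $2s-1$ and microsupported in $\Ell(G)$, is absorbed inductively, or handled by a regularization argument in which one works with $A_\varepsilon = A \Lambda_\varepsilon$ for a family of smoothing cutoffs $\Lambda_\varepsilon \to \mathrm{Id}$ to justify the formal integration by parts when $u$ has only the a priori regularity $H^{-N}$. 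The residual term $\|\chi u\|_{H^{-N}}$ arises from commutators supported on the compact set where $A$ lives.

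The main obstacle is the escape-function construction: one must simultaneously arrange ellipticity of $a$ on $\WF(B)$, confinement of $\mathrm{supp}(a)$ inside $\Ell(G)$, transition of $a$ into $\Ell(E)$ at the backward endpoint, and a \emph{definite} sign of $H_p(a^2)$ everywhere on $\mathrm{supp}(a)$ except where it is compensated by $e^2$. This requires the flow-box structure provided by condition \eqref{propagation_condition} and the compactness of $\WF(B)$ in $S^*M$, which permits one to cover it by finitely many short flow tubes. The regularization argument needed to justify the positive-commutator identity when $u$ is only in $H^{-N}$ is technical but standard; alternatively one may deduce the estimate by a duality/density argument from the case of smooth $u$.
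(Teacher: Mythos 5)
The paper does not give a proof of Proposition~\ref{propagation_estimate}; it cites \cite[Theorem 8.7]{hintz_microlocal_notes}. Your proposal is precisely the positive-commutator (escape function) argument that reference uses, so the approach matches the paper's intended source, and the sketch is sound: the orders check out ($a\in S^{s-(m-1)/2}$ gives $H_p(a^2)\in S^{2s}$ and $[P,A^*A]\in\Psi^{2s}$), the reduction to short flow tubes via compactness of $\WF(B)$ in $S^*M$ and a microlocal partition of unity is the right way to globalize, and you correctly flag both the escape-function construction and the regularization needed when $u$ is merely in $H^{-N}_{\mathrm{loc}}$ as the substantive technical points. One small imprecision worth noting: you write that $\|Au\|_{H^0}$ is absorbed ``using $B_1$ elliptic on $\WF(A)$,'' but $b$ is elliptic only where $-aH_pa$ is bounded below, which excludes the incoming end of $\mathrm{supp}(a)$ where $e^2$ compensates; the correct statement is that on $\WF(A)$ the order-$s$ control comes from $B_1$ away from $\Ell(E)$ and from the a priori term $\|Eu\|_{H^s}$ near $\Ell(E)$, and only the $B_1$ contribution is absorbed into the left-hand side. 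This is a standard refinement, not a gap in the method.
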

\begin{rmk}
    In the statement of Proposition \ref{propagation_estimate}, regularity is propagated forward along the flow of $H_p$. It is also possible to propagate regularity backward along the Hamiltonian flow. Thus, the Proposition remains true if we replace \eqref{propagation_condition} by
    $$e^{TH_p}(x,\xi)\in\Ell(E), \quad e^{tH_p}(x,\xi)\in\Ell(G), \,\,\forall t\in[0,T].$$
\end{rmk}

\begin{rmk}
\label{homogeneous_hamiltonian}
    Note that the Hamiltonian vector field of the degree $m$ homogeneous principal symbol $p$ is homogeneous of degree $m-1$, in the sense that $M_\lambda^*H_p = \lambda^{m-1}H_p$ for $\lambda > 0$, where $M_\lambda^*$ is the pullback by the map $M_\lambda(x,\xi) = (x,\lambda\xi)$. Thus, the projection to the sphere bundle $S^*M$ of integral curves of $H_p$ through $(x,\xi)$ and $(x,\lambda\xi)$ agree up to reparametrization. In fact, the rescaled Hamiltonian vector field $|\xi|^{-(m-1)}H_p$ can be projected down to the sphere bundle to define a flow on $S^*M$. So the Hamiltonian flow of a homogeneous symbol can be viewed as living on $S^*M$. However, this perspective gives up information on the rate of expansion or contraction of the flow in the radial direction of the fibers.
\end{rmk}

Notice that the statement of Proposition \ref{propagation_estimate} becomes vacuous at radial points, that is, points $(x,\xi) \in T^*M$ where $H_p$ is parallel to the generator of dilations in the fiber. When viewed projected down to the sphere bundle, as in Remark \ref{homogeneous_hamiltonian}, these are fixed points of the Hamiltonian flow. More generally, if $\Lambda \subset T^*M$ is an invariant submanifold for the flow of $H_p$, the estimates of Proposition \ref{propagation_estimate} cannot be used to propogate regularity into or out of $\Lambda$. However, for certain kinds of invariant submanifolds, namely radial sources and radial sinks, propagation estimates of a somewhat different nature are available.

\begin{defn}[Radial Source, Radial Sink]
\label{def_source_sink}
    Let $\Lambda \subset \Char(P) \subset T^*M\setminus\{0\}$ be a smooth conic submanifold invariant under the flow of $H_p$, whose projection to $S^*M$ is compact. Let $\rho_r \in C^\infty(T^*M\setminus\{0\})$ be positive, homogeneous of degree $-1$ and elliptic in a neighborhood of $\Lambda$ (i.e. $\rho_r(x,\xi) \geq C|\xi|^{-1}$ there). Then $\Lambda$ is called a radial source (respectively sink) for $P$, if the following conditions hold:
    \begin{enumerate}
        \item
        $$\rho_r^{m-1}H_p\rho_r\bigr|_{\Lambda} = \alpha_r\rho_r, \quad (\text{respectively}\,\, \rho_r^{m-1}H_p\rho_r\bigr|_{\Lambda} = -\alpha_r\rho_r),$$
        where $\alpha_r\in C^\infty(\Lambda)$ is homogeneous of degree $0$ and satisfies $\alpha_r>0$.
        \item There exists a homogeneous degree $0$ function $\rho_t \in C^\infty(V)$, defined in a conic neighborhood $V$ of $\Lambda$, which is a quadratic defining function of $\Lambda$ within $\Char(P)$, in the sense that 
        $$\Lambda = \{(x,\xi) \in V \,\,|\,\, p(x,\xi)=0,\, \rho_t(x,\xi)=0\}$$
        and $\rho_t$ vanishes quadratically at $\Lambda$, such that in $V$ we have
        $$\rho_r^{m-1}H_p\rho_t \geq \alpha_t\rho_t + F_3, \quad (\text{respectively}\,\, \rho_r^{m-1}H_p\rho_t \leq -\alpha_t\rho_t + F_3),$$
        where $\alpha_t \in C^\infty(V)$ is homogeneous of degree $0$ and satisfies $\alpha_t>0$, and $F_3 \in C^\infty(V)$ vanishes at least cubically at $\Lambda$.
    \end{enumerate}
\end{defn}

For propagation estimates at a radial source or sink, we require that the principal symbol of $P-P^*$ is homogeneous. Note that we are still assuming $p = \sigma_m(P)$ is homogeneous and real-valued, which implies that $P-P^* \in \Psi^{m-1}(M)$. The radial estimates come in two different flavors, high regularity and low regularity estimates, depending on the Sobolev regularity of $u\in H^s$. For $s$ above a certain threshold, regularity can be propagated out of radial sources or sinks, and for $s$ below this threshold, regularity can be propagated into sources or sinks. The threshold regularity depends on the subprincipal symbol of $P$. Specifically, it is given in terms of a function $\alpha_s \in C^\infty(\Lambda)$, such that
\begin{equation}
\label{threshold_regularity}
\sigma_{m-1}\Bigl(\frac{1}{2i}(P-P^*)\Bigr)\Bigr|_\Lambda = \pm\alpha_s\alpha_r\rho_r^{-m+1}
\end{equation}
with $\alpha_r$, $\rho_r$ as in Definition \ref{def_source_sink}, where the plus sign corresponds to a radial source and the minus sign to a radial sink. We then have the following results, see \cite[Theorem 9.9]{hintz_microlocal_notes}.

\begin{prop}[High Regularity Radial Estimate]
\label{high_reg_radial_estimate}
    Let $P\in\Psi^m(M)$ be properly supported and have a real-valued homogeneous principal symbol. Let further $P-P^*$ have a homogeneous principal symbol. Assume that $\Lambda \subset T^*M\setminus\{0\}$ is a radial source or radial sink for $P$. Let $G\in\Psi^0(M)$ be compactly supported with $\Lambda \subset \Ell(G)$ and let $s'\in\R$ satisfy $s' > \frac{m-1}{2} + \alpha_s$ on $\Lambda$. Then for any $s,N\in\R$ with $s\geq s'$, there exists $C>0$, $\chi\in C^\infty_c(M)$ and $B\in\Psi^0(M)$ compactly supported with $\Lambda\subset \Ell(B)$, such that $\forall u \in H^{-N}_{\mathrm{loc}}(M)$ with $Gu \in H^{s'}_c(M)$ and $GPu \in H^{s-m+1}_c(M)$, we have $Bu \in H^s_c(M)$, and the following estimate holds:
    $$\|Bu\|_{H^s} \leq C\bigl(\|GPu\|_{H^{s-m+1}} + \|\chi u\|_{H^{-N}}\bigr).$$
\end{prop}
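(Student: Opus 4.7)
The proof is a positive commutator argument, following the Melrose-Vasy approach. The plan is to construct a commutant $A$ microlocalized near $\Lambda$, then exploit a definite-sign contribution in the principal symbol of $\tfrac{1}{i}[P,A^*A] + \tfrac{1}{i}(P-P^*)A^*A$ at $\Lambda$, which emerges precisely above the threshold $s > \tfrac{m-1}{2} + \alpha_s$. Concretely (treating the source case; the sink case is analogous with all flow-signs reversed), with cutoffs $\chi_t, \chi_r \in C^\infty([0,\infty))$ that are nonnegative, equal to $1$ near $0$, compactly supported, and nonincreasing, and with small parameters $\delta_t, \delta_r > 0$, I would set
$$a = \rho_r^{-(s-(m-1)/2)}\, \chi_t(\rho_t/\delta_t)\, \chi_r(\rho_r/\delta_r),$$
viewed, after extension by zero, as a symbol of order $s - (m-1)/2$ supported in a conic neighborhood of $\Lambda$ inside $\Ell(G) \cap V$. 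Quantize $A = \mathrm{Op}(a)$, properly supported, and choose $B\in\Psi^0(M)$ with $\Lambda \subset \Ell(B)$ and essential support inside $\Ell(A)$.

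A direct symbolic computation, using Definition \ref{def_source_sink}(1) together with \eqref{threshold_regularity}, shows that the principal symbol of $\tfrac{1}{i}[P,A^*A] + \tfrac{1}{i}(P-P^*)A^*A$ at $\Lambda$ equals
$$-2\bigl(s - \tfrac{m-1}{2} - \alpha_s\bigr)\, \alpha_r\, \rho_r^{-m+1}\, a^2,$$
which is strictly negative in the regime $s > \tfrac{m-1}{2} + \alpha_s$, and this sign persists in a conic neighborhood of $\Lambda$. The $H_p$-derivative of $\chi_t(\rho_t/\delta_t)$ contributes a factor with the same (favorable) sign once $\delta_t$ is chosen small enough, using $\chi_t'\le 0$ together with Definition \ref{def_source_sink}(2) to dominate the cubic remainder $F_3$. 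The $H_p$-derivative of $\chi_r(\rho_r/\delta_r)$ is microsupported in $\Ell(G)\setminus\Lambda$ and contributes an error term of the form $\|\tilde E u\|_{H^s}^2$ for some compactly supported $\tilde E\in\Psi^0(M)$.

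Feeding this into the commutator identity
$$2\,\mathrm{Im}\,\langle Pu, A^*Au\rangle = \bigl\langle\tfrac{1}{i}[P,A^*A]u, u\bigr\rangle + \bigl\langle\tfrac{1}{i}(P-P^*)A u, A u\bigr\rangle + \langle R u, u\rangle,$$
with $R$ of order $2s-m$, and invoking the sharp Gårding inequality, one obtains a lower bound $c\|Bu\|_{H^s}^2$ on the right side modulo $\|\tilde E u\|_{H^s}^2 + \|\chi u\|_{H^{-N}}^2$. The left side is bounded by $\|GPu\|_{H^{s-m+1}}\bigl(\|Bu\|_{H^s} + \|\chi u\|_{H^{-N}}\bigr)$ via the pairing together with elliptic regularity (using that $A^*A$ is microsupported where $B$ is elliptic), then absorbed via Young's inequality. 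The remaining error $\|\tilde E u\|_{H^s}$ is eliminated by Proposition \ref{propagation_estimate}, propagating backward along $-H_p$ from $\mathrm{WF}(\tilde E)$ into a small neighborhood of $\Lambda$, where the hypothesis $Gu \in H^{s'}$ supplies the required initial $H^{s'}$ control; iterating the resulting basic estimate finitely many times allows upgrading from $s'$ up to arbitrary $s\geq s'$. Since a priori only $u \in H^{-N}_{\mathrm{loc}}$ is assumed, the pairings must be regularized by replacing $A$ with $A\Lambda_\epsilon$, where $\Lambda_\epsilon \in \Psi^{-\delta_0}$ is uniformly bounded and converges to the identity as $\epsilon \to 0$, and letting $\epsilon\to 0$ in the uniform estimates.

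The main obstacle is extracting the sharp threshold $s > \tfrac{m-1}{2} + \alpha_s$: combining the bulk contribution with the $H_p\chi_t$ boundary contribution, in the presence of the cubic remainder $F_3$ from Definition \ref{def_source_sink}(2), requires careful bookkeeping of the quadratic vanishing of $\rho_t$ against $F_3$ in order to guarantee that the combined sign is strictly negative throughout the support of $a$. A secondary technical point is the regularization argument, where one must verify that the commutator error terms produced by $\Lambda_\epsilon$ are uniformly bounded with compatible signs so that the limit $\epsilon\to 0$ closes without any loss in the threshold or in the stated estimate.
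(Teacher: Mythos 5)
The paper does not prove this proposition; it cites it directly from \cite[Theorem 9.9]{hintz_microlocal_notes}. Your proposal reconstructs the standard positive commutator proof from that reference, and the essential structure is correct: the choice of commutant symbol $a=\rho_r^{-(s-(m-1)/2)}\chi_t(\rho_t/\delta_t)\chi_r(\rho_r/\delta_r)$, the symbolic computation yielding the leading contribution $-2\bigl(s-\tfrac{m-1}{2}-\alpha_s\bigr)\alpha_r\rho_r^{-m+1}a^2$ at $\Lambda$ (which I verified against Definition \ref{def_source_sink} and \eqref{threshold_regularity}), the use of sharp G{\aa}rding to extract $\|Bu\|_{H^s}^2$, the treatment of the $H_p\chi_r$ error term by Proposition \ref{propagation_estimate}, the regularization via $\Lambda_\epsilon$, and the upgrade from $s'$ to $s$ by iteration. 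You also correctly flag the two genuine technical subtleties: dominating the cubic remainder $F_3$ using the quadratic vanishing of $\rho_t$, and uniformity in the regularization.

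One sign error: the commutator identity should read
$$2\,\mathrm{Im}\,\langle Pu, A^*Au\rangle = -\Bigl\langle\tfrac{1}{i}[P,A^*A]u, u\Bigr\rangle + \Bigl\langle\tfrac{1}{i}(P-P^*)Au, Au\Bigr\rangle + \langle Ru, u\rangle,$$
with a minus sign on the commutator term; as written your identity is inconsistent with the (correct) sign you compute for the combined principal symbol. This is a bookkeeping slip rather than a structural defect, since in the end one always bounds $|2\,\mathrm{Im}\,\langle Pu,A^*Au\rangle|\le 2\|GPu\|_{H^{s-m+1}}\|Bu\|_{H^s}+\cdots$ regardless of the sign, but as written the sharp G{\aa}rding step would produce an upper, not lower, bound on the right side. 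It would also be worth making explicit that contributions to the commutator supported off $\Char(P)$ are absorbed by the elliptic estimate, since $\chi_t(\rho_t/\delta_t)\chi_r(\rho_r/\delta_r)$ alone does not localize conically to the characteristic set.
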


\begin{prop}[Low Regularity Radial Estimate]
\label{low_reg_radial_estimate}
    Let $P$ and $\Lambda$ be as in Proposition \ref{high_reg_radial_estimate}. Let $G\in\Psi^0(M)$ be compactly supported with $\Lambda \subset \Ell(G)$. Then for any $s,N\in\R$ with $s < \frac{m-1}{2} + \alpha_s$ on $\Lambda$, there exists $C>0$, $\chi\in C^\infty_c(M)$ and $B,E\in\Psi^0(M)$ compactly supported with $\Lambda\subset \Ell(B)$ and $\WF(E)\subset \Ell(G)\setminus\Lambda$, such that $\forall u \in H^{-N}_{\mathrm{loc}}(M)$ with $GPu \in H^{s-m+1}_c(M)$ and $Eu \in H^{s}_c(M)$, we have $Bu \in H^s_c(M)$, and the following estimate holds:
    $$\|Bu\|_{H^s} \leq C\bigl(\|GPu\|_{H^{s-m+1}} + \|Eu\|_{H^s} + \|\chi u\|_{H^{-N}}\bigr).$$
\end{prop}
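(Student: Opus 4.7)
The approach is a positive commutator argument, standard for radial point estimates but requiring care in the low regularity regime where the a priori assumption $Eu \in H^s$ off the radial set is essential. The proof will proceed by constructing a suitable commutant operator $A$ microlocalized near $\Lambda$, computing the principal symbol of a combination of $\tfrac{1}{i}[P,A^*A]$ and $A^*A \cdot \tfrac{1}{2i}(P-P^*)$, and converting a symbol-level positivity into an operator-level estimate via sharp G\r{a}rding and a regularization argument.

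Concretely, I would first choose a quantization $A\in\Psi^{s-(m-1)/2}(M)$ whose principal symbol has the form $a = \rho_r^{-s+(m-1)/2}\,\phi(\rho_t)\,\chi$, where $\chi$ is a conic cutoff to a small neighborhood of $\Lambda$ within $\Ell(G)$, and $\phi$ is a cutoff in $\rho_t$ supported where the quadratic defining function $\rho_t$ of Definition \ref{def_source_sink}(2) is small. Using the radial source (resp.\ sink) relations $\rho_r^{m-1}H_p\rho_r = \pm\alpha_r\rho_r$ on $\Lambda$ together with $\rho_r^{m-1}H_p\rho_t = \pm\alpha_t\rho_t + F_3$, and using the formula \eqref{threshold_regularity} for the principal symbol of $\tfrac{1}{2i}(P-P^*)$, the principal symbol of the combination
$$\frac{1}{i}[P,A^*A] + A^*A\cdot\frac{1}{i}(P-P^*)$$
is, up to lower order terms,
$$\pm 2\alpha_r\rho_r^{1-m}\Bigl[\tfrac{m-1}{2}+\alpha_s - s\Bigr]a^2 \;+\; (\text{terms from }H_p\phi, H_p\chi) \;+\; (\text{terms from } F_3).$$
When $s < \tfrac{m-1}{2}+\alpha_s$ on $\Lambda$, the first bracket has a definite sign (positive for a radial source after choosing the appropriate overall sign, mirror-image for a sink), so near $\Lambda$ the principal term dominates the cubic errors from $F_3$ and from the $H_p\phi$ contributions (which can be arranged to have the same sign or be absorbed by shrinking the support of $\phi$). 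The crucial feature of the low regularity case is that the term $H_p\chi$ is supported away from $\Lambda$ inside $\Ell(G)\setminus\Lambda$; this is precisely the region where the a priori assumption $Eu\in H^s$ (with $\WF(E)\subset\Ell(G)\setminus\Lambda$) lets us control the resulting contribution.

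Pairing $\bigl\langle \tfrac{1}{i}[P,A^*A]u + A^*A\tfrac{1}{i}(P-P^*)u,\, u\bigr\rangle$ against $u$ in two ways -- once by expanding the commutator and once by quantizing the principal symbol via sharp G\r{a}rding -- yields, after using the Cauchy--Schwarz inequality to bound the pairing $|\langle Pu, A^*Au\rangle|$ by $\tfrac{1}{2\varepsilon}\|GPu\|_{H^{s-m+1}}^2 + \tfrac{\varepsilon}{2}\|Bu\|_{H^s}^2$ and absorbing the $\|Bu\|_{H^s}^2$ term into the left-hand side, an estimate of the desired form with $B$ elliptic on $\Lambda$. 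The $\|Eu\|_{H^s}$ term captures the $H_p\chi$ contribution, while $\|\chi u\|_{H^{-N}}$ absorbs all errors coming from $\Psi^{2s-1}$ remainders in the symbolic calculus.

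The main technical obstacle is the justification of the formal pairing, since a priori we only have $u\in H^{-N}_{\mathrm{loc}}$, whereas the commutant has order $2s-m+1$ and the computation requires $u$ to lie in a Sobolev space of high enough regularity to pair against $A^*A u$. I would resolve this by the standard regularization: replace $a$ by $a_\varepsilon = a\cdot(1+\varepsilon|\xi|)^{-\delta}$ for $\delta > 0$ small, giving a family $A_\varepsilon$ of uniformly bounded regularizations whose quantizations are of order $2s-m+1-\delta$, run the argument with $A_\varepsilon$ to obtain an $\varepsilon$-uniform estimate, and then pass to the limit $\varepsilon\to 0$, using the lower regularity control on $Eu$ and $GPu$ together with the monotone convergence of the symbols. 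This regularization is routine but is the step where one must verify that no new obstructions (e.g.\ new $H_p$-derivatives falling on the regularizer) spoil the sign.
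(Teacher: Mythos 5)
Your positive-commutator outline is the standard argument for radial point estimates and is the route the cited reference (Hintz's notes, Theorem 9.9) takes; the paper itself does not prove this statement but invokes that reference, so there is no in-text proof to compare against. Your principal-symbol bookkeeping is correct: taking $A\in\Psi^{s-(m-1)/2}$ with $\sigma(A)=\rho_r^{-s+(m-1)/2}\phi(\rho_t)\chi$, the leading contribution of $\tfrac{1}{i}[P,A^*A]+A^*A\cdot\tfrac{1}{i}(P-P^*)$ at $\Lambda$ is indeed $\pm 2\bigl[\tfrac{m-1}{2}+\alpha_s-s\bigr]\alpha_r\rho_r^{1-m}a^2$, with a definite sign precisely when $s<\tfrac{m-1}{2}+\alpha_s$.

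However, the regularization step contains a genuine gap. You propose $a_\varepsilon=a\,(1+\varepsilon|\xi|)^{-\delta}$ with $\delta>0$ \emph{small}. That is the regularizer for the high-regularity radial estimate, where one already knows $u\in H^{s'}$ microlocally near $\Lambda$ for some $s'$ just below the target $s$, and only needs to bring the commutant order down slightly. Here, by contrast, the only a priori control near $\Lambda$ is $u\in H^{-N}_{\mathrm{loc}}$ with $N$ possibly very large, so the pairing $\bigl\langle A_\varepsilon^*A_\varepsilon u, Pu\bigr\rangle$ (and the expansion of the commutator against $u$) does not converge for each fixed $\varepsilon>0$ unless the regularizer brings the order of $A_\varepsilon^*A_\varepsilon$ down to roughly $-2N-m+1$; this forces $\delta\gtrsim s+N$, not $\delta$ small. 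The saving grace — and the reason the low-regularity estimate admits such a heavy regularization while the high-regularity one does not — is exactly the threshold condition you already invoke: lowering the effective differential order makes the effective $s$ smaller, which moves one \emph{further} below threshold, so $H_p$ falling on the regularizer produces a term of the favorable sign (one checks $H_p\langle\xi\rangle|_\Lambda=\mp\alpha_r\langle\xi\rangle^m$, so $H_p(1+\varepsilon\langle\xi\rangle)^{-2\delta}$ has the same sign as the main term near a source, and the mirror sign near a sink). You flag that this verification is needed but do not carry it out; it is the heart of why the low-regularity version works. Alternatively, one may keep $\delta$ small and iterate from $H^{-N}$ up to $H^s$ in half-integer steps, but then the iteration must be made explicit.

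A smaller imprecision: you suggest the $H_p\phi$ contribution ``can be arranged to have the same sign or be absorbed by shrinking the support of $\phi$.'' In the low-regularity case there is no need (and in general no possibility) to arrange a good sign for $H_p\phi$; since $\phi'$ is supported in $\Ell(G)\setminus\Lambda$, that contribution — like the $H_p\chi$ contribution and the $\phi'F_3$ error — is simply estimated by $\|Eu\|_{H^s}^2$ via the microlocal partition of unity. The sign arrangement via $\alpha_t\rho_t$ dominating $F_3$ is the mechanism of the \emph{high}-regularity radial estimate, where one propagates out of $\Lambda$ and cannot rely on an $Eu$ term. Keeping the two cases cleanly separated would tighten the write-up.
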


\subsection{Hyperbolic estimates on spaces of extendable/supported distributions}
\label{section_hyperbolic_estimates}

In this subsection, we state certain estimates that hold for strictly hyperbolic second order differential operators. We follow \cite[Appendix E.5]{dyatlov_zworski_scattering_book}. For a more thorough introduction to strictly hyperbolic operators see \cite[Section 23.2]{hörmander3}. We begin by introducing the Sobolev spaces of extendable and supported distributions, following \cite[Appendix B.2]{hörmander3}.

Let $X \subset \R^n$ be an open set with smooth boundary. For any $s\in\R$, we define the Sobolev space of extendable distributions as
$$\Bar{H}^s(X) = \{u\in H^s(X),\, u = v|_X \,\text{ for some }\, v\in H^s(\R^n)\}.$$
Likewise, we define the Sobolev space of supported distributions as
$$\Dot{H}^s(\Bar{X}) = \{u \in H^s(\R^n),\, \supp(u) \subset \Bar{X}\}.$$
Note that $\Dot{H}^s(\Bar{X})$ is a closed subspace of $H^s(\R^n)$, and is thus a Hilbert space equipped with the norm inherited from $H^s(\R^n)$. The kernel of the restriction $v \in H^s(\R^n) \to v|_X \in \Bar{H}^s(X)$ is precisely the space $\Dot{H}(\R^n \setminus X)$. Thus, $\Bar{H}^s(X)$ can be viewed as the quotient space
$$\Bar{H}^s(X) \cong H^s(\R^n) / \Dot{H}(\R^n \setminus X),$$
and forms a Hilbert space equipped with the quotient norm
$$\|u\|_{\Bar{H}^s(X)} = \inf\{\|v\|_{H^s(\R^n)},\, v\in H^s(\R^n), u = v|_X\}, \quad \forall u \in \Bar{H}^s(X).$$

Defining analogously the spaces $\Bar{C}^\infty(X)$ and $\Dot{C}^\infty(\Bar{X})$ of extendable, respectively supported, smooth functions, we note that the inclusions
$$\Bar{C}^\infty(X) \subset \Bar{H}^s(X), \quad \Dot{C}^\infty(\Bar{X}) \subset \Dot{H}^s(\Bar{X})$$
are dense. The $L^2$ pairing
$$(u,v) \in \Bar{C}^\infty(X) \times \Dot{C}^\infty(\Bar{X}) \to \int_X u(x)\Bar{v}(x)\,dx$$
extends by density to a pairing $\Bar{H}^s(X) \times \Dot{H}^{-s}(\Bar{X}) \to \C$, which provides an isomorphism of dual spaces
$$\bigr(\Bar{H}^s(X)\bigl)^* = \Dot{H}^{-s}(\Bar{X}).$$

Let now $X \subset \R^n$ have a compact smooth boundary $\partial X$ and consider $\Bar{X}$ as a manifold with boundary. Assume that for some function $x\in C^\infty(\Bar{X})$ and some $a,b\in\R$ with $a<b$, we have a product decomposition near the boundary:
$$x\times F:\,\, x^{-1}([a,b)) \xlongrightarrow{\sim} [a,b) \times \partial X,$$
where $\partial X = x^{-1}(\{a\})$. In the following, we denote by $y$ coordinates on $\partial X$ and by $\xi$ and $\eta$ the fiber coordinates on $T^*X$ associated to $x$ and $y$ respectively.

Let $P \in \mathrm{Diff}^2(X)$ be a second order differential operator whose coefficients are smooth up to the boundary, that is, they lie in the space $\Bar{C}^\infty(X)$. Then $P$ defines a bounded operator
$$P: \Bar{H}^s(X) \to \Bar{H}^{s-2}(X), \quad P: \Dot{H}^s(\Bar{X}) \to \Dot{H}^{s-2}(\Bar{X}).$$

\begin{defn}[Strictly Hyperbolic Operator]
\label{def_hyperbolic}
    We say that $P\in \mathrm{Diff}^2(X)$ is strictly hyperbolic with respect to $x$ on $x^{-1}([a,b))$ if its principal symbol $p(x,\xi,y,\eta)$ is real-valued for $x\in [a,b)$ and for each $x\in [a,b)$, $(y,\eta) \in T^*\partial X \setminus\{0\}$ the polynomial
    $$\xi \,\to\, p(x,\xi,y,\eta)$$
    has two distinct real roots.
\end{defn}

We can now state the pertinent estimates, which will be used in the proof of the Fredholm property for the Kerr spectral family. See \cite[Theorem E.56]{dyatlov_zworski_scattering_book} for a proof.

\begin{prop}[Hyperbolic Estimate]
\label{prop_hyperbolic_estimate}
    Assume that $P$ is strictly hyperbolic with respect $x$ on $x^{-1}([a,b))$. Let $\chi_1,\chi_2,\chi_3 \in \Bar{C}^\infty(X)$ be cutoff functions satisfying
    \begin{equation*}
        \supp(\chi_1) \subset x^{-1}\bigl([a,b)\bigr), \quad \chi_2 = 1 \,\,\text{on}\,\, x^{-1}\bigl([a,b)\bigr), \quad \chi_3 = 1 \,\,\text{near}\,\, x^{-1}\bigl(\{b\}\bigr).
    \end{equation*}
    Let $s\in\R$. Then there exists $C>0$ such that for all $u\in \Bar{H}^s(X)$ with $Pu \in \Bar{H}^{s-1}(X)$ the following estimate holds:
    \begin{equation}
    \label{hyperbolic_estimate_extendable}
        \|\chi_1 u\|_{\Bar{H}^s(X)} \leq C\bigl(\|\chi_2 Pu\|_{\Bar{H}^{s-1}(X)} + \|\chi_3 u\|_{\Bar{H}^s(X)}\bigr).
    \end{equation}
    Furthermore, there exists $C>0$ such that for all $v\in \Dot{H}^s(\Bar{X})$ with $Pv \in \Dot{H}^{s-1}(\Bar{X})$ the following estimate holds:
    \begin{equation}
    \label{hyperbolic_estimate_supported}
        \|\chi_1 v\|_{\Dot{H}^s(\Bar{X})} \leq C\|\chi_2 Pv\|_{\Dot{H}^{s-1}(\Bar{X})}
    \end{equation}
\end{prop}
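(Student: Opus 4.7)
The plan is to prove Proposition~\ref{prop_hyperbolic_estimate} via the classical factorization approach for strictly hyperbolic operators, combined with $L^2$-energy estimates propagated along the $x$-direction, exploiting the supports of the cutoffs to handle the two regimes differently.

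First, exploiting strict hyperbolicity on the collar $x^{-1}([a,b))$, the principal symbol factors as $p(x,\xi,y,\eta) = a(x,y,\eta)(\xi - \lambda_+(x,y,\eta))(\xi - \lambda_-(x,y,\eta))$ with $\lambda_\pm$ real-valued, distinct, and smooth, and with $a$ elliptic. By a standard pseudodifferential factorization (treating $x$ as the evolution variable and quantizing tangentially in $y$), one writes $P = A(D_x - B_+)(D_x - B_-) + R$, where $A$ is an elliptic zeroth-order factor, $B_\pm$ are first-order tangential pseudodifferential operators on $\partial X$ depending smoothly on $x \in [a,b)$ with principal symbols $\lambda_\pm$, and $R$ is a first-order remainder.

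For each factor $Q_\pm = D_x - B_\pm$, I would derive an $L^2$-energy estimate by computing $\tfrac{d}{dx}\|u(x,\cdot)\|_{L^2(\partial X)}^2$, using that the anti-selfadjoint part $B_\pm - B_\pm^*$ is of order zero, and then conjugating by a tangential pseudodifferential weight of order $s$ to promote the bound to an $H^s$ estimate; the resulting error terms of order $2s$ are controlled via the sharp G\aa rding inequality after inserting an exponential weight $e^{-Kx}$ with $K$ sufficiently large. Composing the two factor estimates and integrating in $x$ yields a slab estimate of the form
\[
\|u\|_{H^s(x^{-1}([a,b')))} \leq C\bigl(\|Pu\|_{H^{s-1}(x^{-1}([a,b')))} + \|u(b',\cdot)\|_{H^s(\partial X)} + \|\partial_x u(b',\cdot)\|_{H^{s-1}(\partial X)}\bigr),
\]
together with the analogous version propagating forward from $x = a$.

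For the extendable estimate \eqref{hyperbolic_estimate_extendable}, one applies the slab estimate propagating backward from some $b'$ in the region where $\chi_3 \equiv 1$ down to $x = a$; extendability of $u$ ensures the trace data at $x = b'$ is controlled by $\|\chi_3 u\|_{\Bar{H}^s(X)}$, giving the stated data term. For the supported estimate \eqref{hyperbolic_estimate_supported}, the key point is that $v \in \Dot{H}^s(\Bar{X})$, extended by zero past $x = a$, remains in $H^s(\R^n)$, so its Cauchy data at $x = a$ vanishes in the appropriate distributional sense; the forward-propagating energy estimate from zero data at $x = a$ then yields the homogeneous bound. The main technical obstacle is making the energy arguments rigorous at arbitrary Sobolev regularity $s$, which requires careful commutator analysis and a sharp G\aa rding step to absorb error terms; in the supported case at low $s$, where $v$ is a genuine distribution, one argues by density through $\Dot{C}^\infty(\Bar{X})$ to justify the manipulation of traces at $x = a$.
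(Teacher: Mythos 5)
The paper does not prove this proposition; it cites \cite[Theorem E.56]{dyatlov_zworski_scattering_book}. Your factorization approach---writing $P = A(D_x - B_+)(D_x - B_-) + R$ with $B_\pm$ first-order tangential pseudodifferential operators with real principal symbols $\lambda_\pm$, then composing first-order energy estimates for each factor $D_x - B_\pm$---is precisely the route taken there, and the overall architecture (backward propagation from the region $\{\chi_3 \equiv 1\}$ for the extendable estimate, forward propagation from vanishing data at $x=a$ for the supported one) is correct.

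There is, however, a genuine gap in your intermediate slab estimate. You write a bound whose right-hand side involves the trace terms $\|u(b',\cdot)\|_{H^s(\partial X)}$ and $\|\partial_x u(b',\cdot)\|_{H^{s-1}(\partial X)}$ and assert that these are controlled by $\|\chi_3 u\|_{\Bar{H}^s(X)}$ via extendability. This does not hold for general $s$: restricting an $H^s$ function to a fixed hypersurface loses half a derivative, so the trace $u(b',\cdot)$ is only controlled in $H^{s-\frac{1}{2}}(\partial X)$ by the bulk norm, not in $H^s(\partial X)$. (Hyperbolic propagation does give $H^s$ slice regularity, but that is a consequence of the very energy estimate you are in the middle of proving and cannot be invoked to control the data.) The standard repair, which should replace your trace-based step, is to avoid slices entirely: insert a cutoff $\tilde\chi(x)$ into the energy quantity itself---for instance, differentiate $\tilde\chi(x)\|\Lambda^s u(x,\cdot)\|^2_{L^2(\partial X)}$ in $x$ with $\tilde\chi \equiv 1$ on $[a,b'')$ and $\supp\tilde\chi' \subset \{\chi_3 \equiv 1\}$---and integrate in $x$ over the full interval. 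The derivative $\tilde\chi'$ produces a term supported where $\chi_3 \equiv 1$, which is then directly dominated by $\|\chi_3 u\|_{\Bar{H}^s(X)}$ without appeal to any trace theorem. With this modification, the remainder of your outline (the anti-selfadjoint part of $B_\pm$ being of order zero, conjugation by a tangential weight of order $s$, sharp G\aa rding with an exponential weight $e^{-Kx}$ to absorb commutator errors, and the density argument through $\Dot{C}^\infty(\Bar{X})$ to justify vanishing data at $x=a$ in the supported case) is sound and completes the proof.
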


\begin{rmk}
    Proposition \ref{prop_hyperbolic_estimate} can be understood in the context of the Cauchy problem for hyperbolic equations. In the estimate \eqref{hyperbolic_estimate_extendable}, the equation $Pu=f$ is solved with initial data given on the Cauchy surface $\{x=b\}$. The function $\chi_3$ cuts off to a small neighborhood of this Cauchy surface and $\|\chi_3 u\|_{\Bar{H}^s(X)}$ characterizes the size of the initial data. The solution $u$ can be controlled by the forcing $f$ and the initial data. In the estimate \eqref{hyperbolic_estimate_supported}, the equation is solved with initial data given on the Cauchy surface $\{x=a\} = \partial X$. The fact that $v\in \Dot{H}^s(\Bar{X})$ is a supported distribution should be interpreted as the vanishing of the initial data. Thus, the solution can be controlled by the forcing term alone.
\end{rmk}

\section{Quasinormal modes from the complex scaled Kerr spectral family}
\label{section_fredholm}

\subsection{The Kerr metric}
\label{section_kerr_metric}

Kerr spacetime \cite{kerr} is a 4-dimensional Lorentzian manifold, solving the Einstein vacuum equation, i.e. with vanishing Ricci curvature. It describes a rotating black hole with mass $m$ and specific angular momentum $a$. We use the convention $(+,-,-,-)$ for the metric signature and take $a\in(-m,m)$, which is the subextremal range of angular momenta. The region exterior to the black hole is given by ${M_0 = \R_t\times(r_+,\infty)_r\times \Sph^2}$, and the metric in Boyer-Lindquist coordinates on $M_0$ takes the form
\begin{equation}
\begin{split}
\label{metric_boyer_lindquist}
    g = \frac{1}{\rkerr^2}\bigl(\mu-a^2\sin^2(\theta)\bigr)dt^2 &+ \frac{1}{\rkerr^2}4mar\sin^2(\theta)dtd\varphi - \frac{\rkerr^2}{\mu}dr^2 - \rkerr^2d\theta^2 \\ 
    &- \frac{\sin^2(\theta)}{\rkerr^2}\bigl((r^2+a^2)^2 - a^2\sin^2(\theta)\mu\bigr)d\varphi^2,
    \end{split}
\end{equation}
where $(\theta,\varphi) \in (0,\pi)\times(0,2\pi)$ are spherical coordinates on $\Sph^2$. Here $\rkerr=\rkerr(r,\theta)$, $\mu=\mu(r)$ are  defined as
\begin{equation*}
    \rkerr^2 = r^2+a^2\cos^2(\theta), \qquad \mu = r^2-2mr+a^2.
\end{equation*}
Note that with $a\in(-m,m)$, $\mu(r)$ has two real roots $r_- < r_+$ given by 
$$r_+ = m+\sqrt{m^2-a^2}, \qquad r_- = m-\sqrt{m^2-a^2}.$$
The dual metric is
\begin{equation}
    g^{-1} = \frac{1}{\rkerr^2}\Bigl(\bigl(\frac{(r^2+a^2)^2}{\mu}-a^2\sin^2(\theta)\bigr)\partial_t^2 + \frac{4mar}{\mu}\partial_t\partial_\varphi - \mu\partial_r^2 - \partial_\theta^2 - \bigl(\frac{1}{\sin^2(\theta)}-\frac{a^2}{\mu}\bigr)\partial_\varphi^2\Bigr)
\end{equation}
The hypersurface at $r=r_+$ is the black hole horizon. Here, the Boyer-Lindquist coordinates break down. We can, however, extend the Kerr metric across the horizon using a different choice of coordinates. To this end, we define $t_*, \varphi_*$ on $M_0$ by
\begin{equation}
\label{coordinate_change}
    t_* = t + \int_{3m}^r\Bigl(\frac{r'^2+a^2}{\mu(r')} + h(r')\Bigr)\,dr', \qquad \varphi_* = \varphi + \int_{3m}^r\frac{a}{\mu(r')}\,dr',
\end{equation}
where $h\in C^\infty(\R_+)$ is a smooth bounded function, which will be chosen below so that $dt_*$ is everywhere timelike. Note that the choice of $r=3m$ for the coincidence of $t_*$ with $t$ and $\varphi_*$ with $\varphi$ is arbitrary. In the coordinates $(t_*,r,\theta,\varphi_*)$ the dual metric becomes
\begin{equation}
\label{metric_extended}
\begin{split}
    g^{-1} = -\frac{1}{\rkerr^2}\Bigl(&\mu\partial_r^2 + \partial_\theta^2 + \frac{1}{\sin^2(\theta)}\partial_{\varphi_*}^2 + 2a\partial_r\partial_{\varphi_*} + 2\bigl(r^2+a^2+\mu h\bigr)\partial_t\partial_r \\
    &+ 2a(1+h)\partial_t\partial_{\varphi_*}
    + \bigl(\mu h^2 + 2(r^2+a^2)h + a^2\sin^2(\theta)\bigr)\partial_t^2
    \Bigr).
\end{split}
\end{equation}
This expression has a well-defined extension to $r > r_-$ and defines a smooth Lorentzian metric on $M = \R_{t_*}\times(r_0,\infty)_r\times\Sph^2$, where we take an arbitrary $r_0 \in (m,r_+)$ as the position of the boundary of $M$ within the horizon.

We will choose $h\in C^\infty(\R_+)$ so that $dt_*$ becomes everywhere timelike on $M$. In addition, we take $h=-\frac{r^2+a^2}{\mu}$ in $r>R_0$, for some $R_0>r_+$. In this way, $dt$ and $dt_*$ agree outside the ball of radius $R_0$.
\begin{lemma}
\label{timelike}
    Let $R_0 > r_+$. There exists a function $h\in C^\infty(\R_+)$, satisfying 
    $$h(r) = -\frac{r^2+a^2}{\mu(r)}, \quad \text{for}\quad r>R_0,$$
    such that the one-form $dt_*$, with $t_*$ as in \eqref{coordinate_change}, is everywhere timelike, i.e. 
    $$g^{-1}(dt_*,dt_*)>0 \quad\text{on}\quad M.$$
\end{lemma}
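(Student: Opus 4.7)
The plan is to read off $g^{-1}(dt_*,dt_*)$ directly from \eqref{metric_extended} and then construct $h$ by gluing a bounded choice near and inside the horizon to the prescribed value $-(r^2+a^2)/\mu$ on $\{r>R_0\}$. Since the change of coordinates \eqref{coordinate_change} is independent of $t$, the coordinate vector fields $\partial_t$ (in Boyer--Lindquist) and $\partial_{t_*}$ coincide, so the coefficient of $\partial_t^2$ in \eqref{metric_extended} equals $g^{-1}(dt_*,dt_*)$, namely
$$g^{-1}(dt_*,dt_*) = -\rkerr^{-2}\bigl(\mu h^2 + 2(r^2+a^2)h + a^2\sin^2(\theta)\bigr).$$
Hence the timelike condition amounts to $Q(h,r,\theta) := \mu h^2 + 2(r^2+a^2)h + a^2\sin^2(\theta) < 0$, and since $\sin^2(\theta)\leq 1$ it suffices to arrange the $\theta$-independent inequality $Q_1(h,r) := \mu h^2 + 2(r^2+a^2)h + a^2 < 0$ on $(r_0,\infty)$.

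Two explicit admissible values are then available by direct substitution. For the prescribed value $h_\infty(r):=-(r^2+a^2)/\mu(r)$ one computes
$$Q_1(h_\infty,r) \,=\, -\frac{(r^2+a^2)^2 - \mu\, a^2}{\mu} \,=\, -\frac{r^4 + r^2 a^2 + 2mra^2}{\mu},$$
which is strictly negative for all $r>r_+$. For the constant $h_0:=-1$ one finds $Q_1(-1,r) = \mu - 2(r^2+a^2) + a^2 = -r(r+2m)$, strictly negative for all $r>0$. Thus $h_0$ is admissible on all of $(r_0,\infty)$, including across and inside the horizon, while $h_\infty$ is admissible precisely where $\mu>0$ and is the value prescribed for $r>R_0$.

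To glue the two together, fix $r_1\in(r_+,R_0)$ and $\chi\in C^\infty(\R_+)$ with $\chi\equiv 1$ on $(0,r_1]$ and $\chi\equiv 0$ on $[R_0,\infty)$, and set
$$h(r) \,:=\, \chi(r)h_0 + (1-\chi(r))h_\infty(r).$$
The only potential obstacle is the singularity of $h_\infty$ at $r=r_+$, but this is defused by the fact that $\chi\equiv 1$ on a neighborhood of $[r_0,r_+]$, so the factor $1-\chi$ vanishes identically there; hence $h$ is smooth on $\R_+$ and agrees with $h_\infty$ for $r>R_0$ as required. On the interpolation region $[r_1,R_0]$, $\mu>0$ makes $Q_1(\cdot,r)$ an upward parabola whose sublevel set $\{h\,|\,Q_1(h,r)<0\}$ is an interval, hence convex; since both $h_0$ and $h_\infty(r)$ lie in this interval, so does the convex combination $h(r)$, which yields $Q_1(h(r),r)<0$ throughout and completes the proof.
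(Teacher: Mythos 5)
Your proof is correct and follows essentially the same route as the paper's: reduce the timelike condition to the quadratic inequality $\mu h^2 + 2(r^2+a^2)h + a^2 < 0$, check that $h_0 = -1$ works on all of $(r_0,\infty)$ and that $h_\infty = -(r^2+a^2)/\mu$ works on $\{r>r_+\}$, then interpolate between them. The one small stylistic difference is in verifying the interpolated region: the paper directly bounds $|h + (r^2+a^2)/\mu|$ against $\frac{1}{\mu}\sqrt{r^2(r^2+a^2)+2mr}$ for $h\in[-(r^2+a^2)/\mu,-1]$, whereas you invoke the convexity of the sublevel set of the upward parabola $Q_1(\cdot,r)$ when $\mu>0$, so that a convex combination of two admissible values is automatically admissible. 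The convexity observation is cleaner and slightly more robust (it does not require computing the discriminant), but both arguments establish the same fact. One minor caveat in your gluing: you should note that $h(r)$ only makes sense as a convex combination when $\chi(r)\in[0,1]$, which you should impose on the cutoff; with that hygiene, the argument is airtight.
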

\begin{proof}
    From \eqref{metric_extended} we see that
    $$g^{-1}(dt_*,dt_*) = -\frac{1}{\rkerr^2}\bigl(\mu h^2 + 2(r^2+a^2)h + a^2\sin^2(\theta)\bigr).$$
    Since $\rkerr^2>0$, the condition $g^{-1}(dt_*,dt_*)>0$ on $M$ is equivalent to
    \begin{equation}
    \label{timelike_condition}
        \mu h^2 + 2(r^2+a^2)h + a^2 < 0, \quad \forall r\in(r_0,\infty).
    \end{equation}
    Note that $h(r)=-1$ fulfills this condition everywhere. Indeed,
    $$\mu - 2(r^2+a^2) + a^2 = -(r^2+2mr) < 0, \quad \forall r\in(r_0,\infty).$$
    In the region $\mu>0$, i.e. for $r>r_+$, $h$ must lie between the roots of the quadratic polynomial in $h$ given by \eqref{timelike_condition}. Thus, we need
    \begin{equation*}
        \Bigl|h(r) + \frac{r^2+a^2}{\mu} \Bigr| < \frac{1}{\mu}\sqrt{r^2(r^2+a^2) + 2mr}, \quad \text{for}\quad r>r_+.
    \end{equation*}
    Note that for $\mu>0$, we have $-\frac{r^2+a^2}{\mu} < -1$ and $r^2+a^2 > 2mr$. So for $h$ in the range 
    $$h(r) \in \Bigl[-\frac{r^2+a^2}{\mu},-1\Bigr],$$
    we find
    $$\Bigl|h(r) + \frac{r^2+a^2}{\mu} \Bigr| \leq \frac{r^2+a^2}{\mu} - 1 = \frac{2mr}{\mu} < \frac{1}{\mu}\sqrt{r^2(r^2+a^2) + 2mr},$$
    and condition \eqref{timelike_condition} is satisfied in $r>r_+$ for $h$ in this range. Thus, for some arbitrary $r_+ < \Tilde{r} < R_0$, we can choose $h(r) = -1$ in $r<\Tilde{r}$ and $h(r) = -\frac{r^2+a^2}{\mu}$ in $r>R_0$, and let $h(r)$ interpolate smoothly between these values in $r\in(\Tilde{r},R_0)$.
\end{proof}

Let $\Box_g = |\det(g)|^{-\frac{1}{2}}\partial_i(|\det(g)|^{\frac{1}{2}}g^{ij}\partial_j)$ be the wave operator associated to the Kerr metric $g$ on $M$. We define the Kerr spectral family $P(\sigma)$ by
$$\Box_g(e^{-i\sigma t_*}u(r,\theta,\varphi_*)) = e^{-i\sigma t_*}P(\sigma)u(r,\theta,\varphi_*).$$
Thus, $P(\sigma)$ is a family of operators on the spatial slice $X = (r_0,\infty)_r\times\Sph^2$ parameterized by $\sigma \in \C$. It is obtained from $\Box_g$ by replacing $\partial_{t_*}$ with $-i\sigma$:
\begin{equation}
\label{operator_full}
\begin{split}
    P(\sigma) = &\rkerr^{-2}\Bigl(D_r\mu D_r + \frac{1}{\sin(\theta)}D_\theta\sin(\theta)D_\theta + \frac{1}{\sin^2(\theta)}D_{\varphi_*}^2 + 2aD_rD_{\varphi_*}\Bigl) \\
    - \sigma&\rkerr^{-2}\Bigl(D_r(r^2+a^2+\mu h) + (r^2+a^2+\mu h)D_r + 2a(1+h)D_{\varphi_*}\Bigr) \\
    + \sigma^2&\rkerr^{-2}\Bigr(\mu h^2 + 2(r^2+a^2)h + a^2\sin^2(\theta)\Bigl).
\end{split}
\end{equation}
We will consider the action of $P(\sigma)$ on $\Bar{H}^s(X)$, the Sobolev spaces of extendable distributions defined in Section \ref{section_hyperbolic_estimates}, where we use the density $\rkerr^2\sin(\theta)dr d\theta d\varphi_*$ on $X$. Since $r^2 \leq \rkerr^2 \leq 2r^2$, the norm defined in this way is actually equivalent to the standard Sobolev norm  with respect to the Lebesgue measure on $X \subset \R^3$. Note that $P(\sigma)$ defines a bounded operator 
$$P(\sigma): \Bar{H}^s(X) \to \Bar{H}^{s-2}(X), \quad \forall s\in\R.$$
By the mode stability of the Kerr wave equation, see Proposition \ref{mode_stability} below, there are no quasinormal modes with $\sigma$ in the upper half-plane. However, if such unstable quasinormal mode solutions existed, they would be square integrable on the spatial slice $X$ and could be characterized through the kernel of $P(\sigma)$ on the above Sobolev spaces for $\Im(\sigma)>0$, see the discussion in the Section \ref{section_introduction}. In order to access quasinormal modes in the lower half-plane, we will deform $P(\sigma)$ by the complex scaling procedure.

\subsection{The complex scaled operator}
\label{section_scaled_operator}
We will now apply the complex scaling to the operator $P(\sigma)$, see Section \ref{section_complex_scaling}. To this end, let
$$F_\beta: X = \{x\in\R^3, |x|>r_0\} \to \C^3, \quad F_\beta(x) = f_\beta(|x|)\frac{x}{|x|},$$
where $f_\beta$ is the function constructed in Definition \ref{def_phase_function}. We denote the image of this map as $X_\beta = F_\beta(X)$. Thus, $X_\beta$ agrees with $X$ in the ball $B_{R_1}$, whereas, outside this ball, $X_\beta$ is a deformation of $X$ into $\C^3$.

\begin{figure}[b]
    \centering
    \includegraphics{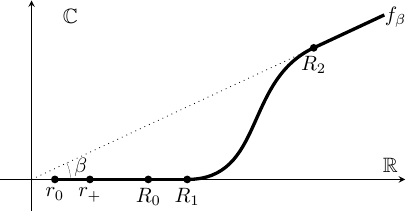}
    \caption{Depiction of the image of $f_\beta$ in the complex plane. The complex deformation begins at $|x|=R_1$, far from the horizon at $|x|=r_+$. The analytic extension of $P(\sigma)$ is already possible in $|x|>R_0$. For $|x|>R_2$ the complex scaled contour coincides with $e^{i\beta}\R^3$. Note that $|x|=r_0$ is the (topological) boundary of $X_\beta$ inside the horizon.}
    \label{fig_complex_scaling}
\end{figure}

To apply complex scaling, we must show that, for some $R_0$ large enough, $P(\sigma)$ has an analytic extension from $\Omega = \R^n \setminus \Bar{B}_{R_0}$ to an open set $U \subset \C^3$. Moreover, $U$ should include the deformed contours
$$\Gamma_\beta = F_\beta(\Omega) = X_\beta \cap \{z \in \C^3,\, |z| > R_0\},$$
where we take the radius $R_1$ where we actually start deforming $X$ larger than $R_0$. The complex scaled operator $P_\beta(\sigma)$ can then be defined by restriction of $P(\sigma)$ to the maximally totally real submanifold $\Gamma_\beta$.

For $(x_1,x_2,x_3)\in \Omega$, we have
$$F_\beta(x_1,x_2,x_3) = (e^{i\phi_\beta(r)}x_1, e^{i\phi_\beta(r)}x_2, e^{i\phi_\beta(r)}x_3),$$
where $r = \sqrt{x_1^2+x_2^2+x_3^2}$ and $\phi_\beta(r) \in [0,\beta]$, respectively $[\beta,0]$ for $\beta<0$. Denoting by $(z_1,z_2,z_3)$ coordinates on $\C^3$, we see that $(z_1,z_2,z_3) \in \Gamma_\beta$ satisfy 
$$\arg(z_1) = \arg(z_2) = \arg(z_3) \,\,\,\text{and}\,\,\, \arg(z_1^2+z_2^2+z_3^2) \in [0,2\beta], \,\,\text{respectively}\,\, [2\beta,0].$$
Furthermore, 
$$|z_1^2+z_2^2+z_3^2| = |z_1|^2+|z_2|^2+|z_3|^2 > R_0^2, \quad \text{on}\quad \Gamma_\beta.$$
For some $\delta>0$ small, let $U$ be the following open subset of $\C^3$:
\begin{equation}
\label{open_set_complex}
\begin{split}
    U = \{(z_1,z_2,z_3)\in\C^3,\,\, |z_1^2+z_2^2+z_3^2| > R_0^2,\,\, \arg(z_1^2+z_2^2+z_3^2) \in (-2\delta,2\pi-2\delta), \\
    |z|^2 < 2|x^2+y^2+z^2| \}.
\end{split}
\end{equation}
Then the discussion above shows that
$$\bigcup_{\beta\in[0,\pi-\delta)}\Gamma_\beta \subset U.$$
We could also scale in the other direction, that is, choosing $\beta$ negative. In this case we would replace $\arg(z_1^2+z_2^2+z_3^2) \in (-2\delta,2\pi-2\delta)$ in the definition of $U$ by 
$$\arg(z_1^2+z_2^2+z_3^2) \in (-2\pi+2\delta,2\delta),$$
and the resulting open subset contains the union $\bigcup_{\beta\in[0,\pi-\delta)}\Gamma_\beta$. We will now show that the operator $P(\sigma)$ in \eqref{operator_full} has an analytic continuation to $U$.

\begin{lemma}
\label{analytic_coefficients}
    Choosing $R_0$ large enough, the Kerr spectral family $P(\sigma)$ defines a differential operator with analytic coefficients on the open set $U\subset\C^3$ as in \eqref{open_set_complex}. The same holds if the condition $\arg(z_1^2+z_2^2+z_3^2) \in (-2\delta,2\pi-2\delta)$ is replaced by $\arg(z_1^2+z_2^2+z_3^2) \in (-2\pi+2\delta,2\delta)$ in the definition of $U$.
\end{lemma}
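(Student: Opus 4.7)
The plan is to rewrite $P(\sigma)$ in Cartesian coordinates $(x_1,x_2,x_3)$ on $\R^3$ and check that, once $R_0$ is chosen sufficiently large, every resulting coefficient extends to an analytic function on $U$. Under the usual change of variables $x_3=r\cos\theta$, $x_1^2+x_2^2=r^2\sin^2\theta$, the basic vector fields transform as
\[ r\partial_r = \sum_j x_j\partial_{x_j}, \qquad \partial_{\varphi_*} = x_1\partial_{x_2} - x_2\partial_{x_1}, \]
while the angular second-order combination appearing in the first line of \eqref{operator_full} is, up to a sign, the squared total angular momentum,
\[ \tfrac{1}{\sin\theta}D_\theta(\sin\theta D_\theta) + \tfrac{1}{\sin^2\theta}D_{\varphi_*}^2 = -\sum_{j<k}\bigl(x_j\partial_{x_k}-x_k\partial_{x_j}\bigr)^2, \]
which has polynomial, hence analytic, coefficients in $x$. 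The remaining ingredients $\mu=r^2-2mr+a^2$, $\rkerr^2 = r^2+a^2x_3^2/r^2$, $\sin^2\theta = (r^2-x_3^2)/r^2$, and (for $r>R_0$) $h=-(r^2+a^2)/\mu$, are all visibly rational in $r$ and $x_3$, with denominators composed only of powers of $r$, $\mu$, and $\rkerr^2$. A direct substitution verifies that the $\sigma$-linear and $\sigma^2$-terms in \eqref{operator_full} are rational of the same type.

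To complexify, I choose the single-valued analytic branch of $r=\sqrt{z_1^2+z_2^2+z_3^2}$ on $U$ with $\arg(r)\in(-\delta,\pi-\delta)$, which is well-defined because the condition $\arg(z_1^2+z_2^2+z_3^2)\in(-2\delta,2\pi-2\delta)$ built into \eqref{open_set_complex} excludes the would-be branch ray. The Cartesian derivatives $\partial_{x_j}$ are simply replaced by the complex derivatives $\partial_{z_j}$, and the polynomial expressions above remain polynomial in $z$. The alternative case $\arg(z_1^2+z_2^2+z_3^2)\in(-2\pi+2\delta,2\delta)$ is handled identically by taking the branch with $\arg(r)\in(-\pi+\delta,\delta)$.

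Finally, I verify the non-vanishing of the three denominators on $U$. Since $|r|^2=|r^2|=|z_1^2+z_2^2+z_3^2|>R_0^2$, one has
\[ |\mu| \geq |r|^2 - 2m|r| - a^2 > 0 \]
provided $R_0 > m+\sqrt{m^2+a^2}$. For $\rkerr^2 = (r^4+a^2z_3^2)/r^2$, the bound $|z|^2 < 2|z_1^2+z_2^2+z_3^2| = 2|r|^2$ from \eqref{open_set_complex} gives $|z_3|^2 < 2|r|^2$, so
\[ |r^4 + a^2 z_3^2| \geq |r|^4 - 2a^2|r|^2 = |r|^2\bigl(|r|^2 - 2a^2\bigr) > 0 \]
once $R_0 > \sqrt{2}|a|$. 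Taking $R_0$ larger than both thresholds (for instance $R_0\geq 3m$ suffices since $|a|<m$), every coefficient of \eqref{operator_full} is an analytic function of $z$ on $U$, proving the claim. The only delicate point is the single-valuedness of the complex square root; beyond this the verification is mostly bookkeeping, since the Kerr metric is real-analytic and axisymmetric and the complex deformation is performed far from the horizon in the elliptic region where no coefficient degenerates.
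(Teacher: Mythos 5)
Your proof is correct and takes essentially the same route as the paper: rewrite $P(\sigma)$ in Cartesian coordinates so that everything is rational in $z$ with denominators $r^2$, $\mu$, $\rkerr^2$, choose the analytic branch of $r=\sqrt{z_1^2+z_2^2+z_3^2}$ on $U$ using the argument restriction, and bound the denominators below on $U$ using $|r|>R_0$ and $|z|^2<2|r|^2$. The only (harmless) cosmetic difference is that you express the angular part as $-\sum_{j<k}(x_j\partial_{x_k}-x_k\partial_{x_j})^2$, whereas the paper folds it into the flat Laplacian via $\Delta_{\Sph^2}=r^2\Delta-(r\partial_r)^2-r\partial_r$; both yield polynomial coefficients and the same final denominators, and your explicit $R_0\ge 3m$ threshold is consistent with the paper's (unspecified) "large enough."
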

\begin{proof}
We choose $R_0$, so that $h(r) = -\frac{r^2+a^2}{\mu}$ in $U\cap\R^3$, see Lemma \ref{timelike}. In this domain, rewriting the Kerr spectral family, given in equation \eqref{operator_full}, in Cartesian coordinates, we obtain:
    \begin{equation*}
    \begin{split}
        P(\sigma) = -\frac{r^2}{\rkerr^2}\Bigl(&\partial_{x_1}^2+\partial_{x_2}^2+\partial_{x_3}^2 - \frac{2m}{r^3}(x_1\partial_{x_1}+x_2\partial_{x_2}+x_3\partial_{x_3})^2 \\
        &+ \frac{a^2}{r^4} \bigl((x_1\partial_{x_1}+x_2\partial_{x_2}+x_3\partial_{x_3})^2 - (x_1\partial_{x_1}+x_2\partial_{x_2}+x_3\partial_{x_3})\bigr) \\
        &+\frac{2a}{r^3}(x_1\partial_{x_1}+x_2\partial_{x_2}+x_3\partial_{x_3})(x_1\partial_{x_2}-x_2\partial_{x_1})\Bigr) \\
        &-i\sigma \frac{4amr}{\rkerr^2\mu}(x_1\partial_{x_2}-x_2\partial_{x_1}) - \sigma^2(1 + 2mr\frac{r^2+a^2}{\rkerr^2\mu})
    \end{split}
    \end{equation*}
Note that the square root of $x_1^2+x_2^2+x_3^2$ extends to an analytic function
$$r = \sqrt{z_1^2+z_2^2+z_3^2},$$
on $U$ and satisfies $|r| > R_0$. This is where restricting the argument of $z_1^2+z_2^2+z_3^2$ is essential. We thereby also obtain an analytic extension to $U$ of the functions
$$\rkerr^2 = r^2 + a^2\frac{z^2}{r^2}, \quad\text{and}\quad \mu = r^2 - 2mr + a^2.$$
Thus, the coefficients of $P(\sigma)$ certainly extend to meromorphic functions on $U$, and it remains to check that they have no poles inside of $U$. To this end, notice that on $U$, we have
\begin{align*}
    |\rkerr^2| &\geq |r^2| - a^2\frac{|z|^2}{|r^2|} > R_0^2 - 2a^2,\\
    |\mu| &\geq |r^2| - 2m|r| - a^2 > R_0^2 - 2mR_0 - a^2.
\end{align*}
Choosing $R_0$ large enough, we can ensure that $|\rkerr^2|, |\mu| > 0$ on $U$.
\end{proof}

\begin{rmk}
    Note that for any fixed $\beta \in (-\pi,\pi)$, we can choose $\delta$ small enough so that both $\Gamma_0 = \Omega \subset \R^3$ and $\Gamma_\beta$ are contained in an open subset of $\C^3$ on which $P(\sigma)$ is analytic. This will allow us, in Proposition \ref{prop_qnm_scaling_independent} below, to relate solutions of $P(\sigma)|_{\Gamma_\beta}u = 0$ to solutions for the original Kerr spectral family. The requirement of finding an open subset of $\C^3$ that includes both $\Gamma_\beta$ and $\Gamma_0$, on which the square root $\sqrt{z_1^2+z_2^2+z_3^2}$ is analytic, restricts the scaling angle to lie in the interval $(-\pi,\pi)$. This is the reason why the statement in Theorem \ref{thm_analytic_continuation} only concerns $\sigma$ lying in the first sheets of the logarithmic cover. Scaling beyond $\pi$, and thus exploring further sheets of the logarithmic cover in Theorem \ref{thm_analytic_continuation}, may be possible, but would require a more sophisticated analysis.
\end{rmk}

By restricting to the maximally real submanifolds $\Gamma_\beta \subset U$, we now obtain a differential operator $P(\sigma)|_{\Gamma_\beta}$ on $\Gamma_\beta$ for each $\beta \in (-\pi,\pi)$. We define the complex scaled operator $P_\beta(\sigma)$ on $X_\beta$ by
\begin{equation*}
    P_\beta(\sigma) = \begin{cases} P(\sigma) &\text{on } X_\beta \setminus \Gamma_\beta\\ P(\sigma)|_{\Gamma_\beta} &\text{on } \Gamma_\beta \end{cases}
\end{equation*}
Note that, with this definition, $P_0(\sigma) = P(\sigma)$.

Using $F_\beta^{-1}: X_\beta \to X$ as a coordinate chart and additionally working with the coordinates $(r,\theta,\varphi_*)$ on $X$, the complex scaled operator is given in $\Gamma_\beta$, i.e. for $r>R_0$, by
\begin{equation}
\label{complex_scaled_operator}
\begin{split}
    P_\beta(\sigma) = &\frac{1}{\rkerr_\beta^{2}}\Bigl(\frac{1}{f_\beta'}D_r \frac{\mu_\beta}{f_\beta'} D_r + \frac{1}{\sin(\theta)}D_\theta\sin(\theta)D_\theta + \frac{1}{\sin^2(\theta)}D_{\varphi_*}^2 + \frac{2a}{f_\beta'}D_rD_{\varphi_*}\Bigl) \\
    &+ \sigma \frac{4maf_\beta}{\rkerr_\beta^2\mu_\beta}D_{\varphi_*}
    - \sigma^2\Bigr(1 + \frac{2mf_\beta(f_\beta^2+a^2)}{\rkerr_\beta^2\mu_\beta}\Bigl),
\end{split}
\end{equation}
where 
$$\rkerr_\beta^2 = \rkerr(f_\beta(r), \theta)^2 = f_\beta(r)^2 + a^2\cos^2(\theta), \quad \mu_\beta = \mu(f_\beta(r)) = f_\beta(r)^2 - 2mf_\beta(r) + a^2,$$ with $f_\beta(r)$ as in Definition \ref{def_phase_function}.

We let $P_\beta(\sigma)$ act on $\Bar{H}^s(X_\beta)$ and $P_\beta(\sigma)^*$ on $\Dot{H}^s(\Bar{X}_\beta)$, the Sobolev spaces of extendable distributions respectively supported distributions on $X_\beta$, where the norms are defined with derivatives taken in the coordinate chart $F_\beta^{-1}$, and the pushforward measure $(F_\beta)_*(\rkerr^2\sin(\theta)drd\theta d\varphi_*)$ is used. With this definition, the pullback induces unitary equivalences
$$F_\beta^*: \Bar{H}^s(X_\beta) \to \Bar{H}^s(X), \quad F_\beta^*: \Dot{H}^s(\Bar{X}_\beta) \to \Dot{H}^s(\Bar{X}).$$

Although we initially defined the complex scaled operator as the analytic continuation of $P(\sigma)$ restricted to the family of submanifolds $X_\beta$, we can also view $P_\beta(\sigma)$ from a different perspective. Namely, as a family of operators $F_\beta^*P_\beta(\sigma)(F_\beta^{-1})^*$ acting on the same space $\Bar{H}^s(X)$. By a slight abuse of notation, we continue to denote this family as $P_\beta(\sigma)$. It coincides with the coordinate expression given in \eqref{complex_scaled_operator}. From this perspective, the formal adjoint of $P_\beta(\sigma)$ is given by
\begin{equation*}
\label{adjoint_scaling}
\begin{split}
    P_\beta(\sigma)^* = P_{-\beta}(\Bar{\sigma}) &+ 2\frac{\mu_{-\beta}}{\rkerr^2f'_{-\beta}}D_r\Bigl(\frac{\rkerr^2}{\rkerr^2_{-\beta}f'_{-\beta}}\Bigr)D_r + \frac{2a}{\rkerr^2}D_r\Bigl(\frac{\rkerr^2}{\rkerr^2_{-\beta}f'_{-\beta}}\Bigr)D_{\varphi_*} + \frac{2}{\rkerr^2}D_\theta\Bigl(\frac{\rkerr^2}{\rkerr_{-\beta}^2}\Bigr)D_\theta \\ &+ \frac{1}{\rkerr^2\sin(\theta)}D_\theta\Bigl(\sin(\theta)D_\theta\Bigl(\frac{\rkerr^2}{\rkerr_{-\beta}^2}\Bigr)\Bigr)
    + \frac{1}{\rkerr^2}D_r\Bigr(\frac{\mu_{-\beta}}{f'_{-\beta}}D_r\Bigl(\frac{\rkerr^2}{\rkerr^2_{-\beta}f'_{-\beta}}\Bigr)\Bigl).
\end{split}
\end{equation*}
Since
$$D_r\Bigl(\frac{\rkerr^2}{\rkerr^2_{-\beta}f'_{-\beta}}\Bigr) = \mathcal{O}(r^{-3}), \quad D_\theta\Bigl(\frac{\rkerr^2}{\rkerr_{-\beta}^2}\Bigr) = \mathcal{O}(r^{-2}), \quad \text{as}\,\, r\to\infty,$$
we have
$$P_\beta(\sigma)^* - P_{-\beta}(\Bar{\sigma}) \in r^{-3}\text{Diff}^1(X),$$
i.e. a first order differential operator with coefficients decaying as $r^{-3}$.

\subsection{Fredholm property of the complex scaled spectral family}
\label{section_fredholm_property}

We now state the central result of this section, namely that the complex scaled Kerr spectral family, $P_\beta(\sigma)$, defines a Fredholm operator on Sobolev spaces of high enough regularity, for $\sigma$ in a $\beta$-dependent open half-plane of $\C$.

In order to prove Fredholm estimates, we must modify the domain of our operators. Instead of working with the full Sobolev space of supported distributions, we let $P_\beta(\sigma)$ act on
\begin{equation*}
    \mathcal{X}^s_\beta = \{u \in \Bar{H}^s(X_\beta) \,\,|\,\, P_\beta(0)u \in \Bar{H}^{s-1}(X_\beta)\},
\end{equation*}
endowed with the norm
\begin{equation*}  
    \|u\|_{\mathcal{X}^s_\beta} = \|u\|_{\Bar{H}^s(X_\beta)} + \|P_\beta(0)u\|_{\Bar{H}^{s-1}(X_\beta)}.
\end{equation*}
Note that $P_\beta(\sigma): \mathcal{X}^s_\beta \to \Bar{H}^{s-1}(X_\beta)$ defines a bounded operator for each $\sigma \in \C$, since $P_\beta(\sigma) - P_\beta(0) \in \mathrm{Diff}^1(X_\beta)$.
Similarly, we define
$$\mathcal{Y}^s_\beta = \{v \in \Dot{H}^s(\Bar{X}_\beta) \,\,|\,\, P_\beta(0)^*v \in \Dot{H}^{s-1}(\Bar{X}_\beta)\},$$
which will serve as the domain for the operator $P_\beta(\sigma)^*$.

We will also make use of the weighted Sobolev spaces
$$\Bar{H}^{s,r}(X) = \brac{x}^{-r}\Bar{H}^s(X), \quad \Dot{H}^{s,r}(\Bar{X}) = \brac{x}^{-r}\Dot{H}^s(\Bar{X})$$
for $r \in \R$. Note that the inclusions $\Bar{H}^{s,r}(X) \subset \Bar{H}^{s',r'}(X)$ and $\Dot{H}^{s,r}(\Bar{X}) \subset \Dot{H}^{s',r'}(\Bar{X})$ are compact for $s>s'$, $r>r'$. On the deformed manifold $X_\beta$, we define
$$\Bar{H}^{s,r}(X_\beta) = (F_\beta^{-1})^*\Bar{H}^{s,r}(X), \quad \Dot{H}^{s,r}(\Bar{X}_\beta) = (F_\beta^{-1})^*\Dot{H}^{s,r}(\Bar{X}).$$

\begin{prop}
\label{fredholm_prop}
    Let
    \begin{equation*}
        \Lambda_\beta = \bigl\{\sigma \in \C\setminus\{0\} \,\,|\,\, \arg(\sigma) \in (-\beta,\pi-\beta)\bigr\},
    \end{equation*}
    and
        \begin{equation*}
            \Omega_s = \Bigl\{\sigma \in \C \,\,|\,\, \Im(\sigma) > \frac{1}{\alpha}\Bigl(\frac{1}{2} - s\Bigr)\Bigr\},
        \end{equation*}
    where
        $$\alpha = 2\Bigl(m + \frac{m^2}{\sqrt{m^2-a^2}}\Bigr).$$
    Then
    $$P_\beta(\sigma): \mathcal{X}^s_\beta \to \Bar{H}^{s-1}(X_\beta)$$
    is an analytic family of Fredholm operators for $\sigma \in \Lambda_\beta \cap \Omega_s$.
\end{prop}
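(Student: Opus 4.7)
The strategy is to prove a pair of semi-Fredholm estimates: one for $P_\beta(\sigma)$ on $\mathcal{X}^s_\beta$, and a dual estimate for the formal adjoint $P_\beta(\sigma)^*$ on $\mathcal{Y}^s_\beta$, each of the form
\begin{equation*}
    \|u\|_{\mathcal{X}^s_\beta} \leq C\bigl(\|P_\beta(\sigma)u\|_{\Bar{H}^{s-1}(X_\beta)} + \|u\|_{\Bar{H}^{s',r'}(X_\beta)}\bigr)
\end{equation*}
with $s' < s$ and $r' < 0$, so that the error term is relatively compact. Standard functional analysis then gives that $P_\beta(\sigma)$ has closed range with finite-dimensional kernel and cokernel. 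The analytic dependence of the coefficients of $P_\beta(\sigma)$ on $\sigma$, together with these estimates being locally uniform in $\sigma$, upgrades this to an analytic family of Fredholm operators.

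The estimate is assembled via a microlocal partition of unity from four regions. First, in the complex scaling region $\{r > R_1\}$ the principal symbol of $P_\beta(\sigma)$ acquires an imaginary part from the factors $1/f'_\beta(r)$ and $\mu_\beta$; for $r > R_2$, the scattering principal symbol at infinity reduces to $e^{-2i\beta}|\xi|^2 - \sigma^2$. This is elliptic in the scattering sense exactly when $\sigma^2 \notin e^{-2i\beta}\R_+$, which is precisely the hypothesis $\arg(\sigma) \in (-\beta,\pi-\beta)$ defining $\Lambda_\beta$; together with the standard ellipticity established in Section \ref{section_scattering_elliptic}, this yields an elliptic estimate on a full neighborhood of $\{r\geq R_1\}$ up to infinity, with compact error. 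Second, in the intermediate region $\{r_+ < r < R_1\}$ the principal symbol is real and the characteristic set is non-trivial; the Hamiltonian flow of the principal symbol is studied in Section \ref{section_hamiltonian_flow}, and propagation of singularities (Proposition \ref{propagation_estimate}) transports control from the elliptic region along every null-bicharacteristic either into the elliptic set or into the radial set over the horizon. Third, at the event horizon $\{r = r_+\}$ the flow exhibits a radial source--sink structure analogous to the Kerr--de Sitter case analyzed by Vasy; here the high regularity radial estimate (Proposition \ref{high_reg_radial_estimate}) applies to $P_\beta(\sigma)$, and the low regularity version (Proposition \ref{low_reg_radial_estimate}) applies to $P_\beta(\sigma)^*$. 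Fourth, inside the horizon $\{r_0 < r < r_+\}$ the operator $P_\beta(\sigma)$ is strictly hyperbolic with respect to $-r$ (treating $r$ as a timelike function), and Proposition \ref{prop_hyperbolic_estimate} propagates the estimate all the way to the artificial boundary $\{r = r_0\}$, using extendable distributions for $P_\beta(\sigma)$ and supported distributions for $P_\beta(\sigma)^*$.

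\textbf{The main obstacle} is the radial point estimate at the event horizon, since this is where the Sobolev regularity threshold $s$ enters through the subprincipal symbol. A direct computation of $\sigma_1\bigl(\frac{1}{2i}(P_\beta(\sigma) - P_\beta(\sigma)^*)\bigr)$ restricted to the radial set over $r=r_+$ should, in the notation of \eqref{threshold_regularity}, yield a threshold function $\alpha_s$ with a linear dependence on $\Im(\sigma)$ whose coefficient is governed by the surface gravity of the horizon. The condition $s > \frac{1}{2} + \alpha_s$ required for Proposition \ref{high_reg_radial_estimate} must then rearrange exactly to $\Im(\sigma) > \frac{1}{\alpha}(\frac{1}{2} - s)$ with $\alpha = 2(m + m^2/\sqrt{m^2-a^2})$, which is essentially $1/\kappa$ for the horizon surface gravity $\kappa = \frac{r_+ - r_-}{2(r_+^2+a^2)}$; getting this constant correctly is the key computational point. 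For the adjoint, the source and sink roles exchange, but since $P_\beta(\sigma)^* - P_{-\beta}(\Bar{\sigma})$ is a first order operator with $r^{-3}$-decaying coefficients (and hence a lower-order perturbation at the horizon), the dual threshold condition $1-s < \frac{1}{2} + \alpha_s$ for low-regularity propagation recovers the same half-plane $\Omega_s$. Combining all four microlocal ingredients via Proposition \ref{microlocal_partition_of_unity} and absorbing error terms into the compact remainder closes the Fredholm estimate.
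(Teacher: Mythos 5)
Your plan coincides in structure and content with the paper's argument: semi-Fredholm estimates for $P_\beta(\sigma)$ and its formal adjoint are established via the same four-region decomposition (scattering-elliptic region, propagation in the ergoregion, radial point estimates over the horizon, and hyperbolic estimates inside the horizon), and your identification of the threshold constant $\alpha = 1/\kappa$ in terms of the surface gravity is exactly the computation carried out in \eqref{threshold_reg_kerr}. Two points, however, should be tightened. First, the conclusion is not quite ``standard functional analysis'': the adjoint estimate controls $\ker_{\mathcal{Y}^{-s+1}_\beta}(P_\beta(\sigma)^*)$, but the operator $P_\beta(\sigma)^*\colon \mathcal{Y}^{-s+1}_\beta \to \Dot{H}^{-s}(\Bar{X}_\beta)$ is \emph{not} the Banach-space adjoint of $P_\beta(\sigma)\colon \mathcal{X}^s_\beta \to \Bar{H}^{s-1}(X_\beta)$ — the target is one derivative off from $(\mathcal{X}^s_\beta)^*$ — so finite-dimensionality of $\ker(P_\beta(\sigma)^*)$ does not directly give finite-dimensional cokernel. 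The paper closes this gap with an explicit Hahn--Banach argument showing that the annihilator of $\ker(P_\beta(\sigma)^*)$ is contained in the range of $P_\beta(\sigma)$, and your write-up should include (or at least acknowledge) this step. Second, for the adjoint estimate the radial source/sink classification of $\Lambda_\pm$ does \emph{not} exchange — it depends only on the real principal symbol, which is unchanged. What flips is the sign of the subprincipal quantity $\alpha_s$, converting the high-regularity radial estimate at $s$ into the low-regularity estimate at $-s+1$. Your appeal to the $r^{-3}$-decay of $P_\beta(\sigma)^* - P_{-\beta}(\bar\sigma)$ is beside the point at the horizon, where one has $P_\beta(\sigma)^* = P_\beta(\bar\sigma)$ on the nose; that decay is only needed in the complex scaling region at large $r$.
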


Proposition \ref{fredholm_prop} will follow from Fredholm estimates for $P_\beta(\sigma)$ and $P_\beta(\sigma)^*$. That is, we show that $\|u\|_{\Bar{H}^{s}(X_\beta)}$ is bounded by $\|P_\beta(\sigma)u\|_{\Bar{H}^{s-1}(X_\beta)}$ and $\|v\|_{\Dot{H}^{-s+1}(\Bar{X}_\beta)}$ is bounded by $\|P_\beta(\sigma)^*v\|_{\Dot{H}^{-s}(\Bar{X}_\beta)}$ modulo compact error terms. In the region where complex scaling takes place, we show that both operators are elliptic, and use the additional ellipticity at infinity, i.e. scattering ellipticity, to obtain such estimates. Close to the black hole horizon, we use the methods of \cite{vasy_KdS} based on the Hamiltonian flow of the principal symbol and the propagation and radial estimates of Section \ref{section_microlocal_estimates}. Finally, we close the Fredholm estimates inside the black hole horizon using the results of Section \ref{section_hyperbolic_estimates} on strictly hyperbolic operators.

\subsection{Elliptic estimates in the complex scaling region}
\label{section_scattering_elliptic}

We begin our discussion of Proposition \ref{fredholm_prop} by proving elliptic estimates for $P_\beta(\sigma)$ and its formal adjoint in the complex scaling region.
\begin{lemma}
\label{lemma_scattering_elliptic}
    $P_\beta(\sigma)$ and $P_\beta(\sigma)^*$ are elliptic on $\Gamma_\beta = X_\beta \cap \{x\in\C^3 \,|\, |x|>R_0\}$. Moreover, let $\sigma \in \Lambda_\beta$ and $\chi, \Tilde{\chi} \in C^\infty(X_\beta)$ with $\supp(\chi), \supp(\Tilde{\chi}) \subset \{|x|>R_0\}$ and $\Tilde{\chi}=1$ on $\supp(\chi)$. Then for each $N\in\N$, there exists $C>0$, such that the following estimates hold for all $u \in \Bar{H}^s(X_\beta)$ and $v \in \Dot{H}^s(\Bar{X}_\beta)$:
    \begin{equation*}
    \begin{split}
        \|\chi u\|_{\Bar{H}^{s}(X_\beta)} &\leq C\bigl(\|\Tilde{\chi}P_\beta(\sigma)u\|_{\Bar{H}^{s-2}(X_\beta)} + \|\Tilde{\chi} u\|_{\Bar{H}^{-N, -1}(X_\beta)}\bigr), \\
        \|\chi v\|_{\Dot{H}^s(\Bar{X}_\beta)} &\leq C\bigl(\|\Tilde{\chi}P_\beta(\sigma)^*v\|_{\Dot{H}^{s-2}(\Bar{X}_\beta)} + \|\Tilde{\chi} v\|_{\Dot{H}^{-N,-1}(\Bar{X}_\beta)}\bigr).
    \end{split}
    \end{equation*}
\end{lemma}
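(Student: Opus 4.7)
The plan is to verify that $P_\beta(\sigma)$ and $P_\beta(\sigma)^*$ are both standardly elliptic on $\Gamma_\beta$ and, moreover, scattering elliptic at spatial infinity for $\sigma \in \Lambda_\beta$, and then to invoke the scattering-elliptic estimate on $\R^3 \setminus \Bar{B}_{R_0}$ pulled back via the chart $F_\beta^{-1}$.

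First I would read off the standard principal symbol of $P_\beta(\sigma)$ from \eqref{complex_scaled_operator}. In the coordinates induced by $F_\beta^{-1}$ it is the complex quadratic form
$$p_\beta(r,\theta,\varphi_*;\xi_r,\xi_\theta,\xi_{\varphi_*}) = \frac{1}{\rkerr_\beta^2}\Bigl(\frac{\mu_\beta}{(f_\beta')^2}\xi_r^2 + \xi_\theta^2 + \frac{\xi_{\varphi_*}^2}{\sin^2\theta} + \frac{2a}{f_\beta'}\xi_r\xi_{\varphi_*}\Bigr).$$
At $\beta = 0$, $p_0$ is real and, by asymptotic flatness, a nondegenerate quadratic form on real $\xi$ for $r > R_0$ (with $R_0$ chosen large). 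For $\beta \neq 0$ the coefficients are rotated into $\C$; using the bound $|\arg(f_\beta') - \arg(f_\beta)| < \pi\varepsilon$ from Definition \ref{def_phase_function} together with $\arg(f_\beta) \in [0,\beta]$, I would check that for $\varepsilon$ sufficiently small $\Im p_\beta$ has a definite sign on the cone where $\Re p_\beta = 0$, ruling out $p_\beta(\xi) = 0$ for real $\xi \neq 0$. A direct computation, mirroring the one in Section \ref{section_scaled_operator} for $P_\beta(\sigma)^* - P_{-\beta}(\Bar\sigma) \in r^{-3}\mathrm{Diff}^1(X)$, shows that the principal symbol of the adjoint is $p_{-\beta} = \overline{p_\beta}$, hence equally elliptic.

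Second, the scattering ellipticity at infinity is where the condition $\sigma \in \Lambda_\beta$ becomes essential. For $r > R_2$ one has $f_\beta(r) = e^{i\beta}r$ and $f_\beta'(r) = e^{i\beta}$, and a direct computation from \eqref{complex_scaled_operator} gives
$$P_\beta(\sigma) = -e^{-2i\beta}\Delta_{\R^3} - \sigma^2 + R,$$
where $R$ is of lower scattering order (either one order lower in derivatives, or with an additional factor of $r^{-1}$, or both). The scattering principal symbol at spatial infinity is therefore $e^{-2i\beta}|\xi|^2 - \sigma^2$. For $\sigma \in \Lambda_\beta$ one has $\arg(\sigma) \in (-\beta, \pi-\beta)$, so $\arg(\sigma^2) \in (-2\beta, 2\pi - 2\beta)$ strictly; this excludes the ray $\{t e^{-2i\beta} : t \geq 0\}$ on which $e^{-2i\beta}|\xi|^2$ lies, and $\sigma \neq 0$ handles the fiber origin. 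The scattering symbol of $P_\beta(\sigma)^*$ is $e^{2i\beta}|\xi|^2 - \Bar\sigma^2$, and the same argument with $\arg(\Bar\sigma) \in (\beta-\pi, \beta)$ gives scattering ellipticity of the adjoint.

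Combining these two ellipticity statements, I would apply the scattering-elliptic estimate (a weighted variant of Proposition \ref{uniform_elliptic_estimate}, valid in the scattering calculus on $\R^3$) to obtain the claimed bounds; the weight $-1$ in the error $\Bar{H}^{-N,-1}$, $\Dot{H}^{-N,-1}$ is the $r^{-1}$-gain produced by scattering ellipticity and is exactly what will make the inclusion $\Bar{H}^s \hookrightarrow \Bar{H}^{-N,-1}$ compact despite the support of $\Tilde\chi$ being unbounded. The main obstacle is controlling the transition region $R_1 \leq r \leq R_2$, where $f_\beta$ is neither the identity nor a pure rotation and both the interior and the at-infinity computations above must be glued together uniformly; this is handled by choosing $\varepsilon$ in Definition \ref{def_phase_function} small enough that $\Im p_\beta$ retains a definite sign on real cotangent vectors throughout $\{|x| > R_0\}$, with the scattering-ellipticity bound made uniform in $\sigma$ on compact subsets of $\Lambda_\beta$.
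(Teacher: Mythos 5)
Your overall strategy --- interior ellipticity plus scattering ellipticity at spatial infinity, fed into a scattering-calculus elliptic estimate --- is a valid route, and in fact it is exactly the alternative that the paper acknowledges and declines in the remark following Lemma \ref{lemma_scattering_elliptic}, where it is noted that invoking the scattering formalism would even upgrade the error weight from $-1$ to $-N$. Your identification of the scattering principal symbol $e^{-2i\beta}|\xi|^2-\sigma^2$ (respectively $e^{2i\beta}|\xi|^2-\bar\sigma^2$ for the adjoint) and the role of $\sigma\in\Lambda_\beta$ in keeping it away from zero matches the paper's reasoning precisely, as does the reduction of $P_\beta(\sigma)^*$ to $P_{-\beta}(\bar\sigma)$ modulo $r^{-3}\mathrm{Diff}^1$.

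Where you diverge --- and where the argument is left unverified --- is the interior ellipticity on $\{|x|>R_0\}$. You propose to show $p_\beta(\xi)\neq 0$ for real $\xi\neq 0$ by arguing that $\Im p_\beta$ has a definite sign on the cone $\{\Re p_\beta=0\}$. This is a legitimate criterion for ellipticity of a complex quadratic form, but it is harder to carry out uniformly than you suggest: in the annulus $R_0<r<R_1$ the phase $\phi_\beta$ vanishes identically, so $p_\beta$ is real and the sign criterion is vacuous (ellipticity there is just positive-definiteness of the real Kerr symbol); in the transition zone $R_1<r<R_2$ both $\phi_\beta$ and $r\phi_\beta'$ are small and of different magnitudes, so the imaginary contributions from $1/\rkerr_\beta^2$, $\mu_\beta/(f_\beta')^2$ and $2a/f_\beta'$ enter with competing signs, and establishing a uniform definite sign requires the same careful tracking of $\mathcal{O}(\phi_\beta)$ versus $\mathcal{O}(\psi')$ versus $\mathcal{O}(R_0^{-1})$ errors that the paper performs in the \emph{semiclassical} setting (Lemma \ref{semiclassical_complex_scaling}), where the criterion is actually needed. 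For the classical symbol the paper sidesteps all of this by proving directly that $|p_\beta|\geq C\bigl(\xi^2+\eta^2/r^2+\nu^2/(r^2\sin^2\theta)\bigr)$ uniformly for $r>R_0$, term by term, which is both simpler and gives the uniform constant you need to apply Proposition \ref{uniform_elliptic_estimate}. Your sketch does not supply this uniform lower bound, and the sign-based detour does not obviously produce it without essentially redoing the same estimates. The rest of your proof (the at-infinity analysis and the weight gain) is sound, so the gap is confined to this one step.
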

\begin{proof}
Denote by $p_\beta \in C^\infty(T^*\Gamma_\beta)$ the principal symbol of $P_\beta(\sigma)$. Working in the coordinate chart given by $F_\beta^{-1}: X_\beta \to X \subset \R^3$ and using spherical coordinates on $\R^3$, as in \eqref{complex_scaled_operator}, we find for $r>R_0$:
    \begin{equation}
    \label{symbol_scaled}
    \begin{split}
        p_\beta(r,\theta,{\varphi_*},\xi,\eta,\nu) = \frac{r^2}{\rkerr(f_\beta(r),\theta)^2} \Bigl(\frac{\mu(f_\beta(r))}{f'_\beta(r)^2r^2} \,\xi^2 + \frac{\eta^2}{r^2} + \frac{\nu^2}{r^2\sin(\theta)^2} + \frac{2a}{f'_\beta(r)r}\,\xi\frac{\nu}{r}\Bigr).
    \end{split}
    \end{equation}
Here, $\xi, \eta, \nu$ denote the fiber coordinates associated to $r, \theta, \varphi_*$ respectively.

Recall that $f_\beta(r)=e^{i\phi_\beta(r)}r$ with $|r\phi_\beta'(r)| < \pi\varepsilon$. Thus, choosing $R_0$ large enough, $\rkerr(f_\beta(r),\theta)^2 = f_\beta(r)^2 + a^2\cos^2(\theta)$ satisfies
$$\frac{r^2}{2} \leq r^2 - a^2\cos^2(\theta) \leq |\rkerr(f_\beta(r),\theta)^2| \leq r^2 + a^2\cos^2(\theta) \leq 2r^2, \quad \forall r>R_0.$$ So we can estimate the prefactor in \eqref{symbol_scaled} by
$$\frac{1}{2} \leq \frac{r^2}{\rkerr(f_\beta(r),\theta)^2} \leq 2.$$

Since $f_\beta'(r) = e^{i\phi_\beta(r)}(1 + ir\phi_\beta'(r))$ satisfies $|f_\beta'(r)| \geq 1$, we can estimate the last term in $p_\beta$ by
$$\Bigl| \frac{2a}{f'_\beta(r)r}\,\xi\frac{\nu}{r} \Bigr| \leq \frac{a}{r}\Bigl(\xi^2 + \frac{\nu^2}{r^2}\Bigr) \leq \frac{a}{R_0}\Bigl(\xi^2 + \frac{\eta^2}{r^2} + \frac{\nu^2}{r^2\sin(\theta)^2}\Bigr), \quad \forall r>R_0.$$

For the first term in \eqref{symbol_scaled}, we write
$$\frac{\mu(f_\beta(r))}{f'_\beta(r)^2r^2} = \frac{f_\beta(r)^2 - 2mf_\beta(r) + a^2}{f'_\beta(r)^2r^2} = 1 + \frac{f_\beta(r)^2 - f'_\beta(r)^2r^2}{f'_\beta(r)^2r^2} + \frac{2mf_\beta(r)}{f'_\beta(r)^2r^2} + \frac{a^2}{f'_\beta(r)^2r^2}.$$
Then we can estimate the above terms as follows:
\begin{align*}
    \Bigl|\frac{f_\beta(r)^2 - f'_\beta(r)^2r^2}{f'_\beta(r)^2r^2}\Bigr| &= \frac{|r^2 - (1 + ir\phi_\beta'(r))^2r^2|}{|f'_\beta(r)^2|r^2} \leq 2|r\phi_\beta'(r)| + (r\phi_\beta'(r))^2  < 2\pi\varepsilon + \pi^2\varepsilon^2, \\
    \Bigl|\frac{2mf_\beta(r)}{f'_\beta(r)^2r^2}\Bigr| &\leq \frac{2m}{r} < \frac{2m}{R_0}, \quad\qquad
    \Bigl|\frac{a^2}{f'_\beta(r)^2r^2}\Bigr| \leq \frac{a^2}{r} < \frac{a^2}{R_0}.
\end{align*}

Combining the above estimates, we find that
$$\bigl|p_\beta(r,\theta,\varphi_*,\xi,\eta,\nu)\bigr| \geq C\Bigl(\xi^2 + \frac{\eta^2}{r^2} + \frac{\nu^2}{r^2\sin(\theta)^2}\Bigr),$$
uniformly for $r>R_0$, that is, $C$ depends only on $R_0$ and $\varepsilon$. Note that the right hand side is just the Euclidean norm on the fibers of the cotangent bundle. Written in Cartesian coordinates, the estimate becomes
\begin{equation}
\label{elliptic_estimate_scaling}
    \bigl|p_\beta(x,y,z,\xi_x,\xi_y,\xi_z)\bigr| \geq C\bigl(\xi_x^2 + \xi_y^2 + \xi_z^2\bigr).
\end{equation}

We should remark that the coordinate system used above breaks down at $\theta \in \{0,\pi\}$. However, the principal symbol $p_\beta$ is smooth on $X_\beta$, compare Lemma \ref{analytic_coefficients}, so the estimate \eqref{elliptic_estimate_scaling} extends by continuity to all of $X_\beta \cap \{|x|>R_0\}$ and shows that $P_\beta(\sigma)$ is uniformly elliptic there.

Let $\chi_1, \chi_2 \in C^\infty(X_\beta)$ be cutoff functions with 
$$\supp(\chi_1),\, \supp(\chi_2) \subset \{\Tilde{\chi}=1\},\quad \chi_2 = 1 \quad\text{on}\,\, \supp(\chi_1) \quad\text{and}\quad \chi_1 = 1 \quad\text{on}\,\, \supp(\chi).$$
We can extend $F_\beta^*(\chi_1 P_\beta(\sigma))(F_\beta^{-1})^*$ to all of $\R^3$ as an operator in $\Psi^2(\R^3)$. The estimate \eqref{elliptic_estimate_scaling} shows that this operator is uniformly elliptic on $\supp(\chi)$. For any $u\in \Bar{H}^s(X_\beta)$, we extend $F_\beta^*\chi_2 u$ to an element of $H^s(\R^3)$, and apply the uniform elliptic estimate, Proposition \ref{uniform_elliptic_estimate}. For simplicity, we will drop the pullback map $F_\beta^*$ from our notation in what follows, but one should keep in mind that all functions are pullbacks to $\R^3$ of functions defined on $X_\beta$. Then the uniform elliptic estimate gives
\begin{equation*}
\begin{split}
    \|\chi u\|_{H^s(\R^3)} &\leq C\bigl(\|\chi_1 P_\beta(\sigma) \chi_2 u\|_{H^{s-2}(\R^3)} + \|\chi_2 u\|_{H^{-N}(\R^3)}\bigr) \\
    &\leq C\bigl(\|\chi_2 P_\beta(\sigma) u\|_{H^{s-2}(\R^3)} + \|\chi_2 u\|_{H^{-N}(\R^3)}\bigr).
\end{split}
\end{equation*}

We can now use the large $r$ asymptotic behavior of $P_\beta(\sigma)$ to improve the error term $\|\chi_2 u\|_{H^{-N}(\R^3)}$ above to an error term measured in the weighted Sobolev space $H^{-N,-1}(\R^3)$. To this end, notice from \eqref{complex_scaled_operator} that the complex scaled operator approaches $e^{-2i\beta}\Delta - \sigma^2$ as $r\to\infty$, that is,
\begin{equation*}
    F_\beta^*P_\beta(\sigma)(F_\beta^{-1})^* = e^{-2i\beta}\bigl(\Delta - (e^{i\beta}\sigma)^2\bigr) + \frac{1}{r}Q,
\end{equation*}
where $\Delta$ is the Laplacian on $\R^3$ and $Q\in\mathrm{Diff}^2(X)$ is a differential operator with bounded coefficients as $r\to\infty$.

For $\sigma\in\Lambda_\beta$, $(e^{i\beta}\sigma)^2$ is in the resolvent set of the Laplacian. Thus, there is a bounded inverse
$$\bigl(\Delta - (e^{i\beta}\sigma)^2\bigr)^{-1}: H^{-N}(\R^3) \to H^{-N+2}(\R^3).$$
So we can estimate:
\begin{equation*}
\begin{split}
    \|\chi_2 u\|_{H^{-N}(\R^3)} &\leq C\|e^{-2i\beta}(\Delta - (e^{i\beta}\sigma)^2\bigr)\chi_2 u\|_{H^{-N-2}(\R^3)} \\
    &\leq C\bigl(\|\chi_2 P_\beta(\sigma)u\|_{H^{-N-2}(\R^3)} + \|[P_\beta(\sigma),\chi_2]u\|_{H^{-N-2}(\R^3)} + \|r^{-1}Q\chi_2 u\|_{H^{-N-2}(\R^3)}\bigr) \\
    &\leq C\bigl(\|\Tilde{\chi}P_\beta(\sigma)u\|_{H^{s}(\R^3)} + \|\Tilde{\chi}u\|_{H^{-N,-1}(\R^3)}\bigr),
\end{split}
\end{equation*}
where in the last line we used that $[P_\beta,\chi_2] \in \mathrm{Diff}^1(X_\beta)$ vanishes for $r$ large enough.

The estimate in the lemma follows, since Lebesgue measure and the density coming from the Kerr metric induce equivalent norms on $H^s(\R^3)$, and since $F_\beta^*$ maps $\Bar{H}^s(X_\beta)$ to $\Bar{H}^s(X)$ unitarily. Note also that for functions supported away from the boundary of $X$ at $r=r_0$ the norm on $\Bar{H}^s(X)$ is just the usual Sobolev norm.

Consider now the adjoint $P_\beta(\sigma)^*$. Abusing notation and denoting by $P_\beta(\sigma)$ and $P_\beta(\sigma)^*$ the local coordinate expressions on $X$, we have by \eqref{adjoint_scaling}:
$$P_\beta(\sigma)^* = P_{-\beta}(\Bar{\sigma}) + r^{-3}\Tilde{Q},$$
where $\Tilde{Q}\in\mathrm{Diff}^1(X)$ has bounded coefficients as $r\to\infty$. Thus, the principal symbol of $P_\beta(\sigma)^*$ is given by $p_{-\beta}$ and the arguments above show the uniform ellipticity of $P_\beta(\sigma)^*$ on $\{r > R_0\}$. Furthermore, the term $r^{-3}\Tilde{Q}$ does not influence the $r\to\infty$ asymptotic behavior and $P_\beta(\sigma)^*$ asymptotically approaches $e^{2i\beta}\bigl(\Delta - (e^{-i\beta}\Bar{\sigma})^2\bigr)$ up to an error term in $r^{-1}\mathrm{Diff}^2(X)$. Once again, $(e^{-i\beta}\Bar{\sigma})^2$ lies in the resolvent set of the Laplacian for $\sigma\in\Lambda_\beta$, so the arguments above go through to give the desired estimate.
\end{proof}

\begin{rmk}
    Proposition \ref{lemma_scattering_elliptic} fits naturally into the context of scattering pseudodifferential operators, see \cite{vasy_minicourse}. The operator $P_\beta(\sigma)$ can be viewed as a scattering operator, whose principal symbol at spatial infinity is $e^{-2i\beta}|\xi|^2 - \sigma^2$, i.e. that of the operator $e^{-2i\beta}\Delta - \sigma^2$ featured in the proof above. Thus, $P_\beta(\sigma)$ is scattering elliptic outside the ball $B_{R_0}$ for $\sigma\in\Lambda_\beta$. This perspective immediately leads to the improvement of the error terms in the estimates of Proposition \ref{lemma_scattering_elliptic} to $\|\Tilde{\chi}u\|_{\Bar{H}^{-N,-N}(X_\beta)}$ and $\|\Tilde{\chi}v\|_{\Dot{H}^{-N,-N}(\Bar{X}_\beta)}$ for any $N$. Since all our operators are scattering elliptic, and thus in some sense trivial, at spatial infinity, we chose not to invoke this formalism. However, note that the propagation and radial estimates of Section \ref{section_microlocal_estimates} have counterparts at spatial infinity in the scattering calculus, which could for instance be used to study the operator $P_0(\sigma)$ for $\sigma$ on the real line, see \cite{melrose_scattering_calculus} for an application of these ideas to the Laplacian on asymptotically Euclidean spaces. The method of complex scaling avoids these issues by replacing $P_0(\sigma)$ with the operator $P_\beta(\sigma)$, which is scattering elliptic at spatial infinity for all $\sigma \in \Lambda_\beta$.
\end{rmk}

\subsection{Dynamics of the bicharacteristic flow}
\label{section_hamiltonian_flow}

We now consider the dynamics of the Hamiltonian flow associated to $P_\beta(\sigma)$. In $\{r<R_0\}$ the operator takes the form \eqref{operator_full}. Note that away from complex scaling the formal adjoint satisfies $P_\beta(\sigma)^* = P_\beta(\Bar{\sigma})$. Thus, in this region, the principal symbols of $P_\beta(\sigma)$ and $P_\beta(\sigma)^*$ agree.

To simplify the formulas, we study instead the operators $\rkerr^2P_\beta(\sigma)$ and $\rkerr^2P_\beta(\sigma)^*$. Since $\rkerr^2$ is just a bounded smooth function in $\{r<R_0\}$, this will not influence the estimates we obtain. The principal symbol is
$$p = \sigma_2\bigl(\rkerr^2P_\beta(\sigma)\bigr) = \mu\xi^2 + \eta^2 + \frac{\nu^2}{\sin^2(\theta)} + 2a\xi\nu,$$
and its Hamiltonian vector field is given by
$$H_p = 2\bigl(\mu\xi + a\nu\bigr)\partial_r + 2\eta\partial_\theta + 2\Bigl(\frac{\nu}{\sin^2(\theta)} + 2a\xi\Bigr)\partial_{\varphi_*} -2(r-m)\xi^2\partial_\xi + 2\frac{\cos(\theta)}{\sin^3(\theta)}\nu^2\partial_\eta.$$

The principal symbol is conserved under the Hamiltonian flow, and evidently $H_p$ annihilates $\nu$. Slightly less immediate is the annihilation of the expression $\eta^2 + \frac{\nu^2}{\sin^2(\theta)}$, leading to the following conserved quantities for the flow of $H_p$:
$$p, \quad \nu, \quad \kappa = \eta^2 + \frac{\nu^2}{\sin^2(\theta)}.$$

Denote the characteristic set by $\Sigma = \{p=0\}$. Using Young's inequality we find that on $\Sigma$:
$$a^2\sin^2(\theta)\xi^2 + \frac{\nu^2}{\sin^2(\theta)} \geq |2a\xi\nu| = |\mu\xi^2 + \eta^2 + \frac{\nu^2}{\sin^2(\theta)}| \geq \mu\xi^2 +\eta^2 + \frac{\nu^2}{\sin^2(\theta)}.$$
In particular, $a^2\sin^2(\theta)\xi^2 \geq \mu\xi^2$ on $\Sigma$, so the characteristic set is contained in the ergoregion:
$$\Sigma \subset \{\mu(r) - a^2\sin^2(\theta) \leq 0\} \subset \{r \leq 2m\}.$$

Notice that $\xi=0$ and $p=0$ imply that $\eta=\nu=0$. Since the zero section is excluded from $\Sigma$, we have $\xi\neq 0$ on $\Sigma$. Thus, the characteristic set consists of two connected components
$$\Sigma = \Sigma_+\cup\Sigma_-, \quad\text{where}\quad \Sigma_+ = \Sigma\cap\{\xi>0\}, \quad \Sigma_- = \Sigma\cap\{\xi<0\}.$$
In fact, another application of Young's inequality leads to a stronger statement. On $\Sigma$, we have
$$2a^2\sin^2(\theta)\xi^2 + \frac{1}{2}\frac{\nu^2}{\sin^2(\theta)} \geq |2a\xi\nu| = |\mu\xi^2 + \eta^2 + \frac{\nu^2}{\sin^2(\theta)}| \geq \eta^2 + \frac{\nu^2}{\sin^2(\theta)} - |\mu|\xi^2.$$
Using that $\mu$ is bounded on $\Sigma$, this shows that for some constant $c>0$:
\begin{equation}
\label{xi_bounded_below}
    \xi^2 > c\Bigl(\eta^2 + \frac{\nu^2}{\sin^2(\theta)}\Bigr) \quad\text{on}\,\,\Sigma.
\end{equation}

Notice that our analysis above used the coordinates $\theta,{\varphi_*}$ on $\Sph^2$, which are ill-defined at the poles of the sphere, i.e. at $\theta = 0,\pi$. However, both $\nu$ and $\kappa$ extend to smooth functions on all of $T^*\Sph^2$. Indeed, using coordinates 
$$u=\sin(\theta)\cos(\varphi_*),\quad w=\sin(\theta)\sin(\varphi_*)$$
near either of the poles, we have
$$\nu = u\xi_w - w\xi_u, \quad \kappa = \xi_u^2 + \xi_w^2 - (u\xi_u + w\xi_w)^2,$$
where $\xi_u, \xi_w$ are the associated coordinates on the fibers of the cotangent bundle. Thus, $\nu$ vanishes at the poles, while $\kappa$ is strictly positive on $T^*\Sph^2$ away from the zero section. The inequality \eqref{xi_bounded_below} extends by continuity to the poles in the form $\xi^2 > c\kappa$. Note that at the poles, the principal symbol becomes $p = \mu\xi^2 + \kappa$, and the only characteristic set over the poles is located at $\mu = 0, \kappa = 0$, i.e. at the radial sets over the black hole horizon, see Lemma \ref{lemma_radial_set} below. In the following, we will use the functions $\kappa$ and $\nu$ in our analysis, so that all statements apply also to the poles of $\Sph^2$.

Following these preliminary observations, we now show that the characteristic set contains a radial source and a radial sink, located over the black hole horizon. Furthermore, outside these radial sets, the Hamiltonian flow tends towards the source or sink in one direction and towards the boundary at $r=r_0$ in the other direction.

\begin{lemma}
\label{lemma_radial_set}
    Let $\Lambda_+ \subset \Sigma_+$ and $\Lambda_- \subset \Sigma_-$ be defined by
    $$\Lambda_+ = \{\mu=0,\, \kappa=0,\, \xi>0\}, \quad \Lambda_- = \{\mu=0,\, \kappa=0,\, \xi<0\}.$$
    Then $\Lambda_+$ is a radial source and $\Lambda_-$ a radial sink for the Hamiltonian flow of $p$, in the sense of Definition \ref{def_source_sink}.
\end{lemma}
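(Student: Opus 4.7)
The plan is to verify directly the two conditions of Definition \ref{def_source_sink}. As a preliminary, on $\Sigma$ the vanishing of $\kappa$ forces $\eta = \nu = 0$, so $\Lambda_\pm = \{r = r_+,\, \eta = 0,\, \nu = 0,\, \pm\xi > 0\}$ is a smooth conic submanifold of $T^*X \setminus \{0\}$ whose projection to $S^*X$ is diffeomorphic to $\{r_+\} \times \Sph^2$, hence compact. Invariance under $H_p$ follows from the conservation laws $H_p\nu = H_p\kappa = 0$ observed above, together with $H_p\mu = 4(r-m)(\mu\xi + a\nu)$, which vanishes on $\Lambda_\pm$ since $\mu$ and $\nu$ both do.

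For condition (1) I would take $\rho_r = (\xi^2 + \kappa)^{-1/2}$, a smooth, positive, degree $-1$ homogeneous function on $T^*X \setminus \{0\}$ equal to $|\xi|^{-1}$ on $\Lambda_\pm$, and hence elliptic there. Using $H_p\kappa = 0$ and $H_p\xi = -2(r-m)\xi^2$, a direct computation gives $H_p\rho_r = 2(r-m)\xi^3(\xi^2+\kappa)^{-3/2}$, which on $\Lambda_\pm$ reduces to $\pm 2\sqrt{m^2-a^2}$. Therefore $\rho_r H_p\rho_r\big|_{\Lambda_\pm} = \pm\alpha_r\rho_r$ with $\alpha_r = 2\sqrt{m^2-a^2} > 0$, as required for a source (resp.\ sink).

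For condition (2) my candidate is $\rho_t = \epsilon\mu^2 + \kappa/\xi^2$, defined in a conic neighborhood $V_\pm$ of $\Lambda_\pm$ where $\pm\xi > 0$ and $r$ lies near $r_+$, with $\epsilon > 0$ a small constant to be chosen. This is homogeneous of degree $0$, vanishes quadratically at $\Lambda_\pm$ in the linear variables $(\mu, \eta/\xi, \nu/\xi)$, and cuts out $\Lambda_\pm$ within $\Char(P)$ in $V_\pm$. Computing $H_p\mu^2 = 8\mu(r-m)(\mu\xi + a\nu)$ and $H_p(\kappa/\xi^2) = 4(r-m)\kappa/\xi$, one obtains in $V_\pm$
\[
\rho_r H_p\rho_t = \mathrm{sign}(\xi)\Bigl(8\epsilon(r_+-m)\mu^2 + 8\epsilon(r_+-m)a\mu\tfrac{\nu}{\xi} + 4(r_+-m)\tfrac{\kappa}{\xi^2}\Bigr) + F_3,
\]
where $F_3$ absorbs the cubic remainders coming from $(r - m) - \sqrt{m^2-a^2}$ and from $\rho_r - |\xi|^{-1}$.

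The hard part is the cross term $8\epsilon(r_+-m)a\mu(\nu/\xi)$, which vanishes in the Schwarzschild case $a = 0$ but is intrinsic to Kerr. Setting $\alpha_t = 2\sqrt{m^2-a^2}$, the source estimate at $\Lambda_+$ (where $\mathrm{sign}(\xi) = +1$) and the sink estimate at $\Lambda_-$ (where $\mathrm{sign}(\xi) = -1$) both reduce, modulo $F_3$ and since the $(\eta/\xi)^2$-direction is manifestly positive, to positive semidefiniteness at $r = r_+$ of the symmetric matrix
\[
\begin{pmatrix} \epsilon(8(r_+-m) - \alpha_t) & 4\epsilon(r_+-m)a \\ 4\epsilon(r_+-m)a & (4(r_+-m) - \alpha_t)/\sin^2\theta \end{pmatrix}
\]
in the variables $(\mu, \nu/\xi)$. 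The Sylvester determinant condition boils down to $12/\sin^2\theta \geq 16\epsilon a^2$, which holds uniformly in $\theta \in (0,\pi)$ provided $\epsilon < 3/(4a^2)$ (any $\epsilon > 0$ works if $a = 0$). Fixing such an $\epsilon$ and shrinking $V_\pm$ if necessary then yields condition (2), completing the identification of $\Lambda_+$ as a radial source and $\Lambda_-$ as a radial sink.
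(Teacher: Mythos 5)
Your proof is correct, but your treatment of condition (2) takes a genuinely different route from the paper's. For condition (1) both arguments agree up to the cosmetic difference between $\rho_r = (\xi^2+\kappa)^{-1/2}$ and the paper's $\rho_r = |\xi|^{-1}$ (well-defined near $\Sigma$ thanks to the inequality $\xi^2 > c\kappa$ established just before the lemma), and both give $\alpha_r = 2\sqrt{m^2-a^2}$. The divergence is in the choice of quadratic defining function $\rho_t$. You take $\rho_t = \epsilon\mu^2 + \kappa/\xi^2$, which cuts out $\Lambda_\pm$ in the ambient cotangent space without reference to the characteristic set; the price is the Kerr-specific cross term $8\epsilon(r_+-m)a\,\mu(\nu/\xi)$, which you control by a Sylvester-criterion argument requiring $\epsilon < 3/(4a^2)$. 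The paper instead exploits the fact that Definition~\ref{def_source_sink} only requires $\rho_t$ to define $\Lambda$ \emph{within} $\Char(P)$: taking $\rho_t = \rho_r^2\kappa$ alone, the constraint $p=\mu\xi^2+\kappa+2a\xi\nu = 0$ together with $\kappa = 0$ already forces $\mu = 0$, so the $\epsilon\mu^2$ term is superfluous. Since $H_p\kappa = 0$, this choice yields the exact identity $\rho_r H_p\rho_t = 2(H_p\rho_r)\rho_t = \pm 4(r-m)\rho_t$ with no cross term and no cubic remainder, making the verification trivial. Your version is more robust in the sense that it would survive a definition demanding $\rho_t$ define $\Lambda$ unconditionally, but at the cost of the matrix analysis (and of checking that the $r\to r_+$ and $\rho_r \to |\xi|^{-1}$ errors you fold into $F_3$ are genuinely cubic, which they are since $r - r_+ = \mathcal{O}(\mu)$ near the horizon). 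The paper's choice is the more economical one given the precise form of the definition.
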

\begin{proof}
    Note that on $\Sigma$, $\kappa=0$ implies $\mu=0$. Thus, we have
    $$\Lambda_{\pm} = \{\kappa=0\}\cap\Sigma_{\pm}.$$
    Since $\kappa$ is a conserved quantity, $\Lambda_\pm$ is an invariant submanifold for the flow of $H_p$.
    
    By \eqref{xi_bounded_below}, $\rho_r = |\xi|^{-1}$ is a well-defined smooth function in a conic neighborhood of $\Sigma$, which is elliptic there, and we have
    $$\rho_rH_p\rho_r\bigr|_{\Lambda_+} = 2(r_+-m)\rho_r, \quad \rho_rH_p\rho_r\bigr|_{\Lambda_-} = -2(r_+-m)\rho_r.$$
    Thus, the first condition in Definition \ref{def_source_sink} is satisfied with $$\alpha_r = 2(r_+-m) = 2\sqrt{m^2-a^2}.$$

    As a homogeneous degree $0$ quadratic defining function of $\Lambda_\pm$ within $\Sigma_\pm$, we take
    $$\rho_t = \frac{\eta^2}{\xi^2} + \frac{\nu^2}{\sin^2(\theta)\xi^2} = \rho_r^2\kappa.$$
    This satisfies
    $$\rho_rH_p\rho_t = 2(H_p\rho_r)\rho_r^2\kappa = \pm4(r-m)\rho_t, \quad \text{on}\,\, \Sigma_\pm.$$
    Thus, the second condition in Definition \ref{def_source_sink} is in fact satisfied with no cubic error term.
\end{proof}

\begin{lemma}
\label{lemma_propagation}
    Let $\Lambda_\pm \subset U_\pm$ be any neighborhoods and let $\delta > 0$. For all $(x,\xi) \in \Sigma_+ \setminus \Lambda_+$, there exists $T_1,T_2>0$, such that $e^{-T_1H_p}(x,\xi) \in U_+$ and $e^{T_2H_p}(x,\xi) \in \{r<r_0+\delta\}$. Similarly, for all $(x,\xi) \in \Sigma_- \setminus \Lambda_-$, there exists $T_1,T_2>0$, such that $e^{T_1H_p}(x,\xi) \in U_-$ and $e^{-T_2H_p}(x,\xi) \in \{r<r_0+\delta\}$.
\end{lemma}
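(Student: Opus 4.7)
The plan is to exploit the full integrability of the Hamiltonian system: together with $p$, the quantities $\nu$ and $\kappa$ are conserved along the flow, which reduces the dynamics to a one-dimensional motion in $r$. First I would derive the energy-type equation
\[
    (H_p r)^2 = 4\,R(r), \qquad R(r) := a^2\nu^2 - \mu(r)\kappa,
\]
obtained by expanding $(\mu\xi + a\nu)^2$ and substituting $\mu\xi^2 = -\kappa - 2a\xi\nu$ from $p = 0$. On a trajectory in $\Sigma_+\setminus\Lambda_+$ one has $\kappa > 0$, so $R$ is a downward-opening quadratic in $r$ whose roots $r^{\mathrm{turn}}_\pm = m \pm \sqrt{m^2 - a^2 + a^2\nu^2/\kappa}$ satisfy $r^{\mathrm{turn}}_- \leq r_- < r_0$ and $r^{\mathrm{turn}}_+ \geq r_+$ (using $\kappa \geq \nu^2/\sin^2\theta \geq \nu^2$).

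For the forward escape, I would use that on $\Sigma_+$ the two branches $\xi_\pm = (-a\nu \pm \sqrt{R})/\mu$ of the constraint $p = 0$ give $\dot r$ a definite sign on each branch, and that the ODE $\dot r^2 = 4R(r)$ reaches any simple zero of $R$ in finite time by separation of variables. Hence the $r$-coordinate either decreases directly, or first rises to $r^{\mathrm{turn}}_+$, transitions at the turning point, and then descends; since $r^{\mathrm{turn}}_- < r_0 + \delta$, the orbit crosses into $\{r < r_0 + \delta\}$ after some finite time $T_2 > 0$.

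For the backward convergence to $U_+$, I would adopt the sphere-bundle perspective of Remark \ref{sphere_bundle}, so that membership in $U_+$ is characterized conically by $|r - r_+|$ and $\rho_t$ being small. Using $p = 0$ one gets $\rho_t = \kappa/\xi^2 = -\mu - 2a\nu\rho_r$, so both $\mu \to 0$ and $\rho_r = 1/|\xi| \to 0$ force $\rho_t \to 0$. Tracing the orbit backward, its $r$-coordinate approaches $r_+$ (possibly after a finite detour through $r^{\mathrm{turn}}_+$ with a branch switch); on the approaching branch the fiber coordinate $\xi$ diverges because $\xi^2 = (-\kappa - 2a\xi\nu)/\mu$ blows up as $\mu \to 0$ for $\kappa > 0$. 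The approach takes finite $H_p$-time by the same near-turning-point integration as above, which yields $T_1$. The case of $\Sigma_-$ will follow from the discrete symmetry $(\xi, \eta, \nu) \mapsto -(\xi, \eta, \nu)$, which preserves $p$, reverses $H_p$, and exchanges $\Sigma_\pm$ and $\Lambda_\pm$, thereby swapping the forward and backward roles.

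The hard part will be the branch casework: depending on the signs of $a\nu$ and $\mu$, the admissible branches of $p = 0$ with $\xi > 0$ differ, and a global orbit in $\Sigma_+$ may traverse both the exterior $\mu > 0$ and the interior $\mu < 0$ of the horizon, transitioning between $\xi_\pm$ at true turning points or at $\mu = 0$. A related conceptual subtlety is that, since $\nu$ is conserved, trajectories with $\nu \neq 0$ never enter a purely Euclidean neighborhood of $\Lambda_+$; only the conic interpretation of $U_+$, natural in the microlocal setting, makes the first assertion true.
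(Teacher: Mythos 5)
Your approach is genuinely different from the paper's, and the heart of it is correct. The paper works directly with the quantities $\rho_r = |\xi|^{-1}$ and $\rho_t = \rho_r^2\kappa$, noting that along the flow $H_p\rho_r = 2(r-m)>0$ while $H_p\rho_t = 4(r-m)\rho_r\rho_t$, applying Gr\"onwall in both time directions, and using the characteristic-set identity $\rho_t = -\mu - 2a\nu\rho_r$ to convert growth of $\rho_t$ into $\mu\to-\infty$, i.e. $r\to r_0$. Your route instead reduces to the ``radial potential'' ODE $(H_pr)^2 = 4R(r)$ with $R = a^2\nu^2-\mu\kappa$ and argues via turning-point analysis. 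The identity $(H_pr)^2 = 4R(r)$ is correct and the root inequalities $r^{\mathrm{turn}}_-\le r_-$, $r^{\mathrm{turn}}_+\ge r_+$ hold as you say, so the forward-escape argument goes through: $r$ reaches $r_0+\delta$ in finite forward time. You also correctly identify the conic interpretation of $U_+$ as essential, and the symmetry $(\xi,\eta,\nu)\mapsto -(\xi,\eta,\nu)$ does handle $\Sigma_-$.

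There is, however, a genuine gap in the backward half. The energy equation alone does not determine which end of the level-set curve $\{p=0,\,\nu,\kappa\ \text{fixed},\,\xi>0\}$ the backward orbit approaches. When $a\nu<0$, the level set over $\mu\ge 0$ has two branches $\xi_\pm$ folding at $r^{\mathrm{turn}}_+$, and one of them, $\xi_-$, stays bounded as $\mu\to 0$ (the limit $\xi_-\to\kappa/(2|a\nu|)$): your line ``$\xi^2=(-\kappa-2a\xi\nu)/\mu$ blows up as $\mu\to 0$'' is a $0/0$ indeterminacy on that branch, so it does not by itself force $\xi\to\infty$. Without pinning down the branch, nothing in the 1D picture stops the backward orbit from descending to $r_0$. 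What closes the gap is precisely the observation the paper is built around: $H_p\xi = -2(r-m)\xi^2 < 0$ on $\{r>r_0>m\}$, so $\xi$ is strictly decreasing along the flow; equivalently $\rho_r$ is monotone. This forces the orbit to traverse the level-set curve monotonically in $\xi$, so backward time heads to the $\xi=\infty$ end, which is the $\xi_+$ branch over $r\searrow r_+$. Once this is in place, your picture is complete. A secondary imprecision: you attribute the finiteness of $T_1$ to ``near-turning-point integration,'' but for $\nu\neq 0$ there is no turning point at $r_+$ (one has $R(r_+)=a^2\nu^2>0$); the finite backward lifetime comes instead from the blow-up $d\xi/ds \gtrsim \xi^2$, or equivalently from $H_pr$ being bounded away from zero as $r\searrow r_+$. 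The conclusion is right, but the justification needs adjusting. In sum: a valid alternative route, at the cost of the branch casework you flagged; the paper's $\rho_t$--$\rho_r$ argument sidesteps the branching entirely, which is why it is shorter.
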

\begin{proof}
    We prove the statement concerning the $\Sigma_+$ component. Let $\rho_t, \rho_r$ be as in the proof of Lemma \ref{lemma_radial_set}. Denote $\rho_t(t) = \rho_t(\exp(tH_p)(x,\xi))$, and similarly for $\rho_r(t)$ and $r(t)$, where $(x,\xi)\in\Sigma_+\setminus\Lambda_+$. Then
    $$\frac{d}{dt}\rho_t(t) = H_p\rho_t(t) = 4(r(t)-m)\rho_r(t)\rho_t(t).$$
    Since the integral curve $\exp(tH_p)(x,\xi)$ remains in the characteristic set, we have $r(t)\leq 2m$ for all $t$. Note that $\kappa \neq 0$ at $(x,\xi)\notin\Lambda_\pm$ and remains constant along the integral curve. Thus, by \eqref{xi_bounded_below}, we have $\xi(t) \geq C$ on the integral curve, or $\rho_r(t) \leq C^{-1}$ for some $C>0$. So for some constant $c_1>0$ we find
    $$\frac{d}{dt}\rho_t(t) \leq c_1\rho_t(t),$$
    and by Grönwall's inequality this shows that 
    $$\rho_t(t) \leq \rho_t(0)e^{c_1 t}.$$
    Any open set $\Lambda_+ \subset U_+$ contains a set of the form $\{\rho_t < \varepsilon\}$ for some $\varepsilon>0$. So the inequality above shows that for $t$ large enough $\exp(-tH_p)(x,\xi) \in U_+$.

    For the other part of the statement, note that $H_p\rho_r = 2(r-m)\rho_r$, so $\rho_r$ is increasing along the Hamiltonian flow, and we can bound $\rho_r(t)\geq\rho_r(0)$. On the domain of definition of the integral curve, that is, before $\exp(tH_p)(x,\xi)$ reaches the boundary at $r=r_0$, we have $r(t)>r_0>m$. So for some constant $c_2>0$, we have
    $$\frac{d}{dt}\rho_t(t) \geq c_2\rho_t(t),$$
    and again by Grönwall's inequality we find
    $$\rho_t(t) \geq \rho_t(0)e^{c_2 t}.$$
    Now notice that $\frac{p}{\xi^2} = \mu + a\frac{\nu}{\xi} + \rho_t$. Thus, on the characteristic set, we have
    $$\rho_t = -\mu - 2a\frac{\nu}{\xi} \leq -\mu + 2a\Bigl|\frac{\nu}{\xi}\Bigr| \leq -\mu + 2a^2 + \frac{1}{2}\rho_t,$$
    where we applied Young's inequality and bounded $|\frac{\nu^2}{\xi^2}|$ by $\rho_t$. As long as $t$ is in the domain of definition of the integral curve, i.e. while $r(t)$ remains bounded away from $r_0$, we have
    $$\mu(r(t)) \leq  2a^2 - \frac{1}{2}\rho_t(t).$$
    Thus, as $t$ increases, eventually $\mu(r(t)) < \mu(r_0 + \delta)$, and therefore also $r(t)<r_0+\delta$, as $\mu(r)$ is strictly increasing for $r\in (r_0,\infty)$.

    The statement for $(x,\xi)\in \Sigma_-\setminus\Lambda_-$ follows analogously, with $\rho_t(t)$ exponentially decreasing there.
\end{proof}

In order to apply the radial estimates, Propositions \ref{high_reg_radial_estimate} and \ref{low_reg_radial_estimate}, we must calculate the threshold regularity, see \eqref{threshold_regularity}. Recall that we are considering the Hamiltonian flow for the rescaled operator $\rkerr^2P_\beta(\sigma)$. Away from complex scaling $P_\beta(\sigma)^* = P_\beta(\Bar{\sigma})$, so we have
\begin{equation*}
    \sigma_1\bigl(\frac{1}{2i}(\rkerr^2P_\beta(\sigma) - (\rkerr^2P_\beta(\sigma))^*)\bigr) = \sigma_1\bigl(\frac{1}{2i}(\rkerr^2P_\beta(\sigma) - \rkerr^2P_\beta(\Bar{\sigma}) + [P_\beta(\Bar{\sigma}),\rkerr^2])\bigr).
\end{equation*}
Note that the principal symbol of $[P_\beta(\Bar{\sigma}),\rkerr^2]$ vanishes at the radial sets. Restricting to the radial sets, we find
\begin{equation*}
    \sigma_1\bigl(\frac{1}{2i}(\rkerr^2P_\beta(\sigma) - (\rkerr^2P_\beta(\sigma))^*)\bigr)\bigl|_{\Lambda_\pm} = -2\Im(\sigma)(r_+^2+a^2)\xi = \pm \alpha_s\alpha_r\rho_r^{-1},
\end{equation*}
with the threshold regularity given in terms of the black hole parameters as
\begin{equation}
\label{threshold_reg_kerr}
    \alpha_s = -2\Im(\sigma)\bigl(m+\frac{m}{\sqrt{m^2-a^2}}\bigr).
\end{equation}

\subsection{Fredholm estimates for the complex scaled operator}
\label{section_fredholm_estimates}

We are now ready to prove the Fredholm estimates for $P_\beta(\sigma)$ and its adjoint.
\begin{prop}
\label{prop_fredholm_estimates}
    Let $\sigma\in\C\setminus\{0\}$ satisfy $\arg(\sigma)\in(-\beta,\pi-\beta)$ and let $s > \frac{1}{2} - \alpha\Im(\sigma)$, where $\alpha = 2\bigl(m + \frac{m^2}{\sqrt{m^2-a^2}}\bigr)$. Then the following estimates hold for any $N\in\N$ and some $C = C_{s,N} > 0$:
    \begin{align}
        \|u\|_{\Bar{H}^s(X_\beta)} &\leq C\bigl(\|P_\beta(\sigma)u\|_{\Bar{H}^{s-1}(X_\beta)} + \|u\|_{\Bar{H}^{-N,-1}(X_\beta)}\bigr), \quad \forall u\in\mathcal{X}^s_\beta, \label{semi_fredholm_P}\\
        \|v\|_{\Dot{H}^{-s+1}(\Bar{X}_\beta)} &\leq C\bigl(\|P_\beta(\sigma)^*v\|_{\Dot{H}^{-s}(\Bar{X}_\beta)} + \|v\|_{\Dot{H}^{-N,-1}(\Bar{X}_\beta)}\bigr), \quad \forall v\in\mathcal{Y}^{-s+1}_\beta. \label{semi_fredholm_adjoint}
    \end{align}
\end{prop}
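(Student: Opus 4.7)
The plan is to stitch together five types of estimates via a microlocal partition of unity (Proposition \ref{microlocal_partition_of_unity}), covering phase space by: (a) the scaling region $\{r>R_0\}$, (b) the elliptic set $\Ell(P_\beta(\sigma))$ inside $\{r\leq R_0\}$, (c) neighborhoods of the radial sets $\Lambda_\pm$, (d) the rest of the characteristic set $\Sigma$, and (e) a neighborhood of the boundary $\{r=r_0\}$ inside the horizon. I focus first on the extendable estimate \eqref{semi_fredholm_P}.

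On (a), Lemma \ref{lemma_scattering_elliptic} directly provides control with error in $\Bar{H}^{-N,-1}$, valid for any $\sigma\in\Lambda_\beta$. On (b), the microlocal elliptic estimate (Proposition \ref{elliptic_estimate}) gives control modulo an $H^{-N}$ error. On (c), the subprincipal computation at the end of Section \ref{section_hamiltonian_flow} identifies the threshold as $\alpha_s=-\alpha\Im(\sigma)$, so the hypothesis $s>\tfrac{1}{2}-\alpha\Im(\sigma)$ is exactly what the high-regularity radial estimate (Proposition \ref{high_reg_radial_estimate}) requires; I apply it at both $\Lambda_+$ and $\Lambda_-$. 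On (d), I chain propagation of singularities (Proposition \ref{propagation_estimate}) along the bicharacteristics described in Lemma \ref{lemma_propagation}: in $\Sigma_+$ forward from $\Lambda_+$ down to $\{r<r_0+\delta\}$, and in $\Sigma_-$ backward from $\Lambda_-$ down to $\{r<r_0+\delta\}$. Finally for (e), I observe that $\mu<0$ strictly on $\{r_0<r<r_+\}$, so the discriminant of $p$ as a polynomial in $\xi$ is positive for $(\eta,\nu)\neq 0$; hence $P_\beta(\sigma)$ is strictly hyperbolic with respect to $r$ in the sense of Definition \ref{def_hyperbolic}, and I invoke \eqref{hyperbolic_estimate_extendable} to propagate the control already established at some $r=r_1\in(r_0,r_+)$ inward to $r=r_0$. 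Summing the five pieces and absorbing the low-regularity errors into a single $\Bar{H}^{-N,-1}$ term yields \eqref{semi_fredholm_P}.

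For the adjoint estimate \eqref{semi_fredholm_adjoint}, the ingredients are the same but the regime changes: since $(\rkerr^2P_\beta(\sigma))^*$ differs from $\rkerr^2P_\beta(\sigma)$ by a sign in the skew-adjoint subprincipal part, the threshold at $\Lambda_\pm$ flips to $+\alpha\Im(\sigma)$. Working at Sobolev order $-s+1$, our assumption $s>\tfrac{1}{2}-\alpha\Im(\sigma)$ reads $-s+1<\tfrac{1}{2}+\alpha\Im(\sigma)$, placing us below the high-regularity threshold. I therefore replace Proposition \ref{high_reg_radial_estimate} by the low-regularity radial estimate Proposition \ref{low_reg_radial_estimate}, which requires a priori control of $v$ on a punctured neighborhood of each radial set. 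To supply that control I reverse the logical order of the extendable argument: I begin with the supported hyperbolic estimate \eqref{hyperbolic_estimate_supported}, which controls $v$ in $\{r<r_1\}$ purely by $P_\beta(\sigma)^*v$ (no error term, since the support condition at $r=r_0$ plays the role of vanishing initial data), then propagate singularities outward through $\Sigma\setminus(\Lambda_+\cup\Lambda_-)$ along the bicharacteristics, and only then apply the low-regularity radial estimate at $\Lambda_\pm$. Elliptic control from (a) and (b), using the adjoint half of Lemma \ref{lemma_scattering_elliptic}, closes the argument.

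The step I expect to require the most care is making all cutoffs and neighborhoods fit together consistently, especially in the adjoint case where the chain of estimates starts from the boundary and propagates outward into the radial sets: one must arrange that the punctured-neighborhood hypothesis of Proposition \ref{low_reg_radial_estimate} is precisely what propagation from the hyperbolic region supplies, and that the Cauchy cutoff $\chi_3$ in the extendable hyperbolic estimate lives in a region where the microlocal control of steps (a)--(d) has already been established. The dynamical picture of Lemma \ref{lemma_propagation} --- in $\Sigma_+$ the flow carries $\Lambda_+$ toward $\{r<r_0+\delta\}$ while in $\Sigma_-$ it carries $\{r<r_0+\delta\}$ toward $\Lambda_-$ --- is what guarantees this matching can be performed consistently in both orientations.
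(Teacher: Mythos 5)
Your proposal is correct and follows essentially the same route as the paper: high-regularity radial estimates at $\Lambda_\pm$ propagated forward/backward toward $r=r_0$, combined with the elliptic estimate of Lemma \ref{lemma_scattering_elliptic} in the scaling region and closed by the extendable hyperbolic estimate inside the horizon; and for the adjoint, the reversal of the propagation order so that the supported hyperbolic estimate (error-free) feeds a priori control into the low-regularity radial estimate, exactly as you describe. The one point worth flagging is that the displayed formula \eqref{threshold_reg_kerr} in the paper contains a typo ($\frac{m}{\sqrt{m^2-a^2}}$ should read $\frac{m^2}{\sqrt{m^2-a^2}}$), so your reading $\alpha_s = -\alpha\Im(\sigma)$ is the correct one.
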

\begin{proof}
    Consider first $P_\beta(\sigma)$. In $\Sigma_+$, we will propagate estimates forward along the Hamiltonian flow from the radial source towards the boundary at $r=r_0$, while in $\Sigma_-$ we propagate estimates backward along the flow from the radial sink towards $r=r_0$.

    Since by assumption $s$ is above the threshold regularity, see \eqref{threshold_reg_kerr}, we can apply the high regularity radial estimates, Proposition \ref{high_reg_radial_estimate}, to find $B^+$ and $B^-$ with $\Lambda_\pm \subset \Ell(B^\pm)$ satisfying
    $$\|B^\pm u\|_{H^s} \leq C\bigl(\|\chi^\pm\rkerr^2P_\beta(\sigma)u\|_{H^{s-1}} + \|\chi^\pm u\|_{H^{-N}}\bigr),$$
    for some $\chi^\pm \in C^\infty_c(X)$. Note the presence of $\rkerr^2$, which stems from examining the flow associated to $\rkerr^2P_\beta(\sigma)$ instead of $P_\beta(\sigma)$. However, $\rkerr^2$ is just a bounded smooth function on the compact sets $\supp(\chi^\pm)$, so 
    $$\|\chi^\pm\rkerr^2P_\beta(\sigma)u\|_{H^{s-1}} \leq C \|\chi^\pm P_\beta(\sigma)u\|_{H^{s-1}}.$$
    We will use this fact without further comment in the estimates below.

    Let now $\chi \in C^\infty_c(X_\beta)$ be any cutoff function with $\supp(\chi) \subset \{r<R_0\}$, i.e. supported away from the region where complex scaling takes place. We denote 
    $$V = \{(x,\xi)\in T^*X_\beta\setminus\{0\} \,\,|\,\, x\in\supp(\chi)\}$$
    and set $U^\pm_0 = \Ell(B^\pm)$. By Lemma \ref{lemma_propagation}, the Hamiltonian flow started from any point in $\Sigma_\pm\cap V$ eventually enters the open set $U^\pm_0$ in the forward, respectively backward, direction. By the continuous dependence of the flow on the initial point, we can find conic open covers $U^+_1,\dots U^+_{k^+}$ of $\Sigma_+\cap V$ and $U^-_1,\dots U^-_{k^-}$ of $\Sigma_-\cap V$ such that, for any $j$, all $(x,\xi)\in U^+_j$ enter $U^+_0$ in finite time when propagated forward along the flow and all $(x,\xi)\in U^-_j$ enter $U^-_0$ in finite time when propagated backward. Taking $\Tilde{U} \subset \Ell(P_\beta(\sigma))$, we then have a conic open cover of $V$:
    $$U^+_0,\dots,U^+_{k^+}, U^-_0,\dots,U^-_{k^-}, \Tilde{U}.$$
    By Proposition \ref{microlocal_partition_of_unity} there is a microlocal partition of unity
    $$A^+_0,\dots,A^+_{k^+}, A^-_0,\dots,A^-_{k^-}, \Tilde{A} \in \Psi^0(X_\beta)$$
    consisting of compactly supported operators and satisfying 
    $$\WF(A^\pm_j) \subset U^\pm_j,\quad \WF(\Tilde{A})\subset \Tilde{U},\quad \WF\bigl(Id - \textstyle\sum_j A^+_j - \textstyle\sum_j A^-_j - \Tilde{A}\bigr)\cap V = \emptyset.$$
    Let $\Tilde{\chi}\in C^\infty_c(X_\beta)$ be a cutoff function satisfying 
    $$\supp(A^\pm_j),\, \supp(\Tilde{A}) \subset \{\Tilde{\chi}=1\}\times\{\Tilde{\chi}=1\}, \quad \forall j,$$
    where we denote by $\supp(A)$ the support of the Schwartz kernel of $A$.

    Since $\WF(A^\pm_0) \subset \Ell(B^\pm)$, we can apply the microlocal elliptic estimate, Proposition \ref{elliptic_estimate}, to find
    $$\|A^\pm_0 u\|_{H^s} \leq C\bigl(\|B^\pm u\|_{H^s} +\|\Tilde{\chi} u\|_{H^{-N}}\bigr) \leq C\bigl(\|\Tilde{\chi} P_\beta(\sigma)u\|_{H^{s-1}} + \|\Tilde{\chi} u\|_{H^{-N}}\bigr).$$
    Since the Hamiltonian flow started from any point in $\WF(A^\pm_j)$ enters $\Ell(B^\pm)$ in finite time, we can apply the propagation estimate, Proposition \ref{propagation_estimate}, for each $j\geq 1$ to find
    $$\|A^\pm_j u\|_{H^s} \leq C\bigl(\|\Tilde{\chi} P_\beta(\sigma)u\|_{H^{s-1}} + \|B^\pm u\|_{H^s} +\|\Tilde{\chi} u\|_{H^{-N}}\bigr) \leq C\bigl(\|\Tilde{\chi} P_\beta(\sigma)u\|_{H^{s-1}} + \|\Tilde{\chi} u\|_{H^{-N}}\bigr).$$
    Finally, $\WF(\Tilde{A})\subset\Ell(P_\beta(\sigma))$, so the microlocal elliptic estimate gives
    $$\|\Tilde{A}u\|_{H^s} \leq C\bigl(\|\Tilde{\chi} P_\beta(\sigma)u\|_{H^{s-2}} + \|\Tilde{\chi} u\|_{H^{-N}}\bigr) \leq C\bigl(\|\Tilde{\chi} P_\beta(\sigma)u\|_{H^{s-1}} + \|\Tilde{\chi} u\|_{H^{-N}}\bigr).$$

    Combining these estimates, we have
    \begin{equation}
    \label{estimate_near_horizon}
    \begin{split}
        \|\chi u\|_{H^s} &\leq \sum_j\|\chi A^+_j u\|_{H^s} + \sum_j\|\chi A^-_j u\|_{H^s} + \|\chi \Tilde{A}u\|_{H^s} + \|\chi\bigl(Id - \textstyle\sum_j A^+_j - \textstyle\sum_j A^-_j - \Tilde{A}\bigr)u\|_{H^s} \\
        &\leq C\bigl(\|\Tilde{\chi} P_\beta(\sigma)u\|_{H^{s-1}} + \|\Tilde{\chi} u\|_{H^{-N}}\bigr),
    \end{split}
    \end{equation}
    where we used that
    $$\chi\bigl(Id - \textstyle\sum_j A^+_j - \textstyle\sum_j A^-_j - \Tilde{A}\bigr) = \chi\bigl(Id - \textstyle\sum_j A^+_j - \textstyle\sum_j A^-_j - \Tilde{A}\bigr)\Tilde{\chi},$$
    and that the wavefront set of this operator is empty.

    The above estimate concerns functions supported in a fixed compact subset of $X_\beta$. Modulo a change of constant, we can hence apply it to the norms $\|\cdot\|_{\Bar{H}^s(X_\beta)}$. We now take a cutoff function $\psi\in C^\infty(X_\beta)$ supported in $\{r>r_0+\delta\}$ and satisfying $\psi=1$ on $\{r>r_0+2\delta\}$. Let further $\psi_0 \in C^\infty(X_\beta)$ satisfy $\supp(\psi_0) \subset \{r>R_0\}$ and $\psi_0 = 1$ near $\{r>R_1\}$. Then we have
    $$\|\psi u\|_{\Bar{H}^s(X_\beta)} \leq \|\psi(1-\psi_0) u\|_{\Bar{H}^s(X_\beta)} + \|\psi_0 u\|_{\Bar{H}^s(X_\beta)}.$$
    Taking $\psi(1-\psi_0)$ as the cutoff function $\chi$ in \eqref{estimate_near_horizon}, we can estimate
    \begin{equation*}
    \begin{split}
        \|\psi(1-\psi_0) u\|_{\Bar{H}^s(X_\beta)} &\leq C\bigl(\|\Tilde{\chi} P_\beta(\sigma)u\|_{\Bar{H}^{s-1}(X_\beta)} + \|\Tilde{\chi} u\|_{\Bar{H}^{-N}(X_\beta)}\bigr) \\
        &\leq C\bigl(\|\Tilde{\chi} P_\beta(\sigma)u\|_{\Bar{H}^{s-1}(X_\beta)} + \|\Tilde{\chi} u\|_{\Bar{H}^{-N,-1}(X_\beta)}\bigr),
    \end{split}
    \end{equation*}
    where we used that, since $\Tilde{\chi}$ is compactly supported, $\|\Tilde{\chi} u\|_{\Bar{H}^{-N}(X_\beta)} \leq C\|r^{-1}\Tilde{\chi} u\|_{\Bar{H}^{-N}(X_\beta)}$.
    Applying Lemma \ref{lemma_scattering_elliptic}, we find
    \begin{equation*}
    \begin{split}
        \|\psi_0 u\|_{\Bar{H}^s(X_\beta)} &\leq C\bigl(\|\Tilde{\psi} P_\beta(\sigma) u\|_{\Bar{H}^{s-2}(X_\beta)} + \|\Tilde{\psi} u\|_{\Bar{H}^{-N,-1}(X_\beta)}\bigr) \\
        &\leq C\bigl(\|\Tilde{\psi} P_\beta(\sigma) u\|_{\Bar{H}^{s-1}(X_\beta)} + \|\Tilde{\psi} u\|_{\Bar{H}^{-N,-1}(X_\beta)}\bigr),
    \end{split}
    \end{equation*}
    where we take $\Tilde{\psi}$ supported away from the boundary at $r=r_0$ and satisfying $\Tilde{\psi}=1$ on $\{r>r_0+\delta\}$. Altogether, we have
    \begin{equation}
    \label{estimate_unclosed}
        \|\psi u\|_{\Bar{H}^s(X_\beta)} \leq C\bigl(\|\Tilde{\psi} P_\beta(\sigma) u\|_{\Bar{H}^{s-1}(X_\beta)} + \|\Tilde{\psi} u\|_{\Bar{H}^{-N,-1}(X_\beta)}\bigr).
    \end{equation}

    This is almost the desired estimate. However, in order to control $u$ on some set, the microlocal estimates require the terms on the right-hand side to be controlled on a slightly larger set. We can close the estimate by applying the results on strictly hyperbolic operators of Section \ref{section_hyperbolic_estimates}.
    
    Notice that $P_\beta(\sigma)$ is strictly hyperbolic with respect to $r$ on $\{r_0\leq r < r_+\}$, in the sense of Definition \ref{def_hyperbolic}. Indeed, for $r\in [r_0,r_+)$, we have $\mu<0$, while $\kappa > 0$ on $T^*\Sph^2$ away from the zero section. Thus, viewing the principal symbol
    $$p = \mu\xi^2 + 2\nu\xi + \kappa$$
    as a polynomial in $\xi$, we find the two distinct real roots
    $$\xi_\pm = \frac{1}{\mu}\bigl(\nu \pm \sqrt{\nu^2 - \mu\kappa}\bigr).$$

    Choosing $\delta$ small enough, we have $\supp(1-\psi) \subset \{r<r_+\}$. Applying the hyperbolic estimate, Proposition \ref{prop_hyperbolic_estimate}, we find
    $$\|(1-\psi)u\|_{\Bar{H}^s(X_\beta)} \leq C\bigl(\|P_\beta(\sigma)u\|_{\Bar{H}^{s-1}(X_\beta)} + \|\psi u\|_{\Bar{H}^s(X_\beta)}\bigr).$$
    Together with the estimate for $\|\psi u\|_{\Bar{H}^s(X_\beta)}$ in \eqref{estimate_unclosed}, the first part of the Proposition, i.e. \eqref{semi_fredholm_P}, follows.

    We now consider the adjoint $P_\beta(\sigma)^*$. Recall that the principal symbol, and thus the Hamiltonian flow, for $P_\beta(\sigma)$ and its adjoint agree away from complex scaling. The proof is in the same vein as for $P_\beta(\sigma)$. However, we will propagate estimates in the opposite direction, that is, from a neighborhood of the boundary at $r=r_0$ backward along the Hamiltonian flow towards the radial source in $\Sigma_+$ and forward along the flow towards the radial sink in $\Sigma_-$. Note that $-s+1 < \frac{1}{2} + \alpha\Im(\sigma)$ is below the threshold regularity for $P_\beta(\sigma)^*$ at the radial sets. We can thus use the low regularity radial estimates, Proposition \ref{low_reg_radial_estimate}, to propagate estimates into the radial sets.

    Let $\psi \in C^\infty(X_\beta)$ be as above, i.e. $\supp(\psi)\in\{r>r_0+\delta\}$ and $\psi=1$ for $r>r_0+2\delta$. Then $1-\psi$ is supported in the region where $P_\beta(\sigma)^*$ is strictly hyperbolic and Proposition \ref{prop_hyperbolic_estimate} gives
    $$\|(1-\psi)v\|_{\Dot{H}^{-s+1}(\Bar{X}_\beta)} \leq C \|P_\beta(\sigma)^*v\|_{\Dot{H}^{-s}(\Bar{X}_\beta)}, \quad \forall v\in \mathcal{Y}^{-s+1}_\beta,$$
    where no error term is present, since $v$ lies in the space of supported distributions.

    By Lemma \ref{lemma_propagation}, the Hamiltonian flow started from any point in $\Sigma_+\setminus\Lambda_+$ eventually enters the region $\{\psi=0\}$, i.e. the elliptic set of $1-\psi$, and similarly for $\Sigma_-\setminus\Lambda_-$ when flowing backward. As above, we can thus cover the characteristic set, away from the radial source and sink, by open sets entering $\Ell(1-\psi)$ in finite time under the Hamiltonian flow. The propagation estimate of Proposition \ref{propagation_estimate} then allows us to control the $H^s$ norm of $u$ microlocally there in terms of $\|(1-\psi)v\|_{\Dot{H}^{-s+1}(\Bar{X}_\beta)}$.

    Away from the characteristic set, we use the microlocal elliptic estimates of Proposition \ref{elliptic_estimate} and, in the complex scaling region, we once again use Lemma \ref{lemma_scattering_elliptic}. This gives the desired estimate \eqref{semi_fredholm_adjoint} microlocally away from the radial source and sink. Finally, we use the low regularity radial estimate of Proposition \ref{low_reg_radial_estimate} to propagate the estimate into the radial sets. 
\end{proof}

The Fredholm property of $P_\beta(\sigma)$ now follows almost immediately from the estimates of Proposition \ref{prop_fredholm_estimates}. Since the formulation in terms of the spaces $\mathcal{X}^s_\beta$ and $\mathcal{Y}^s_\beta$ may not be quite standard, we provide a brief proof.

\begin{proof}[Proof of Proposition \ref{fredholm_prop}]
    $P_\beta(\sigma)$ clearly defines an analytic family of bounded operators from $\mathcal{X}^s_\beta$ to $\Bar{H}^{s-1}(X_\beta)$, being a polynomial in $\sigma$ whose coefficients are bounded operators. 
    
    The condition $\sigma \in \Lambda_\beta \cap \Omega_s$ is precisely the range where Proposition \ref{prop_fredholm_estimates} holds. The space $\Bar{H}^s(X_\beta)$ is compactly included in $\Bar{H}^{-N,-1}(X_\beta)$ and, since the inclusion $\mathcal{X}^s_\beta \xhookrightarrow{} \Bar{H}^s_\beta(X_\beta)$ is continuous, the same holds for $\mathcal{X}^s_\beta$. Thus, the estimate \eqref{semi_fredholm_P} shows, using a standard compactness argument, that $\mathrm{im}_{\mathcal{X}^s_\beta}(P_\beta(\sigma))$ is closed and $\ker_{\mathcal{X}^s_\beta}(P_\beta(\sigma))$ is finite-dimensional. Similarly, the estimate \eqref{semi_fredholm_adjoint} shows that $\ker_{\mathcal{Y}^{-s+1}_\beta}(P_\beta(\sigma)^*)$ is finite-dimensional. However, since $P_\beta(\sigma)^*: \mathcal{Y}^{-s+1}_\beta \to \Dot{H}^{-s}(\Bar{X}_\beta)$ is not quite the dual of ${P_\beta(\sigma): \mathcal{X}^s_\beta \to \Bar{H}^s(X_\beta)}$, we cannot conclude yet.

    Denote $K = \ker_{\mathcal{Y}^{-s+1}_\beta}(P_\beta(\sigma)^*)$. We claim that if $f \in \Bar{H}^{s-1}(X_\beta)$ satisfies $\pair{v}{f} = 0$ for all $v\in K$, then $f = P_\beta(\sigma)u$ for some $u\in\Bar{H}^s(X_\beta)$, and hence $f \in \mathrm{im}_{\mathcal{X}^s_\beta}(P_\beta(\sigma))$. This implies that $\mathrm{Ann}(K) \subset \mathrm{im}_{\mathcal{X}^s_\beta}(P_\beta(\sigma))$, where $\mathrm{Ann}(K)$ is the annihilator of $K$ in $\Bar{H}^{s-1}(X_\beta)$, and thus there is a surjection 
    $$\Bar{H}^{s-1}(X_\beta) / \mathrm{Ann}(K) \to \Bar{H}^{s-1}(X_\beta) / \mathrm{im}_{\mathcal{X}^s_\beta}(P_\beta(\sigma)).$$ Since $\Bar{H}^{s-1}(X_\beta) / \mathrm{Ann}(K) \simeq K^*$, this shows that the cokernel is finite-dimensional.

    To see the claim, let $L \subset \mathcal{Y}^s_\beta$ be a closed complementary subspace to $K$. Then a standard argument by contradiction shows that 
    $$\|v\|_{\Dot{H}^{-s+1}(\Bar{X}_\beta)} \leq \|v\|_{\mathcal{Y}^{-s+1}_\beta} \leq C\|P_\beta(\sigma)^*v\|_{\Dot{H}^{-s}(\Bar{X}_\beta)}, \quad \forall v\in L.$$
    Thus, the linear functional $P_\beta(\sigma)^*v \to \pair{v}{f}$ is well-defined on $\mathrm{im}_{\mathcal{Y}^{-s+1}_\beta}(P_\beta(\sigma)^*)$ and satisfies
    $$\bigl|\pair{v}{f}\bigr| = \bigl|\pair{v_L}{f}\bigr| \leq C\|P_\beta(\sigma)^*v_L\|_{\Dot{H}^{-s}(\Bar{X}_\beta)} = C\|P_\beta(\sigma)^*v\|_{\Dot{H}^{-s}(\Bar{X}_\beta)} \quad \forall v\in\mathcal{Y}^{-s+1}_\beta,$$
    where we wrote $v=v_L+v_K$ with $v_L\in L$ and $v_K\in K$.
    Applying the Hahn-Banach theorem, this functional can be extended to all of $\Dot{H}^{-s}(X_\beta)$ and is represented by an element $u \in \Dot{H}^{-s}(X_\beta)^* = \Bar{H}^s(X_\beta)$. Since $C^\infty_c(X_\beta) \subset \mathcal{Y}^{-s+1}_\beta$, we then find 
    $$\pair{v}{f} = \pair{P_\beta(\sigma)^*v}{u} = \pair{v}{P_\beta(\sigma)u}, \quad \forall v\in C^\infty_c(X_\beta),$$ showing $f=P_\beta(\sigma)u$ by density.
\end{proof}

\begin{rmk}
\label{remark_resonant_states_smooth}
Notice that Proposition \ref{prop_fredholm_estimates} also implies the smoothness of resonant states, that is, of elements in $\ker_{\mathcal{X}^s_\beta}(P_\beta(\sigma))$ as long as $s$ is above the threshold regularity. Indeed, the estimate \eqref{semi_fredholm_P} holds in the strong sense that if $s > \frac{1}{2} - \alpha\Im(\sigma)$ and the right hand side is finite, then the left hand side is finite. Thus, if $\sigma \in \Lambda_\beta$, $u \in \Bar{H}^{s'}(X_\beta)$ for some $s' > \frac{1}{2} - \alpha\Im(\sigma)$ and $P_\beta(\sigma)u = 0$, then \eqref{semi_fredholm_P} shows that $u\in\Bar{H}^s(X_\beta)$ for all $s$, and hence $u \in \Bar{C}^\infty(X_\beta)$. This also implies that the space of resonant states, $\ker(P_\beta(\sigma))$, is independent of the regularity of the space $\mathcal{X}^s_\beta$ on which we let $P_\beta(\sigma)$ act.
\end{rmk}

\subsection{Definition of quasinormal modes}
\label{section_def_QNM}
In this section, we first provide a definition of quasinormal modes for the Kerr black hole in terms of the complex scaled operators $P_\beta(\sigma)$ and then prove Theorem \ref{thm_analytic_continuation}, showing that quasinormal modes can also be obtained as poles of the analytically continued cutoff resolvent. We will use the Fredholm property proved in Proposition \ref{fredholm_prop} and the analytic Fredholm theorem to show that $P_\beta(\sigma)$ is invertible for $\sigma \in \Lambda_\beta$ at all but a discrete set of points. 

\begin{prop}
\label{prop_qnm}
    For any $\beta \in (-\pi,\pi)$ and $s\in\R$, let $\Lambda_\beta$ and $\Omega_s$ be as in Proposition \ref{fredholm_prop}. Then the analytic family of Fredholm operators
    $$\sigma \in \Lambda_\beta\cap\Omega_s \longrightarrow P_\beta(\sigma): \mathcal{X}^s_\beta \to \Bar{H}^{s-1}(X_\beta)$$
    has index $0$ and is invertible away from a closed discrete subset of $\Lambda_\beta\cap\Omega_s$, giving rise to a meromorphic family of operators
    $$\sigma \in \Lambda_\beta\cap\Omega_s \longrightarrow P_\beta(\sigma)^{-1}: \Bar{H}^{s-1}(X_\beta) \to \Bar{H}^{s}(X_\beta)$$
    with poles of finite rank.
\end{prop}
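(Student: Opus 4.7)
My plan is to deduce Proposition~\ref{prop_qnm} from Proposition~\ref{fredholm_prop} via the analytic Fredholm theorem. That proposition already supplies an analytic family of Fredholm operators on $\Lambda_\beta \cap \Omega_s$, which is the intersection of a tilted half-plane with a horizontal half-plane, hence convex and in particular connected. The analytic Fredholm theorem will therefore produce the desired meromorphic inverse with finite-rank poles, and simultaneously pin the Fredholm index to $0$ throughout the domain, as soon as we exhibit a single point $\sigma_0 \in \Lambda_\beta \cap \Omega_s$ at which $P_\beta(\sigma_0)$ is a bijection.

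To produce such a $\sigma_0$, I would take $\Im(\sigma_0)$ large with $\arg(\sigma_0) \in (-\beta,\pi-\beta)$. For injectivity, let $u \in \ker P_\beta(\sigma_0)$. By Remark~\ref{remark_resonant_states_smooth}, $u \in \Bar{C}^\infty(X_\beta)$, and on $\Gamma_\beta = X_\beta \cap \{|x|>R_0\}$ it is the restriction to $\Gamma_\beta$ of a holomorphic function on a neighborhood of this set in $\C^3$. Applying Lemma~\ref{deformation_lemma} to the family of contours $\Gamma_{t\beta}$, $t \in [0,1]$, this holomorphic extension restricts on $\Gamma_0 = \{x \in \R^3 : |x|>R_0\}$ to a smooth solution $u_0$ of $P(\sigma_0) u_0 = 0$. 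Since $\arg(\sigma_0 e^{i\beta}) \in (0,\pi)$, the condition $u \in \Bar{H}^s(X_\beta)$ near scaled infinity translates into exponential decay of $u_0$ as $r \to \infty$. Patching with $u$ on $\{|x|\leq R_0\}$, where $X_\beta = X$ and $P_\beta(\sigma_0) = P(\sigma_0)$, yields a globally defined $u_0 \in \Bar{H}^s(X)$ in the kernel of the Kerr spectral family at a frequency with $\Im(\sigma_0)>0$. Mode stability of the Kerr wave equation (Proposition~\ref{mode_stability}) forces $u_0 \equiv 0$, hence $u = 0$.

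For surjectivity, the adjoint pairing argument in the proof of Proposition~\ref{fredholm_prop} identifies $\mathrm{coker}\,P_\beta(\sigma_0)$ with $\ker_{\mathcal{Y}^{-s+1}_\beta} P_\beta(\sigma_0)^*$. My preferred route is to upgrade the adjoint estimate~\eqref{semi_fredholm_adjoint}, in the regime $\Im(\sigma_0) \to \infty$, to a coercive bound with no error term. This should follow from a positive-commutator computation exploiting the $\Im(\sigma)$-dependent part of the subprincipal symbol, which is responsible for the $\Im(\sigma)$-shift of the radial threshold~\eqref{threshold_reg_kerr}: a large positive imaginary part makes the commutant strictly positive at the radial sink (relevant for the adjoint) and allows the compact remainder in $\Dot{H}^{-N,-1}(\Bar{X}_\beta)$ to be absorbed. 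The trivial adjoint kernel then implies surjectivity of $P_\beta(\sigma_0)$.

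The main obstacle I foresee is this adjoint upgrade: the estimate of Proposition~\ref{prop_fredholm_estimates} as stated is not itself coercive, and pushing it through requires a uniform quantitative track of the $\Im(\sigma)$-dependence across the microlocal, scattering-elliptic, and hyperbolic regions used to close the Fredholm estimate. If the coercive upgrade proves too delicate, an alternative is to mirror the kernel argument for the adjoint, deforming back to $\Gamma_0$ and combining Lemma~\ref{deformation_lemma} with the supported-distribution hyperbolic estimate~\eqref{hyperbolic_estimate_supported} across the horizon, at the price of invoking a mode-stability statement for the formal adjoint of the Kerr wave equation. Either way, once invertibility at one $\sigma_0$ is secured, the analytic Fredholm theorem immediately delivers the remaining assertions of the proposition.
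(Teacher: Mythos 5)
Your logical skeleton is sound — invertibility at one point of the connected domain, together with the analytic Fredholm theorem, would indeed yield everything — but the execution has a genuine gap at the surjectivity step, and the paper avoids that step entirely.

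Your injectivity argument is essentially a sketch of Proposition~\ref{prop_qnm_scaling_independent} applied with $\beta_1=\beta$, $\beta_2=0$ and $\sigma_0\in\Lambda_\beta\cap\Lambda_0$, followed by Proposition~\ref{mode_stability}; you should cite those rather than re-derive them. Be aware, though, that the assertion ``the condition $u\in\Bar{H}^s(X_\beta)$ near scaled infinity translates into exponential decay of $u_0$'' is precisely the nontrivial content of Proposition~\ref{prop_qnm_scaling_independent}: Lemma~\ref{deformation_lemma} gives an analytic continuation of $u$ to a neighborhood of $\bigcup_s\Gamma_s$ but says nothing about its growth as $|x|\to\infty$, and the membership $u_0\in\Bar{H}^s(X)$ is established in the paper only through the dyadic-annulus estimate built on the uniform invertibility of $e^{-2i\beta_1}\bigl(\Delta-(e^{i\beta_1}\sigma)^2\bigr)$. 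What is not salvageable as stated is the surjectivity step. Route~1 (a coercive upgrade of~\eqref{semi_fredholm_adjoint} for $\Im\sigma$ large) is not supported by anything in the paper: the compact remainder in~\eqref{semi_fredholm_adjoint} is contributed by the scattering-elliptic region (Lemma~\ref{lemma_scattering_elliptic}), the propagation region, and the hyperbolic estimate~\eqref{hyperbolic_estimate_supported}, not just by the radial sets where the $\Im(\sigma)$-dependent subprincipal term has a sign; a positive-commutator argument localized at the radial sets does not absorb those other error contributions, and for the complex-scaled operator even the usual large-$\Im\sigma$ energy estimate for $P_0(\sigma)$ is not available. Route~2 would require mode stability for $P_\beta(\sigma)^*$ on the supported-distribution space $\mathcal{Y}^{-s+1}_\beta$, which is not a consequence of~\cite{whiting,shlapentokh-rothman_mode_stability,hintz_mode_stability} and is not proved in the paper. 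The paper instead proves index~$0$ directly: after pulling back by $F_\beta^*$ the domains $\mathcal{X}^s_\beta$ all identify with the fixed space $\mathcal{X}^s_0$, the map $\beta\mapsto P_\beta(\sigma)$ is continuous in operator norm $\mathcal{X}^s_0\to\Bar{H}^{s-1}(X)$, and hence the Fredholm index is the same as that of $P_0(\sigma)$, which is zero on $\Lambda_0$ by Proposition~\ref{mode_stability}. Once index~$0$ is known on all of $\Lambda_\beta\cap\Omega_s$, the injectivity at $\sigma_0\in\Lambda_\beta\cap\Lambda_0$ already furnished by Propositions~\ref{prop_qnm_scaling_independent} and~\ref{mode_stability} forces invertibility there, and the analytic Fredholm theorem concludes. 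You should replace your surjectivity discussion by this continuity-of-index argument.
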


In the following, we will define the quasinormal modes contained in $\Lambda_\beta$ as the set of poles of the inverse $P_\beta(\sigma)^{-1}$. In order for this definition to make sense, we must show that the set of poles agrees for different values of $\beta$. The fundamental result in this direction is the following Proposition, which makes use of Lemma \ref{deformation_lemma}. The proof essentially follows \cite[Lemma 3.4]{sjostrand_zworski_complex_scaling}, but we include it here for completeness.

\begin{prop}
\label{prop_qnm_scaling_independent}
    Let $\beta_1,\beta_2 \in [0,\pi)$ or $\beta_1,\beta_2 \in (-\pi,0]$. Let further $\sigma \in \Lambda_{\beta_1} \cap \Lambda_{\beta_2}$ and $s > \frac{1}{2} - \alpha\Im(\sigma)$. Then 
    $$\dim(\ker_{\mathcal{X}^s_{\beta_1}}(P_{\beta_1}(\sigma))) = \dim(\ker_{\mathcal{X}^s_{\beta_2}}(P_{\beta_2}(\sigma))).$$
\end{prop}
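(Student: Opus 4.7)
\medskip

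\noindent\textbf{Proof plan.} The strategy is to use Lemma \ref{deformation_lemma} to transport resonant states from one complex scaled contour to another by analytic continuation. By symmetry it suffices to produce an injective linear map
$$\Phi:\ker_{\mathcal{X}^s_{\beta_1}}(P_{\beta_1}(\sigma))\to\ker_{\mathcal{X}^s_{\beta_2}}(P_{\beta_2}(\sigma)),$$
so without loss of generality I fix $0\le\beta_1<\beta_2<\pi$. Interpolating via $\beta(t)=(1-t)\beta_1+t\beta_2$ while keeping the phase function $\psi$ from Definition \ref{def_phase_function} fixed yields a smooth one-parameter family of deformed contours $\Gamma_{\beta(t)}\subset U\subset\C^3$, all contained in $\{|z|>R_0\}$. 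Since $\beta_1,\beta_2$ have the same sign, this family lies on a single sheet of $\sqrt{z_1^2+z_2^2+z_3^2}$ and satisfies the non-intersection condition of Remark \ref{remark_multivalued}, so any analytic continuation along the deformation will be single-valued.

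\smallskip

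\noindent Given $u_1\in\ker_{\mathcal{X}^s_{\beta_1}}(P_{\beta_1}(\sigma))$, Remark \ref{remark_resonant_states_smooth} gives $u_1\in\Bar{C}^\infty(X_{\beta_1})$ and in particular $u_1|_{\Gamma_{\beta_1}}\in C^\infty(\Gamma_{\beta_1})$ with $P(\sigma)|_{\Gamma_{\beta_1}}(u_1|_{\Gamma_{\beta_1}})=0$. The proof of Lemma \ref{lemma_scattering_elliptic} shows that $P(\sigma)|_{\Gamma_\beta}$ is elliptic uniformly in $\beta$ (the estimate depends only on $R_0$ and $\varepsilon$), so Lemma \ref{deformation_lemma} provides a well-defined analytic function $\tilde u$ in a neighborhood of $\bigcup_{t\in[0,1]}\Gamma_{\beta(t)}$ with $\tilde u|_{\Gamma_{\beta_1}}=u_1|_{\Gamma_{\beta_1}}$. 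Define $\Phi(u_1)=u_2\in C^\infty(X_{\beta_2})$ by gluing $u_2=u_1$ on the common interior $\{|x|<R_1\}\subset X_{\beta_1}\cap X_{\beta_2}$ with $u_2=\tilde u|_{\Gamma_{\beta_2}}$ on $\Gamma_{\beta_2}$; the two definitions match on the overlap $R_0<|x|<R_1$, where $f_{\beta(t)}(r)=r$ so all contours coincide with $\R^3$ and $\tilde u$ equals $u_1$ there. By construction of $P|_\Gamma$, we have $P_{\beta_2}(\sigma)u_2=0$ on both pieces, and hence on $X_{\beta_2}$.

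\smallskip

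\noindent It remains to check $u_2\in\Bar{H}^s(X_{\beta_2})$ and injectivity. For the membership, the only issue is behavior as $|x|\to\infty$ on $\Gamma_{\beta_2}$, since $u_2$ is smooth on compact subsets. In $\{|x|>R_2\}$ the contour $\Gamma_{\beta_i}$ coincides with $e^{i\beta_i}\R^3$ and, as in the proof of Lemma \ref{lemma_scattering_elliptic}, $P_{\beta_i}(\sigma)$ reduces to $e^{-2i\beta_i}(\Delta-(e^{i\beta_i}\sigma)^2)$ modulo an $\frac{1}{r}$-decaying perturbation. For $\sigma\in\Lambda_{\beta_i}$ the parameter $(e^{i\beta_i}\sigma)^2$ lies in the resolvent set of the Laplacian, and smooth square-integrable solutions of the perturbed Helmholtz equation on the ray have outgoing asymptotics of the form $e^{i\sigma e^{i\beta_i}r}$ times inverse powers of $r$. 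Analytic continuation in the scaling angle preserves this outgoing branch (as opposed to the exponentially growing incoming one), and on $e^{i\beta_2}\R^3$ the outgoing branch with parameter $\sigma\in\Lambda_{\beta_2}$ is exponentially decaying and so in $\Bar{H}^s$. For injectivity, if $\Phi(u_1)=0$ then $\tilde u\equiv 0$ by the identity theorem, so $u_1$ vanishes in the elliptic region $\{|x|>R_0\}$; unique continuation across $\{r=R_0\}$ for the second-order operator $P_{\beta_1}(\sigma)$, combined with the hyperbolic estimate \eqref{hyperbolic_estimate_supported} inside the horizon, propagates this to $u_1\equiv 0$.

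\smallskip

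\noindent\textbf{Main obstacle.} The delicate step is the $L^2$ membership on the new contour: one must rule out an ``incoming'' contribution that is $L^2$ on $\Gamma_{\beta_1}$ but grows exponentially on $\Gamma_{\beta_2}$. The key point is that the analytic continuation acts on the unique outgoing solution of the free Helmholtz equation, and its branch is determined by smoothness/square-integrability at infinity uniformly along the deformation --- this is where the hypothesis $\sigma\in\Lambda_{\beta_1}\cap\Lambda_{\beta_2}$ is essential. Implementing this rigorously amounts to a quantitative Phragm\'en--Lindel\"of argument or a direct scattering-calculus asymptotic expansion for the free problem, together with the fact that the lower-order $\frac{1}{r}$-perturbation does not alter the two-dimensional space of asymptotic models.
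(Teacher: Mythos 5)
Your high‑level strategy — push a resonant state from $\Gamma_{\beta_1}$ to $\Gamma_{\beta_2}$ via analytic continuation (Lemma \ref{deformation_lemma}) and then verify membership in $\Bar{H}^s(X_{\beta_2})$ — is the same as the paper's, but there are two genuine gaps in the execution.

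First, you cannot apply Lemma \ref{deformation_lemma} to the global family $\Gamma_{\beta(t)}$ with $\beta(t)=(1-t)\beta_1+t\beta_2$. The lemma requires a compact set $K\subset\Omega$ such that $F(s,x)=F(0,x)$ for all $x\in\Omega\setminus K$, i.e. the contour deformation must be compactly supported. Your family changes the contour throughout the unbounded region $\{|x|>R_1\}$ (for $|x|>R_2$ it rotates $e^{i\beta_1}\R^3$ all the way to $e^{i\beta_2}\R^3$), so the hypothesis fails. This is not cosmetic: the proof of the deformation lemma in Sjöstrand--Zworski relies on local elliptic estimates on a fixed compact exhaustion of the deformed region, and dropping the compact‑support condition is exactly where one would otherwise need the $L^2$ membership one is trying to prove. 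The paper sidesteps this by introducing a two‑parameter family of \emph{compactly supported} deformations $\Gamma^{\beta_1,\beta_2}_{R,s}$, each deforming only in a dyadic annulus $\{\tfrac{R}{2}\leq|x|\leq 4R\}$, and then patching the results together over $R=2^jR_1$.

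Second, the $L^2$ membership of the continued solution $u_2$ on $\Gamma_{\beta_2}$ is exactly the crux, and you explicitly defer it (``implementing this rigorously amounts to a quantitative Phragm\'en--Lindel\"of argument\ldots''). A heuristic appeal to outgoing vs.\ incoming branches does not suffice: the Kerr spectral family is only asymptotically Helmholtz, the error is a long‑range $r^{-1}$ perturbation, and one must also establish that continuation in $\beta$ does not secretly pick up an exponentially growing branch on the new contour. The paper avoids both the Phragm\'en--Lindel\"of route and any asymptotic‑model decomposition by proving a dyadic‑annulus estimate that is uniform in $R$: it uses the invertibility of $\Delta-(e^{i\beta_1}\sigma)^2$ (together with the smallness of $P^{\beta_1,\beta_2}_R(\sigma)-e^{-2i\beta_1}(\Delta-(e^{i\beta_1}\sigma)^2)$ for $|\beta_1-\beta_2|$ small and $R$ large) to bound $\|u_2\|_{H^s(\{R<|x|<2R\})}$ by $\|u_1\|_{H^s}$ on nearby annuli. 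Summing over dyadic $R$ gives $u_2\in\Bar{H}^s(X_{\beta_2})$. Note the paper only needs this for $|\beta_1-\beta_2|$ small, then chains; your one‑shot deformation does not admit an analogous smallness parameter, which is another reason the quantitative estimate would not close. Your injectivity argument via unique continuation also works but is more than is needed — since the extension is by analytic continuation, distinct kernel elements on $\Gamma_{\beta_1}$ automatically yield distinct analytic functions on a neighborhood of $\Gamma_{\beta_2}$.
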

\begin{proof}
    We prove the result for $|\beta_1-\beta_2|$ sufficiently small. That is, for any $\beta_1$ with $\sigma \in \Lambda_{\beta_1}$, there is a $\delta = \delta_{\beta_1} > 0$, such that $\dim(\ker(P_{\beta_1}(\sigma))) = \dim(\ker(P_{\beta_2}(\sigma)))$ for all $\beta_2 \in (\beta_1-\delta,\beta_1+\delta)$. The result for general $\beta_1 < \beta_2$ follows by covering the interval $[\beta_1,\beta_2]$ by such open intervals.
    
    Assume without loss of generality that $\beta_1 < \beta_2$ and let $u_1 \in \Bar{H}^s(X_{\beta_1})$ satisfy $P_{\beta_1}(\sigma)u_1 = 0$. Note from Remark \ref{remark_resonant_states_smooth} that $u_1\in \Bar{C}^\infty(X_{\beta_1})$ and, in fact, $\ker(P_{\beta_1}(\sigma))$ has a well-defined meaning independent of the regularity $s$. We will first use Lemma \ref{deformation_lemma} to analytically continue $u_1$ in the region of $\C^3$ where $P(\sigma)$ defines an operator with analytic coefficients. Restricting to $X_{\beta_2}$ will provide an element $u_2 \in \Bar{C}^\infty(X_{\beta_2})$ satisfying $P_{\beta_2}(\sigma)u_2 = 0$. However, this does not control the growth of $u_2$ at infinity. Thus, in a second step, we will show that in fact $u_2 \in \Bar{H}^s(X_{\beta_2})$, finishing the proof. The opposite direction, where we start from an element in $\ker(P_{\beta_2}(\sigma))$ follows entirely analogously.

    Denote $\Omega = \{x\in\R^3 \,\,|\,\, |x|>R_0\}$ and let $\Gamma_{\beta_1} = F_{\beta_1}(\Omega)$, $\Gamma_{\beta_2} = F_{\beta_2}(\Omega)$. Choose $\delta$ small enough so that $\beta_1,\beta_2 \in [0,\pi-\delta)$ or $(-\pi+\delta,0]$ respectively. In either case, by Lemma \ref{analytic_coefficients}, $P(\sigma)$ defines a differential operator with analytic coefficients in an open subset $U\subset\C^3$, which includes both contours $\Gamma_{\beta_1}, \Gamma_{\beta_2}$.
    Lemma \ref{deformation_lemma} only applies to contour deformations that take place in a compact set. We will thus patch $u_2$ together from a family of such deformations. To this end, let 
    $$\Tilde{\chi} \in C^\infty_c(\R), \quad \Tilde{\chi}=1 \,\,\text{on}\,\, [1,2],\quad \supp(\Tilde{\chi}) \subset \bigl(\tfrac{1}{2}, 4\bigr), \quad 0\leq\Tilde{\chi}\leq 1.$$ Define the phase function $\phi^{\beta_1,\beta_2}_{R,s}: (0,\infty) \to \R$ by
    \begin{equation}
    \label{phase_function_deformation}
        \phi^{\beta_1,\beta_2}_{R,s}(r) = \phi_{\beta_1}(r) + s\Tilde{\chi}\bigl(\frac{r}{R}\bigr)(\phi_{\beta_2}(r) - \phi_{\beta_1}(r)) = \bigl(\beta_1 + s\Tilde{\chi}\bigl(\frac{r}{R}\bigr)(\beta_2 - \beta_1)\bigr)\psi(\log(r)),
    \end{equation}
    with $\phi_\beta(r) = \beta\psi(\log(r))$ as in Definition \ref{def_phase_function}. Assuming that $R>2R_0$, we set
    $$F^{\beta_1,\beta_2}_R: [0,1]\times \Omega \to \C^3, \quad F^{\beta_1,\beta_2}_R(s,x) = e^{i\phi^{\beta_1,\beta_2}_{R,s}(|x|)}x, \quad\text{and}\quad \Gamma^{\beta_1,\beta_2}_{R,s} = F^{\beta_1,\beta_2}_R(\{s\}\times\Omega).$$
    Then the contour $\Gamma^{\beta_1,\beta_2}_{R} := \Gamma^{\beta_1,\beta_2}_{R,1}$ coincides with $\Gamma_{\beta_2}$ in $\{R \leq |x|\leq 2R\}$ and with $\Gamma_{\beta_1}$ outside of $\{\frac{R}{2} \leq |x| \leq 4R\}$. The family of contours $\Gamma^{\beta_1,\beta_2}_{R,s}$ interpolates between $\Gamma^{\beta_1,\beta_2}_{R,0}=\Gamma_{\beta_1}$ and $\Gamma^{\beta_1,\beta_2}_{R}$. 

    Note that all the $\Gamma^{\beta_1,\beta_2}_{R,s}$ are contained in the open set $U$, where $P(\sigma)$ has analytic coefficients, and $P(\sigma)\bigr|_{\Gamma_{\beta_1}}u_1 = 0$ trivially extends to an analytic function on $U$. Choosing $\varepsilon$, the bound on $|\psi'|$, small enough, the same argument as in the proof of Lemma \ref{lemma_scattering_elliptic} shows that $P(\sigma)\bigr|_{\Gamma^{\beta_1,\beta_2}_{R,s}}$ is elliptic for all $s\in[0,1]$. Furthermore, for all $s$, $F^{\beta_1,\beta_2}_R(s,x)=F^{\beta_1,\beta_2}_R(0,x)$ outside the compact set $\{\frac{R}{2} \leq |x| \leq 4R\}$ and, in fact, the only points of intersection of the $\Gamma^{\beta_1,\beta_2}_{R,s}$ for different $s$ are contained in this set, where all the contours agree. Thus, we can apply Lemma \ref{deformation_lemma} to the family of contours $\Gamma^{\beta_1,\beta_2}_{R,s}$, see also Remark \ref{remark_multivalued}, and obtain an analytic function $\Tilde{u}^{\beta_1,\beta_2}_{R}$ defined in a neighborhood of $\bigcup_{s\in[0,1]}\Gamma^{\beta_1,\beta_2}_{R,s}$, such that $\Tilde{u}^{\beta_1,\beta_2}_{R}\bigr|_{\Gamma_{\beta_1}} = u_1$. 
    
    Notice that $\Tilde{u}^{\beta_1,\beta_2}_{R}$ automatically satisfies $P(\sigma)\Tilde{u}^{\beta_1,\beta_2}_{R} = 0$, since $P(\sigma)\Tilde{u}^{\beta_1,\beta_2}_{R}$ is an analytic function that vanishes on $\Gamma_{\beta_1}$. For the same reason, the analytic extensions $\Tilde{u}^{\beta_1,\beta_2}_{R}$ agree for different choices of $R$ in the intersection of their domains. We will denote by ${u^{\beta_1,\beta_2}_{R} \in C^\infty(\Gamma^{\beta_1,\beta_2}_{R})}$ the restriction $u^{\beta_1,\beta_2}_{R} = \Tilde{u}^{\beta_1,\beta_2}_{R}\bigr|_{\Gamma^{\beta_1,\beta_2}_{R}}$. Finally, we define the desired element $u_2 \in \Bar{C}^\infty(X_{\beta_2})$. Recall that $\phi_\beta(r) = 0$ for $r<R_1$, and thus the $\Gamma^{\beta_1,\beta_2}_{R,s}$ all agree in $\{|x|<R_1\}$. Choose $R_1>2R_0$ and define $u_2$ by
    \begin{equation}
    \label{u_2_construction}
        u_2(x) = u_1(x) \,\,\text{for}\,\, |x| < R_1, \quad u_2(x) = u^{\beta_1,\beta_2}_{2^jR_1}(x) \,\,\text{for}\,\, 2^jR_1\leq |x|\leq 2^{j+1}R_1.
    \end{equation}
    Note that $u_2 \in \Bar{C}^\infty(X_{\beta_2})$ is well-defined in this way and satisfies $P_{\beta_2}(\sigma)u_2 = 0$.

    We will now show that, in fact, $u_2 \in \Bar{H}^s(X_{\beta_2})$. To this end, consider first the operator $e^{-2i\beta_1}\bigl(\Delta - (e^{i\beta_1}\sigma)^2\bigr)$ on $\R^3$. Since by assumption $(e^{i\beta_1}\sigma)^2$ is in the resolvent set of the Laplacian, we have for all $v\in H^s(\R^3)$:
    $$\|v\|_{H^s(\R^3)} \leq C\|e^{-2i\beta_1}\bigl(\Delta - (e^{i\beta_1}\sigma)^2\bigr)v\|_{H^{s-2}(\R^3)}.$$
    Taking a family of cutoff function $\chi_R \in C^\infty_c(\R^3)$ with 
    $$\chi_R = 1 \,\,\text{for}\,\, |x| \in [\tfrac{R}{2},4R], \quad \supp(\chi_R) \subset \{|x| \in (\tfrac{R}{4},8R)\}$$
    and satisfying $|\nabla^j\chi_R| \leq c$ for all $j<s+1$ and some $c>0$ independent of $R$ for $R\geq R_1$, we find that for all $u \in C^\infty(\R^3)$:
    \begin{equation}
    \label{dyadic_cover_estimate}
    \begin{split}
        \|u\|_{H^s(\{\frac{R}{2}<|x|<4R\})} &\leq \|\chi_R u\|_{H^s(\R^3)} \leq C\|e^{-2i\beta_1}\bigl(\Delta - (e^{i\beta_1}\sigma)^2\bigr)\chi_R u\|_{H^{s-2}(\R^3)} \\
        &\leq C\bigl(\|\chi_R e^{-2i\beta_1}\bigl(\Delta - (e^{i\beta_1}\sigma)^2\bigr)u\|_{H^{s-2}(\R^3)} + \|[\Delta,\chi_R] u\|_{H^{s-2}(\R^3)}\bigr) \\
        &\leq C\bigl(\|e^{-2i\beta_1}\bigl(\Delta - (e^{i\beta_1}\sigma)^2\bigr)u\|_{H^{s-2}(\{\frac{R}{4}<|x|<8R\})} + \|u\|_{H^{s}(\{\frac{R}{4}<|x|<\frac{R}{2}\}\cup\{4R<|x|<8R\})}\bigr).
    \end{split}
    \end{equation}
    with $C$ independent of $R$.

    Denote by 
    $$P^{\beta_1,\beta_2}_{R}(\sigma) = F^{\beta_1,\beta_2}_{R}(1,\,\cdot\,)^*P(\sigma)\bigr|_{\Gamma^{\beta_1,\beta_2}_{R}}(F^{\beta_1,\beta_2}_{R}(1,\,\cdot\,)^{-1})^*$$
    the local coordinate expression of the restriction of $P(\sigma)$ to the contour $\Gamma^{\beta_1,\beta_2}_{R}$, viewed as a differential operator on $\Omega \subset \R^3$. Then $P^{\beta_1,\beta_2}_{R}(\sigma)$ asymptotically approaches $${e^{-2i\beta_1}\bigl(\Delta - (e^{i\beta_1}\sigma)^2\bigr)}$$ as $|x|\to\infty$. In fact, for any $\varepsilon>0$, we can choose $|\beta_2-\beta_1|$ small enough and $R$ large enough, so that for all $u \in C^\infty(\R^3)$:
    \begin{equation}
    \label{asymptotically_laplacian_estimate}
        \|\bigl(P^{\beta_1,\beta_2}_{R}(\sigma) - e^{-2i\beta_1}(\Delta - (e^{i\beta_1}\sigma)^2)\bigr)u\|_{H^{s-2}(\{\frac{R}{4}<|x|<8R\})} \leq \varepsilon \|u\|_{H^{s}(\{\frac{R}{4}<|x|<8R\})},
    \end{equation}
    with $\varepsilon$ independent of $R$. Indeed, the coefficients of the second order differential operator on the left hand side consist of terms that are of order $|x|^{-1}$ uniformly in $R$ and terms that are of order
    $$\bigl|e^{i\phi^{\beta_1,\beta_2}_{R,1}(r)} - e^{i\beta_1}\bigr| \leq \bigl|e^{i\beta_2} - e^{i\beta_1}\bigr|.$$
    Combining \eqref{dyadic_cover_estimate} and \eqref{asymptotically_laplacian_estimate}, splitting the norm on the right hand side of \eqref{asymptotically_laplacian_estimate} as
    $$\|u\|_{H^{s}(\{\frac{R}{4}<|x|<8R\})} = \|u\|_{H^{s}(\{\frac{R}{2}<|x|<4R\})} + \|u\|_{H^{s}(\{\frac{R}{4}<|x|<\frac{R}{2}\}\cup\{4R<|x|<8R\})},$$
    and absorbing $\varepsilon \|u\|_{H^{s-2}(\{\frac{R}{2}<|x|<4R\})}$ into the left hand side of \eqref{dyadic_cover_estimate}, we obtain for all $u\in C^\infty(\R^3)$ and $R$ large enough:
    \begin{equation*}
         \|u\|_{H^s(\{\frac{R}{2}<|x|<4R\})} \leq C\bigl(\|P^{\beta_1,\beta_2}_{R}(\sigma)u\|_{H^{s-2}(\{\frac{R}{4}<|x|<8R\})} + \|u\|_{H^{s}(\{\frac{R}{4}<|x|<\frac{R}{2}\}\cup\{4R<|x|<8R\})}\bigr).
    \end{equation*}

    Applying this estimate to $F^{\beta_1,\beta_2}_{R}(1,\,\cdot\,)^*u^{\beta_1,\beta_2}_{R}$, the pullback to $\Omega\subset\R^3$ of the analytically continued solution, which we continue to denote by $u^{\beta_1,\beta_2}_{R}$ for simplicity, we find
    \begin{equation*}
         \|u^{\beta_1,\beta_2}_{R}\|_{H^s(\{\frac{R}{2}<|x|<4R\})} \leq C \|u^{\beta_1,\beta_2}_{R}\|_{H^{s}(\{\frac{R}{4}<|x|<\frac{R}{2}\}\cup\{4R<|x|<8R\})}.
    \end{equation*}
    Since $u^{\beta_1,\beta_2}_{R}$ agrees with $u_1$ in $\{\frac{R}{4}<|x|<\frac{R}{2}\}\cup\{4R<|x|<8R\}$ and agrees with $u_2$ in $\{R<|x|<2R\}$, we have, uniformly in $R$ for all $R$ large enough,
    $$\|u_2\|_{H^s(\{R<|x|<2R\})} \leq C \|u_1\|_{H^{s}(\{\frac{R}{4}<|x|<\frac{R}{2}\}\cup\{4R<|x|<8R\})}.$$
    Applying this to $R = 2^kR_1$ for all $k\in\N$, as in the definition of $u_2$, shows that $u_2 \in \Bar{H}^s(X_{\beta_2})$.

    Note that analytically extending two different elements of $\ker(P_{\beta_1}(\sigma))$ must lead to two different elements of $\ker(P_{\beta_2}(\sigma))$, since otherwise one would end up with two different analytic functions agreeing on $\Gamma_{\beta_2}$. Thus, the dimension of the kernels are equal.
\end{proof}

\begin{rmk}
    A simpler version of the proof of Proposition \ref{prop_qnm_scaling_independent} shows that the dimension of the kernel of $P_\beta(\sigma)$ is also independent of the exact form used for complex scaling. If $P_\beta(\sigma)$ and $\Tilde{P}_\beta(\sigma)$ are the operators obtained from complex scaling with different phase functions, then $P_\beta(\sigma)$ and $\Tilde{P}_\beta(\sigma)$ agree outside a compact set, so the deformation result, Lemma \ref{deformation_lemma}, allows us to analytically continue an element of $\ker(P_\beta(\sigma))$ to an element of $\ker(\Tilde{P}_\beta(\sigma))$.
\end{rmk}

In order to apply the analytic Fredholm theorem, we must show that $P_\beta(\sigma)$ is at least invertible for some $\sigma$. This will follow from Proposition \ref{prop_qnm_scaling_independent} together with the analysis of the original Kerr spectral family $P_0(\sigma)$. It is fairly straightforward to show, using standard energy estimates for the Kerr wave equation, that $P_0(\sigma)$ is invertible for all $\sigma$ with $\Im(\sigma)$ large enough. A different argument, based on semiclassical analysis as in Section \ref{section_semiclassical}, can be obtained by adapting \cite[Section 7]{vasy_KdS} to the Kerr metric. However, for the Kerr wave equation a much stronger result is available, namely mode stability, which implies the invertibility of the Kerr spectral family in the upper half-plane. The mode stability of Kerr was first proved in \cite{whiting} and then extended to the closed upper half-plane in \cite{shlapentokh-rothman_mode_stability}. Note that both of these results only treat fully separated modes, i.e. show the absence of elements of $\ker(P(\sigma))$ of the form $u(r,\theta,\phi_*) = v(r)w(\theta)e^{im\phi_*}$. This is upgraded to mode stability in the sense used here in \cite[Theorem 1.7]{hintz_mode_stability}. (Since the operator underlying the ODE for $\theta$ is not self-adjoint when $\sigma \notin \R$, it is not a priori clear that any mode solution could be expanded into fully separated ones.)

\begin{prop}[\cite{whiting,shlapentokh-rothman_mode_stability,hintz_mode_stability}]
\label{mode_stability}
    For $s>\frac{1}{2}$, the original Kerr spectral family 
    $$P_0(\sigma): \mathcal{X}^s_0 \to \Bar{H}^{s-1}(X)$$
    is invertible on the upper half-plane $\Lambda_0 = \{\sigma\in\C \,\,|\,\, \Im(\sigma)>0\}$.
\end{prop}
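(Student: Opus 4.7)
The plan is to combine the Fredholm framework already in place with the cited mode stability results, through three standard steps. First, observe that for $s > \tfrac{1}{2}$ we have $\tfrac{1}{\alpha}(\tfrac{1}{2} - s) < 0$, so the entire upper half-plane $\Lambda_0$ lies inside the region $\Lambda_0 \cap \Omega_s$ to which Proposition \ref{fredholm_prop} (with $\beta = 0$) applies. Consequently, $\sigma \mapsto P_0(\sigma)$ is an analytic family of Fredholm operators throughout $\Lambda_0$.

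Next, I would establish invertibility of $P_0(\sigma)$ at a single reference point with $\Im \sigma \gg 0$. Because $dt_*$ is timelike by Lemma \ref{timelike}, a standard energy-type argument on the spatial slice $X$ -- pairing $P_0(\sigma) u = f$ against a suitable multiplier built from a timelike vector field and integrating by parts -- produces a contribution proportional to $\Im \sigma$ that is coercive when $\Im \sigma$ is large, dominating the zeroth-order and ergoregion error terms. This yields an estimate $\|u\|_{\mathcal{X}^s_0} \leq C \|f\|_{\Bar{H}^{s-1}(X)}$ with no compact remainder, giving injectivity with closed range; combined with the Fredholm property from the previous step this gives invertibility at the reference point. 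The analytic Fredholm theorem then promotes this to a meromorphic inverse $P_0(\sigma)^{-1}$ on all of $\Lambda_0$ with only finite-rank poles, and pins the Fredholm index to zero throughout.

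The decisive step is to rule out poles in the upper half-plane. A pole of $P_0(\sigma)^{-1}$ corresponds to a nonzero $u \in \mathcal{X}^s_0$ with $P_0(\sigma) u = 0$. By Remark \ref{remark_resonant_states_smooth} such a $u$ is automatically smooth, and its existence is independent of the Sobolev regularity used, so we obtain a bona fide mode solution $v = e^{-i\sigma t_*} u$ of $\Box_g v = 0$ that extends smoothly across the future event horizon (by virtue of the horizon-regular coordinates $(t_*, r, \varphi_*)$) and satisfies outgoing asymptotics at spatial infinity (the incoming component $\sim e^{-i\sigma r_*}$ would grow exponentially when $\Im \sigma > 0$ and hence fail $L^2$-integrability, forcing it to vanish). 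The combined mode stability results of \cite{whiting, shlapentokh-rothman_mode_stability} for fully separated modes, together with \cite[Theorem 1.7]{hintz_mode_stability} which removes the separation assumption and treats general mode solutions, exclude the existence of any nonzero such $u$ when $\Im \sigma > 0$. Hence $\ker P_0(\sigma) = 0$ throughout $\Lambda_0$, and combined with the vanishing index from the previous step this yields full invertibility.

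The main obstacle is of course the mode stability itself, which is imported as a black box from the cited works; the remaining ingredients are standard Fredholm-theoretic packaging. A secondary point worth verifying is that an element of $\ker P_0(\sigma)$ in $\mathcal{X}^s_0$ genuinely produces a mode solution of the type ruled out by the mode stability theorem, but this is handled by the smoothness of resonant states established in Remark \ref{remark_resonant_states_smooth} together with the integrability-forced outgoing behavior at infinity.
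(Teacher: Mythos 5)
The paper does not actually prove Proposition~\ref{mode_stability}; it is stated as a citation to \cite{whiting,shlapentokh-rothman_mode_stability,hintz_mode_stability}, with the surrounding paragraph merely indicating the ingredients (Fredholm framework from Proposition~\ref{fredholm_prop}, energy estimates at $\Im\sigma\gg 0$, mode stability as a black box, with \cite[Theorem 1.7]{hintz_mode_stability} bridging separated-mode stability to triviality of $\ker P_0(\sigma)$). Your proposal correctly reconstructs exactly this argument, including the observation that $s>\tfrac12$ makes $\Omega_s\supset\Lambda_0$, so there is no genuine disagreement with the paper's route.

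One step in your write-up is stated too loosely. You argue that the energy estimate $\|u\|_{\mathcal{X}^s_0}\le C\|f\|_{\Bar{H}^{s-1}(X)}$ at a reference point ``gives injectivity with closed range; combined with the Fredholm property ... this gives invertibility.'' That implication is not correct as stated: a Fredholm operator with trivial kernel and closed range need not be surjective, since the index could be negative. To get genuine invertibility at the reference point (and hence index zero to carry around $\Lambda_0$ by local constancy), you also need a matching estimate without compact remainder for the adjoint $P_0(\sigma)^*$ on $\Dot{H}^{-s+1}(\Bar X)$, which forces triviality of the cokernel. This is not hard -- the hyperbolic estimate \eqref{hyperbolic_estimate_supported} already has no Cauchy-data term for supported distributions, the operator is scattering-elliptic at infinity for $\Im\sigma>0$, and the same multiplier/energy argument applies -- but it should be stated, since it is exactly the point that distinguishes ``injective Fredholm'' from ``invertible.'' With that addition, the analytic-Fredholm-plus-mode-stability packaging goes through as you describe, and there is no circularity with the later use of Proposition~\ref{mode_stability} to pin the index of $P_\beta(\sigma)$ in the proof of Proposition~\ref{prop_qnm}.
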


Using the invertibility of $P_0(\sigma)$ in the upper half-plane, we can now prove Proposition \ref{prop_qnm}.

\begin{proof}[Proof of Proposition \ref{prop_qnm}]
    We first show that $P_\beta(\sigma)$ has Fredholm index zero. Due to its invertibility, $P_0(\sigma)$ certainly has index zero in $\Lambda_0$. So the claim essentially follows from the local constancy of the Fredholm index and the continuous dependence of $P_\beta(\sigma)$ on $\beta$. However, the situation is slightly more subtle, since the operators $P_\beta(\sigma)$ are defined on different spaces for each $\beta$. Recall that the domain is
    $$\mathcal{X}_\beta^s = \{u\in\Bar{H}^s(X_\beta) \,\,|\,\, P_\beta(0)u \in \Bar{H}^{s-1}(X_\beta)\}.$$
    Using the unitary operator $F_\beta^*$, we can identify $F_\beta^*\Bar{H}^s(X_\beta) = \Bar{H}^s(X)$ and the operators with their local coordinate expression $F_\beta^*P_\beta(\sigma)(F_\beta^*)^{-1}$, which we continue to denote $P_\beta(\sigma)$ by a slight abuse of notation. Away from complex scaling, i.e. in the ball $B_{R_1}$, the operators $P_\beta(\sigma)$ all agree for different $\beta$ and we have
    $$P_\beta(0)u\big|_{B_{R_1}} = P_0(0)u\big|_{B_{R_1}}, \quad \forall u \in \Bar{H}^s(X).$$
    Outside the ball $B_{R_0}$ the operators are elliptic, so
    $$P_\beta(0)u \in \Bar{H}^{s-1}\bigl(\R^3\setminus \overline{B_{R_0}}\bigr) \iff u \in \Bar{H}^{s+1}\bigl(\R^3\setminus \overline{B_{R_0}}\bigr) \iff  P_0(0)u \in \Bar{H}^{s-1}\bigl(\R^3\setminus \overline{B_{R_0}}\bigr).$$
    Thus, the spaces $\mathcal{X}_\beta^s$ are all the same, that is, $F_\beta^*\mathcal{X}_\beta^s = \mathcal{X}_0^s$.

    From the local coordinate expression for $P_\beta(\sigma)$ in \eqref{complex_scaled_operator}, we see that $\beta \to P_\beta(\sigma)$ is continuous in operator norm from $\Bar{H}^{s+1}(X)$ to $\Bar{H}^{s-1}(X)$. However, we need to show continuity in operator norm from $\mathcal{X}_0^s(X)$ to $\Bar{H}^{s-1}(X)$. Choosing a cutoff function $\chi \in C^\infty_c(X)$ with $\supp(\chi) \subset \{r>R_0\}$ and $\chi = 1$ near $\{r \geq R_1\}$, and using the fact that the $P_\beta(\sigma)$ agree on $B_{R_1}$, we find for all $u \in \mathcal{X}^s_0$ and all $\beta_1, \beta_2$ with $|\beta_1-\beta_2|$ sufficiently small
    \begin{equation*}
    \begin{split}
        \|\bigl(P_{\beta_1}(\sigma) - P_{\beta_2}(\sigma)\bigr)u\|_{\Bar{H}^{s-1}(X)} &= \|\bigl(P_{\beta_1}(\sigma) - P_{\beta_2}(\sigma)\bigr)\chi u\|_{\Bar{H}^{s-1}(X)} \\
        &\leq C|\beta_1-\beta_2|\|\chi u\|_{\Bar{H}^{s+1}(X)} \\
        &\leq C|\beta_1-\beta_2|\bigl(\|\Tilde{\chi}P_0(0)u\|_{\Bar{H}^{s-1}(X)} + \|\Tilde{\chi}u\|_{\Bar{H}^{s}(X)}\bigr) \\
        &\leq C|\beta_1-\beta_2| \|u\|_{\mathcal{X}^s_0},
    \end{split}
    \end{equation*}
    where we used the uniform ellipticity of $P_0(0)$ in $\{r>R_0\}$. This shows that the $P_\beta(\sigma)$ depend continuously on $\beta$ as operators from $\mathcal{X}^s_0$ to $\Bar{H}^{s-1}$ and the index zero property follows. Note that the unitary pullback map $F_\beta^*$ does not affect the index.

    For $\beta \in (-\pi,\pi)$, we choose $\sigma_0 \in \Lambda_\beta \cap \Lambda_0$. Note that the intersection is non-empty. Since $P_0(\sigma_0)$ is invertible, Proposition \ref{prop_qnm_scaling_independent} shows that $P_\beta(\sigma_0)$ has trivial kernel and, as its index is zero, the operator is invertible. By the analytic Fredholm theorem, see \cite[Theorem C.8]{dyatlov_zworski_scattering_book}, $P_\beta(\sigma)$ is thus invertible at all but a discrete set of points in $\Lambda_\beta \cap \Omega_s$ and $\sigma \to P_\beta(\sigma)^{-1}$ is meromorphic with poles of finite rank.
\end{proof}

We can now provide a rigorous definition of quasinormal modes for the Kerr spacetime as a discrete subset of logarithmic cover of the complex plane.
\begin{defn}
\label{def_qnm}
    The set of quasinormal modes of a Kerr black hole is the discrete subset
    $$\mathrm{QNM}_{m,a} \subset \bigl\{\sigma\in\Lambda \,\,|\,\, \arg(\sigma) \in (-\pi,2\pi)\bigr\},$$
    consisting of those $\sigma$ for which $\ker_{\mathcal{X}^s_\beta}(P_\beta(\sigma))$ is non-trivial for some $\beta$ with $\sigma \in \Lambda_\beta$ and some $s > \frac{1}{2} - \alpha\Im(\sigma)$.
\end{defn}

We now address the analytic continuation of the cutoff resolvent for the original Kerr spectral family. To this end, we first show that the action of the resolvent $P_\beta(\sigma)^{-1}$ away from complex scaling does not depend on $\beta$. Notice that the operators $\chi P_\beta(\sigma)^{-1}\chi$ can all be viewed as acting on the original undeformed space $\Bar{H}^{s-1}(X)$. We claim that they in fact agree on this space. This will follow in a similar manner as Proposition \ref{prop_qnm_scaling_independent}.

\begin{prop}
\label{prop_cutoffs_agree}
    Let $\beta_1,\beta_2 \in [0,\pi)$ or $\beta_1,\beta_2 \in (-\pi,0]$. Let further $\chi \in \Bar{C}^\infty(X)$ have compact support in $\Bar{X}$ and choose $R_1$, the start of complex scaling, large enough so that ${\supp(\chi) \subset \{r<R_1\}}$. Then for any ${\sigma \in \Lambda_{\beta_1}\cap\Lambda_{\beta_2}}$ satisfying $\sigma \notin \mathrm{QNM}_{m,a}$, we have
    $$\chi P_{\beta_1}(\sigma)^{-1}\chi = \chi P_{\beta_2}(\sigma)^{-1}\chi$$
    as operators on $\Bar{H}^{s-1}(X)$, where $s>\frac{1}{2}-\alpha\Im(\sigma)$.
\end{prop}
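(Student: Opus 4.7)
The plan is to adapt the analytic continuation argument of Proposition \ref{prop_qnm_scaling_independent}. Given $f \in \Bar{H}^{s-1}(X)$, set $u_i = P_{\beta_i}(\sigma)^{-1}(\chi f) \in \mathcal{X}^s_{\beta_i}$ for $i=1,2$, where $\chi f$ is viewed as an element of $\Bar{H}^{s-1}(X_{\beta_i})$ via the identification on $\{r<R_1\}$. Since $\chi$ is supported in $\{r<R_1\}$, where $X_{\beta_1}$, $X_{\beta_2}$ and $X$ all coincide, the desired operator identity reduces to showing that $u_1$ and $u_2$ agree as distributions on $\{r<R_1\}$. To achieve this I would construct an element $w \in \mathcal{X}^s_{\beta_2}$ satisfying both $P_{\beta_2}(\sigma)w = \chi f$ and $w|_{\{r<R_1\}} = u_1|_{\{r<R_1\}}$. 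Invertibility of $P_{\beta_2}(\sigma)$ on $\mathcal{X}^s_{\beta_2}$, which holds because $\sigma \in \Lambda_{\beta_2}$ and $\sigma \notin \mathrm{QNM}_{m,a}$ (Proposition \ref{prop_qnm}), then forces $w = u_2$ and hence $u_1 = u_2$ on $\{r<R_1\}$.

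To build $w$, observe that outside $B_{R_1}$ the function $u_1$ satisfies the homogeneous equation $P(\sigma)|_{\Gamma_{\beta_1}}u_1 = 0$, since $\chi f$ vanishes there, and is therefore real analytic on $\Gamma_{\beta_1}$ by elliptic regularity combined with the analyticity of the coefficients established in Lemma \ref{analytic_coefficients}. Following the dyadic construction in the proof of Proposition \ref{prop_qnm_scaling_independent}, I would apply Lemma \ref{deformation_lemma} through the family of interpolating contours $\Gamma^{\beta_1,\beta_2}_{R,s}$ defined as in \eqref{phase_function_deformation} to analytically continue $u_1$ to a smooth function $\Tilde{u}_1 \in \Bar{C}^\infty(\Gamma_{\beta_2})$ satisfying $P(\sigma)|_{\Gamma_{\beta_2}}\Tilde{u}_1 = 0$. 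Because the interpolating contours all coincide with the real subset of $\R^3$ in $\{R_0 < r < R_1\}$, the continuation agrees with $u_1$ on this overlap, and gluing yields $w \in \Bar{C}^\infty(X_{\beta_2})$ with $w = u_1$ for $r<R_1$ and $w = \Tilde{u}_1$ on $\Gamma_{\beta_2}$. The equation $P_{\beta_2}(\sigma)w = \chi f$ then holds on both pieces: inside $B_{R_1}$ the operators $P_{\beta_1}(\sigma)$ and $P_{\beta_2}(\sigma)$ coincide and $w = u_1$, while on $\Gamma_{\beta_2}$ both sides vanish.

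The main obstacle is establishing the Sobolev regularity $w \in \Bar{H}^s(X_{\beta_2})$. This would follow from the uniform-in-$R$ dyadic argument carried out in the second half of the proof of Proposition \ref{prop_qnm_scaling_independent}: in each annulus $\{R<|x|<2R\}$ the operator $P_{\beta_2}(\sigma)$ is asymptotically close to the model constant-coefficient operator $e^{-2i\beta_2}(\Delta - (e^{i\beta_2}\sigma)^2)$, whose resolvent is bounded precisely because $\sigma \in \Lambda_{\beta_2}$ places $(e^{i\beta_2}\sigma)^2$ in the resolvent set of $\Delta$. Combining the resulting dyadic estimates, which are uniform in $R$, with the $\Bar{H}^s(X_{\beta_1})$ control of $u_1$ on the overlapping annuli yields $\Bar{H}^s$ control of $w$ on all of $X_{\beta_2}$. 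Since $P_{\beta_2}(0) - P_{\beta_2}(\sigma) \in \mathrm{Diff}^1(X_{\beta_2})$, we moreover obtain $P_{\beta_2}(0)w \in \Bar{H}^{s-1}(X_{\beta_2})$, placing $w$ in $\mathcal{X}^s_{\beta_2}$ and completing the argument. The remaining steps — elliptic regularity, applicability of Lemma \ref{deformation_lemma}, gluing across the overlap, and verification of the equation — are direct transcriptions of the template already established in Proposition \ref{prop_qnm_scaling_independent}.
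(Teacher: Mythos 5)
Your proposal is correct and follows essentially the same route as the paper: set $u_i = P_{\beta_i}(\sigma)^{-1}\chi f$, note $u_1$ solves the homogeneous equation and is hence analytic in $\{r>R_1\}$, analytically continue across the interpolating contours exactly as in Proposition \ref{prop_qnm_scaling_independent} to produce a candidate $w \in \Bar{H}^s(X_{\beta_2})$ agreeing with $u_1$ on $\{r<R_1\}$, verify the dyadic uniform estimates to get the Sobolev bound, and invoke injectivity to conclude $w = u_2$. The only difference is cosmetic: the paper compresses the continuation and regularity steps into the phrase ``by the exact same arguments as in Proposition \ref{prop_qnm_scaling_independent},'' whereas you spell them out.
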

\begin{proof}
    Let $f \in \Bar{H}^{s-1}(X)$. Then $\chi f$ is supported away from complex scaling. We denote
    $$u_1 = P_{\beta_1}(\sigma)^{-1}\chi f \in \Bar{H}^s(X_{\beta_1}).$$
    Note that $P_{\beta_1}(\sigma)u_1$ is identically zero in $\{r>R_1\}$. By the ellipticity of $P_{\beta_1}(\sigma)$, $u_1$ is thus smooth in this region. By the exact same arguments as in Proposition \ref{prop_qnm_scaling_independent}, we can analytically continue $u_1$ in $\{r>R_1\}$ and obtain an element of $u_2 \in \Bar{H}^s(X_{\beta_2})$, agreeing with $u_1$ in $\{r<R_1\}$ and satisfying $P_{\beta_2}(\sigma)u_2 = \chi f$. The injectivity of $P_{\beta_2}(\sigma)$ for $\sigma \notin \mathrm{QNM}_{m,a}$ then shows that we must have
    $$u_2 = P_{\beta_2}(\sigma)^{-1}\chi f.$$
    Since $\chi u_2 = \chi u_1$, the proposition follows.
\end{proof}

\begin{rmk}
    Note that for $\beta_1 \in [0,\pi)$ and $\beta_2 \in (-\pi,0]$ the cutoff resolvents satisfy
    $$\chi P_{\beta_1}(\sigma)^{-1}\chi = \chi P_0(\sigma)^{-1}\chi = \chi P_{\beta_2}(\sigma)^{-1}\chi, \quad \forall\sigma \in \Lambda_{\beta_1}\cap\Lambda_{\beta_2}\cap\{\sigma\in\C \,\,|\,\, \Im(\sigma)>0\}.$$
    However, in $\Im(\sigma)<0$ we do not expect $\chi P_{\beta_1}(\sigma)^{-1}\chi$ and $\chi P_{\beta_2}(\sigma)^{-1}\chi$ to match up. This makes it necessary to view the analytic continuation of the cutoff resolvent for the original Kerr spectral family as a multi-valued analytic function of $\sigma$, or rather a function defined on the logarithmic cover of the complex plane. This can also be interpreted in terms of the expected logarithmic singularity of $\chi P_0(\sigma)^{-1}\chi$ at $\sigma=0$.
\end{rmk}

Theorem \ref{thm_analytic_continuation} now follows.
\begin{proof}[Proof of Theorem \ref{thm_analytic_continuation}]
    We perform complex scaling with $R_1$, the start of the complex deformation, chosen so that $\supp(\chi)$ is contained in the ball $B_{R_1}$.
    Then for each $\beta \in (-\pi,\pi)$, we have by Proposition \ref{prop_cutoffs_agree}
    $$\chi P_{\beta}(\sigma)^{-1}\chi = \chi P_0(\sigma)^{-1}\chi$$
    on the non-empty set $\Lambda_\beta\cap\Lambda_0$. Thus, the cutoff resolvents for various $\beta$ provide a meromorphic continuation of $\chi P_0(\sigma)^{-1}\chi: \Bar{H}^{s-1}(X) \to \Bar{H}^{s}(X)$ to the set
    $$\bigcup_{\beta \in (-\pi,\pi)} \bigl(\Lambda_\beta \cap \Omega_s\bigr) = \Bigl\{\sigma\in\Lambda \,\,|\,\, \arg(\sigma) \in (-\pi,2\pi), \, \Im(\sigma) > \tfrac{1}{\alpha}\bigl(\tfrac{1}{2} - s\bigr) \Bigr\}.$$
    The poles of this meromorphic continuation are contained in the set $\mathrm{QNM}_{m,a}$ of Definition \ref{def_qnm}.
    
    We will now show that for $\supp(\chi)$ large enough the poles of the meromorphically continued cutoff resolvent in fact coincide with $\mathrm{QNM}_{m,a}$. Thus, let $\chi = 1$ on a ball $B_R$ with $R > R_0$, where $R_0$ is the radius from Lemma \ref{analytic_coefficients}. The claim will follow from the fact that for any $\beta \in (-\pi,\pi)$, $\sigma \in \C$: if $u \in \Bar{C}^\infty(X_\beta)$ satisfies $P_\beta(\sigma)u = 0$ and $\chi u = 0$ then $u = 0$. Indeed, applying Lemma \ref{deformation_lemma}, as in the proof of Proposition \ref{prop_qnm_scaling_independent}, shows that such a $u$ extends from $X_\beta \setminus B_{R_0}$ to an analytic function on an open neighborhood in $\C^3$. But $B_R$ has non-empty intersection with this open set in $\C^3$ and $u$ satisfies $u = \chi u = 0$ on $B_R$, so we must have $u=0$. Note that we do not require $\sigma \in \Lambda_\beta$.

    Let $\sigma_0$ be a pole of $P_\beta(\sigma)^{-1}$ and take $s > \frac{1}{2} - \alpha\Im(\sigma_0)$. We will show below that
    $$\{\mathrm{res}_{\sigma=\sigma_0}P_\beta(\sigma)^{-1}\chi f(\sigma) \,\,|\,\, f:\C \to \Bar{H}^{s-1}(X_\beta) \text{ polynomial}\} \neq \{0\},$$
    where $\mathrm{res}_{\sigma=\sigma_0}$ denotes the residue at $\sigma_0$. Assume for the moment that this holds.
    Then there is a largest $k \in \N$ such that there exists $f_k \in \Bar{H}^{s-1}(X_\beta)$ with
    $$u_k = \mathrm{res}_{\sigma=\sigma_0}P_\beta(\sigma)^{-1}\chi (\sigma-\sigma_0)^k f_k \neq 0.$$
    Writing $P_\beta(\sigma) = P_\beta(0) - \sigma Q - \sigma^2 R$ for $Q,R \in \mathrm{Diff}^1(X_\beta)$, see \eqref{complex_scaled_operator}, we have on $\Bar{H}^{s-1}(X_\beta)$:
    $$P_\beta(\sigma_0)P_\beta(\sigma)^{-1} = 1 + (\sigma - \sigma_0)(Q + 2\sigma_0 R)P_\beta(\sigma)^{-1} + (\sigma - \sigma_0)^2 R P_\beta(\sigma)^{-1}.$$
    Thus,
    $$P_\beta(\sigma_0)u_k = (Q + 2\sigma_0 R)\mathrm{res}_{\sigma=\sigma_0}P_\beta(\sigma)^{-1}\chi (\sigma-\sigma_0)^{k+1} f_k + R\,\mathrm{res}_{\sigma=\sigma_0}P_\beta(\sigma)^{-1}\chi (\sigma-\sigma_0)^{k+2} f_k = 0$$
    by our assumption on $k$. Now $\chi u_k = 0$ would imply that $\supp(u_k) \subset X_\beta \setminus B_R$ so the ellipticity of $P_\beta(\sigma)$ on $\supp(u_k)$ would imply $u_k \in C^\infty(X_\beta)$. Thus, by the discussion above, we must have $\chi u_k \neq 0$ showing that
    $$\{\mathrm{res}_{\sigma=\sigma_0}\chi P_\beta(\sigma)^{-1}\chi f(\sigma) \,\,|\,\, f:\C \to \Bar{H}^{s-1}(X_\beta) \text{ polynomial}\} \neq \{0\},$$
    i.e. $\sigma_0$ is a pole of the cutoff resolvent.
    
    To prove the claim above, we introduce a pairing
    $$\pair{\cdot\,}{\cdot}: \Bar{H}^s(X_\beta) \times \Dot{H}^{-s}(X_{-\beta}) \to \C
    .$$
    Note that $X_{-\beta}$ is the complex conjugate of $X_\beta$. Thus, for $u\in \Bar{C}^\infty(X_\beta)$ and $v \in \Dot{C}^\infty(X_{-\beta})$ the following contour integral is well-defined
    $$\pair{u}{v} = \int_{X_{\beta}} u(z) \overline{v(\overline{z})} \Bigl(1 + \frac{a^2 z_3^2}{z_1^2 + z_2^2 + z_3^2}\Bigr)dz_1\wedge dz_2 \wedge dz_3.$$
    Extending by density, this pairing defines an isomorphism
    $$\Dot{H}^{-s}(X_{-\beta}) \xrightarrow{\sim} \bigl(\Bar{H}^s(X_\beta)\bigr)^*.$$
    Identifying $C^\infty(X_\beta)$ and $C^\infty(X_{-\beta})$ with $C^\infty(X)$ via $F_\beta^*$ respectively $F_{-\beta}^*$, i.e. working in local coordinates, the contour integral becomes
    $$\pair{u}{v} = \int_{X} u(r,\theta,\varphi)\overline{v(r,\theta,\varphi)}\rkerr_\beta^2(r,\theta)f_\beta'(r)\sin(\theta)\,drd\theta d\varphi.$$
    An explicit calculation using \eqref{complex_scaled_operator} now shows that
    $$\pair{P_\beta(\sigma)u}{v} = \pair{u}{P_{-\beta}(\overline{\sigma})v}, \quad \forall u \in \Bar{H}^s(X_\beta), v\in \Dot{H}^{-s}(X_{-\beta}).$$
    Now $P_\beta(\sigma)^{-1}\chi: \Bar{H}^{s-1}(X_\beta) \to \Bar{H}^s(X_\beta)$ is analytic near $\sigma_0$ if and only if 
    $$\pair{P_\beta(\sigma)^{-1}\chi f}{v} = \pair{f}{\chi P_{-\beta}(\overline{\sigma})^{-1}v}$$
    is analytic near $\sigma_0$ for all $f \in \Bar{H}^{s-1}(X_\beta)$, $v \in \Dot{H}^{-s}(X_{-\beta})$, which is the case if and only $\chi P_{-\beta}(\sigma)^{-1}$ is analytic near $\overline{\sigma_0}$. Here we used that $\chi$ is real-valued and supported away from the complex deformation. A similar argument as above shows that the set
    $$\{\mathrm{res}_{\sigma=\overline{\sigma_0}}P_{-\beta}(\sigma)^{-1} v(\sigma) \,\,|\,\, v:\C \to \Dot{H}^{-s}(X_\beta) \text{ polynomial}\}$$
    contains a non-trivial element $g$ satisfying $P_{-\beta}(\overline{\sigma_0})g = 0$. Again arguing as above, we must have $\chi g \neq 0$, so $\chi P_{-\beta}(\sigma)^{-1}$ is not analytic at $\overline{\sigma_0}$.
\end{proof}

\section{High energy estimates}
\label{section_high_energy}

In this section, we study the behavior of the resolvent $P_\beta(\sigma)^{-1}$ in the high energy limit $|\Re(\sigma)| \to \infty$ with $\sigma$ confined to a strip of the form $|\Im(\sigma)| < \gamma$ for some fixed $\gamma>0$. We will show that there is such a strip containing no quasinormal modes for $|\Re(\sigma)|$ large enough, as in Theorem \ref{thm_high_energy}. For $\Re(\sigma)>0$, we choose some small positive $\beta$ so that, by Proposition \ref{prop_qnm}, $P_\beta(\sigma)^{-1}$ is indeed well-defined as a meromorphic family in $\{\sigma\in\C \,\,|\,\, |\Im(\sigma)| < \gamma,\, \Re(\sigma) > C\}$ for some $C>0$. Similarly, for $\Re(\sigma)<0$, we take $\beta$ small and negative.

We will transform this high energy regime into a semiclassical problem with semiclassical parameter $h=|\sigma|^{-1}$. Note that the differential operator $P_\beta(\sigma)$ has the form
$$P_\beta(\sigma) = \sum_{|\alpha|\leq 2} \sigma^{2-|\alpha|}a_\alpha(x)D_x^\alpha.$$
Thus, writing $\sigma = h^{-1}z$ with $z\in\C$ satisfying $|z| = 1$, we have
$$P_\beta(\sigma) = h^{-2}\sum_{|\alpha|\leq 2} z^{2-|\alpha|}a_\alpha(x)(hD_x)^\alpha.$$
We take as our semiclassical operator
\begin{equation}
\label{semiclassical_operator_kerr}
    P_\hbar = h^2P_\beta(h^{-1}z).
\end{equation}
Note that this belongs to the semiclassical operator algebra introduced in Section \ref{section_semiclassical}. We suppress the dependence on $\beta$ and $z$ from our notation. The condition $|\Im(\sigma)|\leq\gamma$ implies $|\Im(z)| \leq \gamma h$ and $|\Re(z)| = 1 +\mathcal{O}(h)$, so for the purpose of the semiclassical principal symbol, we have $z = \pm 1$. We will study the action of $P_\hbar$ on the semiclasical Sobolev spaces $\Bar{H}^s_h(X_\beta)$, which are akin to the spaces in Section \ref{section_fredholm} but with derivatives weighted by a factor of $h$, see Section \ref{section_semiclassical}. The main result of this section is the following semiclassical resolvent estimate.

\begin{prop}
\label{prop_semiclassical_estimate}
    For $\Re(z)>0$, choose $\beta>0$ small, and for $\Re(z)<0$, choose $\beta<0$ small. Then there exists $\gamma > 0$ such that for $|\Im(z)| \leq \gamma h$ and $h$ small enough the following estimate holds for every $s>\frac{1}{2}+\alpha\gamma$, $N>0$ and some $C = C_{s,N}$:
    \begin{equation*}
        \|u\|_{\Bar{H}_h^s(X_\beta)} \leq C\bigl(h^{-2}\|P_\hbar u\|_{\Bar{H}_h^{s-1}(X_\beta)} + h^N\|u\|_{\Bar{H}_h^{-N}(X_\beta)}\bigr), \quad \forall u\in\mathcal{X}_\beta^s,
    \end{equation*}
    where $P_\hbar$ is the operator in \eqref{semiclassical_operator_kerr}.
\end{prop}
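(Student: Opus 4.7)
The plan is to mirror the proof of Proposition \ref{prop_fredholm_estimates} in the semiclassical calculus of Section \ref{section_semiclassical}, replacing each microlocal estimate by its semiclassical counterpart and adding one genuinely new ingredient: a normally hyperbolic trapping estimate at the Kerr photon sphere. With $\sigma = h^{-1}z$ and $|z|=1$, the semiclassical principal symbol $p_\hbar = \sigma_{\hbar,2}(P_\hbar)$ agrees away from the complex scaling region with $\rkerr^{-2}$ times the dual Kerr symbol minus $z^2$, so the Hamiltonian flow of $H_{p_\hbar}$ on the semiclassical characteristic set $\Sigma_\hbar = \{p_\hbar = 0\}$ is, up to reparametrization, the null-geodesic flow of the Kerr metric. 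Over $\{r \le R_0\}$ the analysis of Section \ref{section_hamiltonian_flow} carries over verbatim: $\Sigma_\hbar$ splits into two components, each containing one of the radial source $\Lambda_+$ and the radial sink $\Lambda_-$ at the horizon, and every non-trapped bicharacteristic enters $\{r < r_0 + \delta\}$ in one of the two time directions. The new feature is the presence of an invariant submanifold $\Gamma \subset \Sigma_\hbar$ of trapped null-geodesics lying over an interval inside the ergoregion.

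First I would assemble the bound from the following semiclassical ingredients, joined by a microlocal partition of unity on a compactification of $T^*X_\beta$. (i) Semiclassical elliptic regularity away from $\Sigma_\hbar$; at spatial infinity and throughout the complex scaling region, the argument of Lemma \ref{lemma_scattering_elliptic} shows that $P_\hbar$ is semiclassically (scattering-)elliptic uniformly in $h$, producing $\mathcal{O}(h^\infty)$-small error terms of the form $h^N \|u\|_{\Bar{H}^{-N}_h(X_\beta)}$. (ii) Semiclassical high-regularity radial estimates at $\Lambda_\pm$; the threshold regularity \eqref{threshold_reg_kerr}, evaluated at $\sigma = h^{-1}z$ with $|\Im(z)| \le \gamma h$, reduces to $s > \tfrac{1}{2} + \alpha\gamma$, which is exactly the hypothesis of the proposition. (iii) Semiclassical propagation of singularities along $H_{p_\hbar}$ on $\Sigma_\hbar \setminus (\Lambda_+ \cup \Lambda_- \cup \Gamma)$. (iv) A normally hyperbolic trapping estimate at $\Gamma$: since $\Gamma$ is normally hyperbolic on the full subextremal Kerr family by \cite{dyatlov_trapping}, the estimate of \cite{hintz_vasy_quasilinear} (see also \cite{wunsch_zworski_trapping}) applies provided $\gamma$ is strictly smaller than half the minimal normal expansion rate at $\Gamma$, and yields a bound with controlled $h^{-1}$ loss valid uniformly for $|\Im(z)| \le \gamma h$. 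Propagating the radial estimates forward along $H_{p_\hbar}$ in $\Sigma_+$, backward in $\Sigma_-$, through $\Gamma$ via (iv), and on to a neighborhood of $\{r = r_0\}$, gives the semiclassical analogue of \eqref{estimate_unclosed}. The bound is then closed inside the horizon using the semiclassical strict hyperbolicity of $P_\hbar$ with respect to $r$ on $\{r_0 \le r < r_+\}$, exactly as in the proof of Proposition \ref{prop_fredholm_estimates}.

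The main obstacle lies in the complex scaling region, where $p_\hbar$ is no longer real-valued and the standard real-principal-type propagation machinery does not directly apply. Here the sign analysis of Section \ref{section_definite_sign} will be used to show that $\Im(p_\hbar)$ has a definite sign on $\Sigma_\hbar$ once $\beta$ is chosen positive for $\Re(z) > 0$ and negative for $\Re(z) < 0$; this explains the sign convention on $\beta$ in the statement. A definite sign of $\Im(p_\hbar)$ permits semiclassical propagation only in one direction across the scaling region, and one must verify that this direction is compatible with the forward/backward orientation already prescribed by the radial estimates at $\Lambda_\pm$ and by the trapping estimate at $\Gamma$. The trapping estimate is the other sharp difficulty: it is the only step that carries an $h^{-1}$ loss intrinsic to the dynamics at $\Gamma$, and its range of validity in $\Im(z)$ is what ultimately dictates the admissible size of $\gamma$ in the final estimate.
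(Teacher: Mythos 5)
Your high-level architecture matches the paper's proof: semiclassical elliptic estimates (including scattering ellipticity in the complex scaling region), high-regularity radial estimates at the horizon radial sets, real-principal-type propagation, a normally hyperbolic trapping estimate via \cite{hintz_vasy_quasilinear}, and closure inside the horizon via semiclassical strict hyperbolicity. You also correctly flag that the complex-valued symbol in the scaling region forces a one-sided propagation direction, which is what fixes the sign of $\beta$ in terms of the sign of $\Re(z)$.

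There is, however, a genuine gap, and it sits precisely at the point you yourself mark as ``needing verification.'' You assert that the classical flow analysis of Section~\ref{section_hamiltonian_flow} ``carries over verbatim'' to $\Sigma_\hbar \cap \{r \leq R_0\}$, with $\Sigma_\hbar$ splitting into two components and every non-trapped bicharacteristic entering $\{r<r_0+\delta\}$ in one time direction. Neither claim holds. The classical characteristic set is confined to the ergoregion $\{r\leq 2m\}$ because the $\sigma^2$ term is subprincipal; the semiclassical characteristic set at finite $\xi$ is not, since that term is now principal, and it extends out to spatial infinity. Its structure is governed by the effective potential $V_\nu(r)$ of Lemma~\ref{lemma_potential} and splits according to whether the conserved Carter quantity $\Tilde{\kappa}$ is below, above, or equal to $V_\nu(r_{\min})$, yielding the components $\Sigma_{\mathrm{in}},\Sigma_{\mathrm{out}},\Sigma_\infty,\Sigma_{\mathrm{hor}}$, and the trapped set. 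Bicharacteristics in $\Sigma_\infty$ have a turning point at large $r$ and escape to spatial infinity in both time directions, never approaching $\{r<r_0+\delta\}$; they are handled by propagating the elliptic estimate forward from the scaling region around the turning point and back, which only works once you know this flow structure.

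You also place the trapped set $\Gamma$ ``inside the ergoregion,'' which is false: $\Gamma$ is compactly contained in $\{r>r_+\}$ near the photon sphere ($r\approx 3m$ in the Schwarzschild limit), well outside the ergoregion. What \emph{is} confined to the ergoregion --- and this is the pivotal dynamical fact your forward-only propagation scheme cannot do without --- is the component $\Sigma_{\mathrm{hor}}$ in the regime $z((r_+^2+a^2)z-a\nu) < 0$. For $z=1$ these bicharacteristics flow from the radial \emph{sink} across the horizon, so the radial estimate must be propagated \emph{backward}, in apparent conflict with the constraint $\Im(\sigma_\hbar(P_\hbar))\leq 0$ in the scaling region, which permits only forward propagation. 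The paper's resolution is to show these bicharacteristics never leave $\{\mu \leq a^2\sin^2(\theta)\}$; since $R_0$ is chosen large, they therefore never enter the scaling region, where $\Im(\sigma_\hbar(P_\hbar)) = 0$ along their entire trajectory and backward propagation is allowed. Without establishing this confinement, the compatibility check you defer to ``will be used'' simply fails, and the estimate cannot be assembled. That confinement lemma, together with the potential analysis of $V_\nu$, is the substantive new content of the semiclassical proof beyond the classical Fredholm estimate, and your proposal currently omits both.
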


The proof proceeds via semiclassical symbolic estimates, which we briefly review in the next subsection, see also \cite[Appendix E]{dyatlov_zworski_scattering_book}. Theorem \ref{thm_high_energy} now follows easily from Proposition \ref{prop_semiclassical_estimate}.

\begin{proof}[Proof of Theorem \ref{thm_high_energy}]
    In terms of $\sigma = h^{-1}z$ and $P_\beta(\sigma) = h^2P_\hbar$, Proposition \ref{prop_semiclassical_estimate} sates that for $|\Im(\sigma)| < \gamma$, we have the estimate
    \begin{equation*}
        \|u\|_{\Bar{H}_{|\sigma|^{-1}}^s(X_\beta)} \leq C\bigl(\|P_\beta(\sigma) u\|_{\Bar{H}_{|\sigma|^{-1}}^{s-1}(X_\beta)} + |\sigma|^{-N}\|u\|_{\Bar{H}_{|\sigma|^{-1}}^{-N}(X_\beta)}\bigr).
    \end{equation*}
    Choosing $|\sigma|$ large enough, we can absorb the error term on the right into $\|u\|_{\Bar{H}_{|\sigma|^{-1}}^s(X_\beta)}$ and obtain
    $$\|u\|_{\Bar{H}_{|\sigma|^{-1}}^s(X_\beta)} \leq C\|P_\beta(\sigma) u\|_{\Bar{H}_{|\sigma|^{-1}}^{s-1}(X_\beta)}$$
    when $|\Re(\sigma)| > c$ for some constant $c$. Thus, the kernel of $P_\beta(\sigma)$ is trivial in the strip $\{\sigma\in\C \,\,|\,\, |\Im(\sigma)| < \gamma,\,|\Re(\sigma)|>c\}$. Together with the absence of quasinormal modes in the upper half-plane, see Proposition \ref{mode_stability}, this proves the result.
\end{proof}

\begin{rmk}
    Note that from the proof of Theorem \ref{thm_high_energy} we also obtain an estimate on the resolvent in the strip $\{|\Im(\sigma)| < \gamma,\,|\Re(\sigma)|>c\}$:
    $$\|P_\beta(\sigma)^{-1}f\|_{\Bar{H}_{|\sigma|^{-1}}^s(X_\beta)} \leq C\|f\|_{\Bar{H}_{|\sigma|^{-1}}^{s-1}(X_\beta)}, \quad \forall f\in \Bar{H}^{s-1}(X_\beta).$$
    This estimate holds uniformly as $|\Re(\sigma)| \to \infty$, but in terms of a $|\sigma|$-dependent Sobolev norm, where derivatives are weighted with factors of $|\sigma|^{-1}$.
\end{rmk}

\subsection{Semiclassical analysis}
\label{section_semiclassical}

Here, we review some elements of semiclassical analysis and provide references to the semiclassical symbolic estimates that will be used in the proof of Proposition \ref{prop_semiclassical_estimate}. This exposition roughly follows \cite[Appendix E]{dyatlov_zworski_scattering_book}, see also the monograph \cite{zworski_semiclassical_book}.

The semiclassical operator algebra consists of pseudodifferential operators depending on a small parameter. We will use the subscript $\hbar$ to denote semiclassical objects, while the semiclassical parameter will be denoted by $h$. This operator algebra is a microlocalization of the space of semiclassical differential operators $\mathrm{Diff}_\hbar^k(M)$, which take the form
$$\sum_{|\alpha|\leq k} a_\alpha(x)(hD_x)^\alpha$$
with derivatives weighted by factors of $h$.

Semiclassical pseudodifferential operators of order $m$, denoted $\Psi_\hbar^m(M)$, are obtained by quantizing symbols 
$$a \in C^\infty\bigl([0,1)_h,S^m(T^*M)\bigr),$$
i.e. $h$-dependent families of symbols lying in the symbol spaces of Section \ref{section_microlocal_estimates} (where the smoothness is with respect to the Fréchet topology on $S^m(T^*M)$). The quantization procedure now takes the following form on $\R^n$:
$$\mathrm{Op}_\hbar(a)u(x) = \frac{1}{(2\pi h)^n}\int_{\R^n}\int_{\R^n} e^{\frac{i}{h}(x-y)\cdot\xi}a(h,x,\xi)u(y)\,dyd\xi, \quad \forall u \in C^\infty_c(\R^n).$$
Note the factor of $h^{-1}$ in the exponential. With this definition, we have $\mathrm{Op}_\hbar(\xi^j) = hD_x^j$ on $\R^n$. As in Section \ref{section_microlocal_estimates}, the quantization procedure can be patched together from local coordinates to form the space $\Psi_\hbar^m(M)$. The Schwartz kernel of elements in $\Psi_\hbar^m(M)$ is smooth off the diagonal and decays superpolynomially in $h$ as $h\to 0$.

Notice that to $a \in C^\infty\bigl([0,1)_h,S^m(T^*M)\bigr)$, we can associate its $h$-dependent principal symbol, in the sense of Section \ref{section_microlocal_estimates}, 
\begin{equation}
\label{h_dependent_symbol}
    [a] \in C^\infty\bigl([0,1)_h,S^m(T^*M) / S^{m-1}(T^*M)\bigr),
\end{equation}
which characterizes the $|\xi| \to \infty$ asymptotics. In addition, $a$ carries a semiclassical principal symbol $a|_{h=0} \in S^m(T^*M)$, characterizing the $h\to 0$ asymptotic behavior. These notions are compatible, in the sense that $[a]|_{h=0}$ is just the equivalence class of $a|_{h=0}$ in $S^m(T^*M)) / S^{m-1}(T^*M)$. As in \cite{dyatlov_zworski_scattering_book}, we restrict to a subspace of symbols, where $[a]$ is in fact independent of $h$. In this case, the information from both principal symbols is carried by $a|_{h=0}$. More precisely, we work with the class of symbols, denoted simply as $S_\hbar^m(T^*M)$, which have an asymptotic expansion of the form
\begin{equation*}
    a(h,x,\xi) \sim \sum_{j=0}^\infty h^j a_j(x,\xi), \quad\text{with}\quad a_j(x,\xi) \in S_\mathrm{cl}^{m-j}(T^*M),
\end{equation*}
where $S_\mathrm{cl}^{m-j}(T^*M)$ is the space of classical (or polyhomogeneous) symbols of order $m-j$, see \cite[Section E.1.2]{dyatlov_zworski_scattering_book}. The resulting space of semiclassical pseudodifferential operators is denoted $\Psi_\hbar^m(M)$. As our space of residual operators we now take
$$h^\infty \Psi_\hbar^{-\infty}(M) = \bigcap_{N\in\R} h^N\Psi_\hbar^{-N}(M).$$
As before, properly supported semiclassical pseudodifferential operators form a graded algebra under composition.

The semiclassical principal symbol map, induced from the restriction to $h=0$, is multiplicative and fits into the short exact sequence
$$0 \rightarrow \Psi_\hbar^{m-1}(M) \rightarrow \Psi_\hbar^m(M) \xrightarrow[]{\sigma_\hbar} S^m(T^*M) / hS^{m-1}(T^*M) \rightarrow 0.$$
For our choice of symbol class, the principal symbol can also be viewed as a smooth function on the fiber-radially compactified cotangent bundle $\overline{T}^*M$, see \cite[Section E.1.3]{zworski_semiclassical_book}. This is a manifold with interior $T^*M$ and boundary $\partial \overline{T}^*M \cong S^*M$ the sphere bundle of Remark \ref{sphere_bundle}. Note that $\bdf(x,\xi) = \langle\xi\rangle^{-1}$ is a smooth defining function for fiber infinity, i.e. $\partial \overline{T}^*M$. For $A \in \Psi_\hbar^m(M)$, the rescaled principal symbol $\langle\xi\rangle^{-m}\sigma_\hbar(A)$ extends to an element of $C^\infty(\overline{T}^*M)$. The restriction of this function to fiber infinity is just the classical principal symbol, viewed as a function on $S^*M$ as in Remark \ref{sphere_bundle}, and carries all the information of \eqref{h_dependent_symbol}.

The semiclassical wavefront set and elliptic set should be viewed as subsets of $\overline{T}^*M$. The wavefront set $\WF(A)$ of an operator $A \in \Psi_\hbar^m(M)$ is the set of points in $\overline{T}^*M$ near which the full symbol is not of order $h^\infty \langle\xi\rangle^\infty$. That is, $(x_0,\xi_0) \in \overline{T}^*M$ is not contained in $\WF(A)$, if and only if there is a neighborhood $U \subset \overline{T}^*M$ of $(x_0,\xi_0)$ such that for all $\alpha,\beta \in \N_0^n$ and all $N\in\N$ we have
$$|\partial_x^\alpha \partial_\xi^\beta a(h,x,\xi)| \leq C_{\alpha,\beta,N} h^N\brac{\xi}^{-N}, \quad \forall (x,\xi)\in U,$$
where $A$ is locally given as the quantization of the symbol $a$. Note that, as in the classical case (Proposition \ref{microlocal_partition_of_unity}), we have the existence of semiclassical partitions of unity, see \cite[Proposition E.30]{dyatlov_zworski_scattering_book}.

The semiclassical elliptic set $\mathrm{Ell}_\hbar(A)$ consists of all $(x_0,\xi_0) \in \overline{T}^*M$ that have a neighborhood $U$ where the semiclassical principal symbol of $A$ satisfies
$$|\sigma_\hbar(A)(x,\xi)| \geq C\brac{\xi}^m, \quad \forall (x,\xi)\in U.$$
The complement $\mathrm{Char}_\hbar(A) = \overline{T}^*M\setminus\mathrm{Ell}_\hbar(A)$ is the semiclassical characteristic set. Note that this is just the zero set of $\brac{\xi}^{-m}\sigma_\hbar(A)(x,\xi)$ in $\overline{T}^*M$.

Operators in $\Psi_\hbar^m(M)$ act on semiclassical Sobolev spaces. On $\R^n$, we define the semiclassical Sobolev space $H^s_h(\R^n)$ for $s\in\R$ to agree with $H^s(\R^n)$ as a space, but equipped with the $h$-dependent norm
$$\|u\|_{H^s_h(\R^n)} = \|\mathrm{Op}_\hbar(\brac{\xi}^s)u\|_{L^2(\R^n)} = \|\brac{h\xi}^s\hat{u}(\xi)\|_{L^2(\R^n)}.$$
Note that for $k\in\N$, this norm is equivalent to
$$\|u\|_{H^k_h(\R^n)}^2 = \sum_{|\alpha|\leq k} \|(hD_x)^\alpha u\|_{L^2(\R^n)}^2,$$
the usual Sobolev norm with derivatives weighted by a factor of $h$. On a smooth manifold $M$, we define $H^s_{h,c}(M)$ and $H^s_{h,\mathrm{loc}}(M)$ by patching together the semiclassical Sobolev spaces from charts, as in Section \ref{section_microlocal_estimates}. We once again use the notation $\|u\|_{H_h^s}$ to denote a choice of norm on elements of $H^s_{h,c}(M)$ supported in a fixed compact set. We then have the following mapping property for $A \in \Psi_\hbar^m(M)$:
$$A: H^s_{h,c}(M) \to H^{s-m}_{h,\mathrm{loc}}(M)$$
and if $A$ is properly supported then the estimate
$$\|Au\|_{H_h^s} \leq C\|u\|_{H_h^s}$$
holds for all $u\in H_{h,c}^s(M)$ supported in a fixed compact set.

We now turn to the symbolic estimates needed for the proof of Proposition \ref{prop_semiclassical_estimate}. These are essentially the extensions of the estimates of Section \ref{section_microlocal_estimates} to the semiclassical setting, where principal symbols, as well as wavefront sets and elliptic sets, now live on $\overline{T}^*M$ as above. The precise statements and proofs can be found in \cite[Appendix E]{dyatlov_zworski_scattering_book} and we direct the reader there. Note that we take all operators except $P$ in these estimates to be compactly supported elements of $\Psi_\hbar^0(M)$.

Microlocally on the elliptic set of $P\in\Psi^m_\hbar(M)$, we have semiclassical elliptic estimates, see \cite[Theorem E.33]{dyatlov_zworski_scattering_book}. These take the form
$$\|Bu\|_{H^s_h} \leq C\bigl(\|GPu\|_{H^{s-m}_h} + h^N\|\chi u\|_{H^{-N}_h}\bigr)$$
for compactly supported $B,G \in \Psi^0_\hbar(M)$ with $\WF_\hbar(B) \subset \Ell_\hbar(P)\cap\Ell_\hbar(G)$. As in Section \ref{section_microlocal_estimates}, we can define the notion of uniform semiclassical pseudodifferential operators on $\R^n$ and of semiclassical ellipticity on an open subset uniformly as $|x| \to \infty$. We then have a version of the uniform elliptic estimate, Proposition \ref{uniform_elliptic_estimate}, in the semiclassical setting, where the error term now comes with a factor of $h^N$.

On the characteristic set of an operator $P\in\Psi^m_\hbar(M)$ with real-valued principal symbol we can use semiclassical propagation and radial estimates. The Hamiltonian flow of the semiclassical principal symbol is now on $\overline{T}^*M$ with the flow at fiber infinity corresponding to the Hamiltonian flow of the classical symbol, see Remark \ref{homogeneous_hamiltonian}. Semiclassical propagation estimates take the form
$$\|Bu\|_{H^s_h} \leq C\bigl(h^{-1}\|GPu\|_{H^{s-m+1}_h} + \|Eu\|_{H^s_h} + h^N\|\chi u\|_{H^{-N}_h}\bigr),$$
where estimates are propogated (forwards or backwards) along the Hamiltonian flow from $\Ell_\hbar(E)$ to $\WF_\hbar(B)$ while remaining in $\Ell_\hbar(G)$.

In our proof of semiclassical resolvent estimates for the complex scaled Kerr spectral family, we will need to propagate estimates into and out of the complex scaling region. This poses a problem, since the semiclassical principal symbol becomes complex-valued there. However, propagtion estimates continue to hold, for the Hamiltonian flow of the real part of the principal symbol, as long as the imaginary part of the principal symbol has a definite sign. More precisely, if $\Im(\sigma_\hbar(P))\leq 0$ then estimates can be propagated in the forward direction along the Hamiltonian flow of $\Re(\sigma_\hbar(P))$, and if $\Im(\sigma_\hbar(P))\geq 0$ then estimates can be propagated in the backward direction. See \cite[Theorem E.47]{dyatlov_zworski_scattering_book} for the precise statement.

In the semiclassical setting, radial sets should be viewed at fiber infinity, i.e. as subsets $L \subset \partial\overline{T}^*M$. Radial sources and sinks are defined by requiring the conditions of Definition \ref{def_source_sink} to hold in a neighborhood of $L$. Radial estimates then extend to the semiclassical setting. We will only need the high regularity semiclassical radial estimate, \cite[Theorem E.52]{dyatlov_zworski_scattering_book}, which takes the form
$$\|Bu\|_{H^s_h} \leq C\bigl(h^{-1}\|GPu\|_{H^{s-m+1}_h} + h^N\|\chi u\|_{H^{-N}_h}\bigr).$$
Here, the radial source or sink is contained in $\Ell_\hbar(B)$ and $\WF_\hbar(B) \subset \Ell_\hbar(G)$. Moreover, as in Proposition \ref{high_reg_radial_estimate} this can only be applied to $u$ that are in $H^{s',\mathrm{loc}}_h(M)$ microlocally at the radial set, where $s'$ is larger than the threshhold regularity, see \ref{threshold_regularity}.

As in Section \ref{section_fredholm_estimates}, we will close our estimates beyond the horizon using a semiclassical version of hyperbolic estimates. These will be applied to Sobolev spaces of extendable distributions, which are defined exactly as in Section \ref{section_hyperbolic_estimates}, but with respect to the semiclassical Sobolev norms on $\R^n$. Inside the horizon, i.e. in $\{r\in(r_0,r_+)\}$, our operator $P_\hbar$ will be semiclassically strictly hyperbolic with respect to $r$, see \cite[Definition E.55]{dyatlov_zworski_scattering_book}. For any $r_1 \in (r_0,r_+)$, we then have semiclassical hyperbolic estimates of the form
$$\|\chi_1 u\|_{\Bar{H}^s_h(X)} \leq C\bigl(h^{-1}\|\chi_2 P_\hbar u\|_{\Bar{H}^{s-1}_h(X)} + \|\chi_3 u\|_{\Bar{H}^s_h(X)}\bigr),$$
where $\supp{\chi_1} \subset \{r\in (r_0,r_1)\}$, $\chi_2=1$ on $\{r\in (r_0,r_1)\}$ and $\chi_3 = 1$ near $\{r=r_1\}$. See \cite[Theorem E.57]{dyatlov_zworski_scattering_book} for a proof.

The new feature in the proof of \ref{prop_semiclassical_estimate}, compared to Section \ref{section_fredholm_estimates}, is the presence of a trapped set for the semiclassical Hamiltonian flow, i.e. integral curves in the characteristic set that neither escape to infinity nor into the black hole. Note that this is related to the fact that the Kerr metric exhibits trapped null-geodesics, see Remark \ref{rmk_geodesics}. In general, we define the trapped set of an operator $P \in \Psi_\hbar^m(M)$ as the set of points in the characteristic set that remain in a compact subset of $T^*M$ under the Hamiltonian flow of $p=\sigma_\hbar(P)$. More precisely, let $(x,\xi) \in \Char_\hbar(P) \cap T^*M$ and denote by
$$\gamma: (T_{\mathrm{min}},T_{\mathrm{max}}) \to T^*M, \quad \gamma(t) = e^{tH_p}(x,\xi)$$
the maximally extended integral curve of the Hamiltonian vector field $H_p$, where of course $T_{\mathrm{min}}=-\infty$, $T_{\mathrm{max}}=\infty$ is possible. Then $(x,\xi)$ is in the trapped set $\Gamma \subset T^*M$, if and only if $\gamma((T_{\mathrm{min}},T_{\mathrm{max}}))$ is contained in a compact subset of $T^*M$. Similarly, we define the forward trapped set $\Gamma_s$ and the backward trapped set $\Gamma_u$ to consist of all $(x,\xi)\in T^*M$, such that $\gamma([0,T_{\mathrm{max}}))$, respectively $\gamma((T_{\mathrm{min}},0])$, is contained in a compact subset of $T^*M$. Note that $\Gamma_s$ and $\Gamma_u$ are invariant under the flow and $\Gamma = \Gamma_s\cap\Gamma_u$.

Since $\Gamma$ is invariant under the Hamiltonian flow, we cannot use the above propagation result to propagate estimates into $\Gamma$. However, if the trapped set is normally hyperbolic, and hence in some sense unstable, we can nonetheless obtain semiclassical estimates at trapping with a loss of a factor of $h^{-1}$ compared to standard propagation estimates. Such normally hyperbolic trapping estimates were pioneered in \cite{wunsch_zworski_trapping}, see also \cite{dyatlov_trapping}. We will call the trapped set normally hyperbolic if there exists a bounded neighborhood $U \subset T^*M$ of $\Gamma$ and smooth functions $\varphi_u,\varphi_s \in C_c^\infty(T^*M)$ which locally in $U$ are defining functions for $\Gamma_u$ respectively $\Gamma_s$ within the characteristic set, i.e. $\Gamma_{u/s}\cap U = \varphi_{u/s}^{-1}(0) \cap \mathrm{Char}_\hbar(P)\cap U$. Furthermore, $\phi_u,\phi_s$ are required to satisfy $\{\phi_u,\phi_s\} > 0$ on $U$ and
\begin{equation}
\label{trapping_condition}
    H_p\phi_u = w_u\phi_u, \quad H_p\phi_s = -w_s\phi_s \quad\text{on}\,\, U \quad\text{with}\quad w_u,w_s \in C^\infty(U),\quad w_u,w_s \geq \gamma > 0
\end{equation}
for some positive constant $\gamma$. Note that in a neighborhood of the trapped set this implies that $\Gamma_u,\Gamma_s$ are smooth manifolds and have codimension one within the characteristic set. Furthermore, \eqref{trapping_condition} shows that near trapping the Hamiltonian flow on $\Gamma_s$ exponentially approaches $\Gamma$ in the forward direction, whereas on $\Gamma_u$ the flow exponentially approaches $\Gamma$ in the backward direction.

When the trapping is normally hyperbolic, one can obtain a type of propagation estimate at the trapped set, see \cite[Theorem 4.7]{hintz_vasy_quasilinear}. This requires an additional condition on the imaginary part of the subleading symbol of $P$, namely
\begin{equation}
\label{expansion_rate}
    \sigma_\hbar\Bigl(\frac{1}{2ih}(P-P^*)\Bigr) < \frac{1}{2}\min\bigl(\inf_U(w_u),\inf_U(w_s)\bigr), \quad\text{on}\,\, \Gamma.
\end{equation}
We then have an estimate of the form
$$\|Bu\|_{H^s_h} \leq C\bigl(h^{-2}\|GPu\|_{H^{s-m+1}_h} + h^{-1}\|Eu\|_{H^s_h} + h^N\|\chi u\|_{H^{-N}_h}\bigr)$$
with $\Gamma \subset \Ell_\hbar(B)$, $\WF(B)\subset \Ell_\hbar(G)$ and $\WF_\hbar(E)\cap\Gamma_u = \emptyset$. Thus, we can control $u$ microlocally at the trapped set by the size of $u$ away from the backward trapped set.

We note that normally hyperbolic trapping can also be defined in terms of the more geometric and dynamical conditions of \cite{dyatlov_trapping}. In particular, the flow is required to be hyperbolic in the normal directions to $\Gamma$ within $\Char_\hbar(P)$. 
Note that the quantity $\min\bigl(\inf_U(w_u),\inf_U(w_s)\bigr)$ in condition \eqref{expansion_rate} is related to the minimal expansion rate of the linearized flow in the normal directions at trapping.

\subsection{Semiclassical principal symbol in the complex scaling region}
\label{section_definite_sign}
The proof of Proposition \ref{prop_semiclassical_estimate} requires a careful study of the semiclassical principal symbol $\sigma_\hbar(P_\hbar)$ and its Hamiltonian flow. Note that at fiber infinity, i.e. $\partial\Bar{T}^*X_\beta$, the semiclassical principal symbol coincides with the usual principal symbol of $P_\beta(\sigma)$, which was studied in Section \ref{section_hamiltonian_flow}. Thus, it remains to consider $\sigma_\hbar(P_\hbar)$ on the interior of $\Bar{T}^*X_\beta$, i.e. $T^*X_\beta$.

We begin in the complex scaling region $\{r > R_0\}$, where $\sigma_\hbar(P_\hbar)$ is complex-valued. We will show that $P_\hbar$ is elliptic as soon as complex scaling kicks in, i.e. when the phase function $\phi_\beta(r)>0$, see Definition \ref{def_phase_function}. In order to propagate estimates into or out of this region, according to \cite[Theorem E.47]{dyatlov_zworski_scattering_book}, we must ensure that the imaginary part of $\sigma_\hbar(P_\hbar)$ has a definite sign on the zero set of $\Re(\sigma_\hbar(P_\hbar))$.

\begin{lemma}
\label{semiclassical_complex_scaling}
    Let $\Sigma_\hbar = \{\Re(\sigma_\hbar(P_\hbar)) = 0\} \subset T^*X_\beta$. Then for $\beta>0$ ($z=1$), we have $\Im(\sigma_\hbar(P_\hbar)) \leq 0$ on $\Sigma_\hbar$, while for $\beta<0$ ($z=-1$), we have $\Im(\sigma_\hbar(P_\hbar)) \geq 0$ on $\Sigma_\hbar$. Moreover, $P_\hbar$ is semiclassically elliptic on $\{\phi_\beta(r)>0\}$ and for some $R$ large enough $P_\hbar$ is uniformly semiclassically elliptic in $\{r>R\}$.
\end{lemma}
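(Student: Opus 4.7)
The plan is to compute $\sigma_\hbar(P_\hbar)$ explicitly from \eqref{complex_scaled_operator} and then analyze its real and imaginary parts separately. Writing $P_\beta(\sigma) = P_2 + \sigma P_1 + \sigma^2 P_0$ according to the order of differentiation, the substitution $\sigma = h^{-1}z$ gives $h^2 P_\beta(h^{-1}z) = h^2 P_2 + h z P_1 + z^2 P_0$. Since the middle term carries an extra factor of $h$, it contributes only to the subprincipal symbol, so in the complex scaling region (elsewhere the symbol is real and both conclusions are trivial)
\begin{equation*}
    \sigma_\hbar(P_\hbar)(x,\xi) = p_\beta(x,\xi) - z^2\Bigl(1 + \frac{2mf_\beta(f_\beta^2+a^2)}{\rkerr_\beta^2\mu_\beta}\Bigr),
\end{equation*}
where $p_\beta$ is the classical principal symbol of $P_\beta(\sigma)$ computed in \eqref{symbol_scaled}. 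Setting $\phi := \phi_\beta(r)$ and using that $|r\phi'_\beta(r)| \leq |\beta|\varepsilon$ is small by Definition \ref{def_phase_function}, every complex factor entering $\mu_\beta$, $f_\beta$, $f'_\beta$, $\rkerr_\beta^2$ can be written in terms of $e^{ik\phi}$ times small perturbations.

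For the uniform ellipticity in $\{r > R\}$ with $R$ large, I would proceed as in Lemma \ref{lemma_scattering_elliptic}: the symbol approaches $e^{-2i\beta}|\xi|^2 - z^2$ up to $\mathcal{O}(r^{-1})$ corrections, and $|e^{-2i\beta}|\xi|^2 - 1|$ is bounded below by a positive multiple of $\langle\xi\rangle^2$ uniformly in $\xi$ whenever $\beta \neq 0$ (the ratio $|e^{-2i\beta}t-1|^2/(1+t)^2$ with $t = |\xi|^2$ attains a positive minimum on $[0,\infty)$). The $\mathcal{O}(r^{-1})$ correction is then absorbed by taking $R$ sufficiently large.

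The core of the lemma is the sign of the imaginary part on $\Sigma_\hbar$. A direct expansion shows that the leading contribution to $\Im(\sigma_\hbar(P_\hbar))$ has the schematic form $-\sin(2\phi)\,q(x,\xi)$, where $q$ is a positive-semidefinite quadratic form in $(\xi,\eta,\nu)$ comparable to the Euclidean fiber norm squared in Cartesian coordinates; the remaining terms are bounded by $(|r\phi'_\beta| + r^{-1})q$ together with $\xi$-uniform $\mathcal{O}(r^{-1})$ pieces. For $\beta > 0$ and $z = 1$ the leading sign $-\sin(2\phi)$ is $\leq 0$ on $\{\phi \geq 0\}$, and on $\Sigma_\hbar$ the vanishing of $\Re(\sigma_\hbar(P_\hbar))$ forces $\Re(p_\beta) = 1 + \mathcal{O}(r^{-1})$, yielding a lower bound on $q$ that lets the leading imaginary term dominate all corrections once $\varepsilon$ is small and $R_0$ is large. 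This gives $\Im(\sigma_\hbar(P_\hbar)) \leq 0$ on $\Sigma_\hbar$, with strict inequality in $\{\phi > 0\}$; the case $\beta < 0$ with $z = -1$ is identical since $z^2 = 1$ in both cases.

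The main technical difficulty will be controlling the error terms in the transition region $R_1 \leq r \leq R_2$, where both $e^{i\phi} - 1$ and $r\phi'_\beta$ are non-negligible and themselves contribute to the imaginary part of each coefficient. This is handled through the freedom to choose $\varepsilon$ small in Definition \ref{def_phase_function} together with taking $R_0$ large, exactly in the spirit of the proof of Lemma \ref{lemma_scattering_elliptic}. Semiclassical ellipticity on all of $\{\phi_\beta > 0\}$ then follows: on $\Sigma_\hbar \cap \{\phi > 0\}$ the argument above gives $\Im(\sigma_\hbar(P_\hbar)) < 0$, so in particular $\sigma_\hbar(P_\hbar) \neq 0$ there, while off $\Sigma_\hbar$ the real part is itself nonzero.
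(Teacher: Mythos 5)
Your approach — write out $\sigma_\hbar(P_\hbar)$ explicitly, expand in powers of $e^{i\phi_\beta}$ and $r\phi_\beta'$, and show the leading imaginary contribution has a sign — is the same strategy as the paper's. But there are two problems, one minor and one fatal.

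The minor one: you write $P_\beta(\sigma) = P_2 + \sigma P_1 + \sigma^2 P_0$ and claim the $\sigma P_1$ piece is subprincipal because of the extra $h$ in $hzP_1$. This is false. Since $P_1 = \tfrac{4maf_\beta}{\rkerr_\beta^2\mu_\beta}D_{\varphi_*}$ is first order, $hzP_1 = z\tfrac{4maf_\beta}{\rkerr_\beta^2\mu_\beta}(hD_{\varphi_*})$ is a first-order semiclassical operator with \emph{principal} symbol $z\tfrac{4maf_\beta}{\rkerr_\beta^2\mu_\beta}\nu$; it is not $\mathcal{O}(h)$. You get away with this here only because that term happens to be $\mathcal{O}(r^{-1})$ with imaginary part $\mathcal{O}(R_0^{-1}\phi_\beta)$, so it is absorbed by your other corrections. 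But the justification is wrong and would lead you astray in similar computations.

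The fatal gap is in how you handle the $r\phi_\beta'$-terms. You group them with the errors, writing that the corrections are "bounded by $(|r\phi'_\beta| + r^{-1})q$," and then claim the main term $-\sin(2\phi_\beta)\,q$ dominates once $\varepsilon$ is small and $R_0$ large. This cannot work at the onset of complex scaling, which is precisely where the lemma's conclusion is most delicate: near $r = R_1$ we have $\phi_\beta(r) \to 0$ while $\psi'(\log r)$, and hence $r\phi_\beta'(r) = \beta\psi'(\log r)$, can be of order $\beta\varepsilon > 0$. There is no constraint forcing $\psi'$ to be small relative to $\phi_\beta$ there — the defining function goes to zero while its derivative does not. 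So the "error" $|r\phi_\beta'|\,q$ can dominate $|\sin(2\phi_\beta)|\,q$, and a two-sided bound on it gives no sign information. What is actually true, and what your proof must establish, is that the leading $r\phi_\beta'$-contribution has the \emph{correct sign on its own}: expanding the coefficient of $\xi^2$ gives
\begin{equation*}
    \Im\Bigl(\frac{1}{(1+ir\phi_\beta')^2}\Bigr) = -\frac{2r\phi_\beta'}{\bigl(1+(r\phi_\beta')^2\bigr)^2},
\end{equation*}
which is $\leq 0$ for $\beta > 0$. The main imaginary part therefore consists of \emph{two} leading-order pieces of definite sign, $-\sin(2\phi_\beta)$ and $-\tfrac{2\beta\psi'}{(1+(\beta\psi')^2)^2}$, each multiplied by $|\Vec\xi|^2$, and only the genuinely subordinate corrections — those proportional to $R_0^{-1}\phi_\beta$, $\varepsilon\phi_\beta$, or $R_0^{-1}\psi'$ — are controllable by choosing $R_0$ large and $\varepsilon$ small. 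Without splitting out the $\psi'$-term and exhibiting its sign, the argument does not close in the transition region. The same structural observation also fixes your claim about $\beta < 0$: the relevant point is that $\phi_\beta$ and $\beta\psi'$ both change sign with $\beta$, not that $z^2 = 1$.
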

\begin{proof}
    Note that the statement regarding the sign of $\Im(\sigma_\hbar(P_\hbar))$ is automatically true in $\{r\leq R_0\}$, where $\Im(\sigma_\hbar(P_\hbar)) = 0$. Working in local coordinates, as in Lemma \ref{lemma_scattering_elliptic}, the semiclassical principal symbol in $\{r>R_0\}$ is given by
    \begin{equation*}
        \sigma_\hbar(P_\hbar) = \frac{r^2}{\rkerr_\beta^2}\Bigl(\frac{\mu_\beta}{(f'_\beta r)^2}\xi^2 + \frac{\eta^2}{r^2} + \frac{\nu^2}{r^2\sin^2(\theta)} + \frac{2a}{f_\beta'r^2}\xi\nu\Bigr) + z\frac{4maf_\beta}{\rkerr_\beta^2\mu_\beta}\nu - \frac{2mf_\beta(f_\beta^2+a^2)}{\rkerr_\beta^2\mu_\beta} - 1,
    \end{equation*}
    where $f_\beta(r) = e^{i\phi_\beta(r)}r$, $\rkerr_\beta^2 = f_\beta(r)^2 + a^2\cos^2(\theta)$ and $\mu_\beta = f_\beta(r)^2 - 2mf_\beta(r) + a^2$.
    For all $r > R_0$, we have
    \begin{equation*}
    \begin{split}
        \frac{r^2}{\rkerr_\beta^2} &= e^{-2i\phi_\beta}\bigl(1 - \frac{a^2\cos^2(\theta)}{e^{2i\phi_\beta}r^2+a^2\cos^2(\theta)}\bigr) = e^{-2i\phi_\beta} + \mathcal{O}(R_0^{-2}), \\
        \frac{\mu_\beta}{(f_\beta'r)^2} &= 1 + \frac{f_\beta^2-(f_\beta'r)^2}{(f_\beta'r)^2} - \frac{2mf_\beta}{(f_\beta'r)^2} + \frac{a^2}{(f_\beta'r)^2} = 1 + \mathcal{O}(R_0^{-1}) + \mathcal{O}(\varepsilon),
    \end{split}
    \end{equation*}
    where we estimated $f_\beta^2-(f_\beta'r)^2$ in terms of $|r\phi_\beta'(r)| < \varepsilon$ as in Lemma \ref{lemma_scattering_elliptic}, see also Definition \ref{def_phase_function}. Similarly, using Young's inequality and writing $|\Vec{\xi}|^2 = \xi^2 + \frac{\eta^2}{r^2} + \frac{\nu^2}{r^2\sin^{2}(\theta)}$, we find
    \begin{equation*}
        \frac{2a}{f_\beta'r^2}\xi\nu = \mathcal{O}(R_0^{-1})|\Vec{\xi}|^2, \quad z\frac{4maf_\beta}{\rkerr_\beta^2\mu_\beta}\nu = \mathcal{O}(R_0^{-1}) + \mathcal{O}(R_0^{-1})|\Vec{\xi}|^2, \quad \frac{2mf_\beta(f_\beta^2+a^2)}{\rkerr_\beta^2\mu_\beta} = \mathcal{O}(R_0^{-1}).
    \end{equation*}
    Thus, the real part of the principal symbol satisfies
    \begin{equation*}
        \Re(\sigma_\hbar(P_\hbar)) = \bigl(\cos(2\phi_\beta) + \mathcal{O}(R_0^{-1}) + \mathcal{O}(\varepsilon)\bigr)|\Vec{\xi}|^2 + \mathcal{O}(R_0^{-1}) - 1.
    \end{equation*}
    Choosing $\beta$ small, $\cos(2\phi_\beta)$ is close to $1$ and, choosing $R_0^{-1}$ and $\varepsilon$ small, we find that
    \begin{equation}
    \label{real_part_char_bound}
        \frac{1}{2} \leq |\Vec{\xi}|^2 \leq 2, \quad\text{on}\,\, \Sigma_\hbar.
    \end{equation}

    Considering the imaginary part of $\sigma_\hbar(P_\hbar)$, more care is needed in estimating the above terms. Close to $\phi_\beta(r) = 0$, i.e. as we begin the complex deformation, ${\Im(e^{-2i\phi_\beta}) = -\sin(2\phi_\beta)}$ itself is small, and we cannot just throw away terms of order $R_0^{-1}$. Note that this is exactly the location, where the sign condition is relevant. In particular, we need to be careful with the terms involving $f_\beta'(r)$. Although the derivative of the phase function ${r\phi_\beta'(r) = \beta\psi'(\log(r))}$, see Definition \ref{def_phase_function}, is small, it cannot be bounded by $\phi_\beta(r)$. 

    Thus, consider the leading order term involving the derivative of the phase function. We find
    \begin{equation*}
        \Im\Bigl(\frac{f_\beta^2-(f_\beta'r)^2}{(f_\beta'r)^2}\Bigr) = \Im\Bigl(\frac{1}{(1+ir\phi_\beta')^2}\Bigr) = -\frac{2r\phi_\beta'}{(1+(r\phi_\beta')^2)^2},
    \end{equation*}
    which has the correct sign. The other terms of order $\psi'$ are subleading with respect to $R_0^{-1}$, so choosing $R_0$ large enough they do not affect the sign of $\Im(\sigma_\hbar(P_\hbar))$.
    Writing $\mathcal{O}(\phi_\beta)$ and $\mathcal{O}(\psi')$ for terms that vanish correspondingly as $\phi_\beta, \psi' \to 0$, we can estimate
    \begin{equation*}
    \begin{split}
        \Im\Bigl(\frac{r^2}{\rkerr_\beta^2}\Bigr) = -\sin(2\phi_\beta) + \mathcal{O}(R_0^{-2}\phi_\beta), \quad \Im\Bigl(-\frac{2mf_\beta}{(f_\beta'r)^2} + \frac{a^2}{(f_\beta'r)^2}\Bigl) = \mathcal{O}(R_0^{-1}\phi_\beta) + \mathcal{O}(R_0^{-1}\psi'), \\
        \Im\Bigl(z\frac{4maf_\beta}{\rkerr_\beta^2\mu_\beta}\nu\Bigr) = \mathcal{O}(R_0^{-1}\phi_\beta)|\Vec{\xi}|^2 + \mathcal{O}(R_0^{-1}\phi_\beta), \quad
        \Im\Bigl(\frac{2mf_\beta(f_\beta^2+a^2)}{\rkerr_\beta^2\mu_\beta}\Bigr) = \mathcal{O}(R_0^{-1}\phi_\beta).
    \end{split}
    \end{equation*}
    Finally, we bound
    \begin{equation*}
        \Im\Bigl(\frac{2a}{f_\beta'r^2}\xi\nu\Bigr) = \bigl(\mathcal{O}(R_0^{-1}\phi_\beta) + \mathcal{O}(R_0^{-1}\psi')\bigr)|\Vec{\xi}|^2.
    \end{equation*}
    Altogether, we find for the imaginary part of the principal symbol
    \begin{equation}
    \label{imaginary_part_semiclassical}
    \begin{split}
        \Im(\sigma_\hbar(P_\hbar)) = \Bigl(&-\sin(2\phi_\beta) - \frac{2\beta\psi'}{(1+(\beta\psi')^2)^2}\Bigr)|\Vec{\xi}|^2 \\
        &+ \bigl(\mathcal{O}(R_0^{-1}\phi_\beta) + \mathcal{O}(\varepsilon\phi_\beta) + \mathcal{O}(R_0^{-1}\psi')\bigr)|\Vec{\xi}|^2
        + \mathcal{O}(R_0^{-1}\phi_\beta).
    \end{split}
    \end{equation}
    Recalling that $|\Vec{\xi}|^2$ is bounded as in \eqref{real_part_char_bound} on $\Sigma_\hbar$ and choosing $R_0^{-1}$ and $\varepsilon$ small enough, we see that indeed $\Im(\sigma_\hbar(P_\hbar)) \leq 0$ for $\beta > 0$ and $\Im(\sigma_\hbar(P_\hbar)) \geq 0$ for $\beta < 0$. Equation \eqref{imaginary_part_semiclassical} also shows that, as soon as $\phi_\beta(r)\neq 0$, we have $\Im(\sigma_\hbar(P_\hbar)) \neq 0$ on $\{\Re(\sigma_\hbar(P_\hbar)) = 0\}$, and thus $P_\hbar$ is semiclassically elliptic for $\phi_\beta(r)\neq 0$. Note that the ellipticity at fiber infinity, follows from Lemma \ref{lemma_scattering_elliptic}.

    Further into the region of complex scaling, when $|\phi_\beta| > c|\beta|$ for some $c>0$, the simpler estimate
    \begin{equation*}
        \sigma_\hbar(P_\hbar) = \bigl(e^{-2i\phi_\beta} + \mathcal{O}(R_0^{-1}) + \mathcal{O}(\varepsilon)\bigr)|\Vec{\xi}|^2 - 1 + \mathcal{O}(R_0^{-1}),
    \end{equation*}
    shows that $P_\hbar$ is uniformly semiclassically elliptic there.
\end{proof}

By Lemma \ref{semiclassical_complex_scaling}, if an integral curve of the Hamlitonian vector field enters the complex scaling region, then we can propagate estimates forward along the Hamiltonian flow for $z=1$ and backward along the Hamiltonian flow for $z=-1$, but not the other way around.

\subsection{Hamiltonian flow of the semiclassical principal symbol}

We now turn to the region away from complex scaling. Here, the semiclassical principal symbol is real-valued and we will study its Hamiltonian flow on the semiclassical characteristic set. The essential feature of the Hamiltonian flow is the presence of a trapped set, see the end of Section \ref{section_semiclassical} for this notion. Trapping for the Kerr black hole has been studied in numerous previous works, see for instance \cite{wunsch_zworski_trapping, dyatlov_zworski_KdS_physics, dyatlov_ringdown}, where it is shown that the Kerr trapped set is normally hyperbolic, see also \cite{vasy_KdS, petersen_vasy_full_subextremal} for the related case of Kerr-de Sitter black holes. This inherent instability of trapping will allow us to obtain semiclassical symbolic estimates microlocally at the trapped set. Thus, Proposition \ref{prop_semiclassical_estimate} essentially follows from the analysis of the Hamiltonian flow performed in the above references. However, as noted in the previous subsection, our use of complex scaling imposes a constraint on the propagation direction. That is, when the Hamiltonian flow crosses into the complex scaling region, estimates can only be propagated in a definite direction. We thus require a slightly more detailed analysis of the Hamiltonian flow, in order to ensure that propagation of regularity can be applied in a manner compatible with these constraints.

As in Section \ref{section_hamiltonian_flow}, we work with the principal symbol of $\rkerr^2P_\hbar$ for convenience. Note that $\rkerr^2P_\hbar$ and $P_\hbar$ have the same characteristic set, denoted $\Sigma_\hbar$, and the Hamiltonian flow of $\sigma_\hbar(\rkerr^2P_\hbar) = \rkerr^2\sigma_\hbar(P_\hbar)$ restricted to $\Sigma_\hbar$ is just a reparametrization of the Hamiltonian flow of $\sigma_\hbar(P_\hbar)$.

We will focus on the flow in $T^*X_\beta \subset \Bar{T}^*X_\beta$, since the semiclassical Hamiltonian flow at fiber infinity, $\partial\Bar{T}^*X_\beta$, coincides with the classical Hamiltonian flow studied in Section \ref{section_hamiltonian_flow}. 
The semiclassical principal symbol is given away from complex scaling by
\begin{equation*}
    p_\hbar := \sigma_\hbar(\rkerr^2P_\hbar) = \mu(\xi-zh)^2 - 2((r^2+a^2)z-a\nu)(\xi-zh) + \Tilde{\kappa},
\end{equation*}
where
$$\Tilde{\kappa} = \eta^2 + \frac{1}{\sin^2(\theta)}(\nu - za\sin^2(\theta))^2.$$
Note that, as in Section \ref{section_hamiltonian_flow}, $\nu$ and $\Tilde{\kappa}$ extend to smooth functions on all of $T^*X_\beta$, even at $\theta = 0,\pi$, where the spherical coordinate system is ill-defined. The Hamiltonian vector field is given by
\begin{equation*}
\begin{split}
    H_{p_\hbar} &= 2\bigl(\mu(\xi-zh) - ((r^2+a^2)z-a\nu)\bigr)\partial_r + \frac{\partial\Tilde{\kappa}}{\partial\eta}\partial_\theta - \frac{\partial\Tilde{\kappa}}{\partial\theta}\partial_\eta + \bigl(2a(\xi-zh) + \frac{\partial\Tilde{\kappa}}{\partial\nu}\bigr)\partial_{\varphi_*} \\
    &-2\bigl((r-m)(\xi-zh)^2 - 2zr(\xi-zh) - zh'(\mu(\xi-zh) - ((r^2+a^2)z-a\nu))\bigr)\partial_\xi
\end{split}
\end{equation*}
Evidently, $p_\hbar$, $\nu$ and $\Tilde{\kappa}$ are conserved under the Hamiltonian flow. We note that the conserved quantity $\Tilde{\kappa}$ corresponds to Carter's constant \cite{carter}.

For fixed values of the conserved quantities $\nu$ and $\Tilde{\kappa}$, we can analyse the Hamiltonian flow in the $r-\xi$ plane (the dynamics on $T^*\Sph^2$ do not influence the flow in $r$ and $\xi$). Away from the black hole horizon at $r=r_+$, the characteristic set is given by
\begin{equation}
\label{semiclassical_xi_roots}
    \xi_{\pm}-zh = \frac{1}{\mu}\bigl((r^2+a^2)z-a\nu \pm \sqrt{\mu(V_\nu(r) - \Tilde{\kappa})}\bigr), \quad V_\nu(r) = \frac{((r^2+a^2)z-a\nu)^2}{\mu}.
\end{equation}
That is, given $\nu$ and $\Tilde{\kappa}$, the semiclassical characteristic set in $T^*X_\beta$ is non-empty over $r$, if and only if 
$$\mu(V_\nu(r) - \Tilde{\kappa}) \geq 0,$$
and, in that case, it is located at $\xi(r) = \xi_\pm(r)$. The action of the Hamiltonian vector field on the functions $r$ and $(\xi-zh)$ is given on the characteristic set by
\begin{equation}
\label{semiclassical_hamiltonian_r_xi}
\begin{split}
    H_{p_\hbar}r &= 2\bigl(\mu(\xi-zh) - ((r^2+a^2)z-a\nu)\bigr) = \pm 2\sqrt{\mu(V_\nu(r) - \Tilde{\kappa})}, \\
    H_{p_\hbar}(\xi-zh) &= -2(r-m)(\xi-zh)^2 + 4zr(\xi-zh).
\end{split}
\end{equation}
Recall that $h(r)$ is a bounded function, so $\xi-zh$ just amounts to a shift in the fiber variable $\xi$ by a finite $r$-dependent quantity. We see that the solution $\xi_+$ to \eqref{semiclassical_xi_roots} gives the outward moving branch, while $\xi_-$ gives the inward moving branch. Furthermore, $H_{p_\hbar}r=0$ precisely when $\Tilde{\kappa} = V_\nu(r)$. 

Over the black hole horizon, the semiclassical characteristic set in $T^*X_\beta$ is non-empty unless $(r_+^2+a^2)z-a\nu=0$ and is located at
\begin{equation}
\label{xi_over_horizon}
    \xi - zh = \frac{\Tilde{\kappa}}{2((r_+^2+a^2)z-a\nu)}.
\end{equation}
The action of the Hamiltonian vector field on $r$ is given at $r=r_+$ by
\begin{equation}
\label{r_dot_over_horizon}
    H_{p_\hbar}r = 2((r_+^2+a^2)z-a\nu).
\end{equation}

\subsubsection{Radial sets at fiber infinity over the horizon}
\label{section_semiclassical_radial_sets}
We still have the radial source and radial sink over the horizon, denoted $\Lambda_\pm$ in Section \ref{section_hamiltonian_flow}, but located now at fiber infinity in the semiclassical setting (where the semiclassical flow is the same as the classical one). That is, 
$$L_+ = \Lambda_+\cap\partial\overline{T}^*M \quad\text{and}\quad L_- = \Lambda_-\cap\partial\overline{T}^*M$$
are a semiclassical radial source and radial sink respectively, see \cite[Section 2.8]{vasy_KdS} or \cite[Section E.4.3]{zworski_semiclassical_book}. Note that $L_+$ can be defined within the characteristic set on $\brac{\xi}^{-1}\xi > 0$ by $|\xi|^{-1} = 0$, $|\xi|^{-2}\kappa = 0$ and the same holds for $L_-$ within the characteristic set on $\brac{\xi}^{-1}\xi < 0$. Here, 
$$\kappa = \Tilde{\kappa} + 2az\nu - a^2\sin^2(\theta)$$
is the conserved quantity of the classical Hamiltonian flow of Section \ref{section_hamiltonian_flow} and all expressions are understood in terms of their extension from $T^*M$ to $\overline{T}^*M$. For our purposes it will be more convenient to use $|\xi-zh|^{-1}$ and $|\xi-zh|^{-2}\Tilde{\kappa}$ as defining functions for the semiclassical radial sets. Notice that these expressions are indeed smooth functions in a neighborhood of $L_\pm$, whose vanishing defines $L_\pm$ within the characteristic set in such a neighborhood. The action of the Hamiltonian vector field is given in $\pm(\xi-zh) > 0$ by
\begin{equation*}
\begin{split}
    |\xi-zh|^{-1}H_{p_\hbar}|\xi-zh|^{-1} &= \bigl(\pm2(r-m)-4zr|\xi-zh|^{-1}\bigr)|\xi-zh|^{-1}, \\
    |\xi-zh|^{-1}H_{p_\hbar}(|\xi-zh|^{-2}\Tilde{\kappa}) &= \bigl(\pm4(r-m) - 8zr|\xi-zh|^{-1}\bigr)|\xi-zh|^{-2}\Tilde{\kappa}.
\end{split}
\end{equation*}
Thus, once $(\xi-zh)$ is sufficiently large the Hamiltonian flow within $\Sigma_\hbar$ must exponentially approach $L_+$ in the backward direction, and once $-(\xi-zh)$ is sufficiently large the flow within $\Sigma_\hbar$ must exponentially approach $L_-$ in the forward direction.

\subsubsection{The characteristic set within the horizon}

Inside the black hole horizon, there are always two distinct real solutions for $\xi$ in \eqref{semiclassical_xi_roots}. Thus, the operator $P_\hbar$ is strictly semiclassically hyperbolic with respect to $r$ in this region, in the sense of \cite[Definition E.55]{dyatlov_zworski_scattering_book}. Furthermore, $r$ is strictly increasing or strictly decreasing along the integral curves of the Hamiltonian vector field wihtin $\Sigma_\hbar$.
\begin{lemma}
\label{lemma_semiclassically_hyperbolic}
    $P_\hbar$ is strictly semiclassically hyperbolic with respect to $r$ in $\{r<r_+\}$. Moreover, $H_{p_\hbar}r \neq 0$ on the semiclassical characteristic set within $\{r<r_+\}$.
\end{lemma}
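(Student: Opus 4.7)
The plan is to exploit the explicit root formula \eqref{semiclassical_xi_roots} for the characteristic set and show directly that the discriminant of $\xi \mapsto p_\hbar$ is strictly positive throughout the interior of the horizon. Setting $\zeta = \xi - zh$, the semiclassical principal symbol becomes the quadratic $\mu\zeta^2 - 2\bigl((r^2+a^2)z - a\nu\bigr)\zeta + \tilde\kappa$ in $\zeta$, whose discriminant (divided by $4$) is
$$D(r,\theta,\eta,\nu) = \bigl((r^2+a^2)z - a\nu\bigr)^2 - \mu(r)\,\tilde\kappa.$$
Since $\mu(r) < 0$ for $r\in(r_0,r_+)$ and $\tilde\kappa \geq 0$ by its definition as a sum of squares, both summands in $D$ are nonnegative there, so $D \geq 0$, with equality only when $(r^2+a^2)z - a\nu = 0$ \emph{and} $\tilde\kappa = 0$ simultaneously.

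The next step is to rule out this simultaneous vanishing on $(r_0,r_+)$. From the expression for $\tilde\kappa$, its vanishing forces $\eta = 0$ and $\nu = za\sin^2(\theta)$. Substituting into the first equation gives $(r^2+a^2)z = za^2\sin^2(\theta)$, i.e.\ $r^2 = -a^2\cos^2(\theta)$, which is impossible because $r > r_0 > m > 0$. Hence $D > 0$ strictly on the interior of the horizon, for every $(\theta,\varphi_*,\eta,\nu)$. Note that although the coordinates $(\theta,\varphi_*)$ degenerate at the poles of $\Sph^2$, both $\nu$ and $\tilde\kappa$ are globally defined smooth functions on $T^*\Sph^2$, so the inequality $D>0$ is coordinate-invariant and extends continuously to the poles.

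Strict positivity of $D$ immediately gives two distinct real roots $\zeta_\pm$ of $\zeta \mapsto p_\hbar$, and hence two distinct real roots $\xi_\pm = \zeta_\pm + zh$ of $\xi \mapsto p_\hbar$; this is precisely semiclassical strict hyperbolicity with respect to $r$ in the sense of \cite[Definition E.55]{dyatlov_zworski_scattering_book}. For the second assertion, \eqref{semiclassical_hamiltonian_r_xi} gives $H_{p_\hbar} r = \pm 2\sqrt{\mu(V_\nu - \tilde\kappa)}$ on $\Sigma_\hbar$, and since $\mu(V_\nu - \tilde\kappa) = D$, the bound $D > 0$ shows $H_{p_\hbar} r \neq 0$ on the characteristic set inside the horizon. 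The argument is elementary throughout; the only point requiring care is the sign analysis of $D$, but this is unobstructed once one observes the manifest nonnegativity of both terms.
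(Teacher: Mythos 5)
Your proof is correct and follows essentially the same route as the paper's: rewrite $p_\hbar$ as a quadratic in $\zeta=\xi-zh$, observe that the discriminant $D=((r^2+a^2)z-a\nu)^2-\mu\tilde\kappa$ is a sum of nonnegative terms for $\mu<0$, and rule out simultaneous vanishing of $\tilde\kappa$ and $(r^2+a^2)z-a\nu$ by the identity $(r^2+a^2)z-za^2\sin^2\theta=z(r^2+a^2\cos^2\theta)\neq0$. The added remark about coordinate-invariance of $D$ at the poles is a harmless elaboration; otherwise this is the paper's argument in slightly more explicit form.
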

\begin{proof}
    Since $\mu<0$ in $r<r_+$, we certainly have
    $$\mu(V_\nu(r) - \Tilde{\kappa}) = ((r_+^2+a^2)z-a\nu)^2 - \mu\Tilde{\kappa} \geq 0, \quad\text{for}\quad r<r_+.$$
    For equality to hold, we would need $\Tilde{\kappa} = 0$ and $a\nu = (r^2+a^2)z$ for some $r<r_+$. However, note that the vanishing of $\Tilde{\kappa}$ implies $a\nu = za^2\sin^2(\theta)$, which is impossible, since
    $$(r^2+a^2)z - za^2\sin^2(\theta) = z(r^2+a^2\cos^2(\theta)) \neq 0.$$
    Thus, $p_\hbar$ always possesses two distinct real roots in $\{r<r_+\}$. Note that the homogeneous degree $2$ part of $p_\hbar$ is just the symbol analysed in Section \ref{section_hamiltonian_flow}. Since  $\mu(V_\nu(r) - \Tilde{\kappa}) > 0$ for all $r<r_+$, equation \eqref{semiclassical_hamiltonian_r_xi} shows that $H_{p_\hbar}r \neq 0$ on $\Sigma_\hbar$.
\end{proof}

\subsubsection{Location of the characteristic set near the horizon}
\label{section_char_set_near_horizon}

We now examine the behavior of the characteristic set as $r \to r_+$ from either side of the horizon. This is sensitive to the sign of the conserved quantity $(r_+^2+a^2)z - a\nu$. Note that
$$\mu(V_\nu(r) - \Tilde{\kappa}) = ((r_+^2+a^2)z - a\nu)^2 - \mu\Tilde{\kappa},$$
so for $(r_+^2+a^2)z - a\nu \neq 0$ there are always two distinct roots in \eqref{semiclassical_xi_roots} sufficiently close to the horizon.
Writing
$$\sqrt{\mu(V_\nu(r) - \Tilde{\kappa})} = \sqrt{((r^2+a^2)z-a\nu)^2 - \mu\Tilde{\kappa}} = |(r^2+a^2)z-a\nu|\sqrt{1 - \frac{\Tilde{\mu\kappa}}{((r^2+a^2)z-a\nu)^2}},$$
we see from \eqref{semiclassical_xi_roots} that near the black hole horizon the characteristic set in $T^*X_\beta$ is located at
\begin{equation}
\label{xi_roots_horizon}
    \xi_{\pm}-zh = \frac{(r^2+a^2)z-a\nu}{\mu} \pm \Bigl(\frac{|(r^2+a^2)z-a\nu|}{\mu} - \frac{\Tilde{\kappa}}{2|(r^2+a^2)z-a\nu|}\Bigr) + \mathcal{O}(\mu).
\end{equation}
Thus, depending on the sign of $(r_+^2+a^2)z-a\nu$, either the outward or inward moving branch tends to fiber infinity at the horizon. The other branch crosses the horizon within $T^*X_\beta$ with $\xi-zh$ approaching the single root of $p_\hbar$ over the horizon, which we denote by
$$\zeta_{\nu,\Tilde{\kappa}} = \frac{\Tilde{\kappa}}{2((r_+^2+a^2)z-a\nu)},$$
see \eqref{xi_over_horizon}. The situation can be summarized as follows:
\begin{equation}
\label{xi_limits_horizon}
\begin{alignedat}{4}
    &\text{for}\,\,\, (r_+^2+a^2)z-a\nu>0:\quad &&\xi_+ - zh \,\to\, +\infty, \quad &&\xi_- - zh \,\to\, \zeta_{\nu,\Tilde{\kappa}} \quad&&\text{as}\quad r\searrow r_+ \\
    &\quad &&\xi_+ - zh \,\to\, -\infty, \quad &&\xi_- - zh \,\to\, \zeta_{\nu,\Tilde{\kappa}} \quad&&\text{as}\quad r\nearrow r_+ \\
    &\text{for}\,\,\, (r_+^2+a^2)z-a\nu<0:\quad &&\xi_+ - zh \,\to\, \zeta_{\nu,\Tilde{\kappa}}, \quad &&\xi_- - zh \,\to\, -\infty \quad&&\text{as}\quad r\searrow r_+ \\
    &\quad &&\xi_+ - zh \,\to\, \zeta_{\nu,\Tilde{\kappa}}, \quad &&\xi_- - zh \,\to\, +\infty \quad&&\text{as}\quad r\nearrow r_+.
\end{alignedat}
\end{equation}
If $(r_+^2+a^2)z-a\nu = 0$, then the principal symbol at $r=r_+$ is given by $p_\hbar = \Tilde{\kappa}$ and we must have $\Tilde{\kappa}>0$, since $\Tilde{\kappa} = r_+^2+a^2-za\nu = 0$ would imply $za\nu = a^2\sin^2(\theta) = (r_+^2+a^2)$, which is impossible. Thus, the characteristic set in $T^*X_\beta$ is empty over the horizon (although there is of course still the characteristic set at fiber infinity). Moreover, we have $V_\nu(r_+) = 0$ in this case, see Lemma \ref{lemma_potential} below, so there is no characteristic set in $r>r_+$ near the horizon. Within the horizon, there are still the two branches of the characteristic set posited in Lemma \ref{lemma_semiclassically_hyperbolic}. Writing $(r^2+a^2)z-a\nu = z(r^2-r_+^2)$ and $\mu = (r-r_-)(r-r_+)$, we see from \eqref{semiclassical_xi_roots} that
$$\xi_\pm - zh = z\frac{r+r_+}{r-r_-} \pm \frac{-1}{\sqrt{-\mu}}\sqrt{\Tilde{\kappa}-V_\nu(r)}.$$
So as $r$ approaches the horizon from below, we find
\begin{equation*}
    \xi_+ - zh \,\to\, -\infty, \quad \xi_- - zh \,\to\, +\infty \quad\text{as}\quad r\nearrow r_+.
\end{equation*}

\subsubsection{Hamiltonian flow in the exterior region}
\label{section_semiclassical_flow}

To understand the Hamiltonian flow in ${\{r>r_+\}}$, we must first examine the function $V_\nu(r)$ in \eqref{semiclassical_xi_roots}. Note that the proof of the following lemma is similar to the corresponding calculation in the Kerr-de Sitter setting performed in \cite[Theorem 3.2]{petersen_vasy_full_subextremal}.

\begin{lemma}
\label{lemma_potential}
    Consider the function
    $$V_\nu(r) = \frac{((r^2+a^2)z-a\nu)^2}{\mu},$$
    for fixed $\nu\in\R$ and $z = \pm 1$. If $\nu$ is such that $(r_+^2+a^2)z-a\nu = 0$, then $V_\nu(r_+) = 0$ and $V_\nu$ is strictly increasing in $(r_+,\infty)$. Otherwise, $V_\nu$ has exactly one critical point $r_\mathrm{min}(\nu)$ in $(r_+,\infty)$ and $V''_\nu(r_\mathrm{min}) > 0$.
\end{lemma}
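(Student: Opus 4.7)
My plan is to set $N(r) := z(r^2+a^2) - a\nu$, so that $V_\nu = N^2/\mu$, and split on whether $N(r_+)$ vanishes. Using the identity $r_+^2 + a^2 = 2mr_+$, one has $N(r_+) = 0$ exactly when $za\nu = 2mr_+$, and in this degenerate case $N(r) = z(r^2 - r_+^2)$ factors, giving
\[
V_\nu(r) = \frac{(r-r_+)(r+r_+)^2}{r-r_-}.
\]
Its logarithmic derivative $\frac{1}{r-r_+} + \frac{2}{r+r_+} - \frac{1}{r-r_-}$ is positive on $(r_+,\infty)$ since $r-r_+ < r-r_-$, which gives the first half of the lemma.

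For the generic case $N(r_+) \ne 0$, the fact that $V_\nu \to \infty$ at both endpoints of $(r_+,\infty)$ guarantees at least one minimum. Writing $V'_\nu = Nq/\mu^2$ with $q := 2N'\mu - N\mu'$, critical points come either from $N=0$ or from $q=0$. A direct computation rewrites $q(r)=0$ as $g(r) = -za\nu$, where
\[
g(r) := \frac{r^3 - 3mr^2 + a^2 r + ma^2}{r-m}, \qquad g'(r) = \frac{2\bigl((r-m)^3 + m(m^2-a^2)\bigr)}{(r-m)^2}.
\]
Since $m > |a|$ and $r_+ > m$, the function $g$ is strictly increasing on $(m,\infty)$; computing the limit $g(r_+) = -2mr_+$ via $r_+^2 + a^2 = 2mr_+$ shows that $g$ maps $(r_+,\infty)$ bijectively onto $(-2mr_+,\infty)$, which will yield the uniqueness. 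Note also that $N$ has $N' = 2zr$, hence admits at most one zero in $(r_+,\infty)$.

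The remaining case analysis is then dictated by the sign of $za\nu - 2mr_+$. If $za\nu < 2mr_+$ then $N$ has no zero in $(r_+,\infty)$ and $q$ has a unique zero $r_{\mathrm{min}}$ by the bijectivity of $g$; substituting the critical-point relation back into $N(r_{\mathrm{min}})$ and $q'(r_{\mathrm{min}})$ produces
\[
V''_\nu(r_{\mathrm{min}}) = \frac{N(r_{\mathrm{min}})\,q'(r_{\mathrm{min}})}{\mu(r_{\mathrm{min}})^2} = \frac{8 r_{\mathrm{min}}\bigl((r_{\mathrm{min}}-m)^3 + m(m^2-a^2)\bigr)}{(r_{\mathrm{min}}-m)^2\,\mu(r_{\mathrm{min}})} > 0
\]
after cancellation of the $z^2 = 1$ factor. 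If $za\nu > 2mr_+$ then $N$ has a unique root $r_*\in(r_+,\infty)$ with $V_\nu(r_*)=0$, and a Taylor expansion of $N^2/\mu$ at $r_*$ yields $V''_\nu(r_*) = 8r_*^2/\mu(r_*) > 0$; simultaneously $-za\nu < -2mr_+$ lies below the range of $g$, so $q$ contributes no additional critical points and $r_{\mathrm{min}} := r_*$ is the unique critical point. The main computational obstacle is the algebra for $g'$ and the cancellation in $V''_\nu(r_{\mathrm{min}})$; everything else is an elementary case analysis based on the sign of $za\nu - 2mr_+$.
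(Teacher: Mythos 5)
Your proof is correct, and it differs from the paper's in two places that are worth noting.

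First, the degenerate case $N(r_+)=0$: you dispatch it directly via the logarithmic derivative $\frac{1}{r-r_+}+\frac{2}{r+r_+}-\frac{1}{r-r_-}>0$, which is a short, self-contained computation. The paper instead establishes $V_\nu(r_+)=0$, $V_\nu'(r_+)>0$, and then deduces monotonicity from the separate fact (proved for the generic case) that every critical point is a strict local minimum — so a sign change of $V'_\nu$ would force a non-minimal critical point. Both work; yours is more elementary and local to the case.

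Second, the uniqueness of the critical point in the generic case: the paper's key observation is that any critical point is a strict local minimum and hence there can be at most one, combined with $V_\nu\to\infty$ at both ends to get existence. You instead factor $V'_\nu = Nq/\mu^2$ and explicitly count the zeros: $N$ is monotone in $r^2$ so has at most one zero, and $q=0$ is recast as $g(r)=-za\nu$ with $g$ strictly increasing from $g(r_+^+)=-2mr_+$ to $+\infty$, so $q$ has at most one zero. The sign of $za\nu-2mr_+$ then decides which of the two factors vanishes (and they cannot vanish simultaneously, since $N=0$ would force $q=2N'\mu\neq 0$). This is more constructive: you do not merely prove uniqueness, you locate the critical point via the bijection $g$, and you produce the closed form
$$V''_\nu(r_\mathrm{min}) = \frac{8r_\mathrm{min}\bigl((r_\mathrm{min}-m)^3+m(m^2-a^2)\bigr)}{(r_\mathrm{min}-m)^2\,\mu(r_\mathrm{min})} > 0$$
whose positivity is visible by inspection (since $r_\mathrm{min}>r_+>m$ and $m^2>a^2$). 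Incidentally, your final expression for $V''_\nu$ appears to be the correct one; the paper's displayed intermediate formula for $f'$ seems to have a minor arithmetic slip (the coefficient of $\mu\mu'$), which does not affect the sign it needs but does produce an expression for $V''_\nu$ off by a constant factor from yours. The approaches are of comparable length; yours trades the paper's one-line uniqueness argument for a case split, in exchange for an explicit formula.
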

\begin{proof}
    The derivative of $V_\nu$ can be written
    $$V_\nu'(r) = -\frac{(r^2+a^2)z-a\nu}{\mu^2}f(r), \quad\text{with}\quad f(r) = ((r^2+a^2)z-a\nu)\mu' - 4rz\mu.$$
    Thus, any critical point in $(r_+,\infty)$ must satisfy either
    $$(r^2+a^2)z-a\nu = 0, \quad\text{or}\quad f(r)=0.$$
    In the first case, we have $f(r) = -4rz\mu$ and
    $$V_\nu''(r) = -\frac{2rz}{\mu^2}f(r) = \frac{8r^2}{\mu} > 0.$$
    In the second case, we have $(r^2+a^2)z-a\nu = \frac{4rz\mu}{\mu'}$ and
    \begin{equation*}
        f'(r) = ((r^2+a^2)z-a\nu)\mu'' - 2rz\mu' - 2z\mu = \frac{2z}{\mu'}\bigl(2r\mu\mu''- r(\mu')^2 - \mu\mu'\bigr).
    \end{equation*}
    Note that $\mu'(r) > 0$ on $(r_+,\infty)$. Thus, in this case
    $$V_\nu''(r) = -\frac{4rz}{\mu\mu'}f'(r) = \frac{8r}{\mu(\mu')^2}\bigl(r(\mu')^2 + \mu\mu' - 2r\mu\mu''\bigr).$$
    A direct calculation in terms of $\mu(r) = r^2 - 2mr + a^2$ shows that
    $$r(\mu')^2 + \mu\mu' - 2r\mu\mu'' = 2g(r), \quad\text{with}\quad g(r) = r^3 - 3mr^2 + (4m^2-a^2)r - ma^2.$$
    Now $g(m) = 2m^3 - 2ma^2 > 0$ and $g'(r) > 0$ everywhere, so $g(r) > 0$ on $(r_+,\infty)$. This shows that $V_\nu''(r) > 0$ also in the second case.

    Thus, any critical point of $V_\nu$ in $(r_+,\infty)$ must be a strict local minimum, implying that there can be at most one such critical point. Note that $V_\nu(r) \to \infty$ as $r\to\infty$. If ${(r^2+a^2)z-a\nu \neq 0}$, then also $V_\nu(r) \to \infty$ as $r\to r_+$ from above, so there must be such a unique minimum. If on the other hand $(r_+^2+a^2)z-a\nu = 0$, then we can write
    $$V_\nu(r) = \frac{(r^2-r_+^2)^2}{\mu} = \frac{(r-r_+)(r+r_+)^2}{r-r_-},$$
    where $r_- < r_+$ are the two zeros of $\mu$. Thus, $V_\nu(r_+) = 0$ and $V_\nu'(r_+) = \frac{(2r_+)^2}{r_+-r_-} > 0$, which implies that $V_\nu'(r) > 0$ everywhere, since otherwise there would be a critical point that is not a local minimum.
\end{proof}

In terms of the conserved quantities $\Tilde{\kappa}$ and $\nu$, the semiclassical characteristic set can be divided into three qualitatively different regimes. For $\Tilde{\kappa} < V_\nu(r_{\mathrm{min}})$ all integral curves in $\{r>r_+\}$ either enter from or escape to spatial infinity, and the characteristic set can be split into inward and outward moving integral curves. For $\Tilde{\kappa} > V_\nu(r_{\mathrm{min}})$, the characteristic set in $\{r>r_+\}$ can be split into integral curves with $r$ bounded from above and integral curves with $r$ bounded from below, which never reach the horizon. Finally, for $\Tilde{\kappa} = V_\nu(r_{\mathrm{min}})$ trapping occurs within the characteristic set.

Here, we take $V_\nu(r_{\mathrm{min}}) = V_\nu(r_+) = 0$ when $(r_+^2+a^2)z-a\nu=0$, as suggested by Lemma \ref{lemma_potential}. For $(r_+^2+a^2)-za\nu < 0$, there exists $r>r_+$ with $((r^2+a^2)z-a\nu)^2 = 0$ and, as $V_\nu(r) \geq 0$ for $r>r_+$, this must be the minimum of Lemma \ref{lemma_potential}. Thus, also for $(r_+^2+a^2)-za\nu < 0$, we have $V_\nu(r_{\mathrm{min}}) = 0$. This implies that $(r_+^2+a^2)-za\nu \leq 0$ is only possible in the case $\Tilde{\kappa} > V_\nu(r_{\mathrm{min}})$, since $\Tilde{\kappa} = 0$ would lead to the contradictory statements $za\nu = a^2\sin^2(\theta)$ and $za\nu \geq (r_+^2+a^2)$.

We will now submit the three regimes to a more careful examination. In the case ${\Tilde{\kappa} < V_\nu(r_{\mathrm{min}})}$, we have
$$\mu(V_\nu(r) - \Tilde{\kappa}) > 0, \quad \forall r>r_+,$$
so \eqref{semiclassical_xi_roots} has two distinct real solutions for all $r>r_+$. The characteristic set in this regime has the two connected components
$$\Sigma_\hbar\cap\{\Tilde{\kappa} < V_\nu(r_{\mathrm{min}})\}\cap\{r>r_+\} = \Sigma_{\mathrm{in}} \cup \Sigma_{\mathrm{out}},$$
where
\begin{equation*}
\begin{split}
    \Sigma_{\mathrm{in}} &= \{(\xi-zh) < \mu^{-1}((r^2+a^2)z-a\nu)\} \cap \Sigma_\hbar \cap\{r>r_+\},\\
    \Sigma_{\mathrm{out}} &= \{(\xi-zh) > \mu^{-1}((r^2+a^2)z-a\nu)\} \cap \Sigma_\hbar \cap\{r>r_+\}.
\end{split}
\end{equation*}
By \eqref{semiclassical_hamiltonian_r_xi}, we have $H_{p_\hbar}r < 0$ everywhere on $\Sigma_{\mathrm{in}}$ and $H_{p_\hbar}r > 0$ on $\Sigma_{\mathrm{out}}$. In fact, $H_{p_\hbar}r$ is bounded away from zero on $\{r\geq r_++\delta\}$ for any $\delta>0$ (and $\nu,\Tilde{\kappa}$ fixed), so integral curves must reach the complex scaling region in finite time, in the forward direction on $\Sigma_{\mathrm{out}}$ and in the backward direction on $\Sigma_{\mathrm{in}}$.

Recall that $z((r_+^2+a^2)z-a\nu) > 0$ in the $\Tilde{\kappa} < V_\nu(r_{\mathrm{min}})$ case. Thus, by \eqref{xi_limits_horizon} we see that, for $z=1$, the inward moving branch crosses the horizon, while the outward moving branch tends to fiber infinity, and for $z=-1$ the reverse is true. Note that over the horizon in $T^*X_\beta$, we have $H_{p_\hbar}r = 2((r_+^2+a^2)z-a\nu) \neq 0$. So, for $z=1$, $H_{p_\hbar}r$ remains negative and bounded away from zero all the way across the horizon in $\Sigma_{\mathrm{in}}$ and integral curves in $\Sigma_{\mathrm{in}}$ cross the horizon in finite time and continue towards the boundary of $X_\beta$ at $r=r_0$. Since $H_{p_\hbar}r > 0$ on $\Sigma_{\mathrm{out}}$, $\xi-zh$ becomes arbitrarily large in the backward direction along the Hamiltonian flow in $\Sigma_{\mathrm{out}}$, and thus must exponentially approach the radial source by the discussion in Section \ref{section_semiclassical_radial_sets}. For $z=-1$ the situation is reversed, that is, integral curves in $\Sigma_{\mathrm{out}}$ cross the horizon in finite time in the backward direction and continue towards $r=r_0$, whereas integral curves in $\Sigma_{\mathrm{in}}$ exponentially approach the radial sink in the forward direction. As can be seen from \eqref{semiclassical_xi_roots} and \eqref{xi_limits_horizon}, there is an additional component of the characteristic set contained entirely in $\{r<r_+\}$, which is outward flowing for $z=1$ and inward flowing for $z=-1$ and approaches the radial sink ($z=1$) respectively the radial source ($z=-1$). The flow in this regime is depicted for $z=1$ in Figure \ref{fig_flow1}.

\begin{figure}[h]
    \centering
    \includegraphics{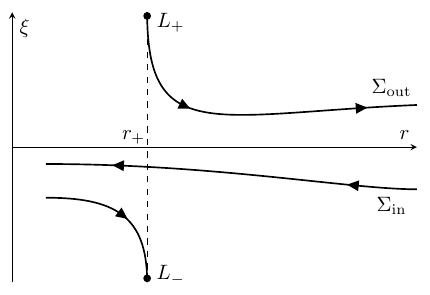}
    \caption{Schematic representation in the $r-\xi$ plane (for fixed values of the conserved quantities $\nu,\Tilde{\kappa}$) of the flow of $H_{p_\hbar}$ on $\Sigma_\hbar$ in the case $z=1$ and $\Tilde{\kappa} < V_\nu(r_{\mathrm{min}})$.}
    \label{fig_flow1}
\end{figure}

We now turn to the case $\Tilde{\kappa} > V_\nu(r_{\mathrm{min}})$. Here, there is no restriction on the sign of $z((r_+^2+a^2)z-a\nu))$. When this quantity is non-zero, $r_\mathrm{min} \in (r_+,\infty)$ and there is an open interval around $r_{\mathrm{min}}$, where $\mu(V_\nu(r) - \Tilde{\kappa}) < 0$. By \eqref{semiclassical_xi_roots} $\Sigma_\hbar$ is empty over this interval, so the characteristic set in $r>r_+$ has the following two connected components
$$\Sigma_\hbar\cap\{\Tilde{\kappa} > V_\nu(r_{\mathrm{min}})\}\cap\{r>r_+\} = \Sigma_\infty \cup \Sigma_\mathrm{hor},$$
where
\begin{equation*}
    \Sigma_\infty = \{r>r_{\mathrm{min}}\}\cap\Sigma_\hbar, \quad \Sigma_\mathrm{hor} = \{r_+<r<r_{\mathrm{min}}\}\cap\Sigma_\hbar.
\end{equation*}
Recall from Lemma \ref{lemma_potential} that $V_\nu(r) \to \infty$ as $r\to\infty$ and $r\to r_+$ with $V_\nu$ strictly decreasing for $r<r_{\mathrm{min}}$ and strictly increasing for $r>r_{\mathrm{min}}$, so there are exactly two values of $r$, where $\mu(V_\nu(r) - \Tilde{\kappa}) = 0$. At these points we have $H_{p_\hbar}r = 0$ by \eqref{semiclassical_hamiltonian_r_xi} and 
$$\xi-zh = \frac{1}{\mu}((r^2+a^2)z-a\nu).$$
The second derivative of $r$ along the Hamiltonian flow is given on the set $\{H_{p_\hbar}r = 0\}$ by
\begin{equation}
\label{second_derivative_r}
\begin{split}
    H_{p_\hbar}^2r &= 2\mu H_{p_\hbar}(\xi-zh) = -4(r-m)\mu(\xi-zh)^2 + 8zr\mu(\xi-zh) \\
    &= -4(r-m)\frac{1}{\mu}((r^2+a^2)z-a\nu)^2 + 8zr\frac{1}{\mu}((r^2+a^2)z-a\nu) = 2\mu V_\nu'(r).
\end{split}
\end{equation}
Thus, for an integral curve $\gamma(t)$ in $\Sigma_\infty$, the function $r(t)=r(\gamma(t))$ has a strict minimum and no other critical points, whereas, for $\gamma(t)$ in $\Sigma_\mathrm{hor}$, $r(t)$ has a strict maximum and no other critical points. Note that at these turning points the $\xi_+$ solution of \eqref{semiclassical_xi_roots} becomes the $\xi_-$ solution and vice versa.
Away from the turning points and the black hole horizon, $\dot r$ is bounded away from zero, so integral curves in $\Sigma_\infty$ enter the complex scaling region in finite time in both the backward and forward direction, and integral curves in $\Sigma_\mathrm{hor}$ enter $\{r<r_++\delta\}$ for any $\delta>0$ in both directions.

To understand the behavior near the horizon, we examine equation \eqref{xi_limits_horizon}. In the case ${z((r_+^2+a^2)z-a\nu) > 0}$ the situation is as follows: for $z=1$, integral curves in $\Sigma_\mathrm{hor}$ approach the radial source in the backward direction and cross the horizon in finite time in the forward direction, whereas, for $z=-1$, they approach the radial sink in the forward direction and cross the horizon in the backward direction. As before there is the additional component of the characteristic set contained entirely in $\{r<r_+\}$.

\begin{figure}[b]
    \centering
    \includegraphics{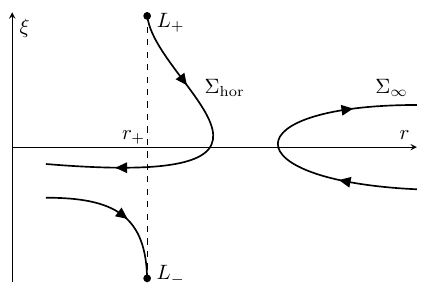}
    \includegraphics{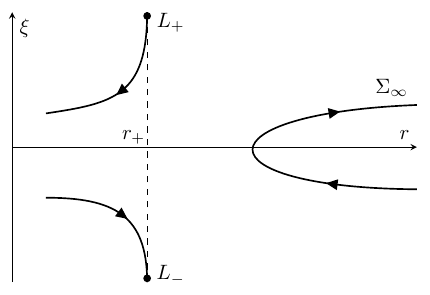}
    \includegraphics{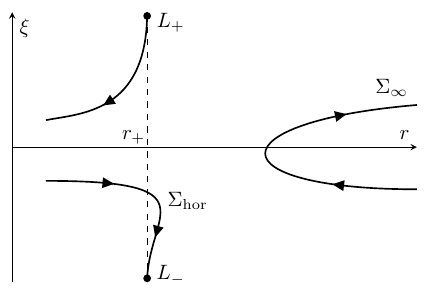}
    \caption{Schematic representation of the flow of $H_{p_\hbar}$ on $\Sigma_\hbar$ for $z=1$ and $\Tilde{\kappa} > V_\nu(r_{\mathrm{min}})$. Top left: $r_+^2+a^2-a\nu > 0$. Top right: $r_+^2+a^2-a\nu = 0$. Bottom: $r_+^2+a^2-a\nu < 0$, in which case $\Sigma_\mathrm{hor}$ is confined to the ergoregion.}
    \label{fig_flow2}
\end{figure}

When $z((r_+^2+a^2)z-a\nu) < 0$, the roles of $z=1$ and $z=-1$ are exactly reversed. Thus, integral curves in $\Sigma_\mathrm{hor}$ flow from the radial sink across the horizon in the backward direction for $z=1$, and from the radial source across the horizon in the forward direction for $z=-1$. Note that, if such an integral curve entered the complex scaling region, it would pose a problem for propagation estimates, since we would be unable to obtain estimates by propagating in the direction imposed by the sign of $z$. However, we claim that these integral curves are confined to the ergoregion, i.e. $\{\mu + a^2\sin^2(\theta) \leq 0\}$. Then, choosing $R_0$ large enough, we can ensure that they do not enter the complex scaling region. Indeed, for $r_+^2+a^2-za\nu < 0$, the minimum of $V_\nu(r)$ is given by $r^2+a^2-za\nu = 0$, so $\Sigma_\mathrm{hor}$ remains entirely in the region $\{r^2+a^2-za\nu < 0\}$. On the characteristic set in $r>r_+$, we must have
\begin{equation*}
    \frac{1}{\mu}\bigl(r^2+a^2-za\nu\bigr)^2 = V_\nu(r) \geq \Tilde{\kappa} \geq \frac{1}{a^2\sin^2(\theta)}\bigl(a^2\sin^2 - za\nu\bigr)^2.
\end{equation*}
When $\mu \geq a^2\sin^2(\theta)$, this implies
\begin{equation*}
    (r^2+a^2-za\nu)^2 \geq (a^2\sin^2 - za\nu)^2 = (r^2+a^2-za\nu)^2 - 2\rkerr^2(r^2+a^2-za\nu) + \rkerr^4,
\end{equation*}
where $\rkerr^2 = r^2+a^2\cos^2(\theta) > 0$. Therefore, we must have $r^2+a^2-za\nu > 0$ on the characteristic set outside the ergoregion.

Finally, when $(r_+^2+a^2)z-a\nu = 0$, there is no characteristic set near the horizon in $\{r>r_+\}$, see Section \ref{section_char_set_near_horizon}. Furthermore, $V_\nu(r_+)=0$ and $V_\nu(r)$ is strictly increasing by Lemma \ref{lemma_potential}. So there is only one turning point with $\Tilde{\kappa}= V_\nu(r)$, which is contained in $\Sigma_\infty$, and the component $\Sigma_\mathrm{hor}$ is empty. There are now two components of the characteristic set contained entirely in $\{r<r_+\}$. By the discussion at the end of Section \ref{section_char_set_near_horizon}, the Hamiltonian flow on these two components approaches the radial source and radial sink respectively. The flow in the various cases is depicted in Figure \ref{fig_flow2} for $z=1$.

\begin{rmk}
\label{rmk_geodesics}
    The Hamiltonian flow of the principal symbol is related to the geodesic flow of the Kerr metric. Indeed, the semiclassical principal symbol (away from complex scaling) is just a rescaling of the inverse metric, viewed as a function on the cotangent bundle (i.e. the norm-squared of $zdt_*+\xi dr+\eta d\theta + \nu d\varphi_*$):
    $$p_\hbar(r,\theta,\varphi_*,\xi,\eta,\nu) = \rkerr^2 g^{-1}(t_*,r,\theta,\varphi_*,z,\xi,\eta,\nu).$$
    Note that this is independent of $t_*$ by the stationarity of the Kerr metric. The parameter $z=\pm 1$ corresponds to $\tau$, the momentum associated to the coordinate $t_*$. The semiclassical characteristic set can be understood as the union of lightcones in cotangent space intersected with the hypersurface $\{t_*=0,\tau=\pm1\}$. Moreover, integral curves of the Hamiltonian flow on the characteristic set are just null-geodesics lifted to the cotangent bundle and projected to the hypersurface $\{t_*=0,\tau=\pm1\}$. From this perspective, the presence of the integral curves flowing in the ``wrong direction'' above, e.g. exiting the horizon in the $z=+1$ case, seems physically odd. However, recall that $\partial_{t_*}$ fails to be time-like inside the ergoregion, which also provides a physical explanation of why such integral curves are confined to this region.
\end{rmk}

We now consider the case $\Tilde{\kappa} = V_\nu(r_\mathrm{min})$, where trapping occurs. At $r=r_\mathrm{min}$, we have $H_{p_\hbar}r=0$ by \eqref{semiclassical_hamiltonian_r_xi} and $H_{p_\hbar}(\xi-zh) = V_\nu'(r_\mathrm{min}) = 0$ by \eqref{second_derivative_r}. Thus, $r$ and $\xi$ remain constant for all times along integral curves at $r_\mathrm{min}$, although there can still be flow in the $\theta,\varphi_*,\eta$ directions. Away from $r=r_\mathrm{min}$, we have $V_\nu(r) > \Tilde{\kappa}$, and hence $H_{p_\hbar}r\neq 0$. Thus, the trapped set of the Hamiltonian flow within the characteristic set is given by
\begin{equation*}
    \Gamma = \bigl\{r=r_\mathrm{min}(\nu),\,\, \xi-zh = \mu^{-1}((r^2+a^2)z-a\nu),\,\, \Tilde{\kappa}=V_\nu(r_\mathrm{min})\bigr\}.
\end{equation*}
We write $\Gamma_s$ and $\Gamma_u$ for the forward and backward trapped sets within the characteristic set. $\Gamma_s$ is given by the $\xi_+$ branch of \eqref{semiclassical_xi_roots} in $r_+<r<r_\mathrm{min}$ and by the $\xi_-$ branch in $r>r_\mathrm{min}$, and the opposite holds for $\Gamma_u$. Notice that the Hamiltonian flow in $\Gamma_s$ indeed approaches the trapped set in the forward direction, while the flow in $\Gamma_u$ approaches the trapped set in the backward direction. In $\{r > r_\mathrm{min}+\delta\}$ for any $\delta>0$, $H_{p_\hbar}r$ is bounded away from zero and the Hamiltonian flow enters the complex scaling region in finite time in the forward direction along $\Gamma_u$ and in the backward direction along $\Gamma_s$. The approach towards the horizon is described by \eqref{xi_limits_horizon}. Recall that we must have $z((r_+^2+a^2)z-a\nu) > 0$ when $\Tilde{\kappa}=V_\nu(r_{\mathrm{min}})$. Thus, for $z=1$, the flow in $\Gamma_u$ crosses the horizon in the forward direction and in $\Gamma_s$ approaches the radial source in the backward direction, while for $z=-1$, the flow in $\Gamma_s$ crosses the horizon in the backward direction and in $\Gamma_u$ approaches the radial sink in the forward direction. The flow for $z=1$ is depicted in Figure \ref{fig_flow3}.

\begin{figure}[h]
    \centering
    \includegraphics{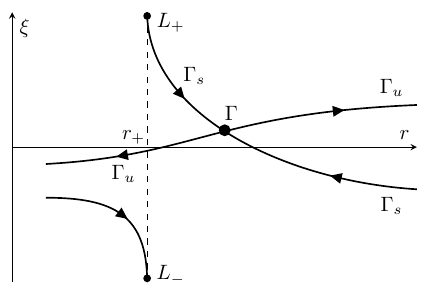}
    \caption{Schematic representation of the flow of $H_{p_\hbar}$ on $\Sigma_\hbar$ in the case $z=1$ and $\Tilde{\kappa} = V_\nu(r_{\mathrm{min}})$. The trapped set $\Gamma$ is located at a singe point in the $r-\xi$ plane (for fixed values of $\nu,\Tilde{\kappa}$). $\Gamma_s$ is the forward trapped set and $\Gamma_u$ the backward trapped set.}
    \label{fig_flow3}
\end{figure}

It is known that the trapped set for Kerr is normally hyperbolic, see the end of Section \ref{section_semiclassical} for a discussion of this notion. This was shown by Wunsch and Zworski in the case of small angular momentum, see \cite[Proposition 2.1]{wunsch_zworski_trapping}, and then extended by Dyatlov to the entire subextremal range, see \cite[Section 3.2]{dyatlov_ringdown}. (In fact, they show the stronger notion of r-normal hyperbolicity for each r.)

\begin{lemma}[{\cite[Proposition 3.2]{dyatlov_ringdown}}]
\label{kerr_normally_hyperbolic}
    The trapped set for the Hamiltonian flow of $p_\hbar$ on $\Sigma_\hbar$ is normally hyperbolic.
\end{lemma}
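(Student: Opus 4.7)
The plan is to verify the conditions defining normally hyperbolic trapping stated at the end of Section \ref{section_semiclassical} by linearizing the Hamiltonian flow in the directions transverse to $\Gamma$ within $\Sigma_\hbar$ and building the defining functions $\varphi_u, \varphi_s$ from the resulting stable and unstable directions. Since $\nu$, $\Tilde{\kappa}$ and $p_\hbar$ are all conserved, the non-trivial transverse dynamics lives effectively in the $(r,\tilde{\xi})$-plane for each fixed leaf of $(\nu, \Tilde{\kappa})$, where $\tilde{\xi} := \xi - zh$ is the shifted momentum in which
$$p_\hbar = \mu\tilde{\xi}^2 - 2((r^2+a^2)z-a\nu)\tilde{\xi} + \Tilde{\kappa}, \qquad H_{p_\hbar}\tilde{\xi} = -\mu'\tilde{\xi}^2 + 4rz\tilde{\xi}.$$
Using the identity $\mu'\tilde{\xi}_0 = 4 r_\mathrm{min} z$ at the fixed point $(r_\mathrm{min},\tilde{\xi}_0)$ (a rewriting of $f(r_\mathrm{min}) = 0$ from the proof of Lemma \ref{lemma_potential}), one computes that the Jacobian $J$ of the linearized flow in $(r,\tilde{\xi})$ is traceless with
$$-\det J = 2\mu(r_\mathrm{min})\,V''_\nu(r_\mathrm{min}) > 0,$$
strictly positive by Lemma \ref{lemma_potential}. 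Hence $J$ has real eigenvalues $\pm\lambda_\nu$ with $\lambda_\nu := \sqrt{2\mu(r_\mathrm{min})V''_\nu(r_\mathrm{min})}>0$, supplying the required hyperbolic splitting in the normal directions.

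With this linearized hyperbolic structure in place, the stable/unstable manifold theorem applied to the smooth family of planar flows parametrized by $(\nu, \Tilde{\kappa})$ near the critical value $\Tilde{\kappa} = V_\nu(r_\mathrm{min})$ promotes the branches of $\Gamma_s, \Gamma_u$ described in Section \ref{section_semiclassical_flow} to smooth codimension-one submanifolds of $\Sigma_\hbar$ in a neighborhood $U$ of $\Gamma$. I would then choose $\varphi_s, \varphi_u \in C^\infty(U)$ as defining functions for $\Gamma_s, \Gamma_u$ normalized so that $d\varphi_s, d\varphi_u$ at $\Gamma$ point along the left-eigenvectors of $J$ associated with the eigenvalues $+\lambda_\nu, -\lambda_\nu$ respectively. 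A short computation then yields
$$H_{p_\hbar}\varphi_u = \lambda_\nu\varphi_u + O(\varphi_u^2), \qquad H_{p_\hbar}\varphi_s = -\lambda_\nu\varphi_s + O(\varphi_s^2),$$
identifying $w_u = w_s = \lambda_\nu$ at $\Gamma$. The Poisson bracket $\{\varphi_u, \varphi_s\}$ at $\Gamma$ is controlled by the symplectic form $d\tilde{\xi}\wedge dr$ paired with the two eigendirections, which is nonzero because the eigenvalues of $J$ are distinct, and can be given a definite positive sign by the choice of orientation.

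The uniform lower bound $w_u, w_s \geq \gamma > 0$ over all of $\Gamma$ then follows from continuity of $\lambda_\nu$ in $(\nu,\Tilde{\kappa})$, provided the reduced trapped set (the image of $\Gamma$ in the $(\nu, \Tilde{\kappa})$-parameter space) is compact and bounded away from the degenerate boundary $(r_+^2+a^2)z - a\nu \to 0$, in which limit $r_\mathrm{min}\to r_+$, $\mu(r_\mathrm{min})\to 0$ and hence $\lambda_\nu\to 0$. The hard part is precisely this uniformity: one must show that the reduced trapped set is compact, that the stable and unstable directions depend smoothly on $(\nu,\Tilde{\kappa})$ and patch together to smooth defining functions globally on the trapped set (including across the poles of $\Sph^2$ where the $(\theta,\varphi_*)$-coordinates degenerate), and that $\lambda_\nu$ admits a uniform positive lower bound over the full subextremal range $a\in(-m,m)$. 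Rather than reproduce this dynamical analysis, I would invoke \cite[Proposition 3.2]{dyatlov_ringdown}, which accomplishes exactly this (extending the earlier small-$a$ analysis of \cite{wunsch_zworski_trapping}).
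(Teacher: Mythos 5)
Your proposal takes the same ultimate route as the paper --- the hard global part is delegated to \cite[Proposition 3.2]{dyatlov_ringdown} --- but you fill in the pointwise linearization, which the paper omits entirely. I checked your key computation: in the $(r,\tilde\xi)$-coordinates, with $F = 2\mu\tilde\xi - 2Q$ (where $Q := (r^2+a^2)z - a\nu$) and $G = -\mu'\tilde\xi^2 + 4zr\tilde\xi$, using $\mu\tilde\xi_0 = Q$ and $\mu'\tilde\xi_0 = 4r_\mathrm{min}z$ one indeed gets a traceless Jacobian with $-\det J = 16r_\mathrm{min}^2 + \tfrac{32r_\mathrm{min}\mu}{\mu'} - \tfrac{64r_\mathrm{min}^2\mu}{(\mu')^2} = 2\mu V_\nu''(r_\mathrm{min})$, so the eigenvalue identification $\lambda_\nu = \sqrt{2\mu V_\nu''}$ is correct. (The published formula for $V_\nu''$ in the proof of Lemma \ref{lemma_potential} contains a small typo --- $\mu\mu'$ should read $2\mu\mu'$, corresponding to $f'(r) = Q\mu'' - 2rz\mu' - 4z\mu$, not $-2z\mu$ --- but the positivity conclusion, and hence your determinant identity, is unaffected.)

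Two minor points. First, your explanation of why the degenerate boundary $(r_+^2+a^2)z - a\nu \to 0$ must be excluded is off: in that limit one has $r_\mathrm{min}\to r_+$ and $\mu(r_\mathrm{min})\to 0$, but $V_\nu''(r_\mathrm{min})\sim 8r_+^2/\mu$ blows up at exactly the compensating rate, so $\lambda_\nu = \sqrt{2\mu V_\nu''}\to 4r_+ > 0$, not $0$. What actually breaks down is that $\Tilde\kappa = V_\nu(r_\mathrm{min}) = 16r_\mathrm{min}^2\mu/(\mu')^2 \to 0$, whereas $\Tilde\kappa > 0$ is forced on $\Sigma_\hbar$ (as the paper shows, $\Tilde\kappa = 0$ would require $r^2 + a^2\cos^2\theta = 0$). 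This is the mechanism behind Dyatlov's compactness of $\Gamma$, which you rightly flag as the crucial input. Second, your normalization of $d\varphi_{u/s}$ against left-eigenvectors of $J$ runs into a sign mismatch: if $d\varphi_u$ is the left-eigenvector for $-\lambda_\nu$, then $H_{p_\hbar}\varphi_u = -\lambda_\nu\varphi_u$, not $+\lambda_\nu\varphi_u$; the convention you wanted to hit matches the signs in \eqref{trapping_condition} but is then inconsistent with $\varphi_u$ vanishing on $\Gamma_u$. (The paper's own condition \eqref{trapping_condition} and the surrounding prose are not mutually consistent on this sign, so the confusion is understandable; the net calculus estimate is insensitive to it.) With these caveats, your proposal is a correct and more explicit version of what the paper does.
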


Note that \cite[Proposition 3.1]{dyatlov_ringdown} shows that the trapped set $\Gamma$ is a compact subset of $\{r>r_+\}$. Thus, we can choose $R_0$ large enough to ensure that the trapped set is disjoint from the complex scaling region. Defining functions of $\Gamma_u$, $\Gamma_s$ inside $\Sigma_\hbar$ are given explicitly by
\begin{equation*}
\begin{split}
    \varphi_u &= \xi - zh - \frac{(r^2+a^2)z-a\nu}{\mu} - \mathrm{sgn}(r-r_\mathrm{min})\frac{1}{\sqrt{\mu}}\sqrt{V_\nu(r)-V_\nu(r_\mathrm{min})} \\
    \varphi_s &= \xi - zh - \frac{(r^2+a^2)z-a\nu}{\mu} + \mathrm{sgn}(r-r_\mathrm{min})\frac{1}{\sqrt{\mu}}\sqrt{V_\nu(r)-V_\nu(r_\mathrm{min})},
\end{split}
\end{equation*}
see also \cite[p. 30]{dyatlov_ringdown}, where different notation is used. As ${V_\nu(r)-V_\nu(r_\mathrm{min}) \sim (r-r_\mathrm{min})^2}$ these are indeed smooth functions. In \cite[eq. 3.34]{dyatlov_ringdown} the functions $w_u,w_s$ of condition \eqref{trapping_condition} are calculated and shown to be positive near the trapped set.

Thanks to the normal hyperbolicity of trapping, we can apply the microlocal estimate of \cite[Theorem 4.7]{hintz_vasy_quasilinear} to obtain an estimate at the trapped set with an error term supported away from $\Gamma_u$. Note that the imaginary part of the subprincipal symbol of $P_\hbar$ is proportional to $h^{-1}\Im(z) = \Im(\sigma)$. Thus, the condition in \eqref{expansion_rate} will amount to a bound on the negative imaginary part of the spectral parameter. This bound can be related to the minimal expansion rate of the linearized flow in the unstable/stable directions.

\subsection{Proof of the semiclassical resolvent estimate}

We are now in a position to prove Proposition \ref{prop_semiclassical_estimate}.

\begin{proof}[Proof of Proposition \ref{prop_semiclassical_estimate}]
    We prove the result for $z=1$. The proof for $z=-1$ is entirely analogous with estimates propagated in the opposite direction along the Hamiltonian flow. Our strategy is the same as in the proof of \ref{prop_fredholm_estimates}, using the semiclassical version of the respective estimates. However, the structure of the semiclassical characteristic set, as discussed above, is more complicated. In particular, there is a trapped set. Since the trapping is normally hyperbolic by Lemma \ref{kerr_normally_hyperbolic}, we can use \cite[Theorem 4.7]{hintz_vasy_quasilinear} to nonetheless propagate estimates into the trapped set. Note that this trapping causes the loss of a factor of $h$ in Proposition \ref{prop_semiclassical_estimate} compared to the propagation and radial estimates. 

    In the region $\{r>R_1\}$, where complex scaling is applied, $P_\hbar$ is elliptic and we can estimate for some $\Tilde{\chi} \in C^\infty_c(X_\beta)$:
    $$\|Au\|_{H_h^s} \leq C \bigl(\|\Tilde{\chi} P_\hbar u\|_{H_h^{s-2}} + h^N\|\Tilde{\chi}u\|_{H_h^{-N}}\bigr)$$
    for any compactly supported $A\in\Psi_\hbar^0(X_\beta)$ with $\WF_\hbar(A) \subset \{r>R_1\}$. Of course, we have a similar semiclassical elliptic estimate off the characteristic of $P_\hbar$ in all of $X_\beta$. Near either of the radial sets, we can apply semiclassical radial estimates, see \cite[Theorem E.52]{dyatlov_zworski_scattering_book}, to obtain
    $$\|Au\|_{H_h^s} \leq C \bigl(h^{-1}\|\Tilde{\chi} P_\hbar u\|_{H_h^{s-1}} + h^N\|\Tilde{\chi}u\|_{H_h^{-N}}\bigr),$$
    for some compactly supported $A\in\Psi_\hbar^0(X_\beta)$ with the respective radial set contained in $\Ell_\hbar(A)$. We will propagate these estimates to the entire characteristic set $\Sigma_\hbar$, being careful to only propagate in the forward direction along integral curves of $H_{p_\hbar}$ that enter the region $\{r>R_1\}$, as required by the sign of $\Im(\sigma_\hbar(P_\hbar))$ there, see \cite[Theorem E.47]{dyatlov_zworski_scattering_book}. Note that components of the characteristic set contained entirely inside the horizon can be dealt with easily, since there is no constraint on the propogation direction there. Integral curves on such components flow from one of the radial sets towards the boundary of $X_\beta$ at $r=r_+$, so the radial estimates can be propogated throughout this part of the characteristic set.

    For the rest of the characteristic set, we separate our analysis into the three regimes of the conserved quantities $\Tilde{\kappa},\nu$ discussed above. In $\{\Tilde{\kappa} < V_\nu(r_{\mathrm{min}})\}$ the characteristic set has the components $\Sigma_{\mathrm{in}}$ and $\Sigma_{\mathrm{out}}$, see Figure \ref{fig_flow1}. On $\Sigma_{\mathrm{in}}$ we can propagate the elliptic estimate forward from $\{r>R_1\}$ towards the boundary at $r=r_0$. On $\Sigma_{\mathrm{out}}$, we can propagate the radial estimate forward from the radial source into the region $\{r>R_1\}$. Now consider $\{\Tilde{\kappa} > V_\nu(r_{\mathrm{min}})\}$, where the characteristic set has the components $\Sigma_\infty$ and $\Sigma_\mathrm{hor}$, see Figure \ref{fig_flow2}. On $\Sigma_\infty$, the elliptic estimate can be propagated forward along the flow from $\{r>R_1\}$ all the way back into $\{r>R_1\}$. On $\Sigma_\mathrm{hor}$ the dynamics depends on the sign of the quantity $r_+^2+a^2-a\nu$. For $r_+^2+a^2-a\nu > 0$, we propagate the radial estimate forward along the flow in $\Sigma_\mathrm{hor}$ from the radial source towards $r=r_0$. In the case $r_+^2+a^2-a\nu < 0$, we must propagate backwards along the Hamiltonian flow, from the radial sink towards $r=r_0$. By the discussion in Section \ref{section_semiclassical_flow} this is permitted, since such integral curves remain in the ergoregion with $\mu<a^2\sin^2(\theta)$ and hence do not enter the complex scaling region, so $\Im(\sigma_\hbar(P_\hbar))=0$ along the flow.

    Finally, when $\Tilde{\kappa} = V_\nu(r_{\mathrm{min}})$ we must deal with the trapped set, see Figure \ref{fig_flow3}. On the forward trapped set $\Gamma_s$, we can propagate the elliptic estimate forward along the flow from $\{r>R_1\}$ towards the trapped set, and the radial estimate from the radial source towards the trapped. Together with the discussion above, this gives the estimate
    $$\|Au\|_{H_h^s} \leq C \bigl(h^{-1}\|\Tilde{\chi} P_\hbar u\|_{H_h^{s-1}} + h^N\|\Tilde{\chi}u\|_{H_h^{-N}}\bigr),$$
    everywhere except on the backward trapped set $\Gamma_u$, that is, for any $A\in\Psi_\hbar^0(X_\beta)$ with $\WF_\hbar(A)\cap \Gamma_u =\emptyset$. 
    Since the trapped set is normally hyperbolic by Proposition \ref{kerr_normally_hyperbolic}, we can use \cite[Theorem 4.7]{hintz_vasy_quasilinear} to estimate
    \begin{equation}
    \label{trapping_estimate_kerr}
        \|Bu\|_{H_h^s} \leq C \bigl(h^{-2}\|\Tilde{\chi} P_\hbar u\|_{H_h^{s-1}} + h^{-1}\|Eu\|_{H_h^s} + h^N\|\Tilde{\chi}u\|_{H_h^{-N}}\bigr),
    \end{equation}
    where $\Ell_\hbar(B)$ contains the trapped set and $\WF_\hbar(E)$ is disjoint from $\Gamma_u$. Of course this estimate requires the additional condition on the imaginary part of the subprincipal symbol of $P_\hbar$, see \eqref{expansion_rate}. A direct calculation shows that
    \begin{equation*}
        \sigma_\hbar\Bigl(\frac{1}{2ih}(P_\hbar-P_\hbar^*)\Bigr) = -2h^{-1}\Im(z)\Bigl(1 + \frac{2mr(r^2+a^2)}{\rkerr^2\mu} - \frac{2mr}{\rkerr^2\mu}a\nu\Bigr).
    \end{equation*}
    Now recall that $r_+^2+a^2-a\nu > 0$ on the trapped set, so we have $a\nu < r^2+a^2$ at trapping. Using this in the expression above, we find
    $$\sigma_\hbar\Bigl(\frac{1}{2ih}(P_\hbar-P_\hbar^*)\Bigr) < -2h^{-1}\Im(z).$$
    Thus, choosing $\Im(z) > -h\gamma$ for some constant $\gamma > 0$ as in the statement of the Proposition, ensures that condition \eqref{expansion_rate} is met.
    Because $\WF_\hbar(E)\cap\Gamma_u = \emptyset$, we can apply the non-trapping estimates to the error term in \eqref{trapping_estimate_kerr} and obtain
    $$\|Bu\|_{H_h^s} \leq C \bigl(h^{-2}\|\Tilde{\chi} P_\hbar u\|_{H_h^{s-1}} + h^N\|\Tilde{\chi}u\|_{H_h^{-N}}\bigr)$$
    microlocally in a neighborhood of the trapped set. This estimate in turn can now be propagated forward along the Hamiltonian flow in $\Gamma_u$ towards $r=r_0$ and $r>R_1$.

    We have now dealt with the entire characteristic set. Combining the various estimates on different parts of $T^*X_\beta$ by a semiclassical partition of unity (and downgrading the factor of $h^{-1}$ to $h^{-2}$ for the non-trapping estimates), gives
    $$\|\chi u\|_{H_h^s} \leq C \bigl(h^{-2}\|\Tilde{\chi} P_\hbar u\|_{H_h^{s-1}} + h^N\|\Tilde{\chi}u\|_{H_h^{-N}}\bigr)$$
    for some $\chi \in C^\infty_c(X_\beta)$ satisfying $\chi = 1$ on $\{r\in (r_0+\delta,R)\}$ for some large $R$. As in the proof of Proposition \ref{prop_fredholm_estimates}, this estimate can be closed by using a semiclassical hyperbolic estimate near $r=r_0$, see \cite[Theorem E.57]{dyatlov_zworski_scattering_book}, and the uniform ellipticity of $P_\hbar$ for large $r$, see Lemma \ref{semiclassical_complex_scaling}.
\end{proof}

\section{Low energy estimates}
\label{section_low_energy}

In this section, we will study the low energy behavior of $P_\beta(\sigma)$ and prove that there is no accumulation of quasinormal modes at $\sigma=0$, Theorem \ref{thm_low_energy}. This will follow from certain uniform estimates near, and down to, $\sigma=0$. We begin by showing that the estimates of Section \ref{section_fredholm_estimates} based on the analysis of the principal symbol hold uniformly for $\sigma$ in compacta.

\begin{lemma}
\label{lemma_uniform_interior_estimate}
    Let $K \subset \C$ be compact and let $s>\frac{1}{2}-\alpha\Im(\sigma)$ for all $\sigma\in K$. Then the following estimate holds uniformly in $\sigma$ for $\sigma \in K$:
    \begin{equation}
    \label{uniform_interior_estimate}
        \|u\|_{\Bar{H}^s(X_\beta)} \leq C\bigl(\|P_\beta(\sigma)u\|_{\Bar{H}^{s-1}(X_\beta)} + \|u\|_{\Bar{H}^{-N}(X_\beta)}\bigr), \quad \forall u \in \mathcal{X}_\beta.
    \end{equation}
\end{lemma}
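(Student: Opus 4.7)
The plan is to revisit the proof of Proposition \ref{prop_fredholm_estimates} and check that every constant can be chosen uniformly in $\sigma \in K$. Two observations make this feasible. First, the principal symbol of $P_\beta(\sigma)$ is independent of $\sigma$: in local coordinates the $\sigma$-dependent terms in \eqref{operator_full} and \eqref{complex_scaled_operator} are of lower order. Hence the characteristic set, the Hamiltonian flow, the radial source/sink $\Lambda_\pm$, and (away from complex scaling) the scaling angle chosen for ellipticity are all fixed as $\sigma$ varies; the microlocal partition of unity from the proof of Proposition \ref{prop_fredholm_estimates} can be taken once and for all. Second, the threshold regularity is $\frac{1}{2} - \alpha\,\Im(\sigma)$, which depends continuously on $\sigma$; compactness of $K$ together with the hypothesis $s > \frac{1}{2}-\alpha\,\Im(\sigma)$ for every $\sigma \in K$ yields a uniform gap above threshold, so the high regularity radial estimate applies with a single regularity $s$ and a single constant for all $\sigma \in K$.

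Given the fixed pseudodifferential data, I would invoke the microlocal elliptic, propagation of singularities, and high regularity radial estimates (Propositions \ref{elliptic_estimate}, \ref{propagation_estimate}, \ref{high_reg_radial_estimate}) with $P$ replaced by $P_\beta(\sigma)$. The constants in these estimates depend continuously on a finite collection of seminorms of the full symbol of the operator, and those seminorms depend continuously (in fact polynomially) on $\sigma$; so each constant is bounded uniformly over $\sigma \in K$. Likewise the hyperbolic estimate of Proposition \ref{prop_hyperbolic_estimate} closes the estimate inside the horizon with constants depending only on the (uniformly) strictly hyperbolic data of $P_\beta(\sigma)$ with respect to $r$, which again varies continuously in $\sigma$.

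The only place where the proof of Proposition \ref{prop_fredholm_estimates} had to work harder is near infinity, where Lemma \ref{lemma_scattering_elliptic} used the resolvent of the Laplacian $(\Delta - (e^{i\beta}\sigma)^2)^{-1}$ to upgrade the error term from $\|\tilde\chi u\|_{\Bar{H}^{-N}}$ to the weighted $\|\tilde\chi u\|_{\Bar{H}^{-N,-1}}$. This is precisely where uniformity down to $\sigma = 0$ would fail, since the operator norm of that resolvent blows up as $\sigma \to 0$. Fortunately the lemma asks only for the unweighted error term $\|u\|_{\Bar{H}^{-N}(X_\beta)}$, so I would simply drop this step and use instead the bare microlocal/uniform elliptic estimate for $P_\beta(\sigma)$ in $\{|x| > R_0\}$: the principal symbol there is bounded below by $c|\xi|^2$ uniformly in $\sigma$ (the estimate \eqref{elliptic_estimate_scaling} in the proof of Lemma \ref{lemma_scattering_elliptic} is independent of $\sigma$), and this produces a local error term $\|\tilde\chi u\|_{\Bar{H}^{-N}(X_\beta)}$ with a $\sigma$-uniform constant.

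Assembling these pieces with the fixed microlocal partition of unity yields \eqref{uniform_interior_estimate} with a $\sigma$-uniform constant. The main obstacle, such as it is, is purely bookkeeping: verifying that the continuous dependence of all intervening seminorms, cutoffs, and quantizations on $\sigma$ transfers to a uniform constant over $K$. No new analytic ingredient beyond Section \ref{section_fredholm_estimates} is required; the genuine analytic difficulty at $\sigma = 0$ (the breakdown of scattering ellipticity at spatial infinity) is deferred to the scattering-b-transition analysis of Section \ref{section_low_energy}, where the weighted estimates are recovered in a different functional-analytic setup.
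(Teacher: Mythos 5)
Your argument is correct, but the paper reaches uniformity by a shorter route. You prove uniformity ``from the inside'': fix the microlocal partition of unity once and for all, use compactness of $K$ to get a uniform gap above the threshold regularity $\frac{1}{2}-\alpha\Im(\sigma)$, and appeal to the principle that the constants in the elliptic, propagation, radial and hyperbolic estimates depend continuously on finitely many symbol seminorms, which themselves depend polynomially on $\sigma$. This is a valid line of reasoning, but it asks the reader to accept (or re-verify) a continuity claim about the constants produced by several positive-commutator arguments, most delicately the radial point estimate, whose constants depend on the subprincipal symbol and hence on $\sigma$. The paper instead observes that the pointwise-in-$\sigma$ estimate with error $\|u\|_{\Bar{H}^{-N}(X_\beta)}$ is exactly what the microlocal estimates of Section \ref{section_fredholm_estimates} give \emph{before} the scattering-ellipticity upgrade, for every $\sigma\in\C$ (not just $\sigma\in\Lambda_\beta$), and then promotes this to local uniformity by an operator-norm perturbation: since $P_\beta(\sigma)-P_\beta(\sigma_0)=(\sigma-\sigma_0)Q_1+(\sigma^2-\sigma_0^2)Q_0$ with $Q_1\in\mathrm{Diff}^1(X_\beta)$, $Q_0\in\mathrm{Diff}^0(X_\beta)$ having bounded coefficients, the difference is small from $\Bar{H}^s(X_\beta)$ to $\Bar{H}^{s-1}(X_\beta)$ for $|\sigma-\sigma_0|$ small, so $C_{\sigma_0}$ works near $\sigma_0$ up to a factor of $2$; compactness of $K$ finishes. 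The perturbation argument is both shorter and more robust, exploiting that $P_\beta(\sigma)$ is polynomial in $\sigma$ with bounded-operator coefficients, and requires no re-examination of the microlocal machinery. Your diagnosis of where scattering ellipticity fails and why dropping the weighted error term resolves it matches the paper's.
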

\begin{proof}
    The principal symbol of $P_\beta(\sigma)$ is independent of $\sigma$. Thus, the microlocal estimates in the proof of Proposition \ref{prop_fredholm_estimates} and the uniform elliptic estimates in $r>R_0$ from the proof of Lemma \ref{lemma_scattering_elliptic} hold for all $\sigma \in \C$, not just $\sigma \in \Lambda_\beta$. It is only the analysis of $P_\beta(\sigma)$ as $r\to\infty$ in Lemma \ref{lemma_scattering_elliptic}, in other words scattering ellipticity, that breaks down for $\sigma \notin \Lambda_\beta$. Thus, we certainly have
    $$\|u\|_{\Bar{H}^s(X_\beta)} \leq C_\sigma\bigl(\|P_\beta(\sigma)u\|_{\Bar{H}^{s-1}(X_\beta)} + \|u\|_{\Bar{H}^{-N}(X_\beta)}\bigr)$$
    pointwise for each $\sigma\in\C$. 

    In order to obtain uniformity in $\sigma$, consider $\sigma_0 \in K$. We have
    $$P_\beta(\sigma) - P_\beta(\sigma_0) = (\sigma-\sigma_0)Q_1 + (\sigma^2-\sigma_0^2)Q_0,$$
    where $Q_1 \in \mathrm{Diff}^1(X_\beta)$, $Q_0 \in \mathrm{Diff}^0(X_\beta)$ do not depend on $\sigma$ and have coefficients extending beyond $r=r_0$ and bounded as $r\to\infty$. Thus, the operator norm satisfies
    $$\|P_\beta(\sigma) - P_\beta(\sigma_0)\|_{\Bar{H}^{s}(X_\beta) \to \Bar{H}^{s-1}(X_\beta)} < \frac{1}{2C}$$
    for all $\sigma$ with $|\sigma - \sigma_0|$ small enough. Applying this to the pointwise estimate, we find
    $$\|u\|_{\Bar{H}^s(X_\beta)} \leq 2C_{\sigma_0}\bigl(\|P_\beta(\sigma)u\|_{\Bar{H}^{s-1}(X_\beta)} + \|u\|_{\Bar{H}^{-N}(X_\beta)}\bigr)$$
    for $\sigma$ near $\sigma_0$. This shows that \eqref{uniform_interior_estimate} holds locally uniformly in $\sigma$, and hence also uniformly on the compact set $K$.
\end{proof}

Note that the error term in \eqref{uniform_interior_estimate}, unlike in the Fredholm estimates of Proposition \ref{prop_fredholm_estimates}, is not compact. The $r\to\infty$ asymptotic behavior of $P_\beta(\sigma)$ in some sense degenerates as $\sigma\to 0$ within $\Lambda_\beta$. In fact, the zero energy operator $P_\beta(0)$ is not well-behaved on the weighted Sobolev spaces $\Bar{H}^{s,l}(X_\beta)$ of Section \ref{section_fredholm_property}, i.e. scattering Sobolev spaces. Rather, $P_\beta(0)$ should be viewed as a b-differential operator and studied on weighted b-Sobolev spaces $\Bar{H}_\mathrm{b}^{s,l}(X_\beta)$. These spaces agree with $\Bar{H}^{s,l}(X_\beta)$ away from $r=\infty$, but regularity is measured with respect to $r\partial_r$ and $\partial_\omega$, where $\omega$ denotes coordinates on $\Sph^2$. In Lemma \ref{lemma_zero_energy_estimate} below, we show that $P_\beta(0)$ is well-behaved on such weighted b-Sobolev spaces. However, at non-zero energies we still wish to work on the scattering Sobolev spaces $\Bar{H}^{s,l}(X_\beta)$. Thus, one should in some sense patch together scattering and b-Sobolev spaces in a continuous manner as $\sigma\to 0$. This is achieved by the scattering-b-transition Sobolev spaces introduced in Section \ref{section_scattering-b-transition}.

Elaborating on this idea, the $r\to\infty$ behavior of $P_\beta(\sigma)$ is quite sensitive to the value of $|\sigma|$ at low energies. This suggests that we consider $P_\beta(\sigma)$ in terms of the rescaled coordinate $\tau = |\sigma|r$. Using coordinates $\tau$ and $\sigma$ on $(X_\beta \times \Lambda_\beta) \cap \{\tau > c\}$, the operator $P_\beta(\sigma)$ is given to leading order both at $\tau = \infty$ and $\sigma = 0$ by
\begin{equation*}
    P_\beta(\sigma) \sim e^{-2i\beta}|\sigma|^2\bigl(\tau^{-2}D_\tau\tau^2D_\tau + \tau^{-2}\Delta_{\Sph^2} -e^{2i(\arg(\sigma)+\beta)}\bigr).
\end{equation*}
Modulo the overall $|\sigma|^2$ decay, this is invertible on scattering Sobolev spaces uniformly in $\sigma$ for $\arg(\sigma)$ bounded away from $-\beta, \pi-\beta$. On the other hand, using coordinates $r$ and $\tau$ in $(X_\beta \times \Lambda_\beta) \cap \{\tau < C\}$, the leading order behavior of $P_\beta(\sigma)$ at both $\tau=0$ and $r=\infty$ becomes
\begin{equation*}
    P_\beta(\sigma) \sim r^{-2}\bigl(r^2P_\beta(0) - \tau^2e^{2i\arg(\sigma)}\bigl).
\end{equation*}
This is a well-behaved weighted b-differential operator, e.g. its b-normal operator is invertible, all the way down to $\tau = 0$. The idea of working with the rescaled coordinate $\tau=|\sigma|r$ down to $\sigma = 0$ and $r=\infty$ is made precise by resolving the point $|\sigma| = \frac{1}{r} = 0$ through a blow-up.

\subsection{Scattering-b-transition calculus}
\label{section_scattering-b-transition}

Here, we introduce the algebra of scattering-b-transition pseudodifferential operators and the corresponding scattering-b-transition Sobolev spaces that will be used in the rest of this section. This pseudodifferential calculus was introduced in \cite{guillarmou_hassell}. We follow \cite[Appendix A.3]{hintz_mode_stability} here, see also \cite{vasy_lagrangian_approach} for a more sophisticated version involving second microlocalization. The constructions take place on a $n$-dimensional manifold with boundary $X$. We denote by $\bdf$ a boundary defining function, that is, a non-negative smooth function on $X$ satisfying $\bdf^{-1}(0) = \partial X$ and $d\bdf \neq 0$ on $\partial X$. We will apply these constructions to subsets of $\R^n$ by adding the sphere at infinity to obtain the manifold with boundary. More precisely, we define the radial compactification of $\R^n$ by
$$\overline{\R^n} = \R^n \sqcup \bigl([0,\infty)_\bdf\times\Sph^{n-1}\bigr) / \sim,$$
where we identify $(r,\omega) \in \R^n$, in polar coordinates, with $(\bdf,\omega) = \bigl(\frac{1}{r},\omega\bigr)$. In this way, $\bdf$ is a boundary defining function on $\overline{\R^n}$ and the image of the embedding $\R^n \hookrightarrow \overline{\R^n}$ is given by $\{\bdf > 0\}$.

The scattering-b-transition calculus is based on the b- and scattering calculi, see \cite{melrose_atiyah-patodi-singer,grieser_b-calculus} for the b-calculus and \cite{melrose_scattering_calculus,vasy_minicourse} for the scattering calculus. We note that these spaces of pseudodifferential operators $\Psi_{\text{b}}^m(X)$ and $\Psi_{\text{sc}}^m(X)$ are microlocalizations of the spaces $\mathrm{Diff}_{\text{b}}^k(X)$ and $\mathrm{Diff}_{\text{sc}}^k(X)$ of b- and scattering differential operators respectively. These, in turn, are generated by the spaces of b-vector fields $\mathcal{V}_{\text{b}}(X)$ respectively scattering vector fields $\mathcal{V}_{\text{sc}}(X)$, where $\mathcal{V}_{\text{b}}(X)$ is the Lie algebra of vector fields on $X$ tangent to the boundary $\partial X$ and $\mathcal{V}_{\text{sc}}(X) = \bdf\mathcal{V}_{\text{b}}(X)$. In local coordinates $(\bdf,\omega)$ near a point in $\partial X$, where $\bdf$ is a boundary defining function and $\omega$ are coordinates on $\partial X$, $\mathcal{V}_{\text{b}}(X)$ is spanned by 
$$\bdf\partial_\bdf \,\,\text{and}\,\, \partial_{\omega^j} \,\,(j = 1,\dots,n-1),$$
and $\mathcal{V}_{\text{sc}}(X)$ is spanned by 
$$\bdf^2\partial_\bdf \,\,\text{and}\,\, \bdf\partial_{\omega^j} \,\,(j = 1,\dots,n-1).$$
Note that on $\overline{X_\beta} \subset \overline{\R^3}$ as above operators in $\mathrm{Diff}_{\text{b}}^k(X_\beta)$, respectively $\mathrm{Diff}_{\text{sc}}^k(X_\beta)$, take the form
$$\sum_{j+|\alpha|\leq k} a_{j,\alpha}(r,\omega)(r\partial_r)^j\partial_\omega^\alpha, \quad\text{respectively}\quad \sum_{j+|\alpha|\leq k} a_{j,\alpha}(r,\omega)\partial_r^j(r^{-1}\partial_\omega)^\alpha,$$
where the coefficients $a_{j,\alpha}$ depend smoothly on $\frac{1}{r}$. We denote by 
$$\mathrm{Diff}_{\text{b}}^{k,l}(M) = \bdf^{-l}\mathrm{Diff}_{\text{b}}^k(M), \quad \mathrm{Diff}_{\text{sc}}^{k,l}(M) = \bdf^{-l}\mathrm{Diff}_{\text{sc}}^k(M)$$ spaces of weighted b-/scattering differential operators. 

Operators in the b-/scattering calculi naturally act on corresponding b-/scattering Sobolev spaces. Let $\mu = \bdf^{-n}\mu_0$ where $\mu_0$ is a smooth b-density on $X$, i.e. locally of the form $\mu_0 = f(\bdf,\omega)\frac{d\bdf}{\bdf}d\omega$ near the boundary with $f\in C^\infty(X)$ bounded away from zero. Then we set $H^0_{\text{b}}(X) = H^0_{\text{sc}}(X) = L^2(X,\mu)$. For $s>0$, we define 
$$H^s_{\text{b}}(X) = \{u\in H^0_{\text{b}}(X),\, Au \in H^0_{\text{b}}(X)\}, \quad \|u\|^2_{H^s_{\text{b}}(X)} = \|u\|^2_{H^0_{\text{b}}(X)} + \|Au\|^2_{H^0_{\text{b}}(X)},$$
where $A$ is a fixed elliptic element of $\Psi^s_{\text{b}}(X)$. The space $H^s_{\text{sc}}(X)$ is defined similarly with $A$ an elliptic element of $\Psi^s_{\text{sc}}(X)$. Note that for $X\subset \overline{\R^n}$ and $k\in\N$ these norms are equivalent to
$$\|u\|^2_{H^k_{\text{b}}(X)} = \sum_{j+|\alpha|\leq k} \|(r\partial_r)^j\partial_\omega^\alpha u\|^2_{L^2(X)}, \quad \|u\|^2_{H^k_{\text{sc}}(X)} = \sum_{j+|\alpha|\leq k} \|\partial_r^j(r^{-1}\partial_\omega)^\alpha u\|^2_{L^2(X)}.$$
In particular, on subsets of $\overline{\R^n}$ scattering Sobolev spaces are just the standard Sobolev spaces on $\R^n$.
For $s<0$ we define $H^s_{\text{b}}(X) = \bigl(H^{-s}_{\text{b}}(X)\bigr)^*$ by duality (with respect to the $L^2(X,\mu)$ inner product), and similarly for $H^s_{\text{b}}(X)$. Finally, for $l\in\R$ we denote weighted b-/scattering Sobolev spaces by
$$H^{s,l}_{\text{b}}(X) = \bdf^l H^s_{\text{b}}(X), \quad H^{s,l}_{\text{sc}}(X) = \bdf^l H^s_{\text{sc}}(X).$$

If the manifold $X$ has two boundary components, i.e. $\partial X = B_\text{sc} \sqcup B_\mathrm{b}$, we can define scattering,b objects with scattering behavior near $B_\text{sc}$ and b-behavior near $B_\mathrm{b}$, see \cite[Section 2.4]{hintz_cone_points}. With $\bdf_\text{sc}$, $\bdf_\mathrm{b}$ denoting defining functions for $B_\text{sc}$ respectively $B_\mathrm{b}$, the corresponding differential operators $\mathrm{Diff}^k_{\text{sc,b}}(X)$ are generated by the space of vector fields $\mathcal{V}_\text{sc,b}(X) = \bdf_\text{sc}\mathcal{V}_\mathrm{b}(X)$. We define weighted operators and Sobolev spaces with a weight at each boundary component:
$$\mathrm{Diff}^{k,q,l}_{\text{sc,b}}(X) = \bdf_\text{sc}^{-q}\bdf_\mathrm{b}^{-l}\mathrm{Diff}^k_{\text{sc,b}}(X), \quad H^{s,q,l}_{\text{sc,b}}(X) = \bdf_\text{sc}^{q}\bdf_\mathrm{b}^{l}H^s_{\text{sc,b}}(X).$$
The particular case we will be interested in is
$X = [0,\infty]_\tau \times \Sph^{n-1},$
where $[0,\infty] \subset \overline{\R}$ is viewed as a subset of the radial compactification of $\R$. A defining function for the b-end at $\tau=0$ is $\bdf_\mathrm{b} = \frac{\tau}{1+\tau}$ and a defining function for the scattering end at $\tau=\infty$ is $\bdf_\text{sc} = \frac{1}{1+\tau}$. Elements of $\mathrm{Diff}^k_{\text{sc,b}}(X)$ then take the form
$$\sum_{j+|\alpha|\leq k} a_{j,\alpha}(\tau,\omega)\Bigl(\frac{\tau}{1+\tau}\partial_\tau\Bigr)^j\Bigl(\frac{1}{1+\tau}\partial_\omega\Bigr)^\alpha,$$
where the coefficients $a_{j,\alpha}$ are smooth down to $\tau=0$ and additionally smooth functions of $\frac{1}{\tau}$ near $\tau=\infty$.
The Sobolev norms for positive integer regularity can be written explicitly as
\begin{equation}
\label{sc,b_integer_norm}
    \|u\|^2_{H^{k,q,l}_{\text{sc,b}}(X)} = \sum_{j+|\alpha|\leq k}\Bigl\|\Bigl(\frac{1}{1+\tau}\Bigr)^{-q}\Bigl(\frac{\tau}{1+\tau}\Bigr)^{-l} \Bigl(\frac{\tau}{1+\tau}\partial_\tau\Bigr)^j\Bigl(\frac{1}{1+\tau}\partial_\omega\Bigr)^\alpha u\Bigr\|^2_{L^2(X,\tau^{n-1}d\tau d\omega)}
\end{equation}

We now turn to the scattering-b-transition calculus. Let $X$ be a manifold with boundary $\partial X$ and interior $X^\circ$. Denote by $\bdf$ a boundary defining function on $X$. Let
$$\Sigma = \{\sigma \in \C,\, |\sigma| \leq c,\, \phi_- \leq \arg(\sigma) \leq \phi_+\}$$
for some $c>0$ and $0 < \phi_+ - \phi_- < 2\pi$.
The space $\Psi_\scb^m(X)$ of $m$-th order scattering-b-transition pseudodifferential operators can be viewed as consisting of parameter-dependent $m$-th order pseudodifferential operators on $X^\circ$ with certain prescribed behavior near the boundary of $X$ and as $\sigma \to 0$ within $\Sigma$. Our exposition differs slightly from \cite[Appendix A.3]{hintz_mode_stability} in that we allow our parameter to take values in the compact sector $\Sigma$ as opposed to just an interval, i.e. $\sigma \in [0,1]$.

Let
$$\Sigma^{\text{res}} = [0,c]_{|\sigma|}\times[\phi_-,\phi_+]_{\arg(\sigma)},$$
which we think of as $[\Sigma;\{0\}]$, the blow-up of $\Sigma$ at the point $\sigma=0$. We continue to denote by $|\sigma|$ and $\arg(\sigma)$ the respective extensions to the blow-up $\Sigma^{\text{res}}$. Note that the projection
$$(|\sigma|,\arg(\sigma)) \in [0,c]\times[\phi_-,\phi_+] \to |\sigma|e^{i\arg(\sigma)} \in \Sigma$$
maps $(0,c]\times[\phi_-,\phi_+]$ diffeomorphically onto $\Sigma\setminus\{0\}$. Smooth functions on $\Sigma$ can be pulled-back along this projection to smooth functions on $\Sigma^{\text{res}}$, which are constant on $\{0\}\times[\phi_-,\phi_+]$.

Denote by
$$X_{\text{sc-b}} = [X\times\Sigma^{\text{res}};\partial X\times(\{0\}\times[\phi_-,\phi_+])]$$
the blow-up of $X\times\Sigma^{\text{res}}$ along the submanifold $\partial X\times(\{0\}\times[\phi_-,\phi_+])$. Then $X_{\text{sc-b}}$ is a manifold with corners, see \cite[Chapter 1]{melrose_mwc} for this notion and \cite[Chapter 5]{melrose_mwc} for the blow-up construction. There is a projection $\pi_\text{sc-b}:X_{\text{sc-b}} \to X\times\Sigma^{\text{res}}$, the blow-down map, which is a diffeomorphism away from the blown-up submanifold. $X_\scb$ has three boundary hypersurfaces that will be relevant to our analysis:
\begin{itemize}
    \item The transition face: $\tf = \pi_\text{sc-b}^{-1}(\{\partial X\times\{0\}\times[\phi_-,\phi_+]\})$,
    \item The scattering face ($\scf$): the closure of $\pi_\scb^{-1}\bigl(\partial X \times (0,c]\times[\phi_-,\phi_+] \bigr)$ in $X_\scb$,
    \item The zero face ($\zf$): the closure of $\pi_\scb^{-1}\bigl((X\setminus \partial X) \times \{0\}\times[\phi_-,\phi_+]\bigr)$ in $X_\scb$.
\end{itemize}
Near the interior of tf, $\tau = \frac{|\sigma|}{\bdf}$ defines a smooth coordinate. Note that $\tau \to 0$ at zf and $\tau \to \infty$ at scf. In fact, 
$$\tf \cong [0,\infty]_\tau\times\partial X \times [\phi_-,\phi_+]_{\arg(\sigma)},$$
where $[0,\infty]\subset\overline{\R}$ is understood as a subset of the radial compactification of $\R$. Furthermore,
$$\scf \cong \partial X \times [0,c]_{|\sigma|}\times[\phi_-,\phi_+]_{\arg(\sigma)}, \quad \zf \cong X\times[\phi_-,\phi_+]_{\arg(\sigma)}.$$
Note that $\tf$ can be invariantly defined as the inward pointing spherical normal bundle to $\partial X\times\{0\}\times[\phi_-,\phi_+]$ within $X\times\Sigma^{\text{res}}$. 
Away from $\scf$, $(\bdf,\frac{|\sigma|}{\bdf},\arg(\sigma))$ give smooth coordinates on $X_\scb$, whereas, away from $\zf$, $(\frac{\bdf}{|\sigma|},|\sigma|,\arg(\sigma))$ are smooth coordinates.
Smooth defining functions on $X_\scb$ for the three boundary faces above are given by (the extension of)
$$\bdf_\scf = \frac{\bdf}{\bdf + |\sigma|}, \quad \bdf_\tf = \bdf + |\sigma|, \quad \bdf_\zf = \frac{|\sigma|}{\bdf + |\sigma|}.$$

With $\mathcal{V}_\mathrm{b}(X_\scb)$ denoting the space of smooth vector fields on $X_\scb$ that are tangent to $\scf$, $\tf$ and $\zf$ we define the Lie algebra of scattering-b-transition vector fields by
$$\mathcal{V}_\scb(X) = \{V \in \bdf_\scf\mathcal{V}_\mathrm{b}(X_\scb),\, V|\sigma| = V\arg(\sigma) = 0\}.$$
In terms of local coordinates, $\mathcal{V}_\scb(X)$ is spanned over $C^\infty(X_\scb)$ by
\begin{equation}
\label{sc-b_vector_fields}
    \frac{\bdf}{\bdf+|\sigma|}\bdf\partial_\bdf \,\,\,\text{and}\,\,\, \frac{\bdf}{\bdf+|\sigma|}\partial_{\omega^j} \,\,(j = 1,\dots,n-1),
\end{equation}
where $\omega$ denotes coordinates on $\partial X$. Note that $[\mathcal{V}_\scb(X),\mathcal{V}_\scb(X)] \subset \bdf_\scf\mathcal{V}_\scb(X)$. $\mathcal{V}_\scb(X)$ can be viewed as the space of smooth sections of a rank $n$ vector bundle $T_\scb X \to X_\scb$, the scattering-b-transition tangent bundle, with \eqref{sc-b_vector_fields} as a local frame. The corresponding dual bundle is denoted $T_\scb^*X$. Note that for $c_0>0$ the restriction of $V \in \mathcal{V}_\scb(X)$ to $\{|\sigma|=c_0\} \cong X\times [\phi_-,\phi_+]_{\arg(\sigma)}$ defines a smooth family (in $\arg(\sigma)$) of scattering vector fields, whereas the restriction of $V$ to $\zf$ defines a smooth family of b-vector fields (hence the name scattering-b-transition algebra). Restricting $V$ to $\tf$ gives an element of $\mathcal{V}_\text{sc,b}(\tf)$ with scattering behavior near $\tf\cap\scf$ and b-behavior near $\tf\cap\zf$.

$\mathcal{V}_\scb(X)$ generates the graded algebra of scattering-b-transition differential operators $\mathrm{Diff}_\scb(X)$, that is, $\mathrm{Diff}_\scb^k(X)$ consists of finite sums of up to $k$-fold products of scattering-b-transition vector fields. There is a multiplicative principal symbol map $\sigma^\scb_k$ taking values in degree $k$ homogeneous polynomials on the fibers of $T^*_\scb X$ and giving a short exact sequence
$$0 \to \mathrm{Diff}_\scb^{k-1}(X) \hookrightarrow \mathrm{Diff}_\scb^k(X) \xrightarrow{\sigma^\scb_k} P^k(T_\scb^*X) / P^{k-1}(T_\scb^*X) \to 0.$$
In local coordinates, $\sigma^\scb_k(P)$ is given by mapping
\begin{equation}
\label{sc-b_symbol_map}
    \frac{\bdf}{\bdf+|\sigma|}\bdf\partial_\bdf \to \xi_\scb, \quad\text{and}\quad \frac{\bdf}{\bdf+|\sigma|}\partial_{\omega^j} \to \eta_\scb^j
\end{equation}
on the degree $k$ part of $P$, where $\xi_\scb$ and $\eta_\scb^j$ denote the corresponding fiber coordinates on $T^*_\scb X$. In addition, there is a principal symbol $\sigma^\scb_\scf$ at scf, which describes the asymptotics of $P$ at the scattering face. It fits into a short exact sequence:
$$0 \to \bdf_\scf\mathrm{Diff}_\scb^k(X) \hookrightarrow \mathrm{Diff}_\scb^k(X) \xrightarrow{\sigma^\scb_\scf} P^k(T_\scb^*X) / \bdf_\scf P^k(T_\scb^*X) \to 0.$$
$\sigma^\scb_\scf(P)$ is given in local coordinates by applying \eqref{sc-b_symbol_map} to the full operator $P$ and then restricting to scf. This produces a polynomial (not necessarily homogeneous) on the fibers of $T_\scb^*X$ over scf. The asymptotic behavior at zf and tf is described by the normal operators $N_\zf(P)$ and $N_\tf(P)$. These are obtained by restricting $P \in \mathrm{Diff}_\scb^k(X)$ to the respective boundary face and fit into short exact sequences
\begin{align}
    0 \to \bdf_\zf\mathrm{Diff}_\scb^k(X) \hookrightarrow \mathrm{Diff}_\scb^k(X) &\xrightarrow{N_\zf} \mathrm{Diff}_\mathrm{b}^k(X) \to 0 \label{zf_exact_seq}\\
    0 \to \bdf_\tf\mathrm{Diff}_\scb^k(X) \hookrightarrow \mathrm{Diff}_\scb^k(X) &\xrightarrow{N_\tf} \mathrm{Diff}_\mathrm{sc,b}^k(\tf) \to 0. \label{tf_exact_seq}
\end{align}
Note that $N_\zf(P)$ is a b-differential operator on $X$ and $N_\tf(P)$ is a scattering,b-differential operator on tf.
We will denote by 
$$\mathrm{Diff}_\scb^{k,q,l,w}(X) = \bdf_\scf^{-q}\bdf_\tf^{-l}\bdf_\zf^{-w}\mathrm{Diff}_\scb^k(X)$$
spaces of weighted scattering-b-transition differential operators. 

The algebra of scattering-b-transition pseudodifferential operators $\Psi_\scb(X)$ is the microlocalization of $\mathrm{Diff}_\scb(M)$. It consists of smooth families (in $\Sigma^{\text{res}}$) of linear operators on $\Dot{C}^\infty(X)$ whose Schwartz kernels are conormal distributions on the scattering-b-transition double space, see \cite[Section A.3]{hintz_mode_stability}. A typical element $A \in \Psi_\scb^m(M)$ can be obtained in local coordinates by the quantization procedure
\begin{equation*}
\begin{split}
    Au(\bdf,\omega) = \frac{1}{(2\pi)^n}\int &\exp\Bigl(i\Bigl(\frac{\bdf-\bdf'}{\bdf\frac{\bdf}{\bdf+|\sigma|}}\xi_\scb + \frac{\omega-\omega'}{\frac{\bdf}{\bdf+|\sigma|}}\cdot\eta_\scb\Bigr)\Bigr) \chi\Bigl(\Bigl|\log\Bigl(\frac{\bdf}{\bdf'}\Bigr)\Bigr|\Bigr)\chi(|\omega-\omega'|) \\
    &\cdot a(\bdf,\omega,|\sigma|,\arg(\sigma),\xi_\scb,\eta_\scb)u(\bdf',\omega')\,d\xi_\scb d\eta_\scb\,\frac{d\bdf'}{\bdf'\frac{\bdf'}{\bdf'+|\sigma|}}\frac{d\omega'}{(\frac{\bdf'}{\bdf'+|\sigma|})^{n-1}},
\end{split}
\end{equation*}
where $a \in C^\infty(T^*_\scb X)$ is a symbol of order $m$ in $(\xi_\scb,\eta_\scb)$. The cutoff $\chi \in C^\infty_c(\R)$ with $\chi=1$ near $0$ localizes to a neighborhood of the diagonal. Note that when composing pseudodifferential operators one may need to enlarge the support of the cutoff. In this way, one obtains a well-defined operator algebra, i.e.
$$\Psi_\scb^m(X) \circ \Psi_\scb^{m'}(X) \subset \Psi_\scb^{m+m'}(X),$$
and a multiplicative principal symbol map $\sigma^\scb$ fitting into the short exact sequence
$$0 \to \bdf_\scf\Psi_\scb^{m-1}(X) \hookrightarrow \Psi_\scb^m(X) \xrightarrow{\sigma^\scb} S^m(T_\scb^*X) / \bdf_\scf S^{m-1}(T_\scb^*X) \to 0.$$
On the subspace of classical pseudodifferential operators, which in particular includes the differential operators, the principal symbol map splits into two parts as above: the principal symbol at fiber infinity and the principal symbol at scf. The existence of a well-defined principal symbol enables the use of symbolic estimates. In particular, elliptic estimates hold in the scattering-b-transition calculus.
We can define spaces of weighted scattering-b-transition operators by
$$\Psi_\scb^{m,q,l,w}(X) = \bdf_\scf^{-q}\bdf_\tf^{-l}\bdf_\zf^{-w}\Psi_\scb^m(X).$$

Operators in $\Psi_\scb(M)$ naturally act on corresponding weighted scattering-b-transition Sobolev spaces $H_{\scb,\sigma}^{s,q,l,w}(X)$. These are parameter-dependent Hilbert spaces on $X$. For each fixed $|\sigma|>0$, an element in $H_{\scb,\sigma}^{s,q,l,w}(X)$ is a smooth family (in $\arg(\sigma)$) of elements in the scattering Sobolev space $H_\mathrm{sc}^{s,q}(X)$. However, we equip $H_{\scb,\sigma}^{s,q,l,w}(X)$ with the $|\sigma|$-dependent norm (for $s>0$):
$$\|u\|^2_{H_{\scb,\sigma}^{s,q,l,w}(X)} = \|\bdf_\scf^{-q}\bdf_\tf^{-l}\bdf_\zf^{-w}u\|^2_{L^2(X,\mu)} + \|Au\|^2_{L^2(X,\mu)},$$
where $A$ is a fixed elliptic element of $\Psi_\scb^{s,q,l,w}(X)$ and $\mu = \bdf^{-n}\mu_0$ with $\mu_0$ a smooth b-density on $X$. Note that for $k\in\N$ this is equivalent to
\begin{equation}
\label{integer_norms}
    \|u\|^2_{H_{\scb,\sigma}^{k,q,l,w}(X)} = \sum_{j+|\alpha|\leq k} \Bigl\|\bdf_\scf^{-q}\bdf_\tf^{-l}\bdf_\zf^{-w}\Bigl(\frac{\bdf}{\bdf+|\sigma|}\bdf\partial_\bdf\Bigr)^j\Bigl(\frac{\bdf}{\bdf+|\sigma|}\partial_{\omega}\Bigr)^\alpha u\Bigr\|^2_{L^2(X)}.
\end{equation}
For $s<0$ we define the scattering-b-transition Sobolev space by duality with respect to the $L^2(X,\mu)$ inner product, that is, $H_{\scb,\sigma}^{s,q,l,w}(X) = \bigl(H_{\scb,\sigma}^{-s,q,l,w}(X)\bigr)^*$.

In a neighborhood of the respective boundary faces, these norms can be related to b-norms on zf and scattering,b-norms on tf. In fact, for $\chi \in C^\infty(X_\scb)$ identically $1$ in a neighborhood of zf and with support away from scf, there is a uniform equivalence of norms:
\begin{equation}
\label{zf_norm_equivalence}
    \|\chi u\|_{H_{\scb,\sigma}^{s,q,l,w}(X)} \sim |\sigma|^{-w}\|\chi u\|_{H_{b}^{s,l-w}(X)},
\end{equation}
i.e. there exists a constant $C>0$ such that
$$\frac{1}{C}\|\chi u\|_{H_{\scb,\sigma}^{s,q,l,w}(X)} \leq |\sigma|^{-w}\|\chi u\|_{H_{b}^{s,l-w}(X)} \leq C\|\chi u\|_{H_{\scb,\sigma}^{s,q,l,w}(X)}$$
uniformly in $\sigma$. For positive integer regularity this can be seen from \eqref{integer_norms}. Indeed, $\frac{\bdf}{|\sigma|}$ is bounded away from scf, so on $\supp(\chi)$ we have
$$\bdf_\scf \sim C, \quad \bdf_\tf \sim \bdf, \quad \bdf_\zf \sim |\sigma|\bdf^{-1} \quad\text{and}\quad \frac{\bdf}{\bdf+|\sigma|}\bdf\partial_\bdf \sim \bdf\partial_\bdf, \quad \frac{\bdf}{\bdf+|\sigma|}\partial_\omega \sim \partial_\omega.$$
For the general case see \cite[Appendix A.3]{hintz_mode_stability}. Let now $\psi \in C^\infty(X_\scb)$ have support in a small neighborhood of tf with $\psi=1$ near tf. Then we have the uniform equivalence of norms
\begin{equation}
\label{tf_norm_equivalence}
    \|\psi u\|_{H_{\scb,\sigma}^{s,q,l,w}(X)} \sim |\sigma|^{-l-\frac{n}{2}}\|\psi u\|_{H_{\mathrm{sc,b}}^{s,q,w-l}(\tf)},
\end{equation}
where we use the coordinate $\tau = \frac{|\sigma|}{\bdf}$ on tf and the density $\tau^{n-1}d\tau d\omega$. Indeed, near tf we have
$$\bdf_\scf \sim \frac{1}{1+\tau}, \quad \bdf_\tf \sim |\sigma|\frac{1+\tau}{\tau}, \quad \bdf_\zf \sim \frac{\tau}{1+\tau}$$
and
$$\frac{\bdf}{\bdf+|\sigma|}\bdf\partial_\bdf \sim \frac{\tau}{1+\tau}\partial_\tau, \quad \frac{\bdf}{\bdf+|\sigma|}\partial_\omega \sim \frac{1}{1+\tau}\partial_\omega.$$
Thus, for positive integer regularity \eqref{tf_norm_equivalence} follows by comparing \eqref{integer_norms} to \eqref{sc,b_integer_norm}. Note that the factor of $|\sigma|^{-\frac{n}{2}}$ comes from relating the densities.
See \cite[Appendix A.3]{hintz_mode_stability} for the general case.

\subsection{Kerr spectral family as a scattering-b transition operator}

In the remainder of this section we will view $P_\beta(\sigma)$ as an element of the scattering-b-transition algebra and study its properties within this calculus. We apply the constructions of Section \ref{section_scattering-b-transition} to our spaces $X_\beta$ by viewing $X_\beta$ as a subset of $\R^3$ using the coordinate map $F_\beta^{-1}$ and and adding the sphere at infinity. In this way, we obtain a manifold with boundary $\overline{X_\beta}$, which is an open subset of the radial compactification of $\overline{\R^3}$. Note that $\bdf=\frac{1}{r}$ is a boundary defining function and
$$\overline{X_\beta} = \bigl[0,\tfrac{1}{r_0}\bigr)_\bdf\times\Sph^2.$$
We will continue to use the density $\rkerr^2\sin(\theta)drd\theta d\varphi_*$ on $X_\beta$. In terms of the boundary defining function, this has the form $\bdf^{-3}$ times a smooth b-density and thus fits into the framework of Section \ref{section_scattering-b-transition}.
As our parameter space we take the compact sector
\begin{equation}
\label{parameter_space}
    \Sigma_{\beta}^{c,\delta} = \bigl\{\sigma \in \C,\, |\sigma| \leq c,\, -\beta + \delta \leq \arg(\sigma) \leq \pi - \beta - \delta\bigr\} \subset \Lambda_\beta.
\end{equation}
for some $c>0$ and $\delta>0$ small. Thus, $\arg(\sigma)$ will be bounded away from the values $-\beta$ and $\pi-\beta$. This will allow us to obtain uniform estimates for the normal operators of $P_\beta(\sigma)$.

Note that the topological boundary of $X_\beta$ at $r=r_0$ is not included in $\overline{X_\beta}$. We will work on scattering-b-transition Sobolev spaces that extend across $r=r_0$. Concretely, we set
$$\Bar{H}_{\scb,\sigma}^{s,q,l,w}(X_\beta) = \{u\in H_{\scb,\sigma}^{s,q,l,w}(X_\beta),\, u = v|_{X_\beta} \,\text{ for some }\, v\in H_{\scb,\sigma}^{s,q,l,w}(\overline{\R^3})\}$$
with norm
$$\|u\|_{\Bar{H}_{\scb,\sigma}^{s,q,l,w}(X_\beta)} = \inf\{\|v\|_{H_{\scb,\sigma}^{s,q,l,w}(\overline{\R^3})},\, v\in H_{\scb,\sigma}^{s,q,l,w}(\overline{\R^3}),\, u = v|_{X_\beta}\}.$$
In the following lemma, we show that $P_\beta(\sigma)$ indeed defines a scattering-b-transition operator, elliptic near $\bdf=0$, and compute its normal operators at zf and tf.

\begin{lemma}
\label{lemma_sc-b_interpretation}
    For every $\beta \in (-\pi,\pi)$, the complex scaled Kerr spectral family satisfies
    $$P_\beta(\sigma) \in \mathrm{Diff}_{\emph{\scb},\sigma}^{2,0,-2,0}(X_\beta).$$
    For $R$ large enough, $P_\beta(\sigma)$ is uniformly elliptic in $\{r>R\}$, i.e. in $\{\bdf_\scf\bdf_\tf < \frac{1}{R}\}$, as an element of the scattering-b-transition algebra. 
    Its principal symbol at \emph{scf} is
    \begin{equation}
    \label{scf_symbol}
        \sigma^\emph{\scb}_\scf\bigl(\bdf_\tf^{-2}P_\beta(\sigma)\bigr) = e^{-2i\beta}|\xi_\emph{\scb}|^2 - e^{2i\arg(\sigma)}
    \end{equation}
    Its normal operators at \emph{tf} and \emph{zf} are given by
    \begin{equation}
    \label{normal_operators}
    \begin{split}
        N_{\tf}\bigl(\bdf_\tf^{-2}P_\beta(\sigma)\bigr) &= e^{-2i\beta}\frac{1}{(1+\tau)^2}\Bigl((\tau D_\tau)^2 - i\tau D_\tau + \Delta_{\Sph^2}\Bigl) - \frac{\tau^2}{(1+\tau)^2}e^{2i\arg(\sigma)}, \\
        N_{\zf}\bigl(P_\beta(\sigma)\bigr) &= P_\beta(0),
    \end{split}
    \end{equation}
    where we use the coordinate $\tau = |\sigma|r$ on \emph{tf}.
\end{lemma}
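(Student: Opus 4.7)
All three assertions—membership, scf symbol, and normal operators—amount to explicit computation once we express $P_\beta(\sigma)$ in sc-b form at infinity. In the interior $\{\bdf \geq c > 0\}$ the sc-b vector fields $V_\bdf = \bdf_\scf\bdf\partial_\bdf$, $V_{\omega^j} = \bdf_\scf\partial_{\omega^j}$ differ from the ordinary coordinate derivatives only by the bounded factor $\bdf_\scf$, so any smooth polynomial in $\sigma$ there lies trivially in $\mathrm{Diff}_\scb^{2,0,0,0}$; the entire content of the lemma concerns $\bdf \to 0$.

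The first step is to handle the flat model. From Lemma \ref{lemma_scattering_elliptic} one has $P_\beta(\sigma) = e^{-2i\beta}\bigl(\Delta - (e^{i\beta}\sigma)^2\bigr) + r^{-1}Q$ with $Q \in \mathrm{Diff}^2(X_\beta)$ bounded; using $\partial_r = -\bdf^2\partial_\bdf$ one computes $\partial_r^2 + \tfrac{2}{r}\partial_r = \bdf^4\partial_\bdf^2$ and hence
\begin{equation*}
    \Delta = \bdf^2\bigl(-(\bdf\partial_\bdf)^2 + \bdf\partial_\bdf + \Delta_{\Sph^2}\bigr).
\end{equation*}
Multiplying by $\bdf_\tf^{-2} = \bdf_\scf^2\bdf^{-2}$ and applying the identity $\bdf_\scf^2(\bdf\partial_\bdf)^2 = V_\bdf^2 - \bdf_\scf\bdf_\zf V_\bdf$, which follows from the commutator $\bdf\partial_\bdf(\bdf_\scf) = \bdf_\scf\bdf_\zf$, expresses the flat piece as an element of $\mathrm{Diff}_\scb^2$. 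The zeroth order term gives $\bdf_\tf^{-2}\sigma^2 = \bdf_\zf^2 e^{2i\arg(\sigma)}$, a bounded smooth function on $X_\scb$ for $\sigma$ in the compact parameter sector. The correction $\bdf_\tf^{-2}r^{-1}Q = \bdf_\scf\bdf_\tf^{-1}Q$ is handled by inspecting \eqref{complex_scaled_operator}: each coefficient expands smoothly in $\bdf$ about its flat value, and every non-flat term carries at least one extra factor of $\bdf = \bdf_\scf\bdf_\tf$, which by direct accounting compensates the $\bdf_\scf^{-2}$ incurred when converting $\partial_r^2, \partial_r\partial_\omega, \partial_\omega^2$ into $V_\bdf^2, V_\bdf V_\omega, V_\omega^2$. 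This gives $P_\beta(\sigma) \in \mathrm{Diff}_\scb^{2,0,-2,0}$, and uniform ellipticity in $\{\bdf_\scf\bdf_\tf < R^{-1}\}$ then follows from \eqref{elliptic_estimate_scaling} at fiber infinity combined with the scf symbol computed below.

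The scf principal symbol is obtained by restricting the sc-b symbol of $\bdf_\tf^{-2}P_\beta(\sigma)$ to scf, where $\bdf_\scf = 0$ and $\bdf_\zf = 1$. The second-order part contributes $e^{-2i\beta}(V_\bdf^2 + V_\theta^2 + V_{\varphi_*}^2/\sin^2\theta)$, with symbol $e^{-2i\beta}|\xi_\scb|^2$ in the convention \eqref{sc-b_symbol_map}; the zeroth order contributes $-e^{2i\arg(\sigma)}$; all subleading terms carry an extra factor of $\bdf_\scf$ and vanish at scf. For the tf normal operator I would pass to the coordinate $\tau = |\sigma|/\bdf$, in which $\bdf_\scf|_{\tf} = (1+\tau)^{-1}$, $\bdf_\zf|_{\tf} = \tau(1+\tau)^{-1}$, and $\bdf\partial_\bdf = -\tau\partial_\tau$; the algebraic identity
\begin{equation*}
    -(\bdf\partial_\bdf)^2 + \bdf\partial_\bdf \;=\; -\tau^2\partial_\tau^2 - 2\tau\partial_\tau \;=\; (\tau D_\tau)^2 - i\tau D_\tau,
\end{equation*}
together with $\bdf_\tf^{-2}\sigma^2|_{\tf} = \tau^2(1+\tau)^{-2}e^{2i\arg(\sigma)}$, produces the claimed formula for $N_\tf$; the correction $r^{-1}Q$ vanishes at tf because its $\bdf_\tf$-order is strictly greater than $-2$. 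The zf normal operator is immediate: the weight at zf is $0$, so setting $|\sigma| = 0$ directly yields $N_\zf(P_\beta(\sigma)) = P_\beta(0) \in \mathrm{Diff}_\mathrm{b}^2(X_\beta)$.

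The principal difficulty is the structural bookkeeping in the first step: verifying that every coefficient in \eqref{complex_scaled_operator} has an expansion in $\bdf$ respecting the $\bdf_\tf^{-2}$ weight exactly, with no hidden losses at scf or tf. Since the $r$-dependence enters only through the analytic functions $f_\beta(r) \sim e^{i\beta}r$, $\mu_\beta \sim e^{2i\beta}r^2$, $\rkerr_\beta^2 \sim e^{2i\beta}r^2$, this reduces to comparing orders of vanishing in $\bdf$ on each monomial in the expansion of $P_\beta(\sigma) - (e^{-2i\beta}\Delta - \sigma^2)$, and the cancellations arrange themselves automatically from the form of the deformation $F_\beta$.
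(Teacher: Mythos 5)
Your proposal is essentially correct and arrives at the same formulas as the paper, but by a genuinely different computational route. The paper writes $P_\beta(\sigma)$ in full directly in terms of the sc-b vector fields $D^\scb_r = \frac{r}{1+|\sigma|r}D_r$, $D^\scb_\omega = \frac{1}{1+|\sigma|r}D_\omega$ and the three boundary defining functions, arriving at a single explicit expression from which the membership, the fiber-infinity ellipticity, the scf symbol, and both normal operators can simply be read off by restriction. You instead leverage the flat-model decomposition $P_\beta(\sigma) = e^{-2i\beta}(\Delta - (e^{i\beta}\sigma)^2) + r^{-1}Q$ from Lemma~\ref{lemma_scattering_elliptic}, express the flat piece via $\Delta = \bdf^2(-(\bdf\partial_\bdf)^2 + \bdf\partial_\bdf + \Delta_{\Sph^2})$ and the commutator identity $\bdf\partial_\bdf\bdf_\scf = \bdf_\scf\bdf_\zf$, and argue that the correction $\bdf_\tf^{-2}r^{-1}Q$ vanishes at scf and tf. Your flat-piece computations for the scf symbol, $N_\tf$, and $N_\zf$ are all correct, including the identity $-(\bdf\partial_\bdf)^2 + \bdf\partial_\bdf = (\tau D_\tau)^2 - i\tau D_\tau$ in the coordinate $\tau = |\sigma|/\bdf$.

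The place where your argument is under-specified is precisely the point you flag as ``the principal difficulty'': the assertion that $r^{-1}Q$ contributes only weights strictly above $-2$ at tf and strictly above $0$ at scf. The bound from Lemma~\ref{lemma_scattering_elliptic} that $Q\in\mathrm{Diff}^2(X_\beta)$ has bounded coefficients in $\partial_r,\partial_\omega$ is by itself insufficient: converting $\partial_\omega^2 = \bdf_\scf^{-2}V_\omega^2$ costs two orders of $\bdf_\scf$, so a generic second-order operator with coefficients merely $\mathcal{O}(r^{-1})$ would produce a term in $\mathrm{Diff}_\scb^{2,1,1,0}$, violating the claimed membership. What rescues the computation is the finer structure of \eqref{complex_scaled_operator}: the angular block has coefficient $\rkerr_\beta^{-2} = e^{-2i\beta}r^{-2}(1 + \mathcal{O}(r^{-2}))$, so the deviation from the flat $r^{-2}\Delta_{\Sph^2}$ carries two extra powers of $\bdf$, not one; the mixed $D_rD_{\varphi_*}$ term has coefficient $\mathcal{O}(r^{-2})$; and so on. You gesture at this (``each coefficient expands smoothly in $\bdf$ about its flat value'') but the phrase ``at least one extra factor of $\bdf$'' does not by itself cover the angular terms, where two are needed relative to the raw $\partial_\omega^2$. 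Similarly, your invocation of \eqref{elliptic_estimate_scaling} for the fiber-infinity ellipticity mixes the classical symbol with the sc-b symbol; the two agree up to the rescaling $\xi_\scb = -\bdf_\scf\, r\,\xi$, but this translation should be made explicit. The paper avoids all of this by simply displaying \eqref{sc-b_explicit}, which is the unglamorous but decisive step; your flat-model route is cleaner conceptually but shifts the real work into the error-term accounting, which here you would need to carry out term by term rather than by a uniform heuristic.
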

\begin{proof}
    Writing the operator $P_\beta(\sigma)$ in the complex scaling region in terms of the scattering-b-transition vector fields
    \begin{align*}
        D^\scb_r &= \frac{r}{1+|\sigma|r}D_r = -\frac{\bdf}{\bdf+|\sigma|}\bdf D_\bdf, \\
        D^\scb_\theta &= \frac{1}{1+|\sigma|r}D_\theta = \frac{\bdf}{\bdf+|\sigma|}D_\theta, \\
        D^\scb_{\varphi_*} &= \frac{1}{1+|\sigma|r}D_{\varphi_*} = \frac{\bdf}{\bdf+|\sigma|}D_{\varphi_*},
    \end{align*}
    and the boundary defining functions
    \begin{align*}
        \bdf_\scf = \frac{\bdf}{\bdf+|\sigma|} = \frac{1}{1+|\sigma|r}, \quad \bdf_\tf = \bdf + |\sigma| = \frac{1+|\sigma|r}{r} \quad \bdf_\zf = \frac{|\sigma|}{\bdf+|\sigma|} = \frac{|\sigma|r}{1+|\sigma|r},
    \end{align*}
    we find
    \begin{equation}
    \label{sc-b_explicit}
    \begin{split}
        P_\beta(\sigma) = \bdf_\tf^2 &\Biggl( \frac{r^2}{\rkerr_\beta^2}\Bigl(\frac{1}{f_\beta'}D^{\scb}_r\frac{\mu_\beta}{f_\beta'r^2}D^{\scb}_r + \Delta_{\Sph^2}^\scb + \bdf_\scf\bdf_\tf\frac{2a}{f_\beta'} D_{\varphi_*}^\scb D^\scb_r \\
        &-2i\bdf_\scf\frac{\mu_\beta}{(f_\beta')^2r^2}D^\scb_r + i\bdf_\scf^2\frac{\mu_\beta}{(f_\beta')^2r^2}D^\scb_r + \bdf_\scf^2\bdf_\tf^2\bdf_\zf e^{i\arg(\sigma)} \frac{4maf_\beta r}{\mu_\beta}D_{\varphi_*}^\scb\Bigl) \\
        &- \bdf_\zf^2e^{2i\arg(\sigma)} - \bdf_\scf\bdf_\tf\bdf_\zf^2e^{2i\arg(\sigma)}\frac{2mf_\beta(f_\beta^2+a^2)r}{\rkerr_\beta^2\mu_\beta} \Biggr),
    \end{split}
    \end{equation}
    where
    $$\Delta_{\Sph^2}^\scb = \frac{1}{\sin(\theta)}D^\scb_\theta\sin(\theta)D^\scb_\theta + \frac{1}{\sin^2(\theta)}(D_{\varphi_*}^\scb)^2 = \bdf_\scf^2\Delta_{\Sph^2}.$$
    In \eqref{sc-b_explicit}, we have factored out the order of vanishing at the various boundary faces. The remaining functions extend to elements of $C^\infty(X^\scb)$ bounded away from zero at the boundary faces. From \eqref{sc-b_explicit} it is evident that
    $$P_\beta(\sigma) \in \bdf_\tf^2\mathrm{Diff}_\scb^{2,0,0,0}(X_\beta) = \mathrm{Diff}_\scb^{2,0,-2,0}(X_\beta).$$

    We can read off the principal symbol at fiber infinity in the scattering-b-transition algebra from the first line of \eqref{sc-b_explicit}. It satisfies 
    $$\sigma^\scb_2\bigl(\bdf_\tf^{-2} P_\beta(\sigma)\bigr) = e^{-2i\beta}|\xi_\scb|^2 + \mathcal{O}(\bdf_\scf\bdf_\tf)|\xi_\scb|^2,$$
    where $\xi_\scb$ denotes fiber coordinates on $T_\scb^*X$. Note that $\frac{r^2}{\rkerr_\beta^2} = e^{-2i\beta} + \mathcal{O}(\bdf_\scf\bdf_\tf)$ and $\frac{\mu_\beta}{(f_\beta')^2r^2} = 1 + \mathcal{O}(\bdf_\scf\bdf_\tf)$.
    Thus, choosing $\bdf_\scf\bdf_\tf = r^{-1}$ small enough, we have
    $$\bigl|\sigma^\scb_2\bigl(\bdf_\tf^{-2} P_\beta(\sigma)\bigr)\bigr| \geq C|\xi_\scb|^2$$
    uniformly in $X_\scb\cap\{\bdf_\scf\bdf_\tf < \frac{1}{R}\}$.
    Moreover, to leading order at scf, we have
    $$\bdf_\tf^{-2} P_\beta(\sigma) = (D^\scb_r)^2 + \Delta^\scb_{\Sph^2} - \bdf_\zf^2 e^{2i\arg(\sigma)} + \mathcal{O}(\bdf_\scf).$$
    Since $\bdf_\zf = 1$ at scf, \eqref{scf_symbol} follows.

    In terms of the coordinate $\tau = |\sigma|r$ in the interior of tf, we have
    $$D^\scb_r = \frac{\tau}{1+\tau}D_\tau, \quad D^\scb_\theta = \frac{1}{1+\tau}D_\theta, \quad D^\scb_{\varphi_*} = \frac{1}{1+\tau}D_{\varphi_*}$$
    and
    $$\bdf_\scf = \frac{1}{1+\tau}, \quad \bdf_\zf = \frac{\tau}{1+\tau}.$$
    Thus, restricting $\bdf_\tf^{-2} P_\beta(\sigma)$ to tf, we obtain
    \begin{equation*}
    \begin{split}
        N_\tf(\bdf_\tf^{-2} P_\beta(\sigma)) = e^{-2i\beta}&\Bigl(\bigl(\frac{\tau}{1+\tau} D_\tau\bigr)^2 - 2i\frac{1}{1+\tau} \frac{\tau}{1+\tau}D_\tau + i\frac{1}{(1+\tau)^2}\frac{\tau}{1+\tau}D_\tau \\
        &+ \frac{1}{(1+\tau)^2}\Delta_{\Sph^2}\Bigl) - \frac{\tau^2}{(1+\tau)^2}e^{2i\arg(\sigma)},
    \end{split}
    \end{equation*}
    which gives the first equation in \eqref{normal_operators} after rearranging. Finally, restricting \eqref{sc-b_explicit} to zf gives $N_{\zf}\bigl(P_\beta(\sigma)\bigr) = P_\beta(0)$ in the complex scaling region. Note that away from complex scaling, i.e. in $\{r<R_0\}$, the operator $P_\beta(\sigma)$ takes a different form, see \eqref{operator_full}. Within this region, $|\sigma|$ is a boundary defining function for zf (we are away from scf and tf), so the second equation in \eqref{normal_operators} follows by restricting to $|\sigma|=0$.
\end{proof}

Using the ellipticity, in the scattering-b-transition calculus, of $P_\beta(\sigma)$ for $r$ large enough together with Lemma \ref{lemma_uniform_interior_estimate}, we can prove estimates of the form \eqref{uniform_interior_estimate} on the scattering-b-transition Sobolev spaces. Note that ellipticity at scf allows the error term to be measured in norms with arbitrarily high decay order at scf.

\begin{lemma}
\label{lemma_sc-b_elliptic_estimate}
    Let $\beta \in (-\pi,\pi)$ and $c,\delta > 0$. Consider $P_\beta(\sigma) \in \mathrm{Diff}_{\emph{\scb},\sigma}^{2,0,-2,0}(X_\beta)$ for $\sigma \in \Sigma_{\beta}^{c,\delta}$ as in \eqref{parameter_space}.
    Let $s,q,l,w \in \R$ with $s > \frac{1}{2}-\alpha\Im(\sigma)$ for all $\sigma\in \Sigma_{\beta}^{c,\delta}$. Then for any $N\in\R$ there exists $C>0$ so that the following estimate holds uniformly in $\sigma$ for all $u \in \Bar{H}_{\emph{\scb},\sigma}^{s,q,l,w}(X_\beta)$ with $P_\beta(\sigma)u \in \Bar{H}_{\emph{\scb},\sigma}^{s-1,q,l+2,w}(X_\beta)$:
    \begin{equation}
    \label{sc-b_interior_estimate}
        \|u\|_{\Bar{H}_{\emph{\scb},\sigma}^{s,q,l,w}(X_\beta)} \leq C\bigl(\|P_\beta(\sigma)u\|_{\Bar{H}_{\emph{\scb},\sigma}^{s-1,q,l+2,w}(X_\beta)} + \|u\|_{\Bar{H}_{\emph{\scb},\sigma}^{-N,-N,l,w}(X_\beta)}\bigr).
    \end{equation}
\end{lemma}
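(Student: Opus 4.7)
The plan is to combine a uniform elliptic estimate in the scattering-b-transition calculus valid where $r$ is large with the uniform interior estimate of Lemma \ref{lemma_uniform_interior_estimate} valid on a bounded region, patching the two via cutoffs.

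First, by Lemma \ref{lemma_sc-b_interpretation}, $\bdf_\tf^{-2}P_\beta(\sigma)$ is uniformly elliptic at fiber infinity on $\{r>R\}$ for $R$ large, and its scf-principal symbol $e^{-2i\beta}|\xi_\scb|^2-e^{2i\arg(\sigma)}$ from \eqref{scf_symbol} is bounded away from zero uniformly in $\sigma\in\Sigma_\beta^{c,\delta}$: the ray $e^{-2i\beta}[0,\infty)$ misses the arc $\{e^{2i\arg(\sigma)}\,|\,\arg(\sigma)\in[-\beta+\delta,\pi-\beta-\delta]\}$. Thus $\bdf_\tf^{-2}P_\beta(\sigma)$ is fully sc-b elliptic (i.e.\ elliptic up to and including $\scf$) on $\{r>R\}$, uniformly in $\sigma$. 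I would then apply the standard sc-b elliptic estimate (see \cite[Appendix A.3]{hintz_mode_stability}) with cutoffs $\chi,\tilde\chi\in\Bar{C}^\infty(X_\beta)$ satisfying $\supp\chi,\supp\tilde\chi\subset\{r>R\}$ and $\tilde\chi=1$ on $\supp\chi$ to obtain
\begin{equation*}
\|\chi u\|_{\Bar{H}_{\scb,\sigma}^{s,q,l,w}(X_\beta)} \leq C\bigl(\|\tilde\chi P_\beta(\sigma)u\|_{\Bar{H}_{\scb,\sigma}^{s-1,q,l+2,w}(X_\beta)} + \|\tilde\chi u\|_{\Bar{H}_{\scb,\sigma}^{-N,-N,l,w}(X_\beta)}\bigr),
\end{equation*}
where the high negative decay $-N$ at \emph{both} fiber infinity and $\scf$ in the error term is a consequence of joint ellipticity up to $\scf$.

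Next I would treat the bounded region $\{r<2R\}$ by means of Lemma \ref{lemma_uniform_interior_estimate}. The key observation is that on $\{r_0<r<2R\}$ all three boundary defining functions $\bdf_\scf,\bdf_\tf,\bdf_\zf$ are uniformly bounded above and below for $\sigma\in\Sigma_\beta^{c,\delta}$, and the sc-b vector fields $\tfrac{\bdf}{\bdf+|\sigma|}\bdf\partial_\bdf$, $\tfrac{\bdf}{\bdf+|\sigma|}\partial_\omega$ are products of the standard coordinate vector fields with smooth functions bounded above and below by constants depending only on $R,c,\delta$. Consequently, for any distribution supported in this bounded set, $\|\cdot\|_{\Bar{H}_{\scb,\sigma}^{s,q,l,w}(X_\beta)}$ is equivalent to $\|\cdot\|_{\Bar{H}^{s}(X_\beta)}$ with constants independent of $\sigma$. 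Applying Lemma \ref{lemma_uniform_interior_estimate} to $\psi u$ for $\psi=1-\chi$, supported in $\{r<2R\}$, and translating between the two scales of norms yields
\begin{equation*}
\|\psi u\|_{\Bar{H}_{\scb,\sigma}^{s,q,l,w}(X_\beta)} \leq C\bigl(\|P_\beta(\sigma)\psi u\|_{\Bar{H}_{\scb,\sigma}^{s-1,q,l+2,w}(X_\beta)} + \|\psi u\|_{\Bar{H}_{\scb,\sigma}^{-N,-N,l,w}(X_\beta)}\bigr).
\end{equation*}

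Summing the two estimates and commuting $\psi$ past $P_\beta(\sigma)$ produces a commutator $[P_\beta(\sigma),\psi]\in\mathrm{Diff}^1(X_\beta)$ supported in the compact annulus $\{R<r<2R\}$, on which the sc-b and standard Sobolev norms are again uniformly equivalent, so it is absorbed into the error term on the right. The main obstacle, I expect, is ensuring the uniformity in $\sigma$ of the sc-b elliptic estimate in the first step: this is standard provided one treats $P_\beta(\sigma)$ as a smooth family over $\Sigma_\beta^{c,\delta}$ of elements of $\mathrm{Diff}_\scb^{2,0,-2,0}(X_\beta)$ whose full symbol (including the $\scf$-principal symbol) is uniformly elliptic on the compact parameter set, but care is required to make the symbol-class dependencies precise.
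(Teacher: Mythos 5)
Your proof takes essentially the same route as the paper: glue the uniform interior estimate of Lemma~\ref{lemma_uniform_interior_estimate} on a bounded region to sc-b-transition elliptic estimates (at fiber infinity and at $\scf$) near $r=\infty$, comparing norms via cutoffs. The main difference is cosmetic: you first apply the sc-b elliptic estimate on $\{r>R\}$ and then patch with the bounded region, whereas the paper starts on the bounded region, gets the estimate at fiber infinity, and then separately improves the $\scf$-decay of the error term.

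There is, however, a concrete error in your justification of the norm comparison on the bounded region. You assert that on $\{r_0<r<2R\}$ all three boundary defining functions $\bdf_\scf,\bdf_\tf,\bdf_\zf$ are uniformly bounded above and below for $\sigma\in\Sigma_\beta^{c,\delta}$, and conclude that $\|\cdot\|_{\Bar{H}_{\scb,\sigma}^{s,q,l,w}(X_\beta)}$ is uniformly equivalent to $\|\cdot\|_{\Bar{H}^s(X_\beta)}$ there. This is false for the zero-face defining function: $\bdf_\zf=\frac{|\sigma|r}{1+|\sigma|r}\to 0$ as $|\sigma|\to 0$, so it is not bounded below. The correct statement (cf.\ the specialization of \eqref{zf_norm_equivalence} to the bounded region, where b- and standard Sobolev norms agree) is $\|\chi u\|_{\Bar{H}_{\scb,\sigma}^{s,q,l,w}}\sim |\sigma|^{-w}\|\chi u\|_{\Bar{H}^s}$. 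Your argument is saved only because the factor $|\sigma|^{-w}$ appears with the \emph{same} exponent $w$ on all three terms in the estimate of Lemma~\ref{lemma_uniform_interior_estimate} (the orders $s$ and $q$ differ, but the zf-weight does not), so it cancels when you convert the full inequality. You should replace the false boundedness claim by this cancellation observation to make the step correct.
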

\begin{proof}
    Let $\chi, \Tilde{\chi} \in C^\infty(X_\beta)$ be cutoff functions supported in $\{r<2R\}$ for some large $R$ and with $\chi=1$ on $\{r\leq R\}$ and $\Tilde{\chi} = 1$ on $\supp(\chi)$. Then the uniform estimate of Lemma \ref{lemma_uniform_interior_estimate} applied to $\chi u$, together with elliptic estimates (in the standard algebra $\Psi^2(X_\beta)$) for $P_\beta(\sigma)$ on $\supp(\nabla \chi)$, gives
    $$\|\chi u\|_{\Bar{H}^s(X_\beta)} \leq C\bigl(\|\Tilde{\chi} P_\beta(\sigma)u\|_{\Bar{H}^{s-1}(X_\beta)} + \|\Tilde{\chi} u\|_{\Bar{H}^{-N}(X_\beta)}\bigr)$$
    uniformly for $\sigma \in \Sigma_{\beta}^{c,\delta}$.
    Away from the boundary at $r=\infty$, we have a uniform equivalence of norms
    $$\|\chi u\|_{\Bar{H}_{\scb,\sigma}^{s,q,l,w}} \sim |\sigma|^w\|\chi u\|_{\Bar{H}^s(X_\beta)},$$
    that is, there exists some $C>0$ such that
    $$\frac{1}{C}\|\chi u\|_{\Bar{H}_{\scb,\sigma}^{s,q,l,w}} \leq |\sigma|^w\|\chi u\|_{\Bar{H}^s(X_\beta)} \leq C\|\chi u\|_{\Bar{H}_{\scb,\sigma}^{s,q,l,w}}, \quad\forall \sigma\in\Sigma_{\beta}^{c,\delta}.$$
    Thus, we obtain
    $$\|\chi u\|_{\Bar{H}_{\scb,\sigma}^{s,q,l,w}(X_\beta)} \leq C\bigl(\|\Tilde{\chi} P_\beta(\sigma)u\|_{\Bar{H}_{\scb,\sigma}^{s-1,q,l+2,w}(X_\beta)} + \|\Tilde{\chi} u\|_{\Bar{H}_{\scb,\sigma}^{-N,q,l,w}(X_\beta)}\bigr).$$
    Note that this could also be obtained by developing the symbolic estimates, i.e. propagation and radial point estimates, for the scattering-b-transition calculus, which we chose to avoid.

    By Lemma \ref{lemma_sc-b_interpretation}, $P_\beta(\sigma) \in \mathrm{Diff}_\scb^{2,0,-2,0}$ is uniformly elliptic on $\supp(1-\chi)$. Combining the estimate on $\chi u$ with elliptic estimates (at fiber infinity) in the scattering-b-transition calculus gives
    $$\|u\|_{\Bar{H}_{\scb,\sigma}^{s,q,l,w}(X_\beta)} \leq C\bigl(P_\beta(\sigma)u\|_{\Bar{H}_{\scb,\sigma}^{s-1,q,l+2,w}(X_\beta)} + \|u\|_{\Bar{H}_{\scb,\sigma}^{-N,q,l,w}(X_\beta)}\bigr).$$
    By \eqref{scf_symbol} $P_\beta(\sigma)$ is also elliptic at scf, uniformly for $\sigma \in \Sigma_{\beta}^{c,\delta}$. Thus, elliptic estimates at scf allow us to improve the error term above and obtain \eqref{sc-b_interior_estimate}. Note that one could also use an argument as in the proof of Lemma \ref{lemma_scattering_elliptic}, based on the uniform invertibility of the leading order part of $P_\beta(\sigma)$ at scf, if one wanted to avoid symbolic estimates at scf.
\end{proof}

\subsection{Estimates on the normal operators}

In this subsection we focus on the normal operators computed in Lemma \ref{lemma_sc-b_interpretation}. We show that these operators have trivial kernels on appropriate Sobolev spaces and prove normal operator estimates that will be crucial for the desired uniform low energy estimate in Proposition \ref{prop_low_energy_estimate} below.

We begin at the transition face. Recall that
$$\tf = [0,\infty]_\tau\times\Sph^2, \quad\text{with}\quad \tau = |\sigma|r.$$
Here, $[0,\infty] \subset \overline{\R}$ is viewed as a subset of the radial compactification of $\R$. We use the measure $\tau^2d\tau d\omega$ on tf, with $d\omega$ denoting the standard measure on $\Sph^2$. 
The normal operator map $N_\tf$ naturally gives rise to an operator with b-behavior at $\tau=0$ and scattering behavior at $\tau=\infty$, see \eqref{tf_exact_seq}. We thus work on the Sobolev spaces $H_{\mathrm{sc,b}}^{s,q,l}(\tf)$, see Section \ref{section_scattering-b-transition}. Note that a similar model operator was studied in \cite[Lemma 3.20]{hintz_mode_stability}. In contrast to this reference, our operator is elliptic at $\tf\cap\scf$, so we do not need to use a variable decay order there.

\begin{lemma}
\label{lemma_tf_operator}
    Let $\beta \in (-\pi,\pi)$ and $\delta > 0$. Let further $s,q \in \R$ and $l \in (\frac{1}{2},\frac{3}{2})$. Then the following estimate holds uniformly for $\arg(\sigma) \in [-\beta+\delta,\pi-\beta-\delta]$:
    $$\|u\|_{H_{\mathrm{sc,b}}^{s,q,l}(\tf)} \leq C\|N_{\tf}\bigl(\bdf_\tf^{-2}P_\beta(\sigma)\bigr)u\|_{H_{\mathrm{sc,b}}^{s-2,q,l}(\tf)}.$$
\end{lemma}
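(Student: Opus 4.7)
The plan is to prove a Fredholm-type estimate with a compact error term and then upgrade to the coercive estimate of the lemma by showing that $N := N_{\tf}\bigl(\bdf_\tf^{-2}P_\beta(\sigma)\bigr)$ has trivial nullspace on $H^{s,q,l}_{\mathrm{sc,b}}(\tf)$. Uniformity over $\arg(\sigma)\in[-\beta+\delta,\pi-\beta-\delta]$ then follows from a standard compactness argument, exploiting continuous dependence of $N$ on $\arg(\sigma)$ and compactness of the parameter interval.

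For the Fredholm estimate I would combine three microlocal inputs. Ellipticity at fiber infinity in the sc,b-sense is immediate from the formula for $N$ in Lemma \ref{lemma_sc-b_interpretation}, whose top-order part is $e^{-2i\beta}\bigl((\tau D_\tau)^2+\Delta_{\Sph^2}\bigr)$ times a nonvanishing prefactor. Ellipticity at the scattering end $\scf$ is uniform in $\arg(\sigma)$: the scattering principal symbol $e^{-2i\beta}|\xi_{\mathrm{sc}}|^2-e^{2i\arg(\sigma)}$ from \eqref{scf_symbol} vanishes only where $e^{2i(\arg(\sigma)+\beta)}\in[0,\infty)$, which is precluded by $\arg(\sigma)+\beta\in[\delta,\pi-\delta]$. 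At the b-end $\tau=0$, the b-normal operator is $e^{-2i\beta}\bigl((\tau D_\tau)^2 - i\tau D_\tau + \Delta_{\Sph^2}\bigr)$; its indicial family on the spherical harmonic $Y_k$ is $\lambda^2 - i\lambda + k(k+1)$, with roots $\lambda=i(k+1)$ and $\lambda=-ik$, i.e.\ growth exponents $\tau^{-k-1}$ and $\tau^k$. For $l\in(\tfrac12,\tfrac32)$ the $\tau^2\,d\tau$-critical growth threshold $\tau^{l-3/2}$ sits strictly between the $k=0$ exponents $-1$ and $0$, while all other indicial roots satisfy $\geq 1$ or $\leq -2$; the Mellin line is therefore indicial-root-free and the b-normal operator is invertible on the associated weighted b-Sobolev space. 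The standard sc,b-calculus package then produces
\[
\|u\|_{H^{s,q,l}_{\mathrm{sc,b}}(\tf)} \leq C\bigl(\|Nu\|_{H^{s-2,q,l}_{\mathrm{sc,b}}(\tf)} + \|u\|_{H^{s',q',l'}_{\mathrm{sc,b}}(\tf)}\bigr)
\]
with $s'<s$, $q'<q$, $l'<l$ chosen so the error space is compactly embedded in $H^{s,q,l}_{\mathrm{sc,b}}(\tf)$.

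For kernel triviality, the decisive observation is that $(1+\tau)^2\tau^{-2}N$ equals the Helmholtz-type operator $e^{-2i\beta}\Delta_{\R^3}-e^{2i\arg(\sigma)}$ on $\R^3$, where $(\tau,\omega)$ play the role of spherical coordinates. Suppose $Nu=0$; iterating the b-normal operator inversion gives $u$ a polyhomogeneous expansion $u\sim\sum_{k\geq 0}c_k(\omega)\tau^k$ at $\tau=0$ to any desired order, and each summand is a harmonic polynomial on $\R^3$, so $u$ extends to a smooth function across the origin. The weight $q$ at $\scf$ enforces at most polynomial growth at infinity, so $u$ is a tempered distribution on $\R^3$ solving $(e^{-2i\beta}\Delta - e^{2i\arg(\sigma)})u=0$. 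Its Fourier transform $\hat u$ is supported on $\{|\xi|^2 = e^{2i(\arg(\sigma)+\beta)}\}\subset\R^3$, which is empty because $e^{2i(\arg(\sigma)+\beta)}\notin[0,\infty)$; hence $u=0$.

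Fredholm plus trivial kernel yields the coercive estimate for each fixed $\arg(\sigma)$ by the open mapping theorem, and uniformity on the compact $\arg(\sigma)$-interval follows from the usual contradiction argument: a sequence $u_n$ with unit norm in $H^{s,q,l}_{\mathrm{sc,b}}$ and $\|N_{\sigma_n}u_n\|\to 0$ would, by the uniform Fredholm estimate and the compact error embedding, subconverge to a nonzero element of the kernel of the limit operator, contradicting the triviality just established. The step I expect to be most delicate is pushing the b-normal operator inversion far enough to secure genuine smoothness of $u$ across the origin (rather than only local integrability at some weighted level), since it is precisely this smoothness that enables the Fourier-analytic argument to cleanly kill the kernel.
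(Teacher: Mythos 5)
Your Fredholm skeleton coincides exactly with the paper's: ellipticity at fiber infinity, uniform ellipticity at $\scf$ via the nonvanishing of $e^{-2i\beta}|\xi|^2 - e^{2i\arg(\sigma)}$ when $\arg(\sigma)+\beta \in [\delta,\pi-\delta]$, and invertibility of the b-normal operator $(\tau D_\tau)^2 - i\tau D_\tau + \Delta_{\Sph^2}$ on the weight range $l\in(\tfrac12,\tfrac32)$, whose indicial roots you compute correctly. The paper also explicitly invokes this trio. Where you diverge is the injectivity step. The paper keeps the problem on $(0,\infty)_\tau\times\Sph^2$, asserts decay at both ends and suggests spherical-harmonic decomposition plus integration by parts; you instead extend $u$ across $\tau=0$ to a smooth global solution of a complex-Helmholtz equation on $\R^3$ and kill it by Fourier support. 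Both work, but your route has a genuine advantage: it forces you to articulate the b-normal-operator iteration, which addresses a point the paper elides. Indeed, for $l\in(\tfrac12,\tfrac32)$ the a priori weight $\tau^{l-3/2}$ is mildly singular (exponent in $(-1,0)$), so one cannot invoke decay directly; it is precisely the absence of indicial roots in $(-1,0)$, exploited via the equation, that upgrades $u$ to boundedness near $\tau=0$, and your proposal makes this explicit. Your closing compactness-and-contradiction argument for uniformity in $\arg(\sigma)$ is also a step the paper leaves implicit, and it is the right one.

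Two small inaccuracies worth fixing. First, "each summand is a harmonic polynomial on $\R^3$" overstates the case: the recursion
\[
\bigl(-j(j+1)+\Delta_{\Sph^2}\bigr)c_j = e^{2i(\arg(\sigma)+\beta)}c_{j-2}
\]
shows by induction that $c_j$ decomposes into spherical harmonics of degrees $j, j-2, j-4,\dots$ only (same parity as $j$), so $c_j(\omega)\tau^j$ is a polynomial on $\R^3$ -- each degree-$m$ spherical-harmonic piece is a harmonic polynomial of degree $m$ times the even power $\tau^{j-m}$ -- but it is not harmonic for $j\geq 2$. This parity observation, not harmonicity, is what rules out $\log\tau$ terms and yields the smooth extension. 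Second, and related to the delicacy you flag at the end: rather than pushing the polyhomogeneous expansion "to any desired order" and then worrying whether a formal series certifies smoothness, it is cleaner to stop after the first step (bounding $u$ near $\tau=0$), observe that $\Delta u$ is then also bounded on a punctured ball, and invoke interior elliptic regularity on $\R^3$ to extend $u$ to a smooth solution across the origin. That closes the gap you identify. With those adjustments your argument is complete and, in the kernel step, somewhat more robust than the paper's sketch.
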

\begin{proof}
    Note that $N_{\tf}\bigl(\bdf_\tf^{-2}P_\beta(\sigma)\bigr)$ is an elliptic element of $\mathrm{Diff}_{\mathrm{sc,b}}^{2,0,0}(\tf)$. The form in \eqref{normal_operators} makes the b-structure at $\tau=0$ evident. We can rewrite
    $$N_{\tf}\bigl(\bdf_\tf^{-2}P_\beta(\sigma)\bigr) = \frac{\tau^2}{(1+\tau)^2}\Bigl(e^{2i\beta}\bigl(\tau^{-2}D_\tau\tau^2D_\tau + \tau^{-2}\Delta_{\Sph^2}\bigr) - e^{2i\arg(\sigma)}\Bigr)$$
    to emphasize the scattering structure at $\tau=\infty$. Asymptotics at the scattering end are controlled by the principal symbol at $\tau=\infty$, which is just $e^{2i\beta}|\xi|^2 - e^{2i\arg(\sigma)}$. Thus, our operator is scattering elliptic at $\tau=\infty$ uniformly for $\arg(\sigma) \in (-\beta+\delta,\pi-\beta-\delta)$. Asymptotics at the b-end are controlled by the b-normal operator at $\tau=0$, which is just $(\tau D_\tau)^2 - i\tau D_\tau + \Delta_{\Sph^2}$. This is invertible on the range of weights in the Lemma. Thus, elliptic theory gives the semi-Fredholm estimate
    $$\|u\|_{H_{\mathrm{sc,b}}^{s,q,l}(\tf)} \leq C\bigl(\|N_{\tf}\bigl(\bdf_\tf^{-2}P_\beta(\sigma)\bigr)u\|_{H_{\mathrm{sc,b}}^{s-2,q,l}(\tf)} + \|u\|_{H_{\mathrm{sc,b}}^{-N,-N,-N}(\tf)}\bigr)$$
    uniformly for $\arg(\sigma)$ as in the Lemma.

    It remains to prove injectivity. Note that the above estimate holds for all $s,q\in\R$. Thus, an element $u \in H_{\mathrm{sc,b}}^{s,q,l}(\tf)$ with $N_{\tf}\bigl(\bdf_\tf^{-2}P_\beta(\sigma))u = 0$ lies in $H_{\mathrm{sc,b}}^{\infty,\infty,l}(\tf)$. Sobolev embedding then implies that $u \in C^\infty\bigl((0,\infty)_\tau\times\Sph^2\bigr)$ with $u$ Schwartz at the scattering end and conormal of weight $\tau^{l-\frac{3}{2}}$ at the b-end. That is, $u$ and any number of derivatives with respect to $\partial_\tau, \tau^{-1}\partial_\omega$ decay superpolynomially at $\tau=\infty$, whereas $u$ and any number of derivatives with respect to $\tau\partial_\tau, \partial_\omega$ decay as $\tau^{l-\frac{3}{2}}$ at $\tau=0$ (note that for $l\in(\frac{1}{2},\frac{3}{2})$ this indeed gives decay.). Thus, $u$ is a smooth function on $(0,\infty)_\tau\times\Sph^2$ lying in the kernel of $e^{2i\beta}\Delta - e^{2i\arg(\sigma)}$ and decaying as $\tau \to 0,\infty$. This is only possible for $u=0$. (This can be seen by decomposing $u$ into spherical harmonics and performing a partial integration.)
\end{proof}

\begin{rmk}
    We note that the estimate in Lemma \ref{lemma_tf_operator}, though not the invertibility of the operator, in fact only requires the condition $l>\frac{1}{2}$ on the weight. Similarly, the estimate in Lemma \ref{lemma_zero_energy_estimate} below holds for weights $l > -\frac{3}{2}$. Applying both of these normal operator estimates to an element $u \in \Bar{H}_{\scb,\sigma}^{s,q,l,w}(X_\beta)$, as we will do in Proposition \ref{prop_low_energy_estimate}, then imposes the range of weights $w-l \in (\frac{1}{2},\frac{3}{2})$ according to the equivalence of norms in \eqref{zf_norm_equivalence} and \eqref{tf_norm_equivalence}.
\end{rmk}

We now turn to the normal operator at zf, which is just the zero energy operator of our complex scaled spectral family. We can view this as a b-differential operator on $X_\beta$, see \eqref{zf_exact_seq}, and we prove estimates on weighted b-Sobolev spaces. Note that $\bdf = \frac{1}{r}$ is a defining function on zf for $\zf\cap\tf$. Thus, the $\bdf_\tf^{2}$ decay of $P_\beta(\sigma)$ at tf corresponds to the presence of a weight $\bdf^{2}$ for the zf-normal operator, i.e. $P_\beta(0) \in \mathrm{Diff}_\mathrm{b}^{2,-2}(X_\beta)$. The injectivity of $P_\beta(0)$ is more subtle than for the tf-normal operator. The case $\beta=0$ is covered in \cite[Lemma 3.19]{hintz_mode_stability}, where a corresponding estimate is proved for the zero energy operator on Kerr (without complex scaling). We reduce the $\beta \neq 0$ case to this result by showing in Lemma \ref{lemma_zero_operator_kernel} below that complex scaling does not affect the triviality of the kernel.

\begin{lemma}
\label{lemma_zero_energy_estimate}
    Let $\beta \in (-\pi,\pi)$, $s>\frac{1}{2}$ and $l \in (-\frac{3}{2},-\frac{1}{2})$. Then the zero energy operator $P_\beta(0)$ has trivial kernel on the weighted b-Sobolev space $\Bar{H}_\mathrm{b}^{s,l}(X_\beta)$ and the following estimate holds:
    \begin{equation}
    \label{zero_energy_estimate}
        \|u\|_{\Bar{H}_\mathrm{b}^{s,l}(X_\beta)} \leq C\|P_\beta(0)u\|_{\Bar{H}_\mathrm{b}^{s-1,l+2}(X_\beta)}.
    \end{equation}
\end{lemma}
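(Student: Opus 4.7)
The plan is to prove \eqref{zero_energy_estimate} in the standard two-step manner: first establish a semi-Fredholm estimate with a compact error term, and then eliminate the error term by showing that $\ker P_\beta(0) \cap \Bar{H}_\mathrm{b}^{s,l}(X_\beta) = \{0\}$. The $\beta = 0$ case is the content of \cite[Lemma 3.19]{hintz_mode_stability}, which gives both the semi-Fredholm estimate and the kernel triviality for the zero energy operator of the original Kerr spectral family. The content of the present lemma is therefore to transfer this information to arbitrary scaling angle $\beta \in (-\pi,\pi)$.

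For the semi-Fredholm estimate, I would view $P_\beta(0) \in \mathrm{Diff}_\mathrm{b}^{2,-2}(X_\beta)$, which by Lemma \ref{lemma_sc-b_interpretation} and the computation in Lemma \ref{lemma_scattering_elliptic} is b-elliptic at fiber infinity uniformly down to $\bdf = 0$. The estimate is then assembled from three local pieces. In a neighborhood of the boundary $\bdf = 0$, a b-elliptic estimate applies provided $-l$ is not an indicial root of the b-normal operator $N_\infty$ of $\bdf^{-2}P_\beta(0)$, which in the complex scaling region equals $e^{-2i\beta}\bigl((r\partial_r)^2 + (r\partial_r) + \Delta_{\Sph^2}\bigr)$; its indicial roots are precisely the integers $\pm\ell$ for $\ell \in \N_0$, and the hypothesis $l \in (-\tfrac{3}{2}, -\tfrac{1}{2})$ places $-l$ strictly between the two $\ell = 0$ roots, giving invertibility of $N_\infty$ between the corresponding b-Sobolev spaces. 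Away from this boundary, the operator agrees with $P_0(0)$ and the flow-based estimates of Section \ref{section_fredholm_estimates} apply verbatim: microlocal elliptic estimates off the characteristic set, propagation of singularities, and high-regularity radial point estimates at the radial source/sink over the horizon. Here the threshold regularity reduces by \eqref{threshold_reg_kerr} to $\alpha_s = 0$ at $\sigma = 0$, matching the hypothesis $s > \tfrac{1}{2}$. A hyperbolic estimate across the interior of the horizon closes the argument at $r = r_0$. The result is an estimate of the form $\|u\|_{\Bar{H}_\mathrm{b}^{s,l}} \leq C\bigl(\|P_\beta(0)u\|_{\Bar{H}_\mathrm{b}^{s-1,l+2}} + \|u\|_{\Bar{H}_\mathrm{b}^{s',l'}}\bigr)$ with $s' < s$ and $l' < l$, so that the inclusion into the error space is compact.

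For the triviality of the kernel at general $\beta$, I would invoke Lemma \ref{lemma_zero_operator_kernel} (stated below), whose proof follows the contour-deformation scheme of Proposition \ref{prop_qnm_scaling_independent}. Given $u \in \ker P_\beta(0) \cap \Bar{H}_\mathrm{b}^{s,l}(X_\beta)$, b-ellipticity and the semi-Fredholm estimate just established bootstrap $u$ to be smooth and b-conormal of weight $l$ in the complex scaling region; one then analytically continues $u$ back along the family of contours $\Gamma^{\beta,0}_{R,s}$ from the proof of Proposition \ref{prop_qnm_scaling_independent} to produce an element $u_0 \in \ker P_0(0)$ on the undeformed space. The main obstacle, which I expect to be the principal technical point, is verifying that the b-conormal weight $l$ is preserved under this deformation: at $\sigma = 0$ the $L^2$-comparison argument of Proposition \ref{prop_qnm_scaling_independent} — which relied on invertibility of $\Delta - (e^{i\beta}\sigma)^2$ between standard Sobolev spaces — must be replaced by invertibility of the b-normal operator $N_\infty$ between weighted b-Sobolev spaces of weight $l$, and this is precisely what forces the restriction $l \in (-\tfrac{3}{2}, -\tfrac{1}{2})$. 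Once $u_0 \in \Bar{H}_\mathrm{b}^{s,l}(X)$ is obtained, \cite[Lemma 3.19]{hintz_mode_stability} yields $u_0 = 0$ and hence, by the analytic continuation, $u = 0$. The standard contradiction-plus-compactness argument then upgrades the semi-Fredholm estimate to the clean bound \eqref{zero_energy_estimate}.
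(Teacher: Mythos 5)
Your proposal follows the paper's proof closely: a semi-Fredholm estimate assembled from b-elliptic estimates and b-normal operator invertibility at spatial infinity, the microlocal estimates of Section~\ref{section_fredholm_estimates} together with hyperbolic estimates near $r=r_0$ in the interior, and then triviality of the kernel via Lemma~\ref{lemma_zero_operator_kernel} and the $\beta = 0$ case from \cite{hintz_mode_stability}, after which the clean bound follows by the standard compactness argument.

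One small correction to your indicial-root aside: the b-normal operator of $r^2 P_\beta(0)$ at $\bdf = 0$ is
$$e^{-2i\beta}\bigl(-(r\partial_r)^2 - (r\partial_r) + \Delta_{\Sph^2}\bigr),$$
not $e^{-2i\beta}\bigl((r\partial_r)^2 + (r\partial_r) + \Delta_{\Sph^2}\bigr)$, and its indicial roots on the degree-$\ell$ spherical harmonic are $\ell$ and $-\ell-1$ rather than $\pm\ell$ (indeed your formula would give complex roots for $\ell\geq 1$). With the shift induced by the $L^2(r^2\,dr\,d\omega)$ measure, the hypothesis $l\in(-\tfrac{3}{2},-\tfrac{1}{2})$ is exactly what places the weight strictly between the $\ell=0$ roots $0$ and $-1$, consistent with the invertibility of the Euclidean Laplacian on $H_\mathrm{b}^{s,l}(\R^3)$ the paper cites. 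Note also that the plain b-elliptic estimate at fiber infinity needs no indicial-root condition at all; it only gives an error with loss of regularity but not of decay, so the inclusion is not yet compact. It is the subsequent step — trading one order of decay at $\bdf=0$ via the invertible b-normal operator — that requires the weight restriction and yields the compactly included error space $\Bar{H}_\mathrm{b}^{-N,l-1}(X_\beta)$. Your wording, which attaches the indicial condition directly to the b-elliptic estimate, slightly conflates these two stages, though the intended conclusion is correct.
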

\begin{proof}
    Away from infinity, the microlocal elliptic, propagation and radial point estimates, combined with hyperbolic estimates near $r=r_0$, give
    \begin{equation}
    \label{zero_op_interior_estimate}
        \|\chi u\|_{\Bar{H}_\mathrm{b}^{s,l}(X_\beta)} \leq C\bigl(\|\Tilde{\chi} P_\beta(\sigma)u\|_{\Bar{H}_\mathrm{b}^{s-1,l+2}(X_\beta)} + \|\Tilde{\chi} u\|_{\Bar{H}_\mathrm{b}^{-N,l}(X_\beta)}\bigr)
    \end{equation}
    for any smooth $\chi,\Tilde{\chi}$ with support in some ball $B_R$ and such that $\Tilde{\chi}=1$ on $\supp(\chi)$. Note that, away from infinity, the weighted b-Sobolev norms $\|\cdot\|_{\Bar{H}_\mathrm{b}^{s,l}(X_\beta)}$ are equivalent to the usual Sobolev norm $\|\cdot\|_{\Bar{H}^{s}(X_\beta)}$.
    
    The zero energy operator is given in local coordinates by
    $$P_\beta(0) = \frac{1}{\rkerr_\beta^2}\Bigl(\frac{1}{f_\beta'}D_r\frac{\mu_\beta}{f_\beta'}D_r + \Delta_{\Sph^2} + \frac{2a}{f_\beta'}D_rD_{\varphi_*}\Bigr),$$
    where $\Delta_{\Sph^2}$ is the (positive) Laplacian on the sphere. This is a weighted b-differential operator on $X_\beta$:
    $$P_\beta(0) \in \mathrm{Diff}_\mathrm{b}^{2,-2}(X_\beta).$$
    Its principal symbol in the b-algebra is given by
    $$\sigma^\mathrm{b}_2(r^2P_\beta(0)) = \frac{r^2}{\rkerr_\beta^2}\Bigl(\frac{\mu_\beta}{(f_\beta')^2r^2}\xi_\mathrm{b}^2 + \eta_\mathrm{b}^2 + \frac{\nu_\mathrm{b}^2}{\sin^2(\theta)} + \frac{2a}{f_\beta'r}\xi_\mathrm{b}\nu_\mathrm{b}\Bigr).$$
    Estimating the terms as in the proof of Lemma \ref{lemma_scattering_elliptic}, shows that $r^2P_\beta(0)$ is uniformly elliptic as a b-operator in $\{|x| > R\}$ for $R$ large enough. Elliptic b-estimates, see for instance \cite[Lemma 2.5]{hintz_gluing_black_holes}, combined with the interior estimate \eqref{zero_op_interior_estimate}, then give
    \begin{equation*}
        \|u\|_{\Bar{H}_\mathrm{b}^{s,l}(X_\beta)} \leq C\bigl(\|P_\beta(\sigma)u\|_{\Bar{H}_\mathrm{b}^{s-1,l+2}(X_\beta)} + \|u\|_{\Bar{H}_\mathrm{b}^{-N,l}(X_\beta)}\bigr).
    \end{equation*}

    As in the proof of Lemma \ref{lemma_scattering_elliptic}, we can use the invertibility of the leading order part of $P_\beta(0)$ at $r=\infty$ to improve the error term in the above estimate. In local coordinates on $X_\beta$, we have
    $$P_\beta(0) = e^{-2i\beta}\Delta + Q, \quad Q \in \mathrm{Diff}_\mathrm{b}^{2,-3}(X_\beta),$$
    where $\Delta$ is just the Laplacian on $\R^3$. Using the invertibility of the Laplacian on weigthed b-Sobolev spaces with weight $l \in (-\frac{3}{2},-\frac{1}{2})$, see for instance \cite[Theorem 3.1]{hintz_geometric_singular_analysis_notes}, we obtain the following semi-Fredholm estimate:
    \begin{equation}
    \label{zero_energy_semi_fredholm}
        \|u\|_{\Bar{H}_\mathrm{b}^{s,l}(X_\beta)} \leq C\bigl(\|P_\beta(\sigma)u\|_{\Bar{H}_\mathrm{b}^{s-1,l+2}(X_\beta)} + \|u\|_{\Bar{H}_\mathrm{b}^{-N,l-1}(X_\beta)}\bigr).
    \end{equation}
    Note the compactness of the inclusion $\Bar{H}_\mathrm{b}^{s,l}(X_\beta) \hookrightarrow \Bar{H}_\mathrm{b}^{-N,l-1}(X_\beta)$.

    The injectivity of $P_\beta(0)$ follows from the corresponding injectivity of the original Kerr zero energy operator $P_0(0)$ on $\Bar{H}_\mathrm{b}^{s,l}(X_\beta)$ with $l\in(-\frac{3}{2},-\frac{1}{2})$, see \cite[Lemma 3.19]{hintz_mode_stability} and the invariance of the dimension of the kernel under complex scaling, which we prove in Lemma \ref{lemma_zero_operator_kernel} below. Finally, the estimate \eqref{zero_energy_estimate} follows from injectivity of $P_\beta(0)$ together with the semi-Fredholm estimate \eqref{zero_energy_semi_fredholm} by standard functional analytic arguments.
\end{proof}

\begin{lemma}
\label{lemma_zero_operator_kernel}
    For $s>\frac{1}{2}$ and $l \in (-\frac{3}{2},-\frac{1}{2})$ the dimension of the kernel of $P_\beta(0)$ on $\Bar{H}_\mathrm{b}^{s,l}(X_\beta)$ is independent of $\beta$. That is, for all $\beta \in (-\pi,\pi)$:
    $$\dim(\ker(P_\beta(0))) = \dim(\ker(P_0(0))).$$
\end{lemma}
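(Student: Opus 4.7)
The plan is to mirror the strategy of Proposition \ref{prop_qnm_scaling_independent} but replacing scattering Sobolev spaces with the weighted b-Sobolev spaces relevant to the zero energy operator. Given $u_1 \in \ker_{\Bar{H}_\mathrm{b}^{s,l}(X_{\beta_1})}(P_{\beta_1}(0))$, the b-ellipticity of $P_{\beta_1}(0)$ established in the proof of Lemma \ref{lemma_zero_energy_estimate} shows that $u_1$ is smooth, and in particular smooth in the complex scaling region $\{|x|>R_0\}$ where $P(0)$ extends analytically to the complex domain $U$ of Lemma \ref{analytic_coefficients}. I would then apply the Sjöstrand–Zworski deformation lemma (Lemma \ref{deformation_lemma}) along the family of compactly supported interpolating contours $\Gamma^{\beta_1,\beta_2}_{R,s}$ constructed in \eqref{phase_function_deformation}, using the ellipticity of $P(0)$ restricted to each such contour (this follows from the same estimates as in the proof of Lemma \ref{lemma_scattering_elliptic}, taking $\sigma=0$). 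Patching the local analytic extensions together dyadically as in \eqref{u_2_construction} produces a smooth function $u_2\in\Bar{C}^\infty(X_{\beta_2})$ with $P_{\beta_2}(0)u_2=0$. Analytic continuation makes the assignment $u_1 \mapsto u_2$ injective, so once we establish $u_2 \in \Bar{H}_\mathrm{b}^{s,l}(X_{\beta_2})$ (and the symmetric statement with $\beta_1,\beta_2$ swapped) the two kernels have the same dimension.

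The core of the argument, and its main obstacle, is the control of $u_2$ at infinity in the b-Sobolev norm. Writing $P^{\beta_1,\beta_2}_R(0)$ for the local coordinate expression on $\Omega\subset\R^3$ of $P(0)|_{\Gamma^{\beta_1,\beta_2}_R}$, the operator asymptotically approaches $e^{-2i\beta_1}\Delta$ as $|x|\to\infty$, and its difference from $e^{-2i\beta_1}\Delta$ on the deformed annulus lies in the weighted b-class $|x|^{-1}\mathrm{Diff}_\mathrm{b}^{2,-2}(\R^3)$ with an additional factor bounded by $|e^{i\phi^{\beta_1,\beta_2}_{R,1}(r)} - e^{i\beta_1}| \leq |e^{i\beta_2}-e^{i\beta_1}|$. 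Invertibility of $e^{-2i\beta_1}\Delta$ on $\Bar{H}_\mathrm{b}^{s,l}(\R^3)$ for $l\in(-\frac{3}{2},-\frac{1}{2})$, together with a dyadic family of b-compatible cutoffs $\chi_R \in C_c^\infty(\R^3)$ (supported in $\{R/4<|x|<8R\}$, equal to $1$ on $\{R/2<|x|<4R\}$, and satisfying $|(r\partial_r)^j\partial_\omega^\alpha \chi_R| \leq C$ uniformly in $R$), yields the uniform dyadic estimate
\begin{equation*}
    \|u\|_{\Bar{H}^{s,l}_\mathrm{b}(\{R/2<|x|<4R\})} \leq C\bigl(\|P^{\beta_1,\beta_2}_R(0)u\|_{\Bar{H}^{s-1,l+2}_\mathrm{b}(\{R/4<|x|<8R\})} + \|u\|_{\Bar{H}^{s,l}_\mathrm{b}(\{R/4<|x|<R/2\}\cup\{4R<|x|<8R\})}\bigr)
\end{equation*}
for all sufficiently large $R$, provided $|\beta_2-\beta_1|$ is small (the general case then follows by chaining such small deformations, as in Proposition \ref{prop_qnm_scaling_independent}). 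Here the small coefficient of $\|u\|_{\Bar{H}^{s,l}_\mathrm{b}(\{R/2<|x|<4R\})}$ produced by the difference $P^{\beta_1,\beta_2}_R(0) - e^{-2i\beta_1}\Delta$ is absorbed into the left-hand side.

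Applied to $u = u^{\beta_1,\beta_2}_R$ (the pullback to $\Omega$ of the analytically continued solution restricted to $\Gamma^{\beta_1,\beta_2}_R$), the forcing term vanishes identically. Since $u^{\beta_1,\beta_2}_R$ agrees with $u_1$ on the outer annuli $\{R/4<|x|<R/2\}\cup\{4R<|x|<8R\}$ and with $u_2$ on $\{R<|x|<2R\}$, we obtain
\begin{equation*}
    \|u_2\|_{\Bar{H}^{s,l}_\mathrm{b}(\{R<|x|<2R\})} \leq C\|u_1\|_{\Bar{H}^{s,l}_\mathrm{b}(\{R/4<|x|<R/2\}\cup\{4R<|x|<8R\})}
\end{equation*}
with $C$ independent of $R$. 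Taking $R=2^kR_1$ and summing over $k\in\N$, finiteness of $\|u_1\|_{\Bar{H}_\mathrm{b}^{s,l}(X_{\beta_1})}$ yields finiteness of $\|u_2\|_{\Bar{H}_\mathrm{b}^{s,l}(X_{\beta_2})}$. The reverse direction proceeds symmetrically, so the two kernels are isomorphic via analytic continuation and hence have equal dimensions.
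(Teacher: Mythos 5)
Your proposal takes essentially the same route as the paper: analytic continuation along the compactly supported interpolating contours $\Gamma^{\beta_1,\beta_2}_{R,s}$ via Lemma~\ref{deformation_lemma}, dyadic patching, and a uniform-in-$R$ b-Sobolev estimate on annuli obtained from invertibility of $e^{-2i\beta_1}\Delta$ on $\Bar{H}_\mathrm{b}^{s,l}(\R^3)$ plus absorption of the small perturbation $P^{\beta_1,\beta_2}_R(0) - e^{-2i\beta_1}\Delta$. The only place the paper is more careful is in separating that perturbation into an $\mathcal{O}(R^{-1})$ piece and an $\mathcal{O}(|\beta_2-\beta_1|)$ piece and explicitly verifying that the b-derivatives of the $R$-dependent phase $\phi_R$ are bounded uniformly in $R$; your proposal gestures at both controls but folds them into a single "additional factor," which slightly obscures that the two smallness mechanisms (large $R$ versus small $|\beta_2-\beta_1|$) act on different terms.
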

\begin{proof}
    The proof is similar to that of Lemma \ref{prop_qnm_scaling_independent}. We prove that every $\beta_1 \in (-\pi,\pi)$ has a neighborhood $(\beta_1-\delta,\beta_1+\delta)$, so that $\dim(\ker(P_{\beta_1}(0))) = \dim(\ker(P_{\beta_2}(0)))$ for all $\beta_2 \in (\beta_1-\delta,\beta_1+\delta)$. The result follows by covering $[0,\beta]$ or $[\beta,0]$ by such neighborhoods.

    Denoting $\Omega = \{x\in\R^3 \,\,|\,\, |x|>R_0\}$ and the complex scaled contours $\Gamma_{\beta_1} = F_{\beta_1}(\Omega)$, $\Gamma_{\beta_2} = F_{\beta_2}(\Omega)$, we define the contours $\Gamma^{\beta_1,\beta_2}_{R}$ and $\Gamma^{\beta_1,\beta_2}_{R,s}$ as in the proof of Lemma \ref{prop_qnm_scaling_independent}. That is, $\Gamma^{\beta_1,\beta_2}_{R}$ coincides with $\Gamma_{\beta_2}$ in $\{R \leq |x|\leq 2R\}$ and with $\Gamma_{\beta_1}$ outside of $\{\frac{R}{2} \leq |x| \leq 4R\}$ and the family of contours $s \in [0,1] \to \Gamma^{\beta_1,\beta_2}_{R,s}$ interpolates between $\Gamma_{\beta_1}$ and $\Gamma^{\beta_1,\beta_2}_{R}$. Taking $\beta_1,\beta_2 \in [0,\pi)$ or $\beta_1,\beta_2 \in (-\pi,0]$, by Lemma \ref{analytic_coefficients} the Kerr zero energy operator $P(0)$ extends to a differential operator with analytic coefficients on an open set $U \subset \C^3$ including $\Gamma^{\beta_1,\beta_2}_{R,s}$ for all $s \in [0,1]$. Furthermore, $P(0)\bigr|_{\Gamma^{\beta_1,\beta_2}_{R,s}}$ is elliptic for all $s$.

    Let now $u_1 \in \Bar{H}_\mathrm{b}^{s,l}(X_{\beta_1})$ satisfy $P_{\beta_1}u_1 = 0$. By elliptic regularity, $u_1$ is smooth on $\Gamma_{\beta_1}$. Applying Lemma \ref{deformation_lemma}, we obtain an analytic continuation of $u_1$ to a neighborhood of $\bigcup_{s\in[0,1]}\Gamma^{\beta_1,\beta_2}_{R,s}$. In particular, restricting to the contour $\Gamma^{\beta_1,\beta_2}_R = \Gamma^{\beta_1,\beta_2}_{R,1}$ gives an element $u^{\beta_1,\beta_2}_{R} \in C^\infty(\Gamma^{\beta_1,\beta_2}_{R})$ satisfying $P^{\beta_1,\beta_2}_R(0)u^{\beta_1,\beta_2}_{R} = 0$, where $P^{\beta_1,\beta_2}_R(0) = P(0)\bigr|_{\Gamma^{\beta_1,\beta_2}_R}$. By patching together these solutions $u^{\beta_1,\beta_2}_{R}$ as in \eqref{u_2_construction}, we obtain an element $u_2 \in \Bar{C}^\infty(X_{\beta_2})$ satisfying $P_{\beta_2}(\sigma)u_2 = 0$.

    It remains to show that $u_2 \in \Bar{H}_\mathrm{b}^{s,l}(X_{\beta_2})$. As in the proof of Lemma \ref{prop_qnm_scaling_independent}, this will follow by estimating the $\Bar{H}_\mathrm{b}^{s,l}$-norm of $u_2$ in terms of the $\Bar{H}_\mathrm{b}^{s,l}$-norm of $u_1$. To this end, note that the Laplacian on $\R^3$ is invertible as an operator
    $$\Delta: H_\mathrm{b}^{s,l}(\R^3) \to H_\mathrm{b}^{s-2,l+2}(\R^3)$$
    between weighted b-Sobolev spaces with $l \in (-\frac{3}{2},-\frac{1}{2})$, see for instance \cite[Theorem 3.1]{hintz_geometric_singular_analysis_notes}. Thus, for all $v \in H_\mathrm{b}^{s,l}(\R^3)$, we have
    \begin{equation}
    \label{b_laplacian_estimate}
        \|v\|_{H_\mathrm{b}^{s,l}(\R^3)} \leq C\|\Delta v\|_{H_\mathrm{b}^{s-2,l+2}(\R^3)}.
    \end{equation}
    
    Let now $\chi \in C_c^\infty(\R)$ satisfy $\supp(\chi) \subset (\tfrac{1}{4},8)$ and $\chi = 1$ on $[\tfrac{1}{2},4]$. Define
    $$\chi_R \in C_c^\infty(\R^3), \quad \chi_R(x) = \chi\Bigl(\frac{|x|}{R}\Bigr).$$
    Since $|x| < 8R$ on the support of $\chi_R$, we have
    $$\sup_{x\in\R^3}|r^j\partial_r^j\chi_R(x)| = \sup_{x\in\R^3}\Bigl|\frac{r^j}{R^j}\chi^{(j)}\Bigl(\frac{|x|}{R}\Bigr)\Bigr| \leq 8^j\sup_{t\in\R} |\chi^{(j)}(t)|.$$
    Thus, for any fixed $j\in\N$, we can bound the supremum norm of up to $j$ b-derivatives of $\chi_R$ by a constant independent of $R$.

    Applying the estimate \eqref{b_laplacian_estimate} to $\chi_R u$, we find for all $u \in C^\infty(\R^3)$ and with constants $C$ independent of $R$:
    \begin{equation}
    \label{b_laplacian_split_estimate}
    \begin{split}
        \|u\|_{H_\mathrm{b}^{s,l}(\{\frac{R}{2}<|x|<4R\})} &\leq \|\chi_R u\|_{H_\mathrm{b}^{s,l}(\R^3)} \leq C\|\Delta\chi_R u\|_{H_\mathrm{b}^{s-2,l+2}(\R^3)} \\
        &\leq C\bigl(\|\chi_R \Delta u\|_{H_\mathrm{b}^{s-2,l+2}(\R^3)} + \|[\Delta,\chi_R] u\|_{H_\mathrm{b}^{s-2,l+2}(\R^3)}\bigr) \\
        &\leq C\bigl(\|e^{-2i\beta_1}\Delta u\|_{H_\mathrm{b}^{s-2,l+2}(\{\frac{R}{4}<|x|<8R\})} + \|u\|_{H_\mathrm{b}^{s,l}(\{\frac{R}{4}<|x|<\frac{R}{2}\}\cup\{4R<|x|<8R\})}\bigr),
    \end{split}
    \end{equation}
    where we used that
    $$[\Delta,\chi_R] = -r^{-2}\bigl(2(r\partial_r\chi_R)r\partial_r + (r\partial_r)^2\chi_R + r\partial_r\chi_R\bigr),$$
    and by the observation above multiplication by $\chi_R$ and its b-derivatives define operators on $H_\mathrm{b}^{s,l}(\R^3)$ that are bounded uniformly in $R$. By the notation we mean
    $$\|u\|_{H_\mathrm{b}^{k,l}(\{\frac{R}{2}<|x|<4R\})} = \sum_{j + |\alpha|\leq k}\|r^l(r\partial_r)^j\partial_\omega^\alpha u\|_{L^2({\frac{R}{2}<|x|<4R\})}}$$
    and similarly for the other b-Sobolev norms on open subsets of $\R^3$.

    In local coordinates, we can view $P^{\beta_1,\beta_2}_R(0)$ as an operator on $\Omega \subset \R^3$. For any $\varepsilon>0$ we can choose $|\beta_1-\beta_2|$ small enough, so that
    \begin{equation}
    \label{deformation-laplacian}
        \|(P^{\beta_1,\beta_2}_R(0) - e^{-2i\beta_1}\Delta)u\|_{H_\mathrm{b}^{s-2,l+2}(\{\frac{R}{4}<|x|<8R\})} \leq \varepsilon \|u\|_{H_\mathrm{b}^{s,l}(\{\frac{R}{4}<|x|<8R\})}
    \end{equation}
    for all $R$ large enough. Indeed, denoting the phase function of the deformation, see \eqref{phase_function_deformation}, by $\phi_R(r) = \phi^{\beta_1,\beta_2}_{R,1}(r)$ for simplicity, and with $f_R(r) = e^{i\phi_R(r)}r$, we have
    \begin{equation*}
    \begin{split}
        P^{\beta_1,\beta_2}_R(0) - e^{-2i\beta_1}\Delta &= \rkerr(f_R)^{-2}\Bigl(\frac{\rkerr(f_R)^2}{e^{2i\beta_1}r^2} - 1\Bigr)\bigl((r\partial_r)^2 + r\partial_r + \Delta_{\Sph^2}\bigr) \\
        &+ \rkerr(f_R)^{-2}\Bigl((r\partial_r)^2 - \Bigl(\frac{f_R}{f_R'r}r\partial_r\Bigr)^2 + r\partial_r - \frac{f_R}{f_R'r}r\partial_r\Bigr) \\
        &- \rkerr(f_R)^{-2}\Bigl(\frac{2a}{f_R'r}r\partial_r\partial_{\varphi_*} - \frac{2m}{f_R'r}r\partial_r\frac{f_R}{r}r\partial_r + \frac{a}{f_R'r}r\partial_r\frac{1}{f_R'r}r\partial_r\Bigr).
    \end{split}
    \end{equation*}
    The last term can be controlled on $H_\mathrm{b}^{s,l}(\{\frac{R}{4}<|x|<8R\})$ by choosing $R$ large enough. For the first two terms, notice that
    \begin{align*}
        \frac{\rkerr(f_R)^2}{e^{2i\beta_1}r^2} - 1 = e^{2i\phi_R(r) - 2i\beta_1} + \frac{a^2\cos^2(\theta)}{e^{2i\beta_1}r^2}, \qquad 1 - \frac{f_R(r)}{f_R'(r)r} &= \frac{ir\phi_R'(r)}{1+ir\phi_R'(r)}.
    \end{align*}
    Now $|\phi_R(r) - \beta_1| \leq |\beta_2 - \beta_1|$ for all $R$, and by \eqref{phase_function_deformation} we have
    $$r\partial_r\phi_R(r) = (\beta_2 - \beta_1)\Bigl(\frac{r}{R}\Tilde{\chi}'\Bigl(\frac{r}{R}\Bigr)\psi(\log(r)) + \Tilde{\chi}\Bigl(\frac{r}{R}\Bigr)\psi'(\log(r))\Bigr).$$
    Thus, the supremum norm on $(\tfrac{R}{4},8R)$ of the functions $(\phi_R-\beta_1)$ and $r\partial_r\phi_R$ can be controlled uniformly in $R$ by choosing $|\beta_2-\beta_1|$ small enough. The same is true of higher order b-derivatives of $\phi_R$. The claim in \eqref{deformation-laplacian} follows.

    Inserting \eqref{deformation-laplacian} in the estimate \eqref{b_laplacian_split_estimate}, using
    $$\|u\|_{H_\mathrm{b}^{s,l}(\{\frac{R}{4}<|x|<8R\})} = \|u\|_{H_\mathrm{b}^{s,l}(\{\frac{R}{2}<|x|<4R\})} + \|u\|_{H_\mathrm{b}^{s,l}(\{\frac{R}{4}<|x|<\frac{R}{2}\}\cup\{4R<|x|<8R\})}$$
    and absorbing one of the terms into the left hand side, we find
    $$\|u\|_{H_\mathrm{b}^{s,l}(\{\frac{R}{2}<|x|<4R\})} \leq C\bigl(\|P^{\beta_1,\beta_2}_R(0) u\|_{H_\mathrm{b}^{s-2,l+2}(\{\frac{R}{4}<|x|<8R\})} + \|u\|_{H_\mathrm{b}^{s,l}(\{\frac{R}{4}<|x|<\frac{R}{2}\}\cup\{4R<|x|<8R\})}\bigr).$$
    Applying this estimate to $u^{\beta_1,\beta_2}_{R}$, the solution on the contour $\Gamma^{\beta_1,\beta_2}_{R}$ obtained by analytic continuation, we obtain
    $$\|u^{\beta_1,\beta_2}_{R}\|_{H_\mathrm{b}^{s,l}(\{\frac{R}{2}<|x|<4R\})} \leq C\|u^{\beta_1,\beta_2}_{R}\|_{H_\mathrm{b}^{s,l}(\{\frac{R}{4}<|x|<\frac{R}{2}\}\cup\{4R<|x|<8R\})}.$$
    Since $u^{\beta_1,\beta_2}_{R} = u_1$ in $\{\frac{R}{4}<|x|<\frac{R}{2}\}\cup\{4R<|x|<8R\})$ and $u^{\beta_1,\beta_2}_{R} = u_2$ in $\{R<|x|<2R\}$, we finally find
    $$\|u_2\|_{H_\mathrm{b}^{s,l}(\{R<|x|<2R\})} \leq C\|u_1\|_{H_\mathrm{b}^{s,l}(\{\frac{R}{4}<|x|<\frac{R}{2}\}\cup\{4R<|x|<8R\})}$$
    with $C$ independent of $R$. Summing over $R = 2^jR_0$ shows that $u_2 \in \Bar{H}_\mathrm{b}^{s,l}(X_{\beta_2})$.
\end{proof}

\subsection{Uniform estimates near zero energy}

We are now ready to prove the main estimate of this section \eqref{low_energy_estimate}. The proof is based on the normal operator estimates of the previous subsection. These allow us to improve the error term in \eqref{sc-b_interior_estimate} by slightly decreasing the weights at both zf and tf. This leads to an error that decays as $\sigma \to 0$, giving invertibility of $P_\beta(\sigma)$ for $|\sigma|$ small enough. The proof is close in spirit to that of \cite[Proposition 3.21]{hintz_mode_stability}.

\begin{prop}
\label{prop_low_energy_estimate}
    Let $\beta \in (-\pi,\pi)$ and $\sigma \in \Sigma_{\beta}^{c,\delta}$ as in \eqref{parameter_space} for some $c,\delta > 0$. Let further ${s,q,l,w \in \R}$ with $s > \frac{1}{2} - \alpha\Im(\sigma)$ for all $\sigma \in \Sigma_{\beta}^{c,\delta}$ and $w-l \in (\frac{1}{2},\frac{3}{2})$. Let $\epsilon > 0$ be such that $w-l-\epsilon \in (\frac{1}{2},\frac{3}{2})$ and take $s_0\in (\frac{1}{2},s)$. Then for all $u\in \Bar{H}_{\emph{\scb},\sigma}^{s,q,l,w}(X_\beta)$ with $P_\beta(\sigma)u \in \Bar{H}_{\emph{\scb},\sigma}^{s-1,q,l+2,w}(X_\beta)$ the following estimate holds uniformly for $\sigma \in \Sigma_{\beta}^{c,\delta}$:
    \begin{equation}
    \label{low_energy_estimate}
        \|u\|_{\Bar{H}_{\emph{\scb},\sigma}^{s,q,l,w}(X_\beta)} \leq C\bigl(\|P_\beta(\sigma)u\|_{\Bar{H}_{\emph{\scb},\sigma}^{s-1,q,l+2,w}(X_\beta)} + \|u\|_{\Bar{H}_{\emph{\scb},\sigma}^{s_0,-N,l-\epsilon,w-\epsilon}(X_\beta)}\bigr).
    \end{equation}
\end{prop}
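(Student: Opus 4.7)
The plan is to start from the elliptic semi-Fredholm estimate of Lemma \ref{lemma_sc-b_elliptic_estimate},
$$\|u\|_{\Bar{H}_{\scb,\sigma}^{s,q,l,w}(X_\beta)} \leq C\bigl(\|P_\beta(\sigma)u\|_{\Bar{H}_{\scb,\sigma}^{s-1,q,l+2,w}(X_\beta)} + \|u\|_{\Bar{H}_{\scb,\sigma}^{-N,-N,l,w}(X_\beta)}\bigr),$$
and to improve the error term by a small amount of decay at both the transition face $\tf$ and the zero face $\zf$, using the invertibility of the respective normal operators provided by Lemmas \ref{lemma_tf_operator} and \ref{lemma_zero_energy_estimate}. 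The hypothesis $w-l\in(\tfrac{1}{2},\tfrac{3}{2})$ is exactly what the norm equivalences \eqref{zf_norm_equivalence} and \eqref{tf_norm_equivalence} turn into the admissible weight ranges $l-w\in(-\tfrac{3}{2},-\tfrac{1}{2})$ of Lemma \ref{lemma_zero_energy_estimate} and $w-l\in(\tfrac{1}{2},\tfrac{3}{2})$ of Lemma \ref{lemma_tf_operator}. Once the error can be put in the form $\|u\|_{\Bar{H}_{\scb,\sigma}^{-N,-N,l-\epsilon,w-\epsilon}}$, the proposition follows by combining with the output of Lemma \ref{lemma_sc-b_elliptic_estimate}.

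The implementation is via a microlocal partition of unity on $X_\scb$: let $1 = \chi_\zf + \chi_\tf + \chi_K$, where $\chi_K$ is supported in a compact subset of $X_\scb$ away from $\tf\cup\zf\cup\scf$, $\chi_\zf$ is supported in a small neighborhood of $\zf$ disjoint from $\tf$, and $\chi_\tf$ is supported in a small neighborhood of $\tf$. On $\supp(\chi_K)$ all boundary defining functions are bounded below so the estimate $\|\chi_K u\|_{\scb^{-N,-N,l,w}} \leq C\|\chi_K u\|_{\scb^{-N,-N,l-\epsilon,w-\epsilon}}$ is automatic. For $\chi_\zf u$, I use the norm equivalence \eqref{zf_norm_equivalence} to convert to a weighted b-norm and apply Lemma \ref{lemma_zero_energy_estimate} to $P_\beta(0)\chi_\zf u = \chi_\zf P_\beta(\sigma)u - (P_\beta(\sigma)-P_\beta(0))\chi_\zf u + [P_\beta(0),\chi_\zf]u$. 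The commutator is supported in the transition region and is absorbed into $\chi_K$ and $\chi_\tf$; the perturbation $P_\beta(\sigma)-P_\beta(0)$ lies in $\sigma\mathrm{Diff}_\mathrm{b}^{1,-1}(X_\beta)+\sigma^2\mathrm{Diff}_\mathrm{b}^{0,-2}(X_\beta)$, which gains a factor of $|\sigma|\sim\bdf_\zf$ on $\supp(\chi_\zf)$. Translating back through \eqref{zf_norm_equivalence}, this contribution is bounded by $\|u\|_{\scb^{-N,-N,l,w-1}}$, which in turn is dominated by $\|u\|_{\scb^{-N,-N,l-\epsilon,w-\epsilon}}$ on $\supp(\chi_\zf)$ since $\bdf_\tf$ is bounded below there.

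The parallel argument for $\chi_\tf u$ uses the norm equivalence \eqref{tf_norm_equivalence}, Lemma \ref{lemma_tf_operator} applied to the model operator $N_\tf(\bdf_\tf^{-2}P_\beta(\sigma))$ computed in Lemma \ref{lemma_sc-b_interpretation}, and the fact that $\bdf_\tf^{-2}P_\beta(\sigma)-N_\tf(\bdf_\tf^{-2}P_\beta(\sigma)) \in \bdf_\tf\mathrm{Diff}_\scb^{2,0,0,0}(X_\beta)$ by the exact sequence \eqref{tf_exact_seq}; the extra factor of $\bdf_\tf$ in the perturbation produces a one-unit gain in the tf weight, which more than suffices for the $\epsilon$ improvement there. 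The main obstacle is the careful bookkeeping of the cutoffs near the corner $\tf\cap\zf$, where both normal operator estimates are simultaneously active. The two estimates are in fact compatible there: the tf-normal operator $N_\tf(\bdf_\tf^{-2}P_\beta(\sigma))$ degenerates at its zf-end $\{\tau=0\}$ to (a rescaling of) the b-normal operator of $P_\beta(0)$ at spatial infinity, so the cutoffs can be nested to capture a thin neighborhood of the corner without losing control on either side. Assembling the three pieces and absorbing the commutator and perturbation terms back into the left-hand side of the estimate from Lemma \ref{lemma_sc-b_elliptic_estimate} then yields \eqref{low_energy_estimate}.
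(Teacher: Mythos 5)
The high-level strategy — starting from Lemma \ref{lemma_sc-b_elliptic_estimate} and improving the error term via the normal operator estimates of Lemmas \ref{lemma_zero_energy_estimate} and \ref{lemma_tf_operator} — is the right one and matches the paper. However, the specific implementation via a partition of unity $1 = \chi_\zf + \chi_\tf + \chi_K$ with $\supp(\chi_\zf)$ \emph{disjoint} from $\tf$ has a genuine gap near the corner $\tf\cap\zf$.

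Since $\bdf_\tf^\epsilon\bdf_\zf^\epsilon = |\sigma|^\epsilon$, the target error $\|u\|_{\Bar{H}_{\scb,\sigma}^{-N,-N,l-\epsilon,w-\epsilon}}$ is exactly $|\sigma|^\epsilon$ times $\|u\|_{\Bar{H}_{\scb,\sigma}^{-N,-N,l,w}}$. So uniformly on $X_\scb$ you must exhibit a gain of $|\sigma|^\epsilon$ in the error term. In your decomposition, only $\chi_\tf$ covers the corner, and the gain you extract there from the perturbation $\bdf_\tf^{-2}P_\beta(\sigma)-N_\tf(\bdf_\tf^{-2}P_\beta(\sigma))\in\bdf_\tf\mathrm{Diff}_\scb^{2,0,0,0}$ is a single factor of $\bdf_\tf$. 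Near the corner $\tf\cap\zf$ one has $\tau=\tfrac{|\sigma|}{\bdf}\to 0$, so $\bdf_\tf\sim\bdf\gg|\sigma|$, and $\bdf_\tf/|\sigma|^\epsilon\sim\bdf^{1-\epsilon}\tau^{-\epsilon}$ has no uniform bound: it blows up as $\tau\to 0$ at fixed $\bdf$. Thus $\bdf_\tf\leq C|\sigma|^\epsilon$ fails on $\supp(\chi_\tf)$ near the corner, and your claim that the $\bdf_\tf$ gain \enquote{more than suffices for the $\epsilon$ improvement} is wrong precisely where it matters. The compatibility of the two normal operators at the corner, which you invoke, is a statement about \emph{invertibility} (the ranges of admissible weights match up), not about the \emph{gains} from the respective perturbations — and it is the gains, not the invertibility, that produce the $\epsilon$ improvement.

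The paper's proof avoids this by not using a partition of unity at all. It applies the two normal operator estimates \emph{sequentially}, with two overlapping cutoffs $\chi$ and $\psi$ that each cover the corner: $\chi$ is identically $1$ on a neighborhood of all of $\zf$ (e.g.\ $\chi = \Tilde\chi(\tau)$ with $\Tilde\chi$ equal to $1$ near $\tau=0$), which yields the $\bdf_\zf$-gain everywhere near $\zf$ including the corner; then $\psi$ covers a neighborhood of all of $\tf$, yielding the $\bdf_\tf$-gain there. The two gains are thus both realized at the corner, and $\bdf_\zf\cdot\bdf_\tf = |\sigma|$ supplies (more than) the required $|\sigma|^\epsilon$. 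To repair your argument you would need to let $\chi_\zf$ and $\chi_\tf$ overlap on a full neighborhood of the corner and apply both estimates on the overlap — at which point you have reproduced the paper's sequential scheme rather than a genuine partition of unity.
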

\begin{proof}
    We start from the estimate in Lemma \ref{lemma_sc-b_elliptic_estimate} and improve the error term at zf and tf using the estimates for the normal operators proved in the previous subsection. In order to apply the estimate for the zero energy operator, we use regularity $s_0\in (\frac{1}{2},s)$ for the error term in \eqref{sc-b_interior_estimate}. We begin at the zero face. Thus, let $\chi \in C^\infty(X_\scb)$ be identically $1$ in a neighborhood of zf and have support away from scf (e.g. $\chi = \Tilde{\chi}(\frac{|\sigma|}{\bdf})$ with $\Tilde{\chi} \in C_c^\infty([0,1))$ identically $1$ near $0$). We have
    $$\|u\|_{\Bar{H}_{\scb,\sigma}^{s_0,-N,l,w}(X_\beta)} \leq \|\chi u\|_{\Bar{H}_{\scb,\sigma}^{s_0,-N,l,w}(X_\beta)} + C\|(1-\chi)u\|_{\Bar{H}_{\scb,\sigma}^{s_0,-N,l,-N}(X_\beta)},$$
    since the second term has support away from zf. For the first term we use the equivalence of norms \eqref{zf_norm_equivalence} to estimate
    $$\|\chi u\|_{\Bar{H}_{\scb,\sigma}^{s_0,-N,l,w}(X_\beta)} \leq C|\sigma|^{-w}\|\chi u\|_{\Bar{H}_{b}^{s_0,l-w}(X_\beta)}$$
    uniformly in $\sigma$. Note that $l-w \in (-\frac{3}{2},-\frac{1}{2})$ by assumption. Thus, we can apply Lemma \ref{lemma_zero_energy_estimate} to obtain
    \begin{equation*}
    \begin{split}
        \|\chi u\|_{\Bar{H}_{b}^{s_0,l-w}(X_\beta)} &\leq C\|P_\beta(0)\chi u\|_{\Bar{H}_{b}^{s_0-1,l-w+2}(X_\beta)} \\
        &\leq C\bigl(\|\chi P_\beta(0)u\|_{\Bar{H}_{b}^{s_0-1,l-w+2}(X_\beta)} + \|[P_\beta(0),\chi] u\|_{\Bar{H}_{b}^{s_0-1,l-w+2,}(X_\beta)},\bigr).
    \end{split}
    \end{equation*}
    Employing \eqref{zf_norm_equivalence} again to express this estimate in terms of the scattering-b-transition norms, we find
    \begin{equation*}
        \|\chi u\|_{\Bar{H}_{\scb,\sigma}^{s_0,-N,l,w}(X_\beta)} \leq C\bigl(\|\chi P_\beta(0)u\|_{\Bar{H}_{\scb,\sigma}^{s_0-1,-N,l+2,w}(X_\beta)} + \|u\|_{\Bar{H}_{\scb,\sigma}^{s_0,-N,l,-N}(X_\beta)}\bigr),
    \end{equation*}
    where we used that the commutator vanishes in a neighborhood of zf and hence
    $$[P_\beta(0),\chi] \in \mathrm{Diff}_\scb^{1,0,-2,-N}(X_\beta)$$
    for all $N$.
    As the zf-normal operator, $P_\beta(0)$ characterizes $P_\beta(\sigma)$ modulo $\bdf_\zf$. In fact, we have
    $$\chi P_\beta(\sigma) - \chi P_\beta(0) \in \mathrm{Diff}_\scb^{1,0,-2,-1}(X_\beta),$$
    and thus
    $$\|\chi P_\beta(0)u\|_{\Bar{H}_{\scb,\sigma}^{s_0-1,-N,l+2,w}(X_\beta)} \leq \|\chi P_\beta(\sigma)u\|_{\Bar{H}_{\scb,\sigma}^{s_0-1,-N,l+2,w}(X_\beta)} + C\|u\|_{\Bar{H}_{\scb,\sigma}^{s_0,-N,l,w-1}(X_\beta)}.$$
    Altogether, we have improved the error term in \eqref{sc-b_interior_estimate} by one order of decay at zf:
    \begin{equation}
    \label{zf_improvement}
        \|u\|_{\Bar{H}_{\scb,\sigma}^{s,q,l,w}(X_\beta)} \leq C\bigl(\|P_\beta(\sigma)u\|_{\Bar{H}_{\scb,\sigma}^{s-1,q,l+2,w}(X_\beta)} + \|u\|_{\Bar{H}_{\scb,\sigma}^{s_0,-N,l,w-1}(X_\beta)}\bigr).
    \end{equation}

    We now proceed in a similar fashion at tf. Let $\psi \in C^\infty(X_\scb)$ have support in a small neighborhood of tf with $\psi=1$ near tf (e.g. $\psi = \Tilde{\psi}(\bdf + |\sigma|)$ with $\Tilde{\psi} \in C^\infty_c[0,\varepsilon)$ identically $1$ near $0$). Then with $\epsilon \in (0,1)$ as in the statement of the Proposition, we have
    \begin{equation*}
    \begin{split}
        \|u\|_{\Bar{H}_{\scb,\sigma}^{s_0,-N,l,w-1}(X_\beta)} &\leq \|u\|_{\Bar{H}_{\scb,\sigma}^{s_0,-N,l,w-\epsilon}(X_\beta)} \\
        &\leq \|\psi u\|_{\Bar{H}_{\scb,\sigma}^{s_0,-N,l,w-\epsilon}(X_\beta)} + C\|(1-\psi)u\|_{\Bar{H}_{\scb,\sigma}^{s_0,-N,-N,w-\epsilon}(X_\beta)}.
    \end{split}
    \end{equation*}
    The equivalence of norms in \eqref{tf_norm_equivalence} gives
    $$\|\psi u\|_{\Bar{H}_{\scb,\sigma}^{s_0,-N,l,w-\epsilon}(X_\beta)} \leq C|\sigma|^{-l-\frac{3}{2}}\|\psi u\|_{H_{\mathrm{sc,b}}^{s_0,-N,w-l-\epsilon}(\tf)}$$
    uniformly in $\sigma$. Since $w-l-\epsilon \in (\frac{1}{2},\frac{3}{2})$ by assumption, we can apply the estimate on the tf-normal operator from Lemma \ref{lemma_tf_operator} to obtain
    \begin{equation*}
    \begin{split}
        \|\psi u\|_{H_{\mathrm{sc,b}}^{s_0,-N,w-l-\epsilon}(\tf)} &\leq C\|N_{\tf}\bigl(\bdf_\tf^{-2}P_\beta(\sigma)\bigr)\psi u\|_{H_{\mathrm{sc,b}}^{s_0-2,-N,w-l-\epsilon}(\tf)} \\
        &\leq C\bigl(\|\psi N_{\tf}\bigl(\bdf_\tf^{-2}P_\beta(\sigma)\bigr)u\|_{H_{\mathrm{sc,b}}^{s_0-2,-N,w-l-\epsilon}(\tf)} \\
        &\quad+ \|\bigl[N_{\tf}\bigl(\bdf_\tf^{-2}P_\beta(\sigma)\bigr),\psi\bigr] u\|_{H_{\mathrm{sc,b}}^{s_0-2,-N,w-l-\epsilon}(\tf)}\bigr).
    \end{split}
    \end{equation*}
    Rewriting this in terms of the scattering-b-transition norms by \eqref{tf_norm_equivalence} and using
    \begin{align*}
        \psi\bdf_\tf^{-2}P_\beta(\sigma) - \psi N_{\tf}\bigl(\bdf_\tf^{-2}P_\beta(\sigma)\bigr) &\in \mathrm{Diff}_\scb^{2,0,-1,0}(X_\beta), \\
        \bigl[N_{\tf}\bigl(\bdf_\tf^{-2}P_\beta(\sigma)\bigr),\psi\bigr] &\in \mathrm{Diff}_\scb^{1,0,-N,0}(X_\beta) \quad \forall N,
    \end{align*}
    we find
    \begin{equation*}
        \|\psi u\|_{\Bar{H}_{\scb,\sigma}^{s_0,-N,l,w-\epsilon}(X_\beta)} \leq C\bigl(\|\psi P_\beta(\sigma) u\|_{\Bar{H}_{\scb,\sigma}^{s_0-1,-N,l+2,w-\epsilon}(X_\beta)} + \|\psi u\|_{\Bar{H}_{\scb,\sigma}^{s_0,-N,l-1,w-\epsilon}(X_\beta)}\bigr).
    \end{equation*}
    Altogether, we have improved the error term in \eqref{zf_improvement} by one order of decay at tf, at the cost of increasing the weight at zf from $w-1$ to $w-\epsilon$. Finally, increasing the weight at tf from $l-1$ to $l-\epsilon$ for convenience gives the result.
\end{proof}

From the uniform low energy estimate of Proposition \ref{prop_low_energy_estimate} it now follows easily that quasinormal modes cannot accumulate at zero energy.

\begin{proof}[Proof of Theorem \ref{thm_low_energy}]
    Notice that $\bdf_\tf^\epsilon\bdf_\zf^\epsilon = |\sigma|^\epsilon$. So the estimate in Proposition \ref{prop_low_energy_estimate} can be rewritten as follows
    $$\|u\|_{\Bar{H}_{\scb,\sigma}^{s,q,l,w}(X_\beta)} \leq C\bigl(\|P_\beta(\sigma)u\|_{\Bar{H}_{\scb,\sigma}^{s-1,q,l+2,w}(X_\beta)} + |\sigma|^\epsilon\|u\|_{\Bar{H}_{\scb,\sigma}^{s_0,-N,l,w}(X_\beta)}\bigr).$$
    Choosing $|\sigma|$ small enough, we can absorb the final term into the left hand side. Thus, for all $\beta \in (-\pi,\pi)$ and all $\delta > 0$ small, there exists some $c=c_{\beta,\delta}$ such that
    \begin{equation}
    \label{low_energy_no_kernel}
        \|u\|_{\Bar{H}_{\scb,\sigma}^{s,q,l,w}(X_\beta)} \leq C\|P_\beta(\sigma)u\|_{\Bar{H}_{\scb,\sigma}^{s-1,q,l+2,w}(X_\beta)}
    \end{equation}
    for $\sigma \in \Sigma_{\beta}^{c,\delta} = \{\sigma \in \C,\, |\sigma|\leq c,\, -\beta + \delta \leq \arg(\sigma) \leq \pi-\beta-\delta\}$.

    We apply this to elements of $\Bar{H}^s(X_\beta)$ lying in the kernel of $P_\beta(\sigma)$ by noting that $\Bar{H}^s(X_\beta)$ is included in a scattering-b-transition Sobolev space with appropriate weights. Indeed, for all $k\in\N$, we have
    $$\Bar{H}^k(X_\beta) \subset \Bar{H}_{\scb,\sigma}^{k,0,-k,0}(X_\beta).$$
    This follows from
    \begin{equation*}
    \begin{split}
        \sum_{j+|\alpha|\leq k} \bigl\|(\bdf+|\sigma|)^k \bigl(\frac{\bdf}{\bdf+|\sigma|}\partial_\omega\bigr)^\alpha \bigl(\frac{\bdf}{\bdf+|\sigma|}\bdf\partial_\bdf\bigr)^j u \bigl\|^2_{L^2(X_\beta)} \leq C \sum_{j+|\alpha|\leq k} \bigl\| \bigl(\bdf\partial_\omega\bigr)^\alpha \bigl(\bdf^2\partial_\bdf\bigr)^j u \bigl\|^2_{L^2(X_\beta)}.
    \end{split}
    \end{equation*}
    In particular, we have $\Bar{H}^1(X_\beta) \subset \Bar{H}_{\scb,\sigma}^{1,0,-1,0}(X_\beta)$. This space satisfies the requirements of Proposition \ref{prop_low_energy_estimate} as long as $c<\frac{1}{2\alpha}$. Thus, \eqref{low_energy_no_kernel} shows that the kernel of $P_\beta(\sigma)$ in $\Bar{H}^1(X_\beta)$ is trivial for all $\sigma \in \Sigma_{\beta}^{c,\delta}$. Covering the set in the statement of Theorem \ref{thm_low_energy} by a finite number of sets of the form $\Sigma_{\beta}^{c,\delta}$ finishes the proof.
\end{proof}

\newpage
\printbibliography

\end{document}